\newtheorem{thm}{Theorem}[section]
\newtheorem{lem}[thm]{Lemma}
\newtheorem{assu}[thm]{Assumption}
\newtheorem{prop}[thm]{Proposition}
\theoremstyle{definition}
\newtheorem{defn}[thm]{Definition}
\theoremstyle{remark}
\newtheorem{rem}[thm]{Remark}
\newcommand{\norm}[1]{\left\lVert#1\right\rVert}
\DeclareMathAccent{\wideparen}{\mathord}{largesymbols}{"F3}
\newcommand{\rO} {\mathrm{O}}
\newcommand{\deq}{\mathrel{\mathop:}=}
\newcommand{\e}[1]{\mathrm{e}^{#1}}
\newcommand{\R} {\mathbb{R}}
\newcommand{\C} {\mathbb{C}}
\newcommand{\N} {\mathbb{N}}
\newcommand{\Z} {\mathbb{Z}}
\newcommand{\adj}{^*}
\newcommand{\dist} {\mathrm{dist}}
\newcommand{\CR} {\C\setminus\R_{+}}
\DeclareMathOperator{\diag}{diag}
\DeclareMathOperator{\tr}{tr}
\DeclareMathOperator{\Tr}{Tr}
\DeclareMathOperator{\supp}{supp}
\DeclareMathOperator{\re}{\mathrm{Re}}
\DeclareMathOperator{\im}{\mathrm{Im}}
\newcommand{\caD}{{\mathcal D}}
\newcommand{\caE}{{\mathcal E}}
\newcommand{\caG}{{\mathcal G}}
\newcommand{\caH}{{\mathcal H}}
\newcommand{\caL}{{\mathcal L}}
\newcommand{\caN}{{\mathcal N}}
\newcommand{\caP}{{\mathcal P}}
\newcommand{\caQ}{{\mathcal Q}}
\newcommand{\caS}{{\mathcal S}}
\newcommand{\caT}{{\mathcal T}}
\newcommand{\ub}{\mathbf{u}}
\newcommand{\vb}{\mathbf{v}}
\newcommand{\bbS}{{\mathbb S}}
\newcommand{\frd}{{\mathfrak d}}
\newcommand{\frN}{{\mathfrak N}}
\newcommand{\frX}{{\mathfrak X}}
\newcommand{\frY}{{\mathfrak Y}}
\newcommand{\frZ}{{\mathfrak Z}}
\newcommand{\bsd}{{\boldsymbol d}}
\newcommand{\bse}{{\boldsymbol e}}
\newcommand{\bsg}{{\boldsymbol g}}
\newcommand{\bsh}{{\boldsymbol h}}
\newcommand{\bsr}{{\boldsymbol r}}
\newcommand{\bsv}{{\boldsymbol v}}
\newcommand{\bsy}{{\boldsymbol y}}
\newcommand{\bsalp}{{\boldsymbol{\alpha}}}
\newcommand{\bsbet}{{\boldsymbol{\beta}}}
\newcommand{\wt}{\widetilde}
\newcommand{\ol}{\overline}
\newcommand{\wh}{\widehat}
\newcommand{\mr}{\mathring}
\newcommand{\beq}{ \begin{equation} }
	\newcommand{\eeq}{ \end{equation} }
\newcommand{\beqs}{	\begin{equation*}	}
	\newcommand{\eeqs}{	\end{equation*}	}
\newcommand{\lone}{\mathbb{I}} 
\newcommand{\dd}{\mathrm{d}}
\newcommand{\ii}{\mathrm{i}}
\renewcommand{\P}{\mathbb{P}}
\newcommand{\angi}{\langle i\rangle}
\newcommand\Var[1]{\mathrm{Var}\left[#1\right]}
\newcommand\expct[1]{\mathbb{E}\left[#1\right]}
\newcommand\prob[1]{\mathbb{P}\left[#1\right]}
\newcommand\Absv[1]{\left\vert#1\right\vert}
\newcommand\absv[1]{\vert#1\vert}
\newcommand\llbra{\llbracket}
\newcommand\rrbra{\rrbracket}
\newcommand\AND{\quad\text{and}\quad}
\numberwithin{equation}{section}
\date{\today}
\title{Local laws for multiplication of random matrices} 
\author[1]{Xiucai Ding \thanks{E-mail: xcading@ucdavis.edu}}
\author[2]{Hong Chang Ji \thanks{E-mail: hongchang.ji@ist.ac.at}}
\affil[1]{Department of Statistics, University of California, Davis}
\affil[2]{Institute of Science and Technology Austria}
\date{\today}
\begin{document}
\maketitle
\abstract{
	Consider the random matrix model $A^{1/2} UBU^* A^{1/2},$ where $A$ and $B$ are two $N \times N$ deterministic matrices and $U$ is either an $N  \times N$ Haar unitary or orthogonal random matrix. It is well-known that on the macroscopic scale \cite{Voiculescu1991},  the limiting empirical spectral distribution (ESD) of the above model is given by the free multiplicative convolution of the limiting ESDs of $A$ and $B,$ denoted as $\mu_\alpha \boxtimes \mu_\beta,$ where $\mu_\alpha$ and $\mu_\beta$ are the limiting ESDs of $A$ and $B,$ respectively. In this paper, we study the asymptotic microscopic behavior of the edge eigenvalues and eigenvectors statistics.  We prove that both the density of $\mu_A \boxtimes \mu_B,$ where $\mu_A$ and $\mu_B$ are the ESDs of $A$ and $B,$ respectively and the associated subordination functions have a regular behavior near the edges. Moreover, we establish the local laws near the edges on the optimal scale. In particular,  
	we prove that the entries of the resolvent are close to some functionals depending only on the eigenvalues of $A, B$ and the subordination functions with optimal convergence rates. Our proofs and calculations are based on the techniques developed for the additive model $A+UBU^*$ in \cite{Bao-Erdos-Schnelli2016,BAO2,BAO3,BEC}, and our results can be regarded as the counterparts of \cite{BEC} for the multiplicative model.
}

\vspace{3mm}
	{\textit{AMS Subject Classification (2020)}: 46L54, 60B20, 15B52
	
	\textit{Keywords}: Random matrices, free multiplicative convolution, edge statistics}

\section{Introduction}\label{sec_introduction}
Large dimensional random matrices play important roles in high dimensional statistics. {More specifically, given a data matrix $Y,$ studying the eigenvalues and eigenvectors of $YY^*$ and $Y^*Y$ has been known to be an effective approach to analyze the data. There have been many different models for $Y$ depending on the nature of data it represents,} and the most fundamental one is the sample covariance matrix \cite{yao2015large}. In this context, $Y$ can be written as $Y=A^{1/2}X,$ where $A$ is the population covariance matrix and $X$ contains i.i.d. centered random variables. An extension of the sample covariance matrix is the separable covariance matrix \cite{bun2017,DYaos,ding2020tracy,PAUL200937,yang2019}, where $Y=A^{1/2}XB^{1/2}$ with another positive definite matrix $B$.

Even though the assumption that $X$ has i.i.d. entries is popular and useful in the literature, {its applications are limited to data composed of linear functions of independent samples.} An important example {that such an assumption cannot cover} is the Haar distributed random matrix, which has been used in the literature of statistical learning theory \cite{ELnips,nips20201,Liu2020Ridge,2020arXiv200500511Y}. More specifically, we consider $X=U$ to be either an $N \times N$ random Haar unitary or orthogonal matrix so that  
\begin{equation}\label{eq_defndatamatrixtype}
	Y=A^{1/2}UB^{1/2}.
\end{equation}
In other words, we study a general class of separable random matrices beyond the i.i.d. assumption; {indeed, the model \eqref{eq_defndatamatrixtype} covers the case where $X$ consists of i.i.d. Gaussian random variables due to invariance}. We mention that the data matrix (\ref{eq_defndatamatrixtype}) has appeared in the study of high dimensional data analysis, for instance, data acquisition \cite{DW}, matrix denoising \cite{7587390,bun2017} and random sketching \cite{ELnips,2020arXiv200500511Y}. 

The empirical spectral distribution (ESD) of $YY^*$ in (\ref{eq_defndatamatrixtype}) has been studied in the literature of free probability theory. Denote 
\begin{equation}\label{defn_freemultiplicationmodel}
	H=AUBU^*,
\end{equation}
which has the same eigenvalues with $YY^*.$ In the influential work \cite{Voiculescu1991}, Voiculescu studied the limiting spectral distribution of the eigenvalues of $H$ and showed that it {was} given by the {free multiplicative convolution} of $\mu_\alpha$ and $\mu_\beta,$ denoted as $\mu_\alpha \boxtimes \mu_\beta$, where $\mu_\alpha$ and $\mu_\beta$ are the limiting ESDs of $A$ and $B,$ respectively; see Definition \ref{def:freeconv} for a precise statement. More recently, in \cite{JHC}, the author investigated the behavior of $\mu_{\alpha} \boxtimes \mu_{\beta}$ by analyzing a system of deterministic equations, known as {subordination equations},  that defines the free convolution; see equation (\ref{eq_suborsystemPhi}) for details. They also proved that under certain regularity assumptions, the density of $\mu_{\alpha}\boxtimes \mu_{\beta}$ {had} a regular square root behavior near the edges of its support.  

However, on the microscopic level, the singular value and vector statistics of $Y,$ as well as the local laws, have not been established so far. The aim of this paper is to fill this gap near the regular edges. {Before proceeding to our main focus, we pause to discuss the {additive model,} that is, $A+UBU^*.$ The ESD of the additive model {converges to} the free additive convolution of $\mu_\alpha$ and $\mu_\beta,$ denoted as $\mu_\alpha \boxplus \mu_\beta$ \cite{Voiculescu1991}. More recently, the local laws as well as eigenvalues and eigenvectors statistics have been extensively studied in the series of papers \cite{Bao-Erdos-Schnelli2016,BAO2,BAO3,BEC,MR4205272}. Our arguments are strongly inspired by these works and our results can be regarded as multiplicative counterparts of \cite{BEC}. In what follows, we highlight and summarize the results and techniques of the additive model \cite{Bao-Erdos-Schnelli2016,BAO2,BAO3, BEC} in Section \ref{subsec_additionresults}. Then we explain how we adapt their approaches with some modifications to obtain the results for the multiplicative model (\ref{defn_freemultiplicationmodel}) in Section \ref{subsec_multiplicativemodelresults}. }

\subsection{Local laws for addition of random matrices}\label{subsec_additionresults}
In this subsection, we review the results and techniques for the addition of random matrices $A+UBU^*$ in the series of papers \cite{Bao-Erdos-Schnelli2016,BAO2,BAO3,BEC}.

In \cite{Bao-Erdos-Schnelli2016, BAO2, BAO3}, the authors studied the local laws in regular bulk spectrum of the free additive convolution. Chronologically, in \cite{Bao-Erdos-Schnelli2016}, the authors proved that the system of the subordination equations, defining the free additive convolution, {was} stable away from the edges of the support and singularities. In particular, on one hand, they showed that the system {was} stable and the imaginary parts of the subordination functions {were} bounded below in the regular bulk; on other hand, they proved a local stability result of the free additive convolution. Based on \cite{Bao-Erdos-Schnelli2016}, in \cite{BAO2}, they proved that the local laws {held} in the bulk of the spectrum down to the optimal scale $N^{-1+\gamma},$ for any $\gamma>0,$ which improved a result obtained in \cite{Bao-Erdos-Schnelli2016}. Particularly, they proved a version of averaged local law that the ESD of $A+UBU^*$ {concentrated} around $\mu_A \boxplus \mu_B$ {where $\mu_{A}$ and $\mu_{B}$ denote the ESDs of $A$ and $B$, respectively. They also proved the entry-wise local law} that the every entry of the resolvent $G:=(A+UBU^*-z)^{-1}, \ z=E+\ii \eta \in \mathbb{C}_+,$ {was} well estimated {at deterministic functions of $z$}. As a byproduct, they showed that the bulk eigenvectors {were} completely delocalized.  Later on,  in \cite{BAO3}, the authors obtained the optimal convergence rate $(N\eta)^{-1}$ in the bulk for the local laws which improved the result of \cite{BAO2} where the convergence speed was shown to be of {smaller order than} $(N \eta)^{-1/2}.$ 

We highlight several important technical components and insights of the aforementioned three works. Since \cite{BAO3} established the local laws down to the optimal scale with optimal precision which refined the results of \cite{Bao-Erdos-Schnelli2016, BAO2}, we focus our discussion on \cite{BAO3}.  The core is to explore the system of subordination functions globally and locally. First, since the additive model lacks the independence of matrix elements, they employed a partial randomness decomposition (see (\ref{eq_prd}) in the present paper) of the Haar measure which enabled them to take partial expectations of the entries of the resolvent. Second, to connect the resolvent with the subordination functions, they used the approximate subordination functions which depend only on the resolvent of $A+UBU^*$. In particular, their choices for these approximates can be considered as a random version of those used in \cite{Pastur-Vasilchuk2000,MR3353823}; see equation (3.18) of \cite{BAO3}. With such choices, they were able to work on a new system of self-consistent equations. Surprisingly, it suffices to monitor only two auxiliary quantities to analyze the system. With the aid of the local stability results of \cite{Bao-Erdos-Schnelli2016}, they connected the partial expectations of the entries of the resolvent with the subordination functions. Third, they proposed a novel strategy to handle the fluctuation averaging mechanism for Haar random matrices. More specifically, instead of working directly with $N^{-1} \sum_{i} G_{ii},$ they first {considered generic averages of an auxiliary quantity which was a carefully chosen linear combination of $G_{ii}$ and $(UBU^*G)_{ii}.$ Such a particular choice made the leading order terms within its average cancel algebraically, and the auxiliary quantity can be passed to $G_{ii}$ by taking different weights for this average.} Finally, to streamline the calculation, instead of directly computing high moments of the essential auxiliary quantities, they used the so-called recursive moment estimates, in which high-moments {were} estimated in terms of the lower moments with the aid of integration by parts. 

Armed with the above techniques and results, in \cite{BEC}, they were able to investigate the local laws near the regular edges in the sense that $\mu_A \boxplus \mu_B$ {had} a regular behavior near the edges. More specifically, they presented the local laws near the edges on the optimal scale with optimal precision. Based on these results, they were able to prove the edge eigenvalue rigidity and edge eigenvector delocalization. On the technical level, they used and generalized the strategies and inputs  of \cite{Bao-Erdos-Schnelli2016, BAO2, BAO3} as summarized in the previous paragraph. Since the eigenvalues around the edges are sparse and fluctuate more, in order to guarantee the regular behavior of $\mu_A \boxplus \mu_B,$ they first established the square root decay of their limiting counterpart $\mu_{\alpha} \boxplus \mu_{\beta}.$ Under suitable assumptions on the Levy distances, with the local stability, the measure $\mu_A \boxplus \mu_B$ inherits the regularity up to the optimal scale. In addition, the probabilistic part of \cite{BAO3} is not sufficient around the edges as the subordination functions become unstable and the improvement from fluctuation averaging in \cite{BAO3} is suboptimal. In order to compensate this instability, they established a very accurate estimate on the approximation error. To achieve this goal, they carefully identified a {new pair of auxiliary quantities}; see equations (4.14) and (4.15) of \cite{BEC}. In particular, {one of the auxiliary quantities in \cite{BEC} has an additional counter term compared to the one used in \cite{BAO3}.} We mention that \cite{BEC} required the assumption that at least one of the Stieltjes transforms of $\mu_\alpha$ and $\mu_\beta$ {was} bounded from above. This assumption can be removed using their recent results in   \cite{MR4205272}.  

In summary, using addition of random matrices as an example, the authors in \cite{Bao-Erdos-Schnelli2016,BAO2,BAO3,BEC} have developed a general framework and powerful techniques to study the local laws of random matrix models where the main source of randomness is the Haar matrix. Since multiplication of random matrices is another typical example using Haar matrix, it is natural to study the multiplication of random matrices using the techniques developed for the additive model. This will be discussed in next subsection, Section \ref{subsec_multiplicativemodelresults}. 

{
	\subsection{From addition to multiplication: an overview of our results}\label{subsec_multiplicativemodelresults}
	
	In this subsection, we explain how to adapt the techniques of the additive model \cite{Bao-Erdos-Schnelli2016,BAO2,BAO3,BEC} as summarized in Section 
	\ref{subsec_additionresults} to obtain the results for the multiplicative model (\ref{defn_freemultiplicationmodel}). The main purpose of this paper is to present a comprehensive edge local law on the optimal scale and with optimal convergence rates for the multiplicative model, which is the counterpart of \cite{BEC}. In what follows, we give an overview of our results and explain how to handle the multiplicative model adapting the techniques of additive model in \cite{BAO3,BEC}.   
	
	The first part of our results concerns the regularity of $\mu_A \boxtimes \mu_B$ and the subordination functions. More specifically, in Proposition \ref{prop:stabN} below, we establish the stability properties of the subordination functions near the regular edges and provide some crucial estimates. Here we point out that, instead of using the conventional $\eta$-transform \cite{ Belinschi-Bercovici2007, Voiculescu1987} } to define the subordination functions and free multiplicative convolution, we use a simple conjugate of it known as $\mathsf{M}$-transform (c.f. Definition \ref{defn_transform})\cite{chistyakov2011arithmetic, JHC}.  One technical advantage  of using    $\mathsf{M}$-transform is that it makes the similarity between the additive and multiplicative models more evident, which enables us to adapt the techniques of \cite{BAO3,BEC} more directly and easily. The proof of Proposition \ref{prop:stabN} follows from its counterpart for the additive model in \cite{BEC} (see Proposition 3.1 therein) which can be split into two steps. In the first step, the results are proved for the limiting measures $\mu_{\alpha}$ and $\mu_\beta$ under some regularity assumptions (c.f. Assumption \ref{assu_limit}). In the second step, assuming that $\mu_{A}, \mu_B$ and $\mu_{\alpha}, \mu_\beta$ are close enough (c.f. Assumption \ref{assu_esd}), the statements can be carried over to the measures $\mu_A$ and $\mu_B.$ As mentioned earlier, in \cite{BEC}, the authors proved analogous results for the additive model assuming that at least one of the Stieltjes transforms of $\mu_\alpha$ and $\mu_\beta$ was bounded from above (see (iii) of Assumption 2.1 in \cite{BEC}), which could be removed using their recent results in \cite{MR4205272}. For our multiplicative model, since the analog of  \cite{MR4205272} has been established by the second author in \cite{JHC}, we will not need this condition in our Assumption \ref{assu_limit}.

The second part of our results focuses on establishing the optimal edge local laws on the optimal scale for   the multiplicative model. In Theorem \ref{thm:main} below, we provide accurate estimates for the entries of the resolvent and also prove the averaged local law. The convergence rates are optimal up to some $N^{\epsilon}$ factor. As two consequences, we prove the rigidity of the edge eigenvalues in Theorem \ref{thm_rigidity} and the complete delocalization of the edge eigenvectors in Theorem \ref{thm_delocalization}.  On the technical level, the proof of Theorem \ref{thm:main} follows closely from its counterpart for additive model \cite{BEC} (see Theorem 2.5 therein) as summarized in Section \ref{subsec_additionresults}. In what follows, we highlight the key ingredients on the adaption of their arguments. Thanks to the $\mathsf{M}$-transform,  the approximate subordination functions for the multiplicative model (c.f. Definition \ref{defn_asf}) can be easily identified. To control the errors between the subordination functions and their approximates, we first explore some hidden relations. {For instance, in (\ref{eq:Lambda}) and (\ref{eq:BGii-S}) we represent the error in terms of the resolvents}. This enables us to find the key auxiliary quantities to work with. Then we use integration by parts to start the recursive estimates to obtain {bounds for} high moments of these essential quantities. In order to establish the optimal convergence rates, as mentioned in \cite{BEC}, the weights in the fluctuation averaging mechanism needed to be properly chosen. In our case, these weights (c.f. (\ref{eq_optimalfaquantitiescoeff}) and (\ref{eq_optimalfaquantitiescoeffextra})) can be constructed using the hidden identities obtained earlier. Finally, we point that due to the structural difference between the additive and multiplicative models, many errors in our model need more careful treatment. For example, in the fluctuation averaging mechanism,  our error terms $\mathsf{e}_{i1}$ in (\ref{eq_epsilon1})  and $\mathsf{e}_{i2}$ in (\ref{eq_defnmathsfe2}) will generate some $\rO_{\prec}(N^{-1/2})$ terms. The weighted summations of these terms will be canceled out algebraically after we explore some hidden identities; see (\ref{eq_epsilon1details})--(\ref{eq:expan_ei2}) and the associated discussion for more details.

As mentioned in \cite{BEC}, the results of addition of random matrices demonstrate that the Haar randomness in the additive model leads to an analogous behavior to the Wigner matrices \cite{2017dynamical} in the sense of strong concentration of the eigenvalues and eigenvectors. In the same spirit, the Haar randomness in our multiplicative model (\ref{eq_defndatamatrixtype}), results in a similar behavior as the separable covariance matrices as in \cite{yang2019, DYaos}. Finally,  we mention that the arguments of the current paper can be carried out to study the bulk eigenvalues and eigenvectors  as in \cite{Bao-Erdos-Schnelli2016, BAO2, BAO3} which deals with additive model. The results obtained here can also be used to study other models and statistics, for example, the deformed invariant model \cite{belinschi2017} and the Tracy-Widom distribution for the edge eigenvalues. We will pursue these topics in future works.

The rest of the paper is organized as follows. In Section \ref{sec:mainresult}, we introduce the necessary notations  and state the main results. In Section \ref{sec_proofroute}, we present a structural summary of our proof. In Section \ref{sec_entrylaw}, we  prove a subordination property for the resolvent entries. The proof of fluctuation averaging lemmas, along with some auxiliary lemmas and technical proofs, are collected in the appendix.

\vspace{3pt}
\noindent {\bf Conventions.}
For $M,N\in\N$, we denote $\{k\in\N:M\leq k\leq N\}$ by $\llbra M,N\rrbra$. For $N\in\N$ and $i\in\llbra 1,N\rrbra$, we denote by $\mathbf{e}_{i}^{(N)} \in \mathbb{R}^N$ with $(\mathbf{e}_{i})_{j}=\delta_{ij}$. We often omit the superscript $N$ to write $\mathbf{e}_{i}^{(N)}\equiv \mathbf{e}_{i}$. We use $I$ for the identity matrix of any dimension. 
For an $N$-dimensional real or complex random vector $\bm{g}=(g_1,\cdots, g_N)$, we write $\bm{g} \sim \mathcal{N}_{\mathbb{R}}(0,\sigma^2 I_N)$ if $g_1,\cdots, g_N$ are i.i.d. $\mathcal{N}(0,\sigma^2)$ random variables, and we write $\bm{g} \sim \mathcal{N}_{\mathbb{C}}(0, \sigma^2 I_N)$ if $g_1, \cdots, g_N$ are i.i.d. $\mathcal{N}_{\mathbb{C}}(0,\sigma^2)$ variables, where $g_i \sim \mathcal{N}_{\mathbb{C}}(0, \sigma^2)$ means that $\re g_i$ and $\im g_i$ are independent $\mathcal{N}(0,\frac{\sigma^2}{2})$ random variables. 
For any matrix $A,$ we denote its operator norm by $\norm{A}$ and for a vector $\bm{v},$ we use $\| \bm{v}\|$ for its $\ell_2$ norm.

\section{Main results}\label{sec:mainresult}

\subsection{Notations and assumptions}\label{sec:subsecnotationandassumption}
For any $N \times N$ matrix $W,$ we denote its normalized trace by $\tr W$, that is, 	
\begin{equation}\label{eq_defntrace}
	\tr W=\frac{1}{N}\sum_{i=1}^{N} W_{ii}.
\end{equation}
Moreover, its empirical spectral distribution (ESD) is defined as  	
\begin{equation*}
	\mu_W=\frac{1}{N}\sum_{i=1}^N \delta_{\lambda_i(W)}. 
\end{equation*}
In the present paper, even if the matrix is not of size $N\times N$, the trace is always normalized by $N^{-1}$ unless otherwise specified. 

Consider two $N\times N$ real deterministic positive definite matrices 
\beqs
A\equiv A_{N}=\diag(a_{1},\cdots,a_{N}), \quad B\equiv B_{N}=\diag(b_{1},\cdots,b_{N}),
\eeqs
where the diagonal entries are ordered as $a_{1}\geq a_{2}\geq \cdots\geq a_{N}>0$ and $b_{1}\geq b_{2}\geq\cdots\geq b_{N}>0$. Let $U\equiv U_{N}$ be a random unitary or orthogonal matrix, Haar distributed on the unitary group $U(N)$ or the orthogonal group $O(N)$. Denote $\wt{A}\deq U\adj AU$, $\wt{B}\deq UBU\adj$, and 
\beq\label{defn_eq_matrices}
H\deq AUBU\adj,	\quad	\caH\deq U\adj AU B,	\quad	
\wt{H}\deq A^{1/2}\wt{B}A^{1/2}, \AND \wt{\caH}\deq B^{1/2}\wt{A}B^{1/2}. 
\eeq 
Note that we only need to consider diagonal 	matrices $A$ and $B$ since $U$ is a Haar random unitary or orthogonal  matrix. Moreover, $\wt{H}$ and $\wt{\caH}$ are Hermitian random matrices. 

Since $H$, $\caH$, $\wt{H}$ and $\wt{\caH}$ have the same eigenvalues, in the sequel,  we denote the eigenvalues of all of them as $\lambda_1 \geq \lambda_2 \geq \cdots \geq\lambda_N$ without causing any confusion. Further, we define the ESDs of the above matrices by
\beqs
\mu_{A}\equiv \mu_{A}^{(N)}\deq \frac{1}{N}\sum_{i=1}^{N}\delta_{a_{i}},\quad 	\mu_{B}\equiv\mu_{B}^{(N)}\deq \frac{1}{N}\sum_{i=1}^{N}\delta_{b_{i}}, \quad
\mu_{H}\equiv\mu_{H}^{(N)}\deq \frac{1}{N}\sum_{i=1}^{N}\delta_{\lambda_{i}}.
\eeqs

For $z\in\C_{+}:=\{z \in \mathbb{C}: \im z >0\}$, we define the {resolvent} of $H$ as 
\beq\label{defn_greenfunctions} 
G(z)\deq (H-zI)^{-1}.
\eeq
Similarly, the resolvents of $\caH, \widetilde{H}$ and $\widetilde{\caH}$ are defined  as $ \caG(z), \wt{G}(z)$ and $\wt{\caG}(z),$ respectively. In the rest of the paper, we usually omit the dependence on $z$ and simply write $G, \caG, \wt{G}$ and $\wt{\caG}.$
The following transforms will play important roles in the current paper. 
\begin{defn}\label{defn_transform} For any probability measure $\mu$ defined on $\mathbb{R}_+,$ its \emph{Stieltjes transform} $m_{\mu}$ is defined as
	\beqs
	m_{\mu}(z)\deq\int\frac{1}{x-z}\dd\mu(x),\quad \text{for }z\in\CR.
	\eeqs 
	Moreover,  we define the $\mathtt{M}$-transform $M_{\mu}$ and $\mathtt{L}$-transform $L_{\mu}$ on $\CR$ as 
	\begin{align}\label{eq_mtrasindenity}
		M_{\mu}(z)&\deq 1-\left(\int\frac{x}{x-z}\dd\mu(x)\right)^{-1}= \frac{zm_{\mu}(z)}{1+zm_{\mu}(z)}, & 
		L_{\mu}(z)&\deq \frac{M_{\mu}(z)}{z}.
	\end{align}
\end{defn}
Let $m_{H}(z)$ be the Stieltjes transform of the ESD of $H.$ Since $H,\caH,\wt{H}$ and $\wt{\caH}$ are similar to each other, we have that $m_H(z)=\tr G=\tr \caG =\tr \wt{G}=\tr \wt{\caG}.$ Moreover, we have 
\begin{equation}\label{eq_connectiongreenfunction}
	G_{ij}=\sqrt{a_i/a_j} \wt{G}_{ij}, \ \mathcal{G}_{ij}=\sqrt{b_j/b_i} \wt{\caG}_{ij}.
\end{equation}


With the above preparation, we introduce the main assumptions. {Analogous to \cite{BEC}, throughout the paper, we assume that $\mu_A$ and $\mu_B$ converge to some $N$-independent absolutely continuous probability measures $\mu_{\alpha}$ and $\mu_{\beta}.$ We start with stating the assumptions on $\mu_{\alpha}$ and $\mu_{\beta},$ which is an analog of the additive model as in \cite[Assumption 2.1]{BEC}.  }  


\begin{assu}\label{assu_limit} 
	Suppose the following assumptions hold true: 
	\begin{itemize}
		\item[(i).] $\mu_{\alpha}$ and $\mu_{\beta}$ have densities $\rho_{\alpha}$ and $\rho_{\beta}$, respectively. For the ease of discussion, we assume that both of them have means $1$. 
		\item[(ii).] Both $\rho_{\alpha}$ and $\rho_{\beta}$ have single non-empty intervals as supports, denoted as $[E_-^{\alpha}, E_+^{\alpha}]$ and $[E_-^{\beta}, E_+^{\beta}],$ respectively. Here $E_-^{\alpha}, E_+^{\alpha}, E_-^{\beta}$, and $E_+^{\beta}$ are all positive numbers. Moreover, both of the density functions are strictly positive  in the interior of their supports. 
		
		\item[(iii).]  There exist constants $-1<t^{\alpha}_{\pm},t^{\beta}_{\pm}<1$ and $C>1$ such that 
		\begin{align*}
			&C^{-1}\leq \frac{\rho_{\alpha}(x)}{(x-E_{-}^{\alpha})^{t_{-}^{\alpha}}(E_{+}^{\alpha}-x)^{t_{+}^{\alpha}}}\leq C,\quad\forall x\in[E_{-}^{\alpha},E_{+}^{\alpha}],\\
			&C^{-1}\leq \frac{\rho_{\beta}(x)}{(x-E_{-}^{\beta})^{t_{-}^{\beta}}(E_{+}^{\beta}-x)^{t_{+}^{\beta}}}\leq C,\quad\forall x\in[E_{-}^{\beta},E_{+}^{\beta}].
		\end{align*}
	\end{itemize}
\end{assu}
\begin{rem}\label{rem_nolowerconstraint}
	First, {the assumption that both $\mu_{\alpha}$ and $\mu_{\beta}$ have means 1 in (i) is introduced for technical simplicity} and can be removed easily; see Remark 3.2 of \cite{JHC}. Second, the assumption (iii) is introduced to guarantee the square root behavior near the edges of the free multiplicative convolution of  $\mu_{\alpha}$ and $\mu_\beta.$ When this condition fails, the behavior of $\mu_{\alpha} \boxtimes \mu_{\beta}$ near the edge can be very different from our current discussion; see \cite{KLP} for more details. {Third, as we are only interested in the edge statistics near the upper edge in Sections \ref{sec_locallawresults}, the assumptions (ii) and (iii) can be relaxed by only imposing conditions on $E_+^{\alpha}$ and $E_+^{\beta}.$ We keep the current form involving $E_-^{\alpha}$ and $E_-^{\beta}$ since our results also hold near the lower edge with minor modification.} {Finally, for the additive model, the counterpart is Assumption 2.1 of  \cite{BEC}. It requires that at least one of the Stieltjes transforms of $\mu_\alpha$ and $\mu_\beta$ is bounded from above (see (iii) of Assumption 2.1 in \cite{BEC}), which could be removed using their recent results in   \cite{MR4205272}. We will no more need this condition since the counterpart of  \cite{MR4205272} for our model has been established in \cite{JHC}. }   
\end{rem}

The following Assumption \ref{assu_esd} ensures that $\mu_A$ and $\mu_B$ are close to $\mu_{\alpha}$ and $\mu_{\beta},$ respectively. Specifically, it demonstrates that the convergence rates of $\mu_{A}$ and $\mu_{B}$ to $\mu_{\alpha}$ and $\mu_{\beta}$ are bounded by an order of $N^{-1}$, so that their fluctuations do not dominate that of $\mu_{H}$. {Its counterpart for the additive model is \cite[Assumption 2.2]{BEC}. }
\begin{assu}\label{assu_esd} Suppose the following hold true  when $N$ is sufficiently large:
	\begin{itemize}
		\item[(iv).] For the Levy distance $\mathcal{L}(\cdot, \cdot),$  we have that for any small constant $\epsilon>0$
		\begin{equation*}
			\bsd\deq \mathcal{L}(\mu_{\alpha}, \mu_A)+\mathcal{L}(\mu_{\beta}, \mu_{B}) \leq N^{-1+\epsilon}.
		\end{equation*}

		\item[(v).] For the supports of $\mu_{A}$ and $\mu_{B}$, we have that for any constant $\delta>0$
		\beqs
		\supp\mu_{A}\subset [E_{-}^{\alpha}-\delta,E_{+}^{\alpha}+\delta], \quad \supp\mu_{B}\subset[E_{-}^{\beta}-\delta,E_{+}^{\beta}+\delta].
		\eeqs
	\end{itemize}
\end{assu}

\begin{rem}
	We remark that we will consistently use $\epsilon$ as a generic sufficiently small constant whose value may change from one line to the next. The assumption (v) assures that both of the upper edges of $\mu_A$ and $\mu_B$ are bounded.  
\end{rem}

As proved by Voiculescu in \cite{Voiculescu1987,Voiculescu1991}, under Assumptions \ref{assu_limit} and \ref{assu_esd}, $\mu_{H}$ converges weakly to the free multiplicative convolution of $\mu_{\alpha}$ and $\mu_{\beta}$, denoted as $\mu_{\alpha}\boxtimes\mu_{\beta}$. 
In the present paper, instead of using the original definitions proposed by Voiculescu, we use the $\mathsf{M}$-transform  in  (\ref{eq_mtrasindenity}) to define the free multiplicative convolution. The following lemma summarizes the properties of the analytic subordination functions. {The counterpart for the additive model is summarized in \cite[Proposition 2.3]{BEC}.} 
\begin{lem}[Proposition 2.5 of \cite{JHC}]\label{lem_subor}
	There exist unique analytic functions $\Omega_{\alpha},\Omega_{\beta}:\CR\to\CR$ satisfying the following: \\
	\noindent{(1).} For all $z\in\C_{+}$, we have \beq\label{eq_subsys3}
	\arg \Omega_{\alpha}(z)\geq \arg z \AND \arg\Omega_{\beta}(z)\geq \arg z.
	\eeq
	\noindent{(2).} For all $z\in\C_{+},$		
	\beq \label{eq_subsys2}
	\lim_{z\searrow-\infty}\Omega_{\alpha}(z)=\lim_{z\searrow-\infty}\Omega_{\beta}(z)=-\infty.
	\eeq
	
	\noindent{(3).} For all $z\in\CR$, we have 
	\beq\label{eq_suborsystem}
	zM_{\mu_{\alpha}}(\Omega_{\beta}(z))=zM_{\mu_{\beta}}(\Omega_{\alpha}(z))=\Omega_{\alpha}(z)\Omega_{\beta}(z).
	\eeq
	
\end{lem}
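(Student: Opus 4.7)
The plan is to recast the system (\ref{eq_suborsystem}) as a fixed-point problem for a holomorphic self-map and then appeal to a contraction argument for the Kobayashi pseudometric. First I would collect the basic analytic properties of $M_\mu$ for a probability measure $\mu$ on $(0,\infty)$. Using the identity $M_\mu(z) = zm_\mu(z)/(1+zm_\mu(z))$ and the Nevanlinna representation of $m_\mu$, one verifies that $M_\mu$ extends to an analytic map $\C\setminus\R_+\to\C\setminus\R_+$ satisfying $\arg M_\mu(z)\geq\arg z$ throughout $\C_+$, and restricts to a strictly increasing real-analytic bijection from $(-\infty,0)$ onto $(-\infty,0)$ with $M_\mu(w)\to-\infty$ as $w\to-\infty$. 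This angular/monotonicity control lets one define a holomorphic local inverse $M_\mu^{-1}$ on a sectoral subdomain of $\C_+\cup(-\infty,0)$.

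Next, rearranging (\ref{eq_suborsystem}) as $\Omega_\beta = M_{\mu_\alpha}^{-1}(\Omega_\alpha\Omega_\beta/z)$ and symmetrically for $\Omega_\alpha$, I would define, for each fixed $z\in\C_+$, the map
\[
\Phi_z(w_1,w_2) := \bigl(M_{\mu_\beta}^{-1}(w_1w_2/z),\; M_{\mu_\alpha}^{-1}(w_1w_2/z)\bigr)
\]
on a product domain $\caD_z\subset\C_+\times\C_+$ carved out so that the angular condition (\ref{eq_subsys3}) is automatic. A fixed point of $\Phi_z$ in $\caD_z$ is precisely a pair $(\Omega_\alpha(z),\Omega_\beta(z))$ solving (\ref{eq_suborsystem}) and obeying (\ref{eq_subsys3}). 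I would then argue that $\Phi_z$ sends $\caD_z$ into a relatively compact subset of itself and strictly contracts the Kobayashi pseudometric, so that the Earle--Hamilton theorem yields a unique attracting fixed point. Standard holomorphic dependence of fixed points on parameters gives analyticity of $z\mapsto(\Omega_\alpha(z),\Omega_\beta(z))$, and reflection across $\R$ extends the functions to all of $\CR$. The boundary condition (\ref{eq_subsys2}) is then obtained by running the same construction directly on $(-\infty,0)$, where $\Phi_z$ preserves the real line and a monotonicity comparison combined with $M_\mu(w)\to-\infty$ forces $\Omega_\alpha(z),\Omega_\beta(z)\to-\infty$ as $z\searrow-\infty$.

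The hard part will be the second step: identifying an invariant sectoral domain $\caD_z$ on which $\Phi_z$ is a well-defined holomorphic self-map and establishing strict contraction of the Kobayashi pseudometric. The local inverse $M_\mu^{-1}$ must be shown to stay inside the cone $\{w:\arg w\geq\arg z\}$, which requires a careful analysis of how $M_\mu$ twists angles; this is where the positivity of $\mu$ on $(0,\infty)$ and the Nevanlinna structure of $m_\mu$ enter in an essential way. Uniqueness of $(\Omega_\alpha,\Omega_\beta)$ among analytic solutions satisfying (\ref{eq_subsys3})--(\ref{eq_subsys2}) then follows from uniqueness of the Earle--Hamilton fixed point together with analytic continuation from the negative real axis.
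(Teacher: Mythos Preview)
The paper does not prove this lemma; it is quoted verbatim as Proposition~2.5 of \cite{JHC} and used as a black box. So there is no ``paper's proof'' to compare against directly. That said, the paper does contain, buried in the proof of Lemma~\ref{lem:Kantorovich_appl} (Step~2, Appendix~\ref{append:A}), exactly the kind of fixed-point argument that underlies this result: one shows that $\omega\mapsto\caF_{z}(\omega)\deq L_{\mu_{1}}(zL_{\mu_{2}}(z\omega))$ is a holomorphic self-map of the sector $\{\omega\in\C_{+}:0<\arg\omega<\pi-\arg z\}$, which is conformally a disk, and then Schwarz--Pick gives at most one fixed point. Existence is handled by Denjoy--Wolff. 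This single-variable reduction is the standard route (Belinschi--Bercovici, Chistyakov--G\"otze) and is what \cite{JHC} uses.

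Your proposal is in the right spirit---a hyperbolic-contraction fixed point---but your specific setup has a real obstruction. You want to work with $M_{\mu}^{-1}$ and define a two-variable map $\Phi_{z}(w_{1},w_{2})=(M_{\mu_{\beta}}^{-1}(w_{1}w_{2}/z),M_{\mu_{\alpha}}^{-1}(w_{1}w_{2}/z))$ on a product domain. The problem is that $M_{\mu}$ is generally \emph{not} injective on $\C\setminus\R_{+}$, so a global holomorphic inverse need not exist on the domain you need; you acknowledge this (``local inverse on a sectoral subdomain''), but carving out a product domain on which both inverses are simultaneously defined, which $\Phi_{z}$ maps into a relatively compact subset of itself, is exactly the technical morass you would rather avoid. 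Moreover, your $\Phi_{z}$ factors through the single scalar $w_{1}w_{2}/z$, so the product structure is spurious: the image of $\Phi_{z}$ lies on a one-dimensional curve, and Earle--Hamilton on the product gives you nothing beyond what you would get from the one-variable map. The clean fix is to eliminate the inverse entirely: substitute $\Omega_{\beta}=zL_{\mu_{\beta}}(\Omega_{\alpha})$ from one equation into the other to obtain $\Omega_{\alpha}/z=L_{\mu_{\alpha}}(zL_{\mu_{\beta}}(\Omega_{\alpha}))$, i.e.\ $\Omega_{\alpha}/z$ is a fixed point of $\caF_{z}$. The angular property $\arg L_{\mu}(w)\in(0,\pi-\arg w)$ follows directly from the Nevanlinna representation of $L_{\mu}$ (Lemma~\ref{lem:reprM}) without any inversion, and the sector is then manifestly $\caF_{z}$-invariant. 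This is both simpler and what the literature actually does.
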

The analytic functions $\Omega_{\alpha}$ and $\Omega_{\beta}$  are referred to as the  {subordination functions}. We remark that the same functions as well as $\mathtt{M}$-transforms also appeared in \cite{chistyakov2011arithmetic}, called $Z$ and $K$-functions, respectively.
Similarly, we denote $\Omega_{A}$ and $\Omega_{B}$ by replacing $(\alpha,\beta)$ with $(A,B).$ With the aid of Lemma \ref{lem_subor}, we can define the free multiplicative convolution. 
\begin{defn}\label{def:freeconv}
	Denote the analytic function $M: \mathbb{C} \backslash \mathbb{R}_+ \rightarrow \mathbb{C} \backslash \mathbb{R}_+ $ by
	\beq\label{eq_defn_eq}
	M(z):=M_{\mu_{\alpha}}(\Omega_{\beta}(z))=M_{\mu_{\beta}}(\Omega_{\alpha}(z)).
	\eeq
	Then the {free multiplicative convolution} of $\mu_{\alpha}$ and $\mu_{\beta}$ is defined as the unique probability measure $\mu,$ 
	denoted as $\mu \equiv \mu_{\alpha}\boxtimes\mu_{\beta}$ such that (\ref{eq_defn_eq}) holds
	for all $z\in\CR$. In other words, $M(z) \equiv M_{\mu_{\alpha} \boxtimes \mu_{\beta}}(z)$ is the $\mathsf{M}$-transform of $\mu_{\alpha} \boxtimes \mu_{\beta}.$ Furthermore, we define $\mu_{A}\boxtimes\mu_{B}$ so that $M_{\mu_{A}}(\Omega_{B}(z))=M_{\mu_{B}}(\Omega_{A}(z))=M_{\mu_{A}\boxtimes\mu_{B}}(z)$ holds for all $z\in\CR$.
\end{defn}
Note that a consequence of (\ref{eq_suborsystem}) and the definition of $M_{\mu}(z)$ is the following identity
\begin{equation}\label{eq_multiidentity}
	\int \frac{x}{x-z} d (\mu_{\alpha} \boxtimes \mu_{\beta}) (x)=\Omega_{\beta}(z) m_{\mu_{\alpha}}(\Omega_{\beta}(z))+1=\int \frac{x}{x-\Omega_{\beta}(z)} d \mu_{\alpha}(x).    
\end{equation}

\begin{rem}
	Since all of $\mu_{\alpha},\mu_{\beta},\mu_{A}$, and $\mu_{B}$ are compactly supported on $(0,\infty)$, similar results hold for $\mu_{\alpha}\boxtimes\mu_{\beta}$ and $\mu_{A}\boxtimes\mu_{B}$. Specifically,  according to \cite[Remark 3.6.2. (iii)]{Voiculescu-Dykema-Nica1992}, we have 
	\begin{align}\label{eq:priorisupp}
		\supp \mu_{\alpha}\boxtimes\mu_{\beta}&\subset [E_{-}^{\alpha}E_{-}^{\beta},E_{+}^{\alpha}E_{+}^{\beta}],	&
		\supp \mu_{A}\boxtimes\mu_{B}&\subset[a_{N}b_{N}, a_1 b_1].
	\end{align}
	In fact, we can conclude from  \cite[Theorem 3.1]{JHC} that, if  (i) and (ii) of Assumption \ref{assu_limit} hold, $\mu_{\alpha} \boxtimes \mu_{\beta} $ is absolutely continuous and  supported on a single non-empty compact interval on $(0, \infty),$ denoted as $[E_-, E_+],$ that is, 
	\beq\label{eq_edge}
	E_{-}\deq\inf \supp(\mu_{\alpha}\boxtimes\mu_{\beta}),\quad E_{+}\deq \sup\supp(\mu_{\alpha}\boxtimes\mu_{\beta}).
	\eeq
	Let the density of $\mu_{\alpha} \boxtimes \mu_{\beta}$ be $\rho.$ For small constant $\tau>0,$ with (iii) of Assumption \ref{assu_limit}, we have
	\begin{equation}\label{eq_originalsquarerootbehavior}
		\rho(x) \sim \sqrt{E_+-x}, \ x \in [E_+-\tau, E_+]. 
	\end{equation}
	Furthermore, as we will see in Lemma \ref{lem:suborsqrt}, the subordination functions $\Omega_{\alpha}$ and $\Omega_{\beta}$ also have square root behaviors near the edges. The regularity behavior is assured by the fact that the subordination functions $\Omega_{\alpha}$ and $\Omega_\beta$ are well separated from the supports of $\mu_\beta$ and $\mu_\alpha$, respectively; see (ii) of  Lemma \ref{lem:stabbound}. In fact, from the proof of Proposition 5.6 of \cite{JHC}, we see that the assumption (iii) of Assumption \ref{assu_limit} implies this stability condition. 
	
\end{rem}

\begin{rem} \label{rem_ctxexpansubordination}
	It is known from \cite{Belinschi2006, JHC} that Assumption \ref{assu_limit} ensures that the subordination functions $\Omega_{\alpha}\vert_{\C_{+}}$ and $\Omega_{\beta}\vert_{\C_{+}}$ can be extended continuously to the real line. Throughout the paper, we will write $\Omega_{\alpha}(x)$ or $\Omega_{\beta}(x)$  for $x \in \mathbb{R}$ to denote the continuous extensions. In particular, $\Omega_{\alpha}(x)$ and $\Omega_{\beta}(x)$ always have nonnegative imaginary parts for all $x\in\R$.
\end{rem}


\subsection{Properties of subordination functions}\label{sec_subordiationproperties}

In this subsection, we state the results regarding the local properties of the subordination functions and related quantities near the regular edge. These results will be used in the proof of the local laws.  
We first introduce some notations. Note that the system of subordination equations \eqref{eq_suborsystem} can be rewritten as 
\begin{align}\label{eq_suborsystemPhi}
	\Phi_{\alpha\beta}(\Omega_{\alpha}(z),\Omega_{\beta}(z),z)=0,
\end{align}
where we denote $\Phi_{\alpha\beta}\equiv(\Phi_{\alpha},\Phi_{\beta}):\{(\omega_{1},\omega_{2},z)\in\C_{+}^{3}: \arg \omega_{1},\arg\omega_{2}\geq \arg z\}\to \C^{2}$ by
\begin{align}\label{eq:def_Phi_ab}
	&\Phi_{\alpha}(\omega_{1},\omega_{2},z)\deq \frac{M_{\mu_{\alpha}}(\omega_{2})}{\omega_{2}}-\frac{\omega_{1}}{z}, &\Phi_{\beta}(\omega_{1},\omega_{2},z)\deq \frac{M_{\mu_{\beta}}(\omega_{1})}{\omega_{1}}-\frac{\omega_{2}}{z}.
\end{align}
Here $\Phi_{\alpha\beta}$ should be regarded as a function of three complex variables. 
We will also use the following quantities, which are closely related to the first and the second derivatives of the system (\ref{eq_suborsystemPhi}). Recall (\ref{eq_mtrasindenity}). Denote 
\begin{align}
	&\caS_{\alpha\beta}(z)\deq z^{2}L_{\mu_{\beta}}'(\Omega_{\alpha}(z))L_{\mu_{\alpha}}'(\Omega_{\beta}(z))-1,	\label{eq_defn_salphabeta} \\
	&\caT_{\alpha}(z)\deq \frac{1}{2}\left[zL_{\mu_{\beta}}''(\Omega_{\alpha}(z))L_{\mu_{\alpha}}'(\Omega_{\beta}(z))
	+(zL_{\mu_{\beta}}'(\Omega_{\alpha}(z)))^{2}L_{\mu_{\alpha}}''(\Omega_{\beta}(z))\right],	\label{eq_defn_talpha} \\
	&\caT_{\beta}(z)\deq \frac{1}{2}\left[zL_{\mu_{\alpha}}''(\Omega_{\beta}(z))L_{\mu_{\beta}}'(\Omega_{\alpha}(z))+(zL_{\mu_{\alpha}}'(\Omega_{\beta}(z)))^{2}L_{\mu_{\beta}}''(\Omega_{\alpha}(z))\right]. \nonumber
\end{align}
By replacing the pair $(\alpha,\beta)$ with $(A,B)$, we can define $\Phi_{AB}$, $\caS_{AB}$, $\caT_{A}$, and $\caT_{B}$ analogously. {We remark that analogous quantities have been defined and used for the additive model in \cite{BEC}; see equation (3.1) therein. }

\begin{rem}
	We provide a few remarks on the usefulness for the above quantities. 
	First, the edges $E_{\pm}$ of $\mu_{\alpha} \boxtimes \mu_{\beta}$ can be  completely characterized by the equation $\mathcal{S}_{\alpha \beta}(E \pm)=0$; see \cite[Section 5]{JHC} for mode details. Second, the above quantities are closely connected with the subordination equation system (\ref{eq_suborsystemPhi}). Let $\mathrm{D}$ be the differential operator with respect to $\omega_{1}$ and $\omega_{2}.$ Then we 
	find that the first derivative of $\Phi_{\alpha\beta}$ is given by
	\beq\label{eq_diferentialoperator}
	\mathrm{D}\Phi_{\alpha\beta}(\omega_{1},\omega_{2},z)\deq\begin{pmatrix}
		-z^{-1} & L'_{\mu_{\alpha}}(\omega_{2}) \\
		L'_{\mu_{\beta}}(\omega_{1}) & -z^{-1}
	\end{pmatrix}.
	\eeq
	Moreover, its determinant is equal to $-z^{-2}\caS_{\alpha\beta}(z)$ at the point $(\Omega_{\alpha}(z),\Omega_{\beta}(z),z)$. Similarly, using $\Phi_{\alpha\beta}(\Omega_{\alpha}(z),\Omega_{\beta}(z),z)=0$, we find that
	\begin{align*}
		\caT_{\alpha}(z)&= z\left[\frac{\partial}{\partial\omega_{1}}\det \mathrm{D} \Phi_{\alpha\beta}(\omega_{1},zL_{\mu_{\beta}}(\omega_{1}),z)\right]_{\omega_{1}=\Omega_{\alpha}(z)}, \\
		\caT_{\beta}(z)&=z\left[\frac{\partial}{\partial\omega_{2}}\det \mathrm{D} \Phi_{\alpha\beta}(zL_{\mu_{\alpha}}(\omega_{2}),\omega_{2},z)\right]_{\omega_{2}=\Omega_{\beta}(z)}.
	\end{align*}
	As will be seen later in the proof, we need to show that $\Omega_{\alpha}$ and $\Omega_\beta$ are close to $\Omega_A$ and $\Omega_B,$ respectively. The arguments are based on the stability analysis of $\Phi_{AB},$ which require sharp estimates of the above quantities.  
	
\end{rem}

We collect the key properties of the subordination functions in Proposition \ref{prop:stabN}.  {It is the counterpart of Proposition 3.1 of \cite{BEC} which concerns the additive model.  } For the ease of statements, we only provide the results near the upper edge $E_+$ defined in (\ref{eq_edge}). Similar results hold for the lower edge $E_-.$ 
For $z=E+\ii \eta \in \mathbb{C}_+,$ denote 
\begin{equation}\label{eq_defnkappa}
	\kappa \equiv \kappa(z):=|E-E_+|.
\end{equation}
{
	For some given constants $0\leq a\leq b$ and  $0<\tau<\min\{\frac{E_{+}-E_{-}}{2},1\}$, we define the following set of spectral parameters by
	\beq \label{eq_fundementalset}
	\caD_{\tau}(a,b)\deq \{z=E+\ii\eta\in\C_{+}:E_+-\tau \leq E \leq \tau^{-1}, a\leq \eta\leq b\}.
	\eeq
	Further, for any small positive constant $\gamma>0$, we let 
	\beq\label{eq_eltalgamma}
	\eta_{L}\equiv\eta_{L}(\gamma)\deq N^{-1+\gamma},
	\eeq and let $\eta_{U}>1$ be a large $N$-independent constant. }
\begin{prop}\label{prop:stabN}
	Suppose Assumptions \ref{assu_limit} and \ref{assu_esd} hold. Then for any fixed small constant $\tau>0$ and sufficiently large $N$, the following hold:
	\begin{itemize}
		\item[(i)] There exists some constant $C>1$ such that
		\begin{align*}
			&\min_i\absv{a_{i}-\Omega_{B}(z)}\geq C^{-1},& &\min_i \absv{b_{i}-\Omega_{A}(z)}\geq C^{-1},&\\
			& C^{-1}\leq \absv{\Omega_{A}(z)}\leq C,& &C^{-1}\leq \absv{\Omega_{B}(z)}\leq C,&
		\end{align*}
		uniformly in $z\in\caD_{\tau}(\eta_{L},\eta_{U})$.
		
		\item[(ii)] For all $z \in\caD_{\tau}(\eta_{L},\eta_{U})$, we have 
		\beqs
		\im m_{\mu_{A}\boxtimes\mu_{B}}(z)\sim\left\{
		\begin{array}{lcl}
			\sqrt{\kappa+\eta}, & \ \text{if }& E\in\supp\mu_{A}\boxtimes\mu_{B},\\
			\dfrac{\eta}{\sqrt{\kappa+\eta}}, &\ \text{if }& E\notin\supp\mu_{A}\boxtimes\mu_{B}.
		\end{array}
		\right.
		\eeqs
		
		\item[(iii)] For all $z\in\caD_{\tau}(\eta_{L},\eta_{U}),$ we have the following bounds for $\caS_{AB}$, $\caT_{A}$, and $\caT_{B}$, 
		\begin{align*}
			&\caS_{AB}\sim\sqrt{\kappa+\eta},& &\absv{\caT_{A}(z)}\leq C,& &\absv{\caT_{B}(z)}\leq C.&
		\end{align*}
		Furthermore, if $\absv{z-E_{+}}\leq\delta$ for sufficiently small constant $\delta>0$, we also have the lower bounds for $\caT_{A}$ and $\caT_{B}$ such that for some small constant $c>0$ 
		\begin{align*}
			\absv{\caT_{A}(z)}\geq c,	\quad  \absv{\caT_{B}(z)}\geq c.
		\end{align*}

		\item[(iv)] For the derivatives of $\Omega_{A}$, $\Omega_{B}$ and $\caS_{AB}$, we have
		\begin{align*}
			\absv{\Omega_{A}'(z)} \leq C\frac{1}{\sqrt{\kappa+\eta}}, &&
			\absv{\Omega_{B}'(z)}\leq C\frac{1}{\sqrt{\kappa+\eta}}, &&
			\absv{S_{AB}'(z)}\leq C\frac{1}{\sqrt{\kappa+\eta}},
		\end{align*}
		uniformly in $z\in\caD_{\tau}(\eta_{L},\eta_{U})$.
	\end{itemize}
\end{prop}

Proposition \ref{prop:stabN} will be proved in Section \ref{proof_prop32}.
First, the first equation in (i) states that the subordination functions are well separated from the supports of $\mu_A$ and $\mu_B.$ This regularity further  implies the square root behavior of the subordination functions; see  Lemmas \ref{lem:suborsqrt} and \ref{lem:OmegaBound}.
The second equation in (i) shows that the subordination functions are bounded from both below and above. Second, (ii) offers a standard estimate for the Stieltjes transform, which follows from the square root behavior of $\mu_A \boxtimes \mu_B.$ Third, (iii) and (iv) prepare some estimates for the related quantities. All these will be used to prove the closeness between $\Omega_\alpha, \Omega_{\beta}$ and $\Omega_A, \Omega_B$; see Section \ref{sec:omegabound} for more details.

\subsection{Local laws for free multiplication of random matrices} \label{sec_locallawresults}
In this subsection, we state the results of the local laws. We will need the notion of \emph{stochastic domination.} It was first introduced in \cite{MR3119922} and subsequently used in many works on random matrix theory. It simplifies the
presentation of the results by systematizing statements of the form ``$X_N$ is bounded by $Y_N$ with
high probability up to a small power of $N$''. 

\begin{defn}
	For two sequences of random variables $\{X_{N}\}_{N\in\N}$ and $\{Y_{N}\}_{N\in\N}$, we say that $X_{N}$ is \emph{stochastically dominated} by $Y_{N},$ written as  $X_{N}\prec Y_{N}$ or $X_{N}=\rO_{\prec}(Y_{N}),$ if for all (small) $\epsilon>0$ and (large) $D>0$, we have
	\beqs
	\prob{\absv{X_{N}}\geq N^{\epsilon}\absv{Y_{N}}}\leq N^{-D},
	\eeqs
	for sufficiently large $N\geq N_{0}(\epsilon,D)$. If $X_{N}(v)$ and $Y_{N}(v)$ depend on some common parameter $v$, we say $X_{N}\prec Y_{N}$ \emph{uniformly in $v$} if the threshold $N_{0}(\epsilon,D)$ can be chosen independent of the parameter $v$. Moreover, we say an event $\Xi$ holds with high probability if for any constant $D>0,$ $\mathbb{P}(\Xi) \geq 1-N^{-D}$ for large enough $N.$
\end{defn}

The following theorem establishes the local laws for the matrices $H, \wt{H}, \mathcal{H}$ and $\wt{\caH}$ near the upper edge $E_+.$ {Analogous results can be obtained for the lower edge $E_-.$ } {It can be regarded as the counterpart of \cite[Theorem 2.5]{BEC}.} 


{ 
	\begin{thm}\label{thm:main}
		Suppose  Assumptions  \ref{assu_limit} and \ref{assu_esd} hold. Let $\tau$ and $\gamma$ be fixed small positive constants. Given any deterministic vector $\bm{v}=(v_1, \cdots, v_N) \in \mathbb{C}$ such that $\| \bm{v} \|_{\infty} \leq 1,$
		\\	
		\noindent{(1)}. For the matrix $H$ and its resolvent $G(z),$ we have 
		\beq \label{eq:main}
		\Absv{\frac{1}{N}\sum_{i=1}^{N}v_{i}\left(zG_{ii}(z)+1-\frac{a_{i}}{a_{i}-\Omega_{B}(z)}\right)}\prec \frac{1}{N\eta},
		\eeq
		uniformly in $z\in\caD_{\tau}(\eta_{L},\eta_{U})$ with $\eta_L$ in (\ref{eq_eltalgamma}) and any fixed constant $\eta_{U}$. Particularly, 
		\beq \label{eq_averagelawmain}
		\absv{m_{H}(z)-m_{\mu_{A}\boxtimes\mu_{B}}(z)}\prec\frac{1}{N\eta}.
		\eeq		
		{
			Moreover, we have the following entry-wise local law 
			\begin{equation}\label{eq_off1}
				\max_{i, j} \left|G_{ij}(z)-\delta_{ij}  \frac{\Omega_B(z)}{z(\Omega_B(z)-a_i)}  \right| \prec \frac{1}{\sqrt{N \eta}}. 
		\end{equation}}
		Similar results hold true by replacing $H$ and $G(z)$ with $\wt{H}$ and $\wt{G}(z)$, respectively. \\ 
		\noindent{(2).}   For the matrix $\mathcal{H}$ and its resolvent $\mathcal{G}(z),$ we have  
		\beq \label{eq:main1}
		\Absv{\frac{1}{N}\sum_{i=1}^{N}v_{i}\left(z\mathcal{G}_{ii}(z)+1-\frac{b_{i}}{b_{i}-\Omega_{A}(z)}\right)}\prec \frac{1}{N\eta},
		\eeq
		and
		\beqs
		\absv{m_{\mathcal{H}}(z)-m_{\mu_{A}\boxtimes\mu_{B}}(z)}\prec\frac{1}{N\eta},
		\eeqs		
		uniformly in $z\in\caD_{\tau}(\eta_{L},\eta_{U})$. Moreover, for the entry-wise local law, we have 
		\begin{equation}\label{eq:mainoff2}
			\max_{i, j} \left|\mathcal{G}_{ij}(z)-\delta_{ij} \frac{\Omega_A(z)}{z(\Omega_A(z)-b_i)} \right| \prec \frac{1}{\sqrt{N \eta}}. 
		\end{equation}
		Similar results hold true by simply replacing $\caH$ and $\caG(z)$ with $\wt{\caH}$ and $\wt{\caG}(z)$, respectively.
	\end{thm}
	
}

{
	\begin{rem} 
		We provide a few remarks for Theorem \ref{thm:main}.  First, since the goal of \cite{BEC} is to establish the spectral rigidity for the additive model, they only need the averaged local laws so the entry-wise local laws are not presented explicitly there. However, it is easy to check that such entry-wise laws also hold for the additive model following their proofs. In fact, the entry-wise local laws are stated explicitly for the additive model in the regular bulk in their work \cite{BAO2} (see Theorem 2.5 therein). Second, it is not hard to check that for the entry-wise local laws, the convergence rates can be replaced by 
		\begin{equation*}
			\sqrt{\frac{\im m_{\mu_A \boxtimes \mu_B}(z)}{N\eta}}+\frac{1}{N\eta}, 
		\end{equation*}
		which matches the typical forms of the bounds of local laws in the random matrix theory literature; see the monograph \cite{2017dynamical}. We keep the current form to highlight the similarities  between our multiplicative model and the additive model in \cite{BEC}. Third, in \cite{BEC}, the authors also state the averaged local law far away from the edges such that the error bound $(N \eta)^{-1}$ could be replaced by $(N(\kappa+\eta))^{-1}.$ Such an improvement also holds for our multiplicative model. In fact, in this case, we can also improve the convergence rates for the entry-wise local laws from $(\sqrt{N\eta})^{-1/2}$ to $N^{-1/2}(\kappa+\eta)^{-1/4}.$ Together with these results, we will be able to study the deformed invariant model \cite{belinschi2017}. These will be studied in our future works. Finally, while we restricted ourselves to the edge local law for the sake of simplicity, the same argument can be used to prove  Theorem \ref{thm:main} in the bulk, by replacing the spectral domain $\mathcal{D}_{\tau}(\eta_L, \eta_U)$ with  
			\beq \label{eq_bulkspectraldomain}
			\caD_{\mathrm{bulk}}\deq\{z=E+\ii\eta\in\C_{+}: E_{-}+\tau<E<E_{+}-\tau,\eta_{L}<\eta<\eta_{U}\},
			\eeq
			for any fixed constant $\tau>0.$  In fact, the proof will be simpler in this regime. We refer the readers to Remark \ref{rem:bulk2} for more details.
	\end{rem}
}

Next, we state two important consequences of the local laws: edge eigenvalue rigidity and edge eigenvector delocalization. Denote $\gamma_j$ as the $j$-th $N$-quantile (or classical location) of $\mu_{\alpha} \boxtimes \mu_{\beta}$ such that 
\begin{equation*}
	\int_{\gamma_j}^\infty d \mu_{\alpha} \boxtimes \mu_{\beta}(x)=\frac{j}{N}.
\end{equation*}
Similarly, we denote $\gamma_j^*$ to be the $j$-th $N$-quantile of $\mu_A \boxtimes \mu_B.$ Recall that $\lambda_1 \geq \lambda_2 \geq \cdots \geq  \lambda_N$ are the eigenvalues of $AUBU^*.$

{\begin{thm} \label{thm_rigidity} Suppose Assumptions \ref{assu_limit} and \ref{assu_esd} hold true. For any small constant $0<c<1/2,$ we have that for all $1 \leq i \leq cN,$
		\begin{equation*}
			|\lambda_i-\gamma_i^*| \prec i^{-1/3}N^{-2/3}. 
		\end{equation*}  
		Moreover, the same conclusion holds if $\gamma_i^*$ is replaced with $\gamma_i.$
	\end{thm}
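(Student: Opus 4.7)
The plan is to deduce both rigidity estimates from the averaged local law $|m_H(z)-m_{\mu_A\boxtimes\mu_B}(z)|\prec(N\eta)^{-1}$ in Theorem \ref{thm:main} via the standard Helffer--Sj\"{o}strand route to edge rigidity. The first step is to control the eigenvalue counting function $\mathfrak{N}(E):=\#\{i:\lambda_i\geq E\}$ against its deterministic counterpart $N_{*}(E):=N(\mu_A\boxtimes\mu_B)([E,\infty))$ by writing
\[
\mathfrak{N}(E)-N_{*}(E)=\frac{N}{\pi}\int_{\C}\bar\partial\widetilde{\chi}_E(z)\,(m_H(z)-m_{\mu_A\boxtimes\mu_B}(z))\,\dd^{2}z,
\]
where $\widetilde{\chi}_E$ is a quasi-analytic extension of a smoothed indicator of $[E,E_{+}+\tau]$. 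Splitting the $z$-integration at the scale $\eta^{\star}\sim N^{-1}(\kappa(E)+N^{-2/3})^{-1/2}$ and invoking the local law on the regime $\im z\geq\eta_{L}$, one obtains the uniform counting-function bound $\sup_{E}|\mathfrak{N}(E)-N_{*}(E)|\prec 1$ over $E\in[E_{+}-\tau/2,E_{+}+\tau/2]$.

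The second step uses the square-root edge behavior $\rho(x)\sim\sqrt{E_{+}-x}$ near $E_{+}$ from \eqref{eq_originalsquarerootbehavior}, together with a subordination-stability perturbation, to obtain that $E_{+}^{*}-\gamma_{i}^{*}\sim(i/N)^{2/3}$ and that the density $\rho_{*}$ of $\mu_A\boxtimes\mu_B$ satisfies $\rho_{*}(\gamma_{i}^{*})\sim(i/N)^{1/3}$ for $i\leq cN$, where $E_{+}^{*}$ denotes the upper edge of $\mu_A\boxtimes\mu_B$. Inverting the counting-function estimate via $|\mathfrak{N}(\lambda_i)-i|\leq|\mathfrak{N}(\lambda_i)-N_{*}(\lambda_i)|+|N_{*}(\lambda_i)-i|\prec 1$ and using monotonicity of $N_{*}$ then yields
\[
|\lambda_i-\gamma_{i}^{*}|\prec\frac{1}{N\rho_{*}(\gamma_{i}^{*})}\sim i^{-1/3}N^{-2/3},
\]
which is the first assertion of the theorem.

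To replace $\gamma_{i}^{*}$ by $\gamma_{i}$, I would combine Assumption \ref{assu_esd} with the Kantorovich-type stability of the subordination system developed in Section \ref{sec_subordiationproperties}, in the spirit of Proposition 5.6 of \cite{JHC}. The Levy-distance bound $\mathcal{L}(\mu_A,\mu_{\alpha})+\mathcal{L}(\mu_B,\mu_{\beta})\leq N^{-1+\epsilon}$ produces an $O(N^{-1+\epsilon})$ perturbation of the subordination equations \eqref{eq_suborsystem}, and stability, via Lemma \ref{lem:stabbound} and the edge square-root behavior in Lemma \ref{lem:suborsqrt}, transfers this perturbation to the densities and upper edges of the two $\boxtimes$-convolutions. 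The resulting comparison $|\gamma_{i}-\gamma_{i}^{*}|\prec i^{-1/3}N^{-2/3}$ for $i\leq cN$, combined with the previous step, gives the claim with $\gamma_{i}$ in place of $\gamma_{i}^{*}$.

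The main technical difficulty should be the uniform stability of $\Omega_A,\Omega_B$ under Levy-distance perturbations of $\mu_A,\mu_B$ down to the edge scale $\im z\sim N^{-2/3+\epsilon}$, where these subordination functions degenerate in the manner of Lemma \ref{lem:suborsqrt}. This forces the Kantorovich stability argument to be implemented with weights adapted to the square-root structure of the edge. Once this quantitative comparison is in place, the passage from the counting-function bound to the rigidity statement is routine.
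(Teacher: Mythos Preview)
Your Helffer--Sj\"{o}strand route correctly yields the uniform counting-function bound $\sup_{E}|\mathfrak{N}(E)-N_{*}(E)|\prec 1$, and the inversion $|N_{*}(\lambda_i)-i|\prec 1$ does give the lower bound $\lambda_i\geq\gamma_{i}^{*}-Ci^{-1/3}N^{-2/3+\epsilon}$. The gap is on the upper side for the extreme indices: if $\lambda_1>E_{+}^{*}$ then $N_{*}(\lambda_1)=0$ and the inequality $|N_{*}(\lambda_1)-1|\leq 1$ is satisfied trivially, regardless of how far $\lambda_1$ sits above the edge. The counting-function bound by itself therefore does not rule out stray eigenvalues above $E_{+}^{*}+N^{-2/3+\epsilon}$, and hence does not give $|\lambda_1-\gamma_1^{*}|\prec N^{-2/3}$.

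The paper supplies this missing ingredient by a separate argument (its equation~\eqref{eq_controllargestdifference}) based on an \emph{improved} averaged estimate outside the spectrum, Lemma~\ref{lem_improvedestimate}: on the region $E>E_{+}$ with $\sqrt{\kappa+\eta}>N^{2\epsilon}(N\eta)^{-1}$ one has
\[
\Lambda(z)\prec \frac{1}{N\sqrt{(\kappa+\eta)\eta}}+\frac{1}{\sqrt{\kappa+\eta}}\,\frac{1}{(N\eta)^{2}}.
\]
The point is that this bound is \emph{strictly smaller} than $(N\eta)^{-1}$ along the line $\eta=N^{-2/3+\epsilon}$, $E\geq E_{+}+N^{-2/3+6\epsilon}$, so that $\im m_H(z)\prec N^{-\epsilon}(N\eta)^{-1}$ there; an eigenvalue at such an $E$ would force $\im m_H(E+\ii\eta)\geq(N\eta)^{-1}$, a contradiction. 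The raw local-law bound $|m_H-m_{\mu_A\boxtimes\mu_B}|\prec(N\eta)^{-1}$ of Theorem~\ref{thm:main} is not sharp enough for this step, since the error term is itself of size $(N\eta)^{-1}$. You should either derive the improved bound (it comes from iterating $\wh\Lambda\sim(N\eta)^{-1}$ through the fluctuation averaging of Proposition~\ref{prop:FA2}) or invoke Theorem~\ref{thm_outlierlocallaw} directly.

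For the quantile comparison $|\gamma_i-\gamma_i^{*}|$, your subordination-stability approach is viable and is essentially how the paper establishes Proposition~\ref{prop:stabN}; but the paper's actual argument here (Lemma~\ref{lem:rigidity_AB}) is shorter. It uses continuity of $\boxtimes$ under L\'evy distance (Bercovici--Voiculescu) to get the CDF comparison $\sup_x|\mu_A\boxtimes\mu_B((-\infty,x])-\mu_\alpha\boxtimes\mu_\beta((-\infty,x])|\leq CN^{-1+\epsilon}$ directly, then uses the already-proved bound~\eqref{eq:OmegaImBound} to place $\gamma_1^{*}$ within $N^{-1+\xi\epsilon}$ of $E_{+}$, and finally reads off the quantile bound from the square-root behavior of $\mu_A\boxtimes\mu_B$ in Proposition~\ref{prop:stabN}(ii). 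Both routes work; the paper's avoids redoing the Kantorovich analysis at the edge scale.
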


	Denote the singular value decomposition (SVD) for $Y$ in (\ref{eq_defndatamatrixtype}) as 
	\begin{equation*}
		Y=\sum_{i=1}^N \sqrt{\lambda_i} \ub_i \vb_i^*,
	\end{equation*}
	where $\{\ub\}_i$ and $\{\vb_i\}$ are the left and right singular vectors of $Y,$ respectively. 
	\begin{thm} \label{thm_delocalization}
		Suppose Assumptions \ref{assu_limit} and \ref{assu_esd} hold true. For any fixed small constant $0<c<1/2,$ we have that for all $1 \leq i \leq cN,$
		\begin{equation*}
			\max_k|\ub_i(k)|^2+\max_\mu|\vb_i(\mu)|^2 \prec \frac{1}{N}. 
		\end{equation*}
	\end{thm}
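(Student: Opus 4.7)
The plan is to reduce the delocalization estimate to an upper bound on the diagonal entries of the imaginary parts of the resolvents $\widetilde{G}$ and $\widetilde{\caG}$, then feed in the local laws already established in Theorem \ref{thm:main}. First I note that since $Y = A^{1/2}UB^{1/2}$, we have $YY^{*} = A^{1/2}UBU^{*}A^{1/2} = \widetilde{H}$ and $Y^{*}Y = B^{1/2}U^{*}AUB^{1/2} = \widetilde{\caH}$. Thus $\{\ub_{k}\}$ are the (normalized) eigenvectors of $\widetilde{H}$ and $\{\vb_{k}\}$ those of $\widetilde{\caH}$, both with eigenvalues $\{\lambda_{k}\}$. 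The argument for $\ub_{k}$ via $\widetilde{G}$ and for $\vb_{k}$ via $\widetilde{\caG}$ is symmetric, so I describe only the former.

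The standard spectral-decomposition identity gives, for any $z=E+\ii\eta\in\C_{+}$,
\[
\im \widetilde{G}_{ii}(z) \;=\; \sum_{\ell=1}^{N} \frac{\eta\,\absv{\ub_{\ell}(i)}^{2}}{(\lambda_{\ell}-E)^{2}+\eta^{2}}.
\]
Dropping all terms but $\ell=k$ and setting $E=\lambda_{k}$ yields
\[
\absv{\ub_{k}(i)}^{2} \;\leq\; \eta\,\im \widetilde{G}_{ii}(\lambda_{k}+\ii\eta).
\]
I choose $\eta=\eta_{L}=N^{-1+\gamma}$ with $\gamma>0$ arbitrarily small. By the rigidity estimate of Theorem \ref{thm_rigidity}, for $1\leq k\leq cN$ the eigenvalue $\lambda_{k}$ lies, with high probability, in a region where $\lambda_{k}+\ii\eta\in\caD_{\tau}(\eta_{L},\eta_{U})$, so the local laws apply.

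To bound $\im \widetilde{G}_{ii}$ individually I invoke the entrywise/isotropic strengthening of Theorem \ref{thm:main} (alluded to in Remark 2.6 and extracted from Proposition \ref{prop_linearlocallaw} and the pointwise machinery of Section \ref{sec:pointwiselocallaw}). Since $\widetilde{G}_{ii}=G_{ii}$ by \eqref{eq_connectiongreenfunction}, the deterministic approximant is $z^{-1}(a_{i}/(a_{i}-\Omega_{B}(z))-1)$, whose imaginary part is controlled by $\im \Omega_{B}(z)$; by the square-root behaviour of $\Omega_{B}$ near $E_{+}$ (Lemma \ref{lem:suborsqrt}) and the stability separation (Lemma \ref{lem:stabbound}), this imaginary part is $\rO(1)$ uniformly in $i$ and $z\in\caD_{\tau}(\eta_{L},\eta_{U})$. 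The fluctuation error is $\rO_{\prec}(\sqrt{\im m/(N\eta)}+1/(N\eta))=\ro_{\prec}(1)$ at the chosen scale, so $\im \widetilde{G}_{ii}(\lambda_{k}+\ii\eta)\prec 1$. Combining yields $\absv{\ub_{k}(i)}^{2}\prec \eta = N^{-1+\gamma}$, and since $\gamma>0$ is arbitrary this upgrades to $\absv{\ub_{k}(i)}^{2}\prec N^{-1}$; a union bound over the $N^{2}$ pairs $(i,k)$ (absorbed by the definition of $\prec$) completes the bound on $\max_{i,k}\absv{\ub_{k}(i)}^{2}$. The same argument applied to $\widetilde{\caG}$, with $(a_{i},\Omega_{B})$ replaced by $(b_{\mu},\Omega_{A})$ in the deterministic main term, handles $\max_{\mu,k}\absv{\vb_{k}(\mu)}^{2}$.

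The main obstacle is not the delocalization argument itself but the availability of the \emph{entrywise} version of the local law: the averaged bound displayed in \eqref{eq:main} is not strong enough to bound a single $\im \widetilde{G}_{ii}$ (testing with $\bsv=\bse_{i}$ loses a factor of $N$). Consequently, the proof must rely on the pointwise estimates of $G_{ii}$ with the Ward-compatible error $\sqrt{\im m_{\mu_{A}\boxtimes\mu_{B}}/(N\eta)}+(N\eta)^{-1}$ that are produced along the way in Sections \ref{sec:pointwiselocallaw}--\ref{sec_finalsection}; once these are in hand, the rest is a routine spectral-decomposition and union-bound argument.
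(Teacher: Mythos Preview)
Your proposal is correct and essentially identical to the paper's argument: both use the spectral-decomposition identity to bound $|\ub_{k}(i)|^{2}\leq\eta\,\im\widetilde{G}_{ii}(\lambda_{k}+\ii\eta)$ at $\eta=N^{-1+\epsilon_{0}}$ and then control $\im\widetilde{G}_{ii}$ via the local law together with Proposition~\ref{prop:stabN}(i). Your observation that the averaged form of \eqref{eq:main} is not enough and that one needs the entrywise control furnished by Proposition~\ref{prop_linearlocallaw} is well taken and is precisely what the paper relies on (somewhat implicitly) in its short proof.
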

}

	\begin{rem}
		We provide some further remarks here. First, in the current paper, the deterministic matrices $A$ and $B$ are both assumed to be positive definite so that $A^{1/2}$ and $B^{1/2}$ are well-defined. {This ensures the model is symmetric in the sense that $A$ and $B$ can be interchanged freely, and we often use such an argument along our proofs. In particular, we actually use all four matrices in (\ref{defn_eq_matrices}) and their resolvents; see (\ref{eq:a_i}) for an illustration, where we apply the Ward identity to the resolvents of $\widetilde{H}$ and $\widetilde{\mathcal{H}}$.} Moreover, this symmetry played an important role in \cite{JHC} and in earlier appearances of subordination functions on which our results relied, for example \cite{Biane1997,Belinschi-Bercovici2007}. In this sense, even though taking (only) one of $A$ and $B$ to be non-positive in $H=AUBU^*$ still gives real eigenvalues, some arguments in the present paper cease to work {because the model is no longer symmetric and either $\wt{H}$ or $\wt{\caH}$ in \eqref{defn_eq_matrices} is not defined. Nonetheless, we believe that the result remains true in this case, and it might be possible to prove with an application of linearization trick. We explain more details on difficulties arising in applications of the linearization trick in the next paragraph.}
		
		Second, in \cite{ho2022local} the author proved a weak local law for generic self-adjoint polynomials of $A$ and $\wt{B}$ (recall $\wt{B}=UBU^*$), on the scale of $N^{-1/12}$. This work and its deterministic precursor \cite{Belinschi-Mai-Speicher2017} suggest that an analogous result to the present paper and \cite{BEC} should hold for generic polynomials. The main idea in \cite{ho2022local} was to use the linearization trick to consider the sum of tensors $x_{\alpha}\otimes A+x_{\beta}\otimes \wt{B}$ for suitably finite and Hermitian matrices $x_{\alpha},x_{\beta},$ instead of the given polynomial. Moreover, we point out that this can also apply to the case with a non-positive matrix, say $B$, by considering $\sqrt{A}\wt{B}\sqrt{A}$ as a polynomial of $\sqrt{A}$ and $\wt{B}$. {Although it is feasible that the techniques of the current paper and those in \cite{Bao-Erdos-Schnelli2016,BAO2,BAO3,BEC} can apply to the general model in \cite{ho2022local}, there are two major difficulties in accommodating these arguments to the linearized models.} On one hand, it is a nontrivial task to study the limiting distribution and its regularity for the general free polynomial model. In particular, there {is no known} natural and suitable conditions on $\mu_{\alpha}$ and $\mu_{\beta}$ like Assumption \ref{assu_limit}, and on the generic polynomials so that the free polynomial is regular, especially near the edge. On the other hand, as will be seen in Section \ref{subsec_sketch} below, the proof of local laws relies on many auxiliary scalar quantities; see \eqref{eq_shorhandnotation} and \eqref{eq_pk} for examples. However, for the general model in \cite{ho2022local}, due to the linearization, all these quantities should be matrices instead of scalars. Hence, finding the non-scalar equivalents of these auxiliary quantities in the general setting can be challenging. {At the current stage} we are not aware of a systematic approach to find them in general, except that in the bulk regime some related techniques have been developed for polynomials of Wigner matrices recently in \cite{Erdos-Kruger-Nemish2020}.
	\end{rem}
{
	\subsection{Statistical applications}
	
	In this subsection, we briefly discuss some applications of our results to high dimensional statistics. First, our results can be used to detect the existence of signals in the signal-plus-noise model when the noise part is of the form $A^{1/2}UBU^*A^{1/2}.$ Consider 
	\begin{equation*}
		Y=S+Z,
	\end{equation*}   
	{where $S$ and $Z$ stand for the signal and noise parts, respectively.} Such a model finds important applications in many scientific endeavors. Especially in many cases $S$ is a low-rank symmetric matrix, for example, diffusion tensor imaging (DTI) analysis \cite{schwartzman2008inference}, $\mathbb{Z}_2$ synchronization \cite{javanmard2016phase}, community detection using stochastic block model (SBM) \cite{abbe2017community},  matrix denoising and recovery \cite{donoho2013phase, lelarge2019fundamental} and signal processing \cite{tulino2004random}. While most of the existing literature focuses on the setting that $Z$ is a Wigner matrix, the free multiplicative noise is also considered in the literature \cite{7587390,bun2017}. Therefore, we can apply our results to study the signal-plus-noise model when $Z=A^{1/2}UBU^*A^{1/2},$ that is, 
	\begin{equation}\label{eq_statmodel}
		Y=S+A^{1/2}UBU^*A^{1/2}.
	\end{equation}

	A fundamental task is to recover the signal matrix $S$ from observed sample $Y$ in (\ref{eq_statmodel}), and the very first step is to know whether there exists any such signal. From the random matrix theory viewpoint, the eigenvalues of the signal part $S$ can be viewed as outliers which detach from the bulk of $\mu_A \boxtimes \mu_B.$  Our Theorem \ref{thm_rigidity} can be used to achieve this goal, especially we can employ the following Onatski's statistic \cite{onatski2009testing} 
	\begin{equation}\label{eq_tform}
		\mathbb{T}:=\max_{1 \leq i \leq C}\frac{\lambda_i(Y)-\lambda_{i+1}(Y)}{\lambda_{i+1}(Y)-\lambda_{i+2}(Y)}, 
	\end{equation} 
	where $C>0$ is some pre-chosen large integer and $\{\lambda_i(Y)\}$ are the eigenvalues of $Y$ which are ordered decreasingly. {We now explain how the statistic $\mathbb{T}$ works by considering the simplest case with rank-one alternative.} Note that rank-one $S$ already has important applications in $\mathbb{Z}_2$ synchronization \cite{javanmard2016phase} and SBM \cite{abbe2017community}. Formally, we consider {the hypothesis test
		\begin{equation}\label{eq_testingproblem}
			\mathbf{H}_0: S=0 \qquad \mathrm{versus} \qquad \mathbf{H}_a: S=d \mathbf{u} \mathbf{u}^*, 
		\end{equation}
		for some large constant $d>0.$} On one hand, according to Theorem \ref{thm_rigidity}, under the null hypothesis $\mathbf{H}_0,$ the statistic $\mathbb{T}$ should satisfy that $\mathbb{T}=1+o_{\prec}(1)$. On the other hand, if $\mathbf{H}_a$ holds, when $d$ is above some threshold, that is, the signal is relatively strong so that $\lambda_1(Y)$ detaches from the spectrum of $\mu_A \boxtimes \mu_B$, we shall have that with high probability $\mathbb{T}>1+\tau$ for some constant $\tau>0.$ Consequently, we can use $\mathbb{T}$ to detect the existence of the signal matrix. We mention that the exact characterization of $d$, that is, the BBP transition, often requires a sophisticated perturbation argument involving optimal local laws, Theorem \ref{thm:main}. {Moreover, to formally perform the test \eqref{eq_testingproblem}, we need to find the distribution of $\mathbb{T}$.} Since these are beyond the focuses of the current paper, we defer these problems in future works.
	
	Second, some simple extensions of our results can be applied to provide some insights on the performance of random sketching in the setting of high dimensional least square regression \cite{ELnips}. Suppose that we observe $N$ data points $(x_i, y_i),$ where $x_i \in \mathbb{R}^p$ are the predictors, and $y_i \in \mathbb{R}$ are the responses.  Consider the linear model that $y_i=x_i^\top \beta +\epsilon_i,$ where $\beta \in \mathbb{R}^p$ is an unknown parameter and $\epsilon_i's$ are the white noise error.  The ordinary least squares (OLS) estimator for $\beta$ can be written as
	\begin{equation*}
		\widehat{\beta}=(X^\top X)^{-1}X^\top Y,
	\end{equation*}
	where $X \in \mathbb{R}^{N \times p}$ collects all $x_i$ and $y \in \mathbb{R}^N$ collects all $y_i, 1 \leq i \leq N.$ The OLS estimator is a gold standard when rank($X$)$=p.$ However, when both $N$ and $p$ are large, it is computationally expensive to get the OLS estimator. In the high dimensional setting, sketching is an effective approach to reduce the size of the problem by multiplying an $M \times N$ matrix $S$ to obtain the sketched data $(\widetilde{X}, \widetilde{Y})=(SX, SY).$ Then the sketched OLS estimator is \cite{ELnips}
	\begin{equation*}
		\widehat{\beta}_s=(\widetilde{X}^\top \widetilde{X})^{-1} \widetilde{Y}.
	\end{equation*}   
	The computational cost will drop from $Np^2$ to $Mp^2$ by introducing the sketching matrix $S.$ The goal of sketching is to find an $M<N$ so that the performance of $\widehat{\beta}$ and $\widehat{\beta}_s$ is similar.   
	
	One popular and efficient choice for $S$ is the truncated Haar orthogonal matrix in the sense that $S$ is a submatrix of an $N \times N$ Haar orthogonal matrix. To analyze the performance of $\widehat{\beta}_s,$ the core is to study the projection matrix $\widetilde{X}^\top \widetilde{X}.$ It is not hard to see that we can rewrite $\mathsf{D}_s:=\widetilde{X}^\top \widetilde{X}$ as (see Section A.7 of \cite{ELnips})
	\begin{equation}\label{eq_defnds}
		\mathsf{D}_s=\begin{pmatrix}
			I_M & 0\\
			0 & 0
		\end{pmatrix} U
		\begin{pmatrix}
			\Lambda_p & 0\\
			0 & 0
		\end{pmatrix}U^*
		\begin{pmatrix}
			I_M & 0\\
			0 & 0
		\end{pmatrix},
	\end{equation}
	where $\Lambda_p$ is the diagonal matrix containing the  nontrivial eigenvalues of $X^\top X. $ $\mathsf{D}_s$ includes the truncated Haar matrices here due to the block structure of the deterministic matrices. 
	
	Based on the above discussion, we see that the core is to analyze the matrix $\mathsf{D}_s.$ In fact, the above model is of the form (\ref{eq_defndatamatrixtype}) by setting
	\begin{equation}\label{eq_abtransform}
		A=
		\begin{pmatrix}
			I_M & 0\\
			0 & 0
		\end{pmatrix}, \ B=\begin{pmatrix}
			\Lambda_p & 0\\
			0 & 0
		\end{pmatrix}.
	\end{equation}
	For this particular case, even through the positive definite assumptions in Section \ref{sec_locallawresults} are slightly violated, the results still hold true near the upper edge.  These results, especially Theorem \ref{thm:main} can be used to establish the convergent rates for the variance efficiency (VE) of $\widehat{\beta}_s$, that is, the increase in parameter estimation error compared to using $\widehat{\beta}$ directly. For instance, together with Theorem 2.3 of \cite{ELnips}, we will be able to show that when $p$ is comparable to both $M$ and $N$ 
	\begin{equation}\label{eq_pointresults}
		\frac{\| \widehat{\beta}_s-\beta \|^2}{ \| \widehat{\beta}-\beta\|^2}=\frac{N-p}{M-p}+O_{\prec}\left(\frac{1}{\sqrt{N}}\right).
	\end{equation}
	(\ref{eq_pointresults}) can be used as a starting point to choose a numerically efficient value of $M$ for finite sample study. A rigorous discussion is out of the scope of this paper and we will pursue this direction in future works.
	
	Finally, we mention that the key matrices of many other problems are also in the form of (\ref{defn_freemultiplicationmodel}) or $A^{1/2}UBU^* A^{1/2}.$ For example, many of the nonlinear kernel-based data acquisition algorithms can be reduced to studying a random matrix of the form (\ref{defn_freemultiplicationmodel}) \cite{DW}. Consequently, our results can be potentially applied to check whether common signals  have been properly captured by two different sensors using a statistic similar to (\ref{eq_tform}) \cite{ding2021kernel}. Moreover, $A^{1/2}UBU^*A^{1/2}$ is also a natural model for spatiotemporal data analysis \cite{bun2017}, where $A$ and $B$ are respectively the spatial and
	temporal covariance matrices. In practice, a spiked model analogous to \cite{DYaos} is more reasonable for real applications. The results in this paper are key ingredients to study such a problem. We will pursue these directions in future works.

}

\section{General structure of the proof} \label{sec_proofroute}

\subsection{Partial randomness decomposition}
{As mentioned earlier, the partial randomness decomposition has been used as an important asset to handle the Haar random matrix in the additive model \cite{BAO3,BEC,bao2019}.} For our multiplicative model, we also need to use this tool. The partial randomness decomposition can be regarded as the counterpart of the Schur's complement, which plays  a central role in the proof of the local laws \cite{bao2015,ding2018,MR3704770,lee2016,yang2019} when $X$  in (\ref{eq_defndatamatrixtype}) has i.i.d. entries. In what follows, we focus on introducing this technique for Haar unitary matrix. We will also briefly discuss how the arguments apply to Haar orthogonal matrix.

Let $U$ be an $N \times N$ Haar unitary random matrix. For all $i\in\llbra1,N\rrbra$, define $\bsv_{i}\deq U\bse_{i}$ as the $i$-th column vector of $U$ and $\theta_{i}$ as the argument of $\bse_{i}\adj\bsv_{i}$. Following \cite{PM1}, we denote
\beq\label{eq_prd}
U^{\angi}\deq -\e{-\ii\theta_{i}}R_{i}U, \quad\text{where}\ R_{i}\deq I-\bsr_{i}\bsr_{i}\adj, \quad \bsr_{i}\deq \sqrt{2}\frac{\bse_{i}+\e{-\ii\theta_{i}}\bsv_{i}}{\norm{\bse_{i}+\e{-\ii\theta_{i}}\bsv_{i}}}  .
\eeq 
Since $\|\bsr_i \|^2=2$, we have that $R_i$ is a Householder reflection. Consequently, $R_i^*=R_i$ and $R_i^2=I.$ Furthermore, it is easy to see that $U^{\angi} \bm{e}_i=\bm{e}_i$ and $\bm{e}_i^* U^{\angi}=\bm{e}_i^*.$  This implies that $U^{\angi}$ is a unitary block-diagonal matrix. In other words, $U^{\angi}_{ii}=1$ and the $(i,i)$-matrix minor  of $U^{\angi}$ is Haar distributed  on $U(N-1) $ and $\bm{v}_i$ is uniformly distributed on the $N-1$ unit sphere. 
Denote
\beq\label{eq_hatubhatu}
\wt{B}^{\angi}\deq U^{\angi}B(U^{\angi})\adj.
\eeq
Since $\bsv_{i}$ is uniformly distributed on the unit sphere $\bbS^{N-1}_{\C}$, we can find a Gaussian vector $\wt{\bsg}_{i}\sim\caN_{\C}(0,N^{-1}I_{N})$ such that 
\beq\label{eq_defnvb}
\bsv_{i}= \frac{\wt{\bsg}_{i}}{\norm{\wt{\bsg}_{i}}}.
\eeq
Armed with the above Gaussian vector, we further define
\begin{align}\label{eq_prd2}
	&\bsg_{i}\deq \e{-\ii\theta_{i}}\wt{\bsg}_{i},	\ \ 
	\bsh_{i}\deq \frac{\bsg_{i}}{\norm{\bsg_{i}}}=\e{-\ii\theta_{i}}\bsv_{i}, \ \ \nonumber 
	\ell_{i}\deq \frac{\sqrt{2}}{\norm{\bse_{i}+\bsh_{i}}}, 	\\
	&	\mr{\bsg}_{i}\deq \bsg_{i}-g_{ii}\bse_{i},	 \ \
	\mr{\bsh}_{i}\deq \bsh_{i}-h_{ii}\bse_{i}.
\end{align}
It is easy to see from (\ref{eq_prd}) that 
\beq\label{eq_prd1}
\bm{r}_i=\ell_i(\bm{e}_i+\bm{h}_i), \	R_{i}\bse_{i}=-\bsh_{i}, \ R_{i}\bsh_{i}=-\bse_{i},
\eeq
which further implies that 
\beq\label{eq_hbwtr}
\begin{aligned}
	\bsh_{i}\adj \wt{B}^{\angi}R_{i}=-\bse_{i}\adj \wt{B},	\quad 
	\bse_{i}\adj\wt{B}^{\angi}R_{i}=	-\bsh_{i}\adj \wt{B}=-b_{i}\bsh_{i}\adj .
\end{aligned}
\eeq
The above equations provide several convenient identities for the Haar unitary matrix. Moreover, since $\wt{B}^{\angi}$ is independent of both $\bm{h}_i$ and $R_i,$ we can establish accurate large deviation estimates for quantities related to (\ref{eq_hbwtr}); see Section \ref{sec_largedeviation} for more details. Finally, 
for Haar orthogonal random matrix on $O(N)$, the only difference lies in the partial randomness decomposition. In fact, we can decompose an orthogonal matrix $U$ in the same way as in (\ref{eq_prd}), except that the factor $\e{-\ii\theta_{i}}$ in \eqref{eq_prd} should be replaced by $\mathrm{sgn}(\bse_{i}\adj \bsv_{i})$. We refer the readers to \cite[Appendix A]{BAO2} for more details. 
\subsection{Sketch of  the proof route}\label{subsec_sketch}
In this subsection, we summarize the main route of the proof. {Our proof basically follows \cite{BEC} and  we focus on explaining how to adapt their proof strategies to study the multiplicative model. For a proof route of the additive model, we refer the readers to \cite[Section 4.2]{BEC}. }


Recall (\ref{eq_defntrace}). Without loss of generality, till the end of the paper, we assume that both $A$ and $B$ are normalized such that $\tr A =\tr B=1.$ First, we introduce the random equivalents of the subordination functions $\Omega_A$ and $\Omega_B$ in terms of the resolvents. They are the starting points of the arguments and the counterparts of the additive model as in equation (5.2) of \cite{BEC}.
\begin{defn}[Approximate subordination functions]\label{defn_asf}
	For $z\in\CR$, we define
	\beqs
	\Omega_A^c \equiv \Omega_{A}^{c}(z) \deq\frac{z\tr \wt{B}G}{1+z\tr G}, \ \ \Omega_B^c \equiv
	\Omega_{B}^{c}(z) \deq\frac{z\tr AG}{1+z\tr G}=\frac{z\tr\caG\wt{A}}{1+z\tr\caG}.
	\eeqs
\end{defn}
As mentioned earlier in Section \ref{sec_introduction}, while the final goal is to prove (\ref{eq:main}), following the strategy of \cite{BEC}, we actually work with the approximate subordination functions in Definition \ref{defn_asf} and several auxiliary quantities.  To identity these  auxiliary quantities, we need several crucial decompositions. By  replacing $\Omega_B$ with $\Omega_B^c$ in (\ref{eq:main}), we observe that
\begin{align}\label{eq:Lambda}
	(zG_{ii}+1)-&\frac{a_{i}}{a_{i}-\Omega_{B}^{c}}
	=a_{i}(\wt{B}G)_{ii}-\frac{a_{i}}{a_{i}-\Omega_{B}^{c}} \nonumber	\\
	&=\frac{a_{i}}{(1+z\tr G)(a_{i}-\Omega_{B}^{c})}	
	\left((z\tr G+1)(a_{i}-\Omega_{B}^{c})(\wt{B}G)_{ii}-(z\tr G+1)\right) \nonumber	\\
	&=\frac{{a_{i}}z}{(1+z\tr G)(a_{i}-\Omega_{B}^{c})}\left(G_{ii}\tr(A\wt{B}G)-\tr (GA)(\wt{B}G)_{ii}\right),
\end{align}
where we used the elementary identity $(HG)_{ii}-zG_{ii}=a_{i}(\wt{B}G)_{ii}-zG_{ii}=1.$ 
In light of Proposition \ref{prop:stabN} and \eqref{eq:Lambda},  to prove (\ref{eq:main}), it suffices to control
\beq\label{eq_defnq}
Q_{i}\deq G_{ii}\tr(A\wt{B}G)-\tr(GA)(\wt{B}G)_{ii},
\eeq
and show that $\Omega_B$ and $\Omega_B^c$ are sufficiently close. {Note that $Q_i$ is the counterpart of equation (4.11) of \cite{BEC}.}

We first present detailed decomposition of $Q_i.$ In particular, following \cite[Section 4.2]{BEC}, we discuss how to decompose and explore the independence structure of $(\wt{B} G)_{ii}$ using the partial randomness decomposition. Using (\ref{eq_hatubhatu}) and (\ref{eq_prd2}), we introduce the notations 
\beq\label{eq_shorhandnotation}
S_{i}\deq \bsh_{i}\adj \wt{B}^{\angi}G\bse_{i},\quad \mr{S}_{i}\deq \mr{\bsh}_{i}\adj \wt{B}^{\angi}G\bse_{i},\quad T_{i}\deq \bsh_{i}\adj G\bse_{i}=\e{\ii\theta_{i}}\bse_{i}\adj U\adj G\bse_{i},\ \mr{T}_{i}\deq \mr{\bsh}_{i}\adj G\bse_{i},
\eeq
where we used $(e^{-\ii \theta_i})^*=e^{\ii \theta_i}.$ By the construction of $U^{\angi}$ in (\ref{eq_prd}) and (\ref{eq_prd1}), we find that 
\begin{equation*}
	(\wt{B}G)_{ii}=-\bsh_{i}\adj\wt{B}^{\angi}R_{i}G\bse_{i}.
\end{equation*} 
Using the definition of $R_i$ in (\ref{eq_prd}) and $\bm{r}_i=\ell_i(\bm{e}_i+\bm{h}_i)$, we have the following expansion
\begin{equation*}
	(\wt{B}G)_{ii}=-\bsh_{i}\adj\wt{B}^{\angi}G\bse_{i} 
	+\ell_{i}^{2}\bsh_{i}\adj\wt{B}^{\angi}(\bse_{i}+\bsh_{i})(\bse_{i}+\bsh_{i})\adj G\bse_{i},
\end{equation*}
Moreover,  utilizing the notations in (\ref{eq_shorhandnotation}), we can write that 
\begin{equation*}
	(\wt{B}G)_{ii}=-S_{i}+\ell_{i}^{2}(\bsh_{i}\adj\wt{B}^{\angi}\bse_{i}+\bsh_{i}\adj\wt{B}^{\angi}\bsh_{i})(G_{ii}+T_{i}). 
\end{equation*}
Further, since $R_i$ is a projection satisfying (\ref{eq_prd1}), we have that 
\begin{align}\label{eq:BGii-S}
	(\wt{B}G)_{ii}
	=&-S_{i}+\ell_{i}^{2}(-b_{i}\bsh_{i}\adj R_{i}\bsh_{i}+\bsh_{i}\adj\wt{B}^{\angi}\bsh_{i})(G_{ii}+T_{i}) \nonumber	\\
	=&-S_{i}+\ell_{i}^{2}(b_{i}h_{ii}+\bsh_{i}\adj\wt{B}^{\angi}\bsh_{i})(G_{ii}+T_{i}).
\end{align}
{We will see later in our proof (e.g. (\ref{eq:BGii})), the discussion boils down to 
	controlling $S_{i}$ and $T_{i}$, which are the counterparts of equation (4.10) of \cite{BEC}.}

In the actual proof, inspired by the arguments in \cite{BEC}, instead of working directly with $S_i$ and $T_i,$ we deal with the following quantities
\beq\label{eq_pk}
\begin{aligned}
	&P_{i}\deq Q_i+(G_{ii}+T_{i})\Upsilon,
	\\
	&K_{i}\deq	T_{i}+\tr(GA)(b_{i}T_{i}+(\wt{B}G)_{ii})-\tr(GA\wt{B})(G_{ii}+T_{i}),
\end{aligned}
\eeq
where $\Upsilon$ is defined as 
\beq\label{eq_defnupsilon}
\Upsilon\deq (\tr(GA\wt{B}))^{2}-\tr(GA)\tr(\wt{B}GA\wt{B})-\tr(GA\wt{B})+\tr(GA).
\eeq
{The analogs of the above quantities for the additive model have been defined in equations (4.12), (4.14) and (4.15) of \cite{BEC}.} On one hand, $P_i$ and $K_i$ are closely related to $S_i$ and $T_i$. On the other hand, they are easily to be handled with. In fact, using $GA\wt{B}=A\wt{B}G=zG+I$ and $\tr B=\tr \wt{B}=1$, we recognize that $\Upsilon$ is an average of $Q_i,$ that is, 
\begin{align} \label{eq_averageqdefinitionupsilon}
	\Upsilon&=\tr(A\wt{B}G)(\tr(zG+I)-1)-\tr(GA)(\tr(\wt{B}(zG+I))-\tr{\wt{B}}) \nonumber \\
	&=z\left(\tr (G)\tr(A\wt{B}G)-\tr(GA)\tr(\wt{B}G)\right) =\frac{z}{N}\sum_{i}Q_{i}.
\end{align}
The proof of the estimates of the above quantities relies on a two-level approach, which is commonly used in the proofs of local laws for random matrices. On the first level, we provide bounds for a fixed spectral parameter $z$  under the condition that $G_{ii}$, $\caG_{ii}$  and $T_i$ satisfy a weak a priori bound
(c.f., Assumption \ref{assu_ansz}). On the second level, we will verify the above priori bound and further prove they hold uniformly in $z.$

On the first level, the proof strategy contains three steps. In Step 1,  we establish recursive estimates for the high moments of $P_i$ and $K_i,$ that is, Proposition \ref{prop:entrysubor}, {which is the counterpart of Proposition 5.1 of \cite{BEC}.} The main idea is to employ the (Gaussian) integration by parts with respect to the coordinates of $\bsh_i$ since $\wt{B}^{\angi}$ is independent of $\bsh_i.$ This step concludes that all the quantities $P_i, K_i, T_i, Q_i$ and $\Upsilon$ can be bounded by $(N\eta)^{-1/2}.$ The actual proofs will be presented in Section \ref{sec_entrylaw}. In Step 2, we derive a rough bound on the averaged quantities. Especially, since $Q_i$ is the most fundamental quantity, we focus on the form $N^{-1} \sum d_i Q_i,$ where $d_i$'s are some generic weights. The proof also concerns the recursive moment estimates as in Step 1. This step yields that the averaged quantity can be bounded by $ \sqrt{\im m_H(z)} (N\eta)^{-1},$ which improves the bounds from Step 1.  The arguments will be given in Section \ref{sec_roughfa}. In Step 3, we prove that for some specific weights $d_i$ (c.f. (\ref{eq_optimalfaquantitiescoeff}) and (\ref{eq_optimalfaquantitiescoeffextra})), the averaged quantities can be bounded by $\im m_H(z) (N\eta)^{-1}.$ { Note that the weights we choose here are the counterparts of equation (7.12) of \cite{BEC}. }As a byproduct, we obtain a priori bound for   $|\Omega_B-\Omega_B^c|$ and $|\Omega_A-\Omega_A^c|.$ In fact, controlling the differences above can also be reduced to $\Upsilon$ due to the following decomposition
\begin{align}\label{eq:approx_subor}
	\Omega_{A}^{c}\Omega_{B}^{c}-zM_{\mu_{H}}(z)&=\frac{z^{2}}{(1+zm_{H}(z))^{2}}(\tr(GA)\tr(\wt{B}G)-\tr(G)\tr(A\wt{B}G)) \nonumber \\
	&	=-\frac{z}{(1+zm_{H}(z))^{2}}\Upsilon(z),
\end{align} 
where we used $A \wt{B}G=zG+I.$
All these will be discussed in Section \ref{subsec_stronglocallawfixed}.  

On the second level, the proof consists of two parts. In Part 1, we establish the \emph{weak local laws} by  verifying Assumption \ref{assu_ansz} and prove the   uniformity of the estimates
in $z$ is obtained by a continuity argument.  The weak local laws state that most of the aforementioned quantities, e.g., $P_i, K_i$ and (\ref{eq:Lambda}) can be bounded by $(N \eta)^{-1/2}$ uniformly in $z.$  Moreover, $|\Omega_B-\Omega_B^c|$ and $|\Omega_A-\Omega_A^c|$ can be uniformly bounded by $(N \eta)^{-1/3}.$ The formal arguments are given in Section \ref{subsec:weaklocallaw}.
In Part 2, using the weak local laws, we complete the proof of Theorem \ref{thm:main}, which is referred to as \emph{strong local laws}. The arguments can be found in  Section \ref{sec_proofofstronglocallaw}. Finally, we emphasize that even though the above strategy is sketched for the diagonal entries, the off-diagonal entries can be handled similarly. We discuss this aspect in details at the end of Section \ref{subsec:weaklocallaw}. 

Once Theorem \ref{thm:main} is proved, the other theorems can be justified based on it. First, Theorems \ref{thm_rigidity} can be proved by translating the closeness of the resolvent into the closeness of the eigenvalues and the quantiles of $\mu_{A} \boxtimes \mu_{B}$ using Helffer-Sj{\" o}strand formula, and Theorem \ref{thm_delocalization} can be proved by exploring the imaginary part of the resolvents. 

	\begin{rem}\label{rem:bulk2}
		Before concluding this section, we briefly discuss how the above strategy can be used to obtain the bulk local laws when the spectral parameter $z$ is in $\mathcal{D}_{\text{bulk}}$ defined in
		(\ref{eq_bulkspectraldomain}). Similar results have been proved for the additive model in \cite{BAO3}.
		
		In fact, in the bulk regime, the proof will be easier. The main reason is that when $z \in \mathcal{D}_{\text{bulk}},$ the key parameter $\kappa \equiv \kappa(z):=\min\{|z-E_-|, |z-E_+|\}$ satisfies $\kappa \sim 1$ so that $\sqrt{\kappa+\eta} \sim 1.$ Consequently, according to \cite{JHC}, in contrast to Proposition \ref{prop:stablimit}, the key quantities can be controlled more easily in the sense that $\im m_{\mu_{\alpha}\boxtimes\mu_{\beta}}(z)\sim 1$ and $\absv{\caS_{\alpha\beta}(z)}\sim 1$. Combining these updates with lines of the proof of Proposition \ref{prop:stabN}, we can update the results of Proposition \ref{prop:stabN} by inserting $\kappa \sim 1.$ 
		
		Next, we explain how the two-level approach applies and is easier. On the first level, the bulk regime only needs Steps 1 and 2. The reason is that since when $z \in \mathcal{D}_{\text{bulk}}$ we have $\im m_{\mu_A \boxtimes \mu_B}(z) \sim 1,$ Steps 1 and 2 will prove that $\im m_H(z) \sim 1$ and the averaged quantities will therefore be bounded by $(N \eta)^{-1}$ which is already optimal. On the second level, in contrast to the edge regime where we have to decompose the edge spectral domain according to different scales of $\sqrt{\kappa+\eta}$ as in Section \ref{sec_proofofstronglocallaw}, we can work on the whole bulk spectral domain directly as $\kappa\sim 1$.
	\end{rem}

\section{Entry-wise resolvent subordination}\label{sec_entrylaw}

In this section, we prove a subordination property for the resolvent entries, that is, Proposition \ref{prop:entrysubor}. 	In particular, we prove (\ref{eq:main}) and (\ref{eq:main1}) of Theorem \ref{thm:main} and other related quantities for fixed spectral parameter $z$ with a priori bound, that is, Assumption \ref{assu_ansz}. This completes Step 1 of the first level of our proof route as summarized in Section \ref{subsec_sketch}. Moreover, the proof of Proposition \ref{prop:entrysubor}, especially the recursive moment estimates in Lemma \ref{lem:PKrecmoment}, is a representative formal argument of our proof strategies in the sense that Steps 2 and 3 follow a similar discussion. {Analogous arguments have been made for the additive model in Section 5 of \cite{BEC}. }

We first introduce the assumptions. Denote    
\beq\label{eq_moregeneralerrorbound}
\Lambda_{di} \deq \Absv{zG_{ii}+1-\frac{a_{i}}{a_{i}-\Omega_{B}}},\quad \Lambda_{d}\deq \max_{i}\Lambda_{di},\quad \Lambda_{T}\deq \max_{i}\absv{T_{i}}.
\eeq
Similarly, we define $\Lambda_{di}^{c}$ and $\Lambda_{d}^{c}$ by replacing $\Omega_{B}$ with its approximate $\Omega_{B}^{c}$.  Moreover, denote $\wt{\Lambda}_{di}$, $\wt{\Lambda}_{d}$ and $\wt{\Lambda}_{T}$ as
\beqs
\wt{\Lambda}_{di}\deq \Absv{z\caG_{ii}+1-\frac{b_{i}}{b_{i}-\Omega_{A}}},\quad \wt{\Lambda}_{d}\deq \max_{i}\wt{\Lambda}_{di},\quad \wt{\Lambda}_{T}\deq \max_{i} \absv{\bse_{i}\adj U\caG \bse_{i}}.
\eeqs 
Furthermore, $\wt{\Lambda}_{di}^c$ and $\wt{\Lambda}_{d}^c $ are defined by replacing $\Omega_A$ with $\Omega_A^c.$
In this section, the statements and proofs are based on Assumption \ref{assu_ansz}, which provides a priori bound for the essential quantities. It will be verified in Section \ref{subsec:weaklocallaw}.

\begin{assu}\label{assu_ansz} 
	Recall (\ref{eq_fundementalset}). For the small constant $\gamma>0$ in (\ref{eq_eltalgamma}), fix $z\in\caD_{\tau}(\eta_{L},\eta_{U})$,  we suppose the following hold true;
	\beq\label{eq_locallaweqbound}
	\Lambda_{d}(z)\prec N^{-\gamma/4},\quad	\wt{\Lambda}_{d}(z)\prec N^{-\gamma/4},\quad
	\Lambda_{T}\prec 1,\quad	\wt{\Lambda}_{T}\prec 1.
	\eeq
\end{assu}

We now state the main result of this section.  Throughout the paper, we will consistently use the following control parameter
\beq\label{eq_controlparameter}
\Psi\equiv\Psi(z)\deq \sqrt{\frac{1}{N\eta}}, \ \ \Pi_{i}\equiv\Pi_{i}(z)\deq \sqrt{\frac{\im G_{ii}(z)+\im \caG_{ii}(z)}{N\eta}}.
\eeq
\begin{prop}\label{prop:entrysubor}
	Recall (\ref{eq_pk}). Suppose that the assumptions of Theorem \ref{thm:main} and Assumption \ref{assu_ansz} hold. Fix $z\in\caD_{\tau}(\eta_{L},\eta_{U})$. For all $i\in\llbra1, N\rrbra,$ we have that 
	\beq\label{eq:PKbound}
	\absv{P_{i}(z)}\prec\Psi(z),\quad \absv{K_{i}(z)}\prec \Psi(z).
	\eeq	
	Furthermore, we have
	\beq\label{eq_lambdaother}
	\Lambda_{d}^c(z)\prec\Psi(z),\quad \Lambda_{T}\prec\Psi(z),\quad \wt{\Lambda}_{d}^c\prec\Psi(z),\quad \wt{\Lambda}_{T}\prec\Psi(z),\quad \Upsilon\prec\Psi(z).
	\eeq
\end{prop}

The above proposition provides the estimates for the diagonal entries of the resolvents and 	{is an analog of  Proposition 5.1 of \cite{BEC} for the additive model.} The arguments of the off-diagonal entries are similar and we refer the readers to {the discussion at the end of} Section \ref{subsec:weaklocallaw} for more details.  

{In the rest of this section, we follow the proof strategy of \cite[Proposition 5.1]{BEC} to prove Proposition \ref{prop:entrysubor}.} The following resolvent identities will be frequently used in the proof.
\beq\label{eq:apxsubor}
\begin{aligned}
	(HG)_{ii}-zG_{ii}=a_{i}(\wt{B}G)_{ii}-zG_{ii}=1,\\
	(\caG\caH)_{ii}-z\caG_{ii}=b_{i}(\caG\wt{A})_{ii}-z\caG_{ii}=1.
\end{aligned}
\eeq

\begin{proof}[\bf Proof of Proposition \ref{prop:entrysubor}] We first prove (\ref{eq_lambdaother}) assuming (\ref{eq:PKbound}) holds. The arguments rely on the estimate
	\begin{equation}\label{eq_claimtibound}
		T_i \prec N^{-\gamma/4},
	\end{equation}
	where $\gamma>0$ is introduced in (\ref{eq_locallaweqbound}).
	
	Before proving (\ref{eq_lambdaother}), we pause to justify (\ref{eq_claimtibound}). By (\ref{eq:PKbound}) and (\ref{eq_pk}), we have 
	\begin{align}\label{eq:T1}
		T_{i}(1+b_{i}\tr(GA)-\tr(GA\wt{B}))
		=\tr (GA\wt{B})G_{ii}-\tr(GA)(\wt{B}G)_{ii}+\rO_{\prec}(\Psi). 
	\end{align}
	By (\ref{eq:apxsubor}), we have that 
	\beq\label{eq_gbgindeti}
	(\wt{B}G)_{ii}=\frac{zG_{ii}+1}{a_{i}}, \ 
	G_{ii}= \frac{1}{z}\left( a_i (\wt{B}G)_{ii}-1 \right).
	\eeq
	Based on (\ref{eq_gbgindeti}), on one hand, by (\ref{eq_locallaweqbound}), we find that 
	\begin{equation} \label{eq_wtbgiipointwise}
		(\wt{B}G)_{ii}=\frac{1}{a_{i}-\Omega_{B}}+\rO_{\prec}(N^{-\gamma/4}).
	\end{equation}
	On the other hand, using the fact that $\tr{(GA \wt{B})}=z m_H(z)+1$ and a relation similar to (\ref{eq_multiidentity}), together with (\ref{eq_locallaweqbound}), we conclude that 
	\begin{align}\label{eq_trgawtbpointwise}
		\tr(GA\wt{B})=\int\frac{x}{x-\Omega_{B}}\dd\mu_{A}(x)+\rO_{\prec}(N^{-\gamma/4})=zm_{\mu_{A}\boxtimes\mu_{B}}(z)+1+\rO_{\prec}(N^{-\gamma/4}),
	\end{align}
	and 
	\begin{align}\label{eq_trgapointwise}
		\tr(GA)=\frac{1}{N}\sum_{i}a_{i}G_{ii}&=\frac{\Omega_{B}}{z}\frac{1}{N}\sum_{i}\frac{a_{i}}{a_{i}-\Omega_{B}}+\rO_{\prec}(N^{-\gamma/4})\nonumber \\
		&=\frac{\Omega_{B}}{z}(zm_{\mu_{A}\boxtimes\mu_{B}}(z)+1)+\rO_{\prec}(N^{-\gamma/4}).
	\end{align}
	Combining (\ref{eq:T1}), (\ref{eq_wtbgiipointwise}), (\ref{eq_trgawtbpointwise}) and (\ref{eq_trgapointwise}), using (\ref{eq_locallaweqbound}), we have that
	\beq\label{eq:T2}
	T_{i}\left(1+(zm_{\mu_{A}\boxtimes\mu_{B}}(z)+1)\left(\frac{b_{i}\Omega_{B}}{z}-1\right)\right)=\rO_{\prec}(\Psi+N^{-\gamma/4}).
	\eeq
	Moreover, invoking (\ref{eq_mtrasindenity}) and (\ref{eq_suborsystem}),  we see that 
	\begin{align}\label{eq_t2simplify}
		1+(zm_{\mu_{A}\boxtimes\mu_{B}}(z)+1)\left(\frac{b_{i}\Omega_{B}}{z}-1\right)	
		&=(zm_{\mu_{A}\boxtimes\mu_{B}}(z)+1)\left(\frac{b_{i}\Omega_{B}}{z}-M_{\mu_{A}\boxtimes\mu_{B}}(z)\right) \nonumber	\\
		&=(zm_{\mu_{A}\boxtimes\mu_{B}}(z)+1)\frac{\Omega_{B}}{z}(b_{i}-\Omega_{A}).
	\end{align}
	By (\ref{eq_t2simplify}), a relation similar to (\ref{eq_multiidentity}), and (i) of Proposition \ref{prop:stabN}, we have proved the claim (\ref{eq_claimtibound}) using (\ref{eq:T2}).

	Then we prove (\ref{eq_lambdaother}). First, using (\ref{eq:PKbound}) and the definitions in (\ref{eq_pk}), we find that 
	\beq\label{eq_gammastepone}
	\frac{1}{N}\sum_{i}a_{i}P_{i}
	=\Upsilon\frac{1}{N}\sum_{i}a_{i}(G_{ii}+T_{i})\prec\Psi,
	\eeq
	where we used the fact that $\{a_i\}$ are bounded. 
	By (\ref{eq_gammastepone}), (\ref{eq_trgapointwise}), and (i) of Proposition \ref{prop:stabN}, we have proved that 
	\begin{equation}\label{eq_upsilonbound}
		\Upsilon \prec \Psi. 
	\end{equation}
	Second, using the definition of $P_i$ in (\ref{eq_pk}), the expansion (\ref{eq:Lambda}), and 
	(\ref{eq_upsilonbound}), we have proved that 
	$\Lambda_{d}^c \prec \Psi.$ 
	Third, by (\ref{eq:T1}) and a discussion similar to (\ref{eq_claimtibound}) with the bound $\Lambda_{d}^c \prec \Psi,$ it is easy to see that $\Lambda_T \prec \Psi(z).$ Finally, the proof for $\wt{\Lambda}_d^c$ and $\wt{\Lambda}_T$ follows from an argument similar to (\ref{eq:Lambda}) and (\ref{eq:apxsubor}). This completes the proof of (\ref{eq_lambdaother}).

	It remains to prove \eqref{eq:PKbound}, which is equivalent to the bounds for high moments of $P_{i}$ and $K_{i}$. More specifically, by Markov inequality, it suffices to prove for all positive integer $p\geq 2,$ that the following hold.
	\beq\label{eq:PKmomentbound}
	\expct{\absv{P_{i}}^{2p}}\prec\Psi^{2p}\AND 
	\expct{\absv{K_{i}}^{2p}}\prec\Psi^{2p}.
	\eeq
	The proof of (\ref{eq:PKmomentbound}) makes use of the recursive estimates, that is, Lemma \ref{lem:PKrecmoment} below.  
	Denote 
	\beq\label{eq_definitionpq}
	\frX_{i}^{(p,q)}\deq P_{i}^{p}\ol{P^q_i},\AND \frY_{i}^{(p,q)}\deq K_{i}^{p}\ol{K^q_i}.
	\eeq
	\begin{lem}\label{lem:PKrecmoment}
		For any fixed integer $p\geq 2$ and $i\in\llbra 1,N\rrbra$, we have that
		\begin{align}
			\expct{\frX_{i}^{(p,p)}}
			\leq \expct{\rO_{\prec}(\Psi)\frX_{i}^{(p-1,p)}+\rO_{\prec}(\Psi^{2})\frX_{i}^{(p-2,p)}+\rO_{\prec}(\Psi^{2})\frX_{i}^{(p-1,p-1)}},		
			\label{eq:PKrecmoment} \\
			\expct{\frY_{i}^{(p,p)}}\leq \expct{\rO_{\prec}(\Psi)\frY_{i}^{(p-1,p)}+\rO_{\prec}(\Psi^{2})\frY_{i}^{(p-2,p)}+\rO_{\prec}(\Psi^{2})\frY_{i}^{(p-1,p-1)}}. \label{eq:etakrecursive}
		\end{align}
	\end{lem}
	We next explain how Lemma \ref{lem:PKrecmoment} implies  \eqref{eq:PKmomentbound} and will give the proof of Lemma \ref{lem:PKrecmoment} at the end of this section. 
	Recall that for any positive numbers $\mathsf{u}, \mathsf{v}>0$ 
	we have 
	\begin{equation}\label{eq_young}
		\mathsf{u} \mathsf{v} \leq \frac{\mathsf{u}^{m}}{m}+\frac{\mathsf{v}^{n}}{n}, \ \text{where} \ m,n>1 \ \text{are real numbers with} \  \frac{1}{m}+\frac{1}{n}=1.
	\end{equation} 
	For $k=1,2$, any arbitrary small constant $\epsilon>0$ and any random variable $\frN=\rO_{\prec}(\Psi^{k})$ satisfying $\expct{\absv{\frN}^{q}}\prec \Psi^{qk}$, we have that
	\begin{align}
		\expct{\absv{\frN P_{i}^{2p-k}}} & =\expct{\absv{N^{\epsilon}\frN}\absv{N^{-\frac{\epsilon}{2p-k}}P_{i}}^{2p-k}} \nonumber \\
		&	\leq \frac{kN^{\frac{2p\epsilon}{k}}}{2p}\expct{\absv{\frN}^{\frac{2p}{k}}}+\frac{(2p-k)N^{-\frac{2p\epsilon}{(2p-k)^2}}}{2p}\expct{\absv{P_{i}}^{(2p-k)\frac{2p}{2p-k}}} \nonumber \\
		&	\leq  \frac{kN^{(\frac{2p}{k}+1)\epsilon}}{2p}\Psi^{2p}+\frac{(2p-k)N^{-\frac{2p\epsilon}{(2p-k)^2}}}{2p}\expct{\absv{P_{i}}^{2p}}, \label{eq_arbitraydiscussion}
	\end{align}
	where in the first inequality we used (\ref{eq_young}) with $m=2p/k$ and $n=2p/(2p-k),$ and in the second inequality we used  $\expct{\absv{\frN}^{q}}\prec \Psi^{qk}.$ Together with \eqref{eq:PKrecmoment}, it yields that
	\beqs
	\expct{\absv{P_{i}}^{2p}}\leq \frac{3}{2p}N^{(2p+1)\epsilon}\Psi^{2p}+\frac{3(2p-1)}{2p}N^{-\frac{2p\epsilon}{(2p-1)^2}}\expct{\absv{P_{i}}^{2p}}.
	\eeqs 
	Since $\epsilon>0$ is arbitrarily small, we can conclude the first part of \eqref{eq:PKmomentbound}. The second part can be proved similarly and we omit the details here. This completes the proof of (\ref{eq:PKbound}) and hence the proof of Proposition \ref{prop:entrysubor}. 
\end{proof}

The rest of this subsection is devoted to the proof of Lemma \ref{lem:PKrecmoment}. {This type of estimates have been used in Lemma 5.2 of \cite{BEC} to study the analogous quantities for the additive model and our arguments basically follow the proof therein. In particular, similar to equation (5.34) of \cite{BEC}, integration by parts will be mainly applied to the term $\mr{S}_{i}$ in (\ref{eq_rewritemrs}) to generate several hidden terms which will cancel many other existing larger order terms.} Throughout the proof, we will need some derivative formulas and large deviation estimates as our technical inputs. These can be found in Lemmas \ref{lem_derivative}, \ref{lem:LDE}, \ref{lem:DeltaG} and \ref{lem:recmomerror}
\begin{proof}[\bf Proof of Lemma \ref{lem:PKrecmoment}] We start with the proof of (\ref{eq:PKrecmoment}). Recall (\ref{eq_shorhandnotation}). Since $h_{ii}\bse_{i}\adj \wt{B}^{\angi}G\bse_{i}=b_{i}h_{ii}G_{ii}$, we can rewrite \eqref{eq:BGii-S} as
	\beq\label{eq:BGii}
	(\wt{B}G)_{ii}=-S_{i}+\ell_{i}^{2}(b_{i}h_{ii}+\bsh_{i}\adj\wt{B}^{\angi}\bsh_{i})(G_{ii}+T_{i})=-\mr{S}_{i}+G_{ii}+T_{i}+\mathsf{e}_{i1},
	\eeq
	where we denoted
	\beq\label{eq_epsilon1}
	\mathsf{e}_{i1}
	\deq (\ell_{i}^{2}-1)b_{i}h_{ii}G_{ii}+(\ell_{i}^{2}\bsh_{i}\adj\wt{B}^{\angi}\bsh_{i}-1)(G_{ii}+T_{i})+\ell_{i}^{2}b_{i}h_{ii}T_{i}.
	\eeq
	Recall $\wt{\bsg} \sim \mathcal{N}_{\mathbb{C}}(0, N^{-1}I_N)$. By Lemma \ref{lem:LDE}, we see that 
	\begin{equation}\label{eq_hiicontrol}
		h_{ii}=\norm{\wt{\bsg}_{i}}^{-1}\absv{\bse_{i}\adj \wt{\bsg}_{i}}\prec N^{-1/2}.
	\end{equation}
	Consequently, using the definitions in (\ref{eq_prd2}), we obtain that 
	\beq\label{eq_licontrol}
	\ell_{i}^{2}=\frac{2}{\norm{\bse_{i}+\bsh_{i}}^{2}}=\frac{1}{1+\bse_{i}\adj \bsh_{i}}=1+\rO_{\prec}(N^{-1/2}).
	\eeq
	Moreover, by (\ref{eq_prd}), (\ref{eq_prd1}), (\ref{eq_defnvb}), and Lemma \ref{lem:LDE}, we have 
	\begin{align}\label{eq_controlhibhi}
		\bsh_{i}\adj\wt{B}^{\angi}\bsh_{i} =\bsh_{i}\adj R_{i}\wt{B}R_{i}\bsh_{i}=\bse_{i}\adj\wt{B}\bse_{i}=\frac{1}{\norm{\wt{\bsg}_{i}}^{2}}\wt{\bsg}_{i}\adj B\wt{\bsg}_{i}=1+\rO_{\prec}(N^{-1/2}),  
	\end{align}
	where we recall that $B$ is normalized such that $\operatorname{tr} B=1.$ Using the definition (\ref{eq_epsilon1}), by (\ref{eq_hiicontrol}), (\ref{eq_licontrol}), (\ref{eq_controlhibhi}), and (\ref{eq_locallaweqbound}), we conclude that 
	\begin{equation}\label{eq_boundepsilon1}
		\absv{\mathsf{e}_{i1}}\prec N^{-1/2}.
	\end{equation}
	Therefore, by (\ref{eq_pk}), \eqref{eq:BGii}, and (\ref{eq_boundepsilon1}), we have shown that
	\begin{align}\label{eq:recmomP}
		\expct{\frX_{i}^{(p,p)}}
		&=\expct{\left(G_{ii}\tr(A\wt{B}G)+\tr(GA)(\mr{S}_{i})+(G_{ii}+T_{i})(\Upsilon-\tr(GA))\right)\frX_{i}^{(p-1,p)}}\nonumber \\
		&+\expct{ \mathsf{e}_{i1} \tr{(GA)}\frX_{i}^{(p-1,p)}}. 
	\end{align}

	Next, we control all the terms on the RHS of (\ref{eq:recmomP}). We mainly focus on the term involving $\tr(GA)(\mr{S}_{i})$. As we will see later, by exploring the hidden terms using integration by parts, the term involving $\tr(GA)(\mr{S}_{i})$ will generate several terms which would cancel the rest of the terms on the RHS of (\ref{eq:recmomP}) algebraically. Note that
	\beq\label{eq_rewritemrs}
	\mr{S}_{i}=\mr{\bsh}_{i}\adj \wt{B}^{\angi}G\bse_{i} =\sum_{k}\mr{\bsh}_{i}\adj \bse_{k}\bse_{k}\adj \wt{B}^{\angi}G\bse_{i}
	=\sum_{k}^{(i)}\ol{g}_{ik}\frac{1}{\norm{\bsg_{i}}}\bse_{k}\adj \wt{B}^{\angi}G\bse_{i},
	\eeq
	where we use the shorthand notation $\sum_{k}^{(i)}$ to represent the sum over $\llbra 1,N\rrbra \backslash \{i\}.$
	Our calculation relies on the following  integration by parts formula for $g \sim \mathcal{N}(0,\sigma^2)$ (see equation (5.33) of \cite{BEC}) 
	\begin{equation}\label{eq_formulaintergrationbyparts}
		\int_{\mathbb{C}} \bar{g} f(g, \bar{g}) e^{-\frac{|g|^2}{\sigma^2}} \mathrm{d}^2 g= \sigma^2 \int_{\mathbb{C}} \partial_g f(g, \bar{g}) e^{-\frac{|g|^2}{\sigma^2}}  \mathrm{d}^2 g,
	\end{equation}
	where $f: \mathbb{C}^2 \rightarrow \mathbb{C}$ is a differentiable function. By (\ref{eq_rewritemrs}) and (\ref{eq_formulaintergrationbyparts}), we have
	\begin{align}
		\expct{\mr{S}_{i}\tr (GA)\frX^{(p-1,p)}}
		= \sum_{k}^{(i)}\expct{\ol{g}_{ik}\frac{1}{\norm{\bsg_{i}}}\bse_{k}\adj \wt{B}^{\angi}G\bse_{i}\tr(GA)\frX^{(p-1,p)}} \quad \quad \quad \quad \quad \quad \quad  \quad \quad \quad \quad \quad \quad  \nonumber	 \\
		=\frac{1}{N}\sum_{k}^{(i)}\expct{\frac{\partial\norm{\bsg_{i}}^{-1}}{\partial g_{ik}}\bse_{k}\adj \wt{B}^{\angi}G\bse_{i}\tr(GA)\frX^{(p-1,p)}}
		+\frac{1}{N}\sum_{k}^{(i)}\expct{\frac{1}{\norm{\bsg_{i}}}\frac{\partial (\bse_{k}\adj\wt{B}^{\angi}G\bse_{i})}{\partial g_{ik}}\tr (GA)\frX^{(p-1,p)}} \nonumber	\\
		+ \frac{1}{N}\sum_{k}^{(i)}\expct{\frac{\bse_{k}\adj\wt{B}^{\angi}G\bse_{i}}{\norm{\bsg_{i}}}\frac{\partial\tr(GA)}{\partial g_{ik}}\frX^{(p-1,p)}}	
		+\frac{p-1}{N}\expct{\frac{1}{\norm{\bsg_{i}}}\bse_{k}\adj\wt{B}^{\angi}G\bse_{i}\tr(GA)\frac{\partial P_{i}}{\partial g_{ik}}\frX^{(p-2,p)}} \quad \quad \quad \nonumber	\\
		+ \hfill \frac{p}{N}\expct{\frac{1}{\norm{\bsg_{i}}}  \bse_{k}\wt{B}^{\angi}G\bse_{i}\tr(GA)\frac{\partial \ol{P}_{i}}{g_{ik}}\frX^{(p-1,p-1)}}, \quad \quad \quad  \label{eq_cumulantfirst}
	\end{align}
	where we recall that $\frX^{(p-1,p)}=P_{i}^{p-1}\ol{P}_{i}^{p}$ as defined in (\ref{eq_definitionpq}). We point out that the second term on the RHS of (\ref{eq_cumulantfirst}) will provide some hidden terms for cancellation. Further, since $\wt{B}^{\angi}$ is independent of $\bm{v}_i,$ we have that
	\begin{equation*}
		\frac{\partial(\bm{e}_k^* \wt{B}^{\angi} G \bm{e}_i)}{\partial g_{ik}}=\bm{e}_k^* \wt{B}^{\angi} \frac{\partial G}{\partial g_{ik}} \bm{e}_i.
	\end{equation*}  
	
	We start with  the analysis of the Household reflection defined in (\ref{eq_prd}). Recall $\bm{r}_i=\ell_i(\bm{e}_i+\bm{h}_i)$ and $\ell_i$ defined in (\ref{eq_prd2}). By (\ref{eq_householdderivative}), (\ref{eq_hiderivative}), and (\ref{eq_partialli2}), we have that 
	\begin{multline*}
		\frac{\partial R_{i}}{\partial g_{ik}}=-\ell_{i}^{4}\norm{\bsg_{i}}^{-1}\ol{h}_{ik}h_{ii}(\bse_{i}+\bsh_{i})(\bse_{i}+\bsh_{i})\adj	\\
		-\ell_{i}^{2}\norm{\bsg_{i}}^{-1}\left(\bse_{k}\bse_{i}\adj-\ol{h}_{ik}(\bsh_{i}\bse_{i}\adj+\bse_{i}\bsh_{i}\adj)
		+\bse_{k}\bsh_{i}\adj-\ol{h}_{ik}\bsh_{i}\bsh_{i}\adj -\ol{h}_{ik}\bsh_{i}\bsh_{i}\adj\right).
	\end{multline*}
	We can further rewrite the above equation as 
	\beq\label{eq_househouldfinalexpression}
	\frac{\partial R_{i}}{\partial g_{ik}}=-\frac{\ell_{i}^{2}}{\norm{\bsg_{i}}}\bse_{k}(\bse_{i}\adj +\bsh_{i}\adj)+\Delta_{R}(i,k),
	\eeq
	where we defined
	\begin{align}\label{eq_deltarg}
		\Delta_{R}(i,k)
		\deq & -\frac{\ell_{i}^{4}}{\norm{\bsg_{i}}^{2}}\bar{h}_{ik}h_{ii}(\bse_{i}+\bsh_{i})(\bse_{i}+\bsh_{i})\adj \nonumber \\
		&+\ell_{i}^{2}\norm{\bsg_{i}}^{-1}\ol{h}_{ik}(\bsh_{i}\bse_{i}\adj+\bse_{i}\bsh_{i}\adj +2\bsh_{i}\bsh_{i}\adj).
	\end{align}
	By (\ref{eq_househouldfinalexpression}) and the fact that $\wt{B}^{\angi}$ is independent of $g_{ik}$, we obtain that
	\beq\label{eq:Gder1}
	\frac{\partial G}{\partial g_{ik}}=\frac{\ell_{i}^{2}}{\norm{\bsg_{i}}}GA\left(\bse_{k}(\bse_{i}\adj+\bsh_{i}\adj)\wt{B}^{\angi}R_{i}+R_{i}\wt{B}^{\angi}\bse_{k}(\bse_{i}\adj+\bsh_{i}\adj)\right)G+\Delta_{G}(i,k),
	\eeq
	where
	\beq\label{eq_defndeltag}
	\Delta_{G}(i,k)\deq -GA\left(\Delta_{R}(i,k)\wt{B}^{\angi}R_{i}+R_{i}\wt{B}^{\angi}\Delta_{R}(i,k)\right)G.
	\eeq

	We see from  (\ref{eq:Gder1}) and \eqref{eq:DeltaG1} that 
	\begin{align}\label{eq:1st_der_expa}
		\frac{1}{N}\sum_{k}^{(i)}\bse_{k}\adj\wt{B}^{\angi}\frac{\partial G}{\partial g_{ik}}\bse_{i}
		=\frac{\ell_{i}^{2}}{\norm{\bsg_{i}}}\frac{1}{N}\sum_{k}^{(i)}\bse_{k}\adj \wt{B}^{\angi}GA\left(\bse_{k}(\bse_{i}\adj +\bsh_{i}\adj)\wt{B}^{\angi}R_{i}+R_{i}\wt{B}^{\angi}\bse_{k}(\bse_{i}\adj+\bsh_{i}\adj)\right)G\bse_{i} +\rO_{\prec}(\Pi_{i}^{2})	\nonumber \\
		=\frac{\ell_{i}^{2}}{\norm{\bsg_{i}}}\frac{1}{N}\sum_{k}^{(i)} \left[ a_{k}\bse_{k}\adj\wt{B}^{\angi}G\bse_{k}(-\bsh_{i}\adj\wt{B}-\bse_{i}\adj\wt{B})G\bse_{i}+\bse_{k}\adj \wt{B}^{\angi}GAR_{i}\wt{B}^{\angi}\bse_{k}(G_{ii}+\bsh_{i}\adj G\bse_{i})\right]+\rO_{\prec}(\Pi_{i}^{2})  \nonumber	\\
		=\frac{\ell_{i}^{2}}{\norm{\bsg_{i}}}\frac{1}{N}\sum_{k}^{(i)} a_{k}(\wt{B}^{\angi}G)_{kk}(-b_{i}T_{i}-(\wt{B}G)_{ii}) \quad \quad \quad \quad \quad \quad \quad \quad \quad \quad \quad \quad \quad \quad  \quad \quad \quad \quad \quad \quad \quad \quad \nonumber \\
		+\frac{\ell_{i}^{2}}{\norm{\bsg_{i}}}\frac{1}{N}\sum_{k}^{(i)} (\bse_{k}\adj \wt{B}^{\angi}GAR_{i}\wt{B}^{\angi}\bse_{k})(G_{ii}+T_{i})+\rO_{\prec}(\Pi_{i}^{2}),    \quad \quad \quad \quad \quad \quad \quad \quad    
	\end{align}
	where in the second equality we used (\ref{eq_hbwtr}) and in the third equality we used (\ref{eq_hbwtr}) and (\ref{eq_shorhandnotation}). Moreover, by (\ref{eq_locallaweqbound}) and the assumption that $\{a_i\}$ and $\{b_i\}$ are bounded, we readily see that
	\begin{equation}\label{eq_firstapproximatetrace}
		\tr(A\wt{B}^{\angi}G)-\frac{1}{N}\sum_{k}^{(i)}a_{k}(\wt{B}^{\angi}G)_{kk} 
		=\frac{1}{N}a_{i}b_{i}G_{ii}\prec\frac{1}{N},
	\end{equation}
	and
	\begin{align}\label{eq_firstapproximatetrace2}
		\tr(\wt{B}^{\angi}GAR_{i}\wt{B}^{\angi})-\frac{1}{N}\sum_{k}^{(i)}\bse_{k}\adj\wt{B}^{\angi}GAR_{i}\wt{B}^{\angi}\bse_{k}
		=-\frac{b_{i}}{N}\bse_{i}\adj GA\wt{B}\bsh_{i}  \prec\frac{1}{N}.
	\end{align}
	We claim that  we can replace $\wt{B}^{\angi}$ by $\wt{B}$ in (\ref{eq_firstapproximatetrace}) and (\ref{eq_firstapproximatetrace2}) without changing the error bound in (\ref{eq:1st_der_expa}). In fact, by the definition of $\wt{B}^{\angi}$, we have that 
	\begin{align}\label{eq_removeindexcontrol}
		\tr(A\wt{B}G)-\tr(A\wt{B}^{\angi}G)	&
		=\tr(A\wt{B}G)-\tr(AR_{i}\wt{B}R_{i}G) \nonumber	\\
		&=\tr(A\bsr_{i}\bsr_{i}\adj\wt{B}G)+\tr(A\wt{B}\bsr_{i}\bsr_{i}\adj G)-\tr(A\bsr_{i}\bsr_{i}\adj \wt{B}\bsr_{i}\bsr_{i}\adj G) \nonumber	\\
		&=\frac{1}{N}\bsr_{i}\adj\wt{B}GA\bsr_{i}+\frac{1}{N}\bsr_{i}\adj GA\wt{B}\bsr_{i}-\frac{1}{N}\bsr_{i}\adj \wt{B}\bsr_{i}\bsr_{i}\adj GA\bsr_{i}.
	\end{align}

	Recall that $\bsr_i=\ell_i(\bm{e}_i+\bm{h}_i).$ Then we have 
	\begin{align}\label{eq_casestudyerrorone}
		\left|\frac{1}{N}\bsr_{i}\adj\wt{B}GA\bsr_{i}\right|
		& \lesssim \frac{1}{N}\left(\norm{\sqrt{A}G\adj \wt{B}\bse_{i}} \norm{A^{1/2}}+\norm{G\adj \wt{B}\bsh_{i}}\right) \nonumber \\
		& \lesssim \frac{1}{N}\left(\bse_{i}\wt{B} G AG\adj \wt{B}\bse_{i}+b_{i}^{2}\bsh_{i}\adj G G\adj\bsh_{i}\right)^{1/2},
	\end{align}
	where in the second inequality we used the fact that $\| A \|$ is bounded. Moreover, using (\ref{eq_gggconnetction}), (\ref{eq_connectiongreenfunction}) and (\ref{defn_eq_matrices}), it is easy to see that
	\beq\label{eq:a_i}
	\bse_{i}\adj \wt{B}GAG\adj \wt{B}\bse_{i}
	=\frac{\bse_{i}\adj\sqrt{A}\wt{B}\sqrt{A}\wt{G}\wt{G}\adj \sqrt{A}\wt{B}\sqrt{A}\bse_{i}}	{a_{i}}
	=\frac{\bse_{i}\adj \wt{H}\wt{G}\wt{G}\adj \wt{H}\bse_{i}}{a_{i}}
	\leq \norm{\wt{H}}^{2}\frac{\im \wt{G}_{ii}}{a_{i}\eta},
	\eeq
	where in the last inequality we used the fact that $\wt{G}$ is Hermitian and the Ward identity 
	\begin{equation*}
		\sum_{j=1}^N |\wt{G}_{ij}|^2=(\wt{G} \wt{G}^*)_{ii} =\frac{\im \wt{G}_{ii}}{\eta}.
	\end{equation*}
	Similarly, we can show that 
	\beq\label{eq_part2ai}
	b_{i}^{2}\bsh_{i}\adj GG\adj \bsh_{i}=b_{i}^{2}\bse_{i}\caG \caG\adj \bse_{i} =b_{i}\bse_{i}\wt{\caG}B\wt{\caG}\adj \bse_{i}\leq b_{i}\norm{B}\frac{\im \wt{\caG}_{ii}}\eta.
	\eeq
	Since $A, B, \wt{H}$ are bounded, by (\ref{eq_casestudyerrorone}), (\ref{eq:a_i}), and (\ref{eq_part2ai}), we see that 
	\begin{equation*}
		\left|\frac{1}{N} \bsr_i^* \wt{B} GA \bsr_i \right| \lesssim \frac{1}{N}\left( \frac{\im \wt{G}_{ii}}{\eta}+\frac{\im \wt{\caG}_{ii}}{\eta} \right)^{1/2}.
	\end{equation*}
	By an analogous discussion, we can control the other two terms of the RHS of (\ref{eq_removeindexcontrol}) as
	\begin{equation*}
		\left|\frac{1}{N} \bsr_i^* GA \wt{B} \bsr_i \right| \lesssim \frac{1}{N}\left( \frac{\im \wt{G}_{ii}}{\eta}+\frac{\im \wt{\caG}_{ii}}{\eta} \right)^{1/2}, \  \left|\frac{1}{N} \bsr_i^* \wt{B} \bsr_i \bsr_i^* GA \bsr_i \right| \lesssim \frac{1}{N}\left( \frac{\im \wt{G}_{ii}}{\eta}+\frac{\im \wt{\caG}_{ii}}{\eta} \right)^{1/2}.
	\end{equation*}

	Furthermore, from the spectral decomposition of $\wt{H}$ and $\wt{\caH}$, it is clear that that $\im \wt{G}_{ii}/\eta\geq c$ and $\im \wt{\caG}_{ii}/\eta\geq c$ for some fixed constant $c>0.$ This shows that for some constant $C>0,$ 
	\beqs
	\frac{1}{N}\left(\frac{\im\wt{G}_{ii}+\im\wt{\caG}_{ii}}{\eta}\right)^{1/2}\leq \frac{C}{N}\frac{\im\wt{G}_{ii}+\im\wt{\caG}_{ii}}{\eta}=C\Pi_{i}^{2}.
	\eeqs
	Together with (\ref{eq_removeindexcontrol}), we arrive at 
	\begin{equation}\label{eq_firstindexreducecomplete}
		\tr(A\wt{B}^{\angi}G)=\tr(A\wt{B}G)+\rO_{\prec}(\Pi_{i}^2). 
	\end{equation}
	By a discussion similar to (\ref{eq_firstindexreducecomplete}), we can get
	\begin{equation}\label{eq_secondindexreducecomplete}
		\tr(\wt{B}^{\angi}GAR_{i}\wt{B}^{\angi})=\tr{(\wt{B}GA \wt{B})}+\rO_{\prec}(\Pi_{i}^2). 
	\end{equation}
	
	Therefore, by (\ref{eq:1st_der_expa}), (\ref{eq_firstindexreducecomplete}), and (\ref{eq_secondindexreducecomplete}), we conclude that
	\begin{align}\label{eq:Sexpand}
		\frac{1}{N}\sum_{k}^{(i)}\bse_{k}\adj\wt{B}^{\angi} & \frac{\partial G}{\partial g_{ik}}\bse_{i} \nonumber \\
		& =\frac{\ell_{i}^{2}}{\norm{\bsg_{i}}}\left(\tr(A\wt{B}G)(-b_{i}T_{i}-(\wt{B}G)_{ii})+\tr(\wt{B}GA\wt{B})(G_{ii}+T_{i})\right)+\rO_{\prec}(\Pi_{i}^{2}).
	\end{align}

	Note that compared to the expansion (\ref{eq:recmomP}), the coefficient in front of $\tr{(A \wt{B}G)}$ is still different. We need further explore the hidden relation. By a discussion similar to (\ref{eq:Sexpand}), we have that 
	\begin{align}\label{eq:Texpand}
		\frac{1}{N}\sum_{k}^{(i)}\bse_{k}\adj & \frac{\partial G}{\partial g_{ik}}\bse_{i} \nonumber \\
		&=\frac{\ell_{i}^{2}}{\norm{\bsg_{i}}}\left(\tr(GA)(-b_{i}T_{i}-(\wt{B}G)_{ii})+\tr(GA\wt{B})(G_{ii}+T_{i})\right)+\rO_{\prec}(\Pi_{i}^{2}).
	\end{align}
	In light of (\ref{eq:Sexpand}), (\ref{eq:Texpand})
	and (\ref{eq:recmomP}), it suffices to control 
	$$\tr(GA)\frac{1}{N}\sum_{k}^{(i)}\bse_{k}\adj\wt{B}^{\angi}\frac{\partial G}{\partial g_{ik}}\bse_{i}-\tr(A\wt{B}G)\frac{1}{N}\sum_{k}^{(i)}\bse_{k}\adj \frac{\partial G}{\partial g_{ik}}\bse_{i}.$$
	Combining (\ref{eq:Sexpand}) and (\ref{eq:Texpand}), we have that
	\begin{align}\label{eq_combineminus}
		\tr(GA)& \frac{1}{N}\sum_{k}^{(i)}\bse_{k}\adj\wt{B}^{\angi}\frac{\partial G}{\partial g_{ik}}\bse_{i}-\tr(A\wt{B}G)\frac{1}{N}\sum_{k}^{(i)}\bse_{k}\adj \frac{\partial G}{\partial g_{ik}}\bse_{i}	\nonumber \\
		&=\frac{\ell_{i}^{2}}{\norm{\bsg_{i}}}(G_{ii}+T_{i})(\tr(GA)\tr(\wt{B}GA\wt{B})-\tr(GA\wt{B})\tr(GA\wt{B}))+\rO_{\prec}(\Pi_{i}^{2})	\nonumber \\
		&=\frac{\ell_{i}^{2}}{\norm{\bsg_{i}}}(G_{ii}+T_{i})(-\Upsilon-\tr(A\wt{B}G)+\tr(GA))+\rO_{\prec}(\Pi_{i}^{2}),
	\end{align}
	where in the second equality we employed the definition of $\Upsilon$ in (\ref{eq_defnupsilon}). Denote
	\begin{align}\label{eq_defnmathsfe2}
		\mathsf{e}_{i2}\deq \left(\frac{\ell_{i}^{2}}{\norm{\bsg_{i}}}-\norm{\bsg_{i}}\right)(&-G_{ii}\tr(A\wt{B}G)-(G_{ii}+T_{i})(\Upsilon-\tr(GA))) \nonumber \\
		&+\tr(A\wt{B}G)\left(\norm{\bsg_{i}}\mr{T}_{i}-\frac{\ell_{i}^{2}}{\norm{\bsg_{i}}}T_{i}\right).
	\end{align} 
	By a discussion similar to (\ref{eq_boundepsilon1}), we can conclude that
	\begin{equation}\label{eq_controlepsilon2}
		|\mathsf{e}_{i2}| \prec N^{-1/2}. 
	\end{equation}
	Moreover, by a simple algebraic calculation using (\ref{eq_combineminus}) and (\ref{eq_defnmathsfe2}), we find that 
	\begin{align}\label{eq_finalexpansion}
		\tr(GA)\frac{1}{N}\sum_{k}^{(i)}\bse_{k}\adj\wt{B}^{\angi}\frac{\partial G}{\partial g_{ik}}\bse_{i}=&\norm{\bsg_{i}}(-G_{ii}\tr(A\wt{B}G)-(G_{ii}+T_{i})(\Upsilon-\tr(GA))) \nonumber \\
		+&\tr(A\wt{B}G)\left(\frac{1}{N}\sum_{k}^{(i)}\bse_{k}\adj\frac{\partial G}{\partial g_{ik}}\bse_{i}-\norm{\bsg_{i}}\mr{T}_{i}\right)+\mathsf{e}_{i2}+\rO_{\prec}(\Pi_{i}^{2}).
	\end{align}
	With (\ref{eq_finalexpansion}) and (\ref{eq_cumulantfirst}), we can now come back to discussing \eqref{eq:recmomP}. More specifically, inserting (\ref{eq_finalexpansion}) into (\ref{eq_cumulantfirst}) and then (\ref{eq:recmomP}), we have that  
	\begin{align}\label{eq:recmomP1}
		&\expct{\frX_{i}^{(p,p)}}
		=\expct{\left(\frac{1}{N}\sum_{k}^{(i)}\frac{1}{\norm{\bsg_{i}}}\bse_{k}\adj\frac{\partial G}{\partial g_{ik}}\bse_{i}-\mr{T}_{i}\right)\tr(A\wt{B}G)\frX_{i}^{(p-1,p)}} \quad \quad \quad \quad \quad \quad \quad \quad \quad \quad \quad \quad \quad \quad \quad \quad \quad \quad \quad \quad \nonumber \\
		+&\frac{1}{N}\sum_{k}^{(i)}\expct{\frac{\partial\norm{\bsg_{i}}^{-1}}{\partial g_{ik}}\bse_{k}\adj\wt{B}^{\angi}G\bse_{i}\tr(GA)\frX_{i}^{(p-1,p)}} \nonumber +\frac{1}{N}\sum_{k}^{(i)}\expct{\frac{\bse_{k}\adj\wt{B}^{\angi}G\bse_{i}}{\norm{\bsg_{i}}}\tr\left(\frac{\partial G}{\partial g_{ik}}A\right)\frX_{i}^{(p-1,p)}} \quad \quad \quad \quad \quad \quad \quad \quad \\
		+&\frac{p-1}{N}\sum_{k}^{(i)}\expct{\frac{1}{\norm{\bsg_{i}}}\bse_{k}\adj\wt{B}^{\angi}G\bse_{i}\tr(GA)\frac{\partial P_{i}}{\partial g_{ik}}\frX_{i}^{(p-2,p)}} +\frac{p}{N}\sum_{k}^{(i)}\expct{\frac{1}{\norm{\bsg_{i}}}\bse_{k}\adj\wt{B}^{\angi}G\bse_{i}\tr(GA)\frac{\partial \ol{P}_{i}}{\partial g_{ik}}\frX_{i}^{(p-1,p-1)}} \quad \quad \quad \quad  \nonumber \\
		+&\expct{\left(\mathsf{e}_{i1}\tr(GA)+\frac{1}{\norm{\bsg_{i}}}\mathsf{e}_{i2}+\rO_{\prec}(\Pi_{i}^{2})\right)\frX_{i}^{(p-1,p)}}.
	\end{align}
	We do one more expansion for the first term of the above equation. Recall the definitions in (\ref{eq_shorhandnotation}). Applying the technique of integration by parts, that is, (\ref{eq_formulaintergrationbyparts}), we get that 
	\begin{align}\label{eq:recmomP2}
		\expct{\left(\mr{T}_{i}-\frac{1}{N}\sum_{k}^{(i)}\frac{1}{\norm{\bsg_{i}}}\bse_{k}\adj\frac{\partial G}{\partial g_{ik}}\bse_{i}\right)\tr(A\wt{B}G)\frX_{i}^{(p-1,p)}} \quad \quad \quad \quad \quad \quad  \quad \quad \quad \quad 
		\\
		=\frac{1}{N}\sum_{k}^{(i)}\expct{\frac{\partial \norm{\bsg_{i}}^{-1}}{\partial g_{ik}}G_{ki}\tr(A\wt{B}G)\frX_{i}^{(p-1,p)}}	
		+\frac{1}{N}\sum_{k}^{(i)}\expct{\frac{1}{\norm{\bsg_{i}}}G_{ki}\tr(A\wt{B}\frac{\partial G}{\partial g_{ik}})\frX_{i}^{(p-1,p)}} \quad \quad \quad   \nonumber \\
		+\frac{p-1}{N}\sum_{k}^{(i)}\expct{\frac{1}{\norm{\bsg_{i}}}G_{ki}\tr(A\wt{B}G)\frac{\partial P_{i}}{\partial g_{ik}}\frX_{i}^{(p-2,p)}}	
		+\frac{p}{N}\sum_{k}^{(i)}\expct{\frac{1}{\norm{\bsg_{i}}}G_{ki}\tr(A\wt{B}G)\frac{\partial \ol{P}_{i}}{\partial g_{ik}}\frX_{i}^{(p-1,p-1)}}. \nonumber
	\end{align}
	Combining (\ref{eq:recmomP2}) and (\ref{eq:recmomP1}), we can rewrite 
	\begin{equation}\label{eq_pkfinalrepresentation}
		\expct{\frX_{i}^{(p,p)}}=\expct{ \mathfrak{C}_1\frX_{i}^{(p-1,p)}}
		+\expct{\mathfrak{C}_2\frX_{i}^{(p-2,p)}}
		+\expct{\mathfrak{C}_3\frX_{i}^{(p-1,p-1)}},
	\end{equation}
	where the coefficients $\mathfrak{C}_k,k=1,2,3$ are defined as 
	\begin{align}\label{eq_defnmathfrackc1}
		\mathfrak{C}_1:=\frac{1}{N} \sum_{k}^{(i)} \Big(\frac{\partial \norm{\bsg_{i}}^{-1}}{\partial g_{ik}}G_{ki}\tr(A\wt{B}G)+\frac{1}{\norm{\bsg_{i}}}G_{ki}\tr(A\wt{B}\frac{\partial G}{\partial g_{ik}})+\frac{\partial\norm{\bsg_{i}}^{-1}}{\partial g_{ik}}\bse_{k}\adj\wt{B}^{\angi}G\bse_{i}\tr(GA) \nonumber  \\
		+\frac{\bse_{k}\adj\wt{B}^{\angi}G\bse_{i}}{\norm{\bsg_{i}}}\tr\left(\frac{\partial G}{\partial g_{ik}}A\right)+\left(\mathsf{e}_{i1}\tr(GA)+\frac{1}{\norm{\bsg_{i}}}\mathsf{e}_{i2}+\rO_{\prec}(\Pi_{i}^{2})\right)
		\Big),  \\
		\mathfrak{C}_2:=\frac{p-1}{N} \sum_{k}^{(i)} \Big(\frac{1}{\norm{\bsg_{i}}}\bse_{k}\adj\wt{B}^{\angi}G\bse_{i}\tr(GA)\frac{\partial P_{i}}{\partial g_{ik}}+\frac{1}{\norm{\bsg_{i}}}G_{ki}\tr(A\wt{B}G)\frac{\partial P_{i}}{\partial g_{ik}} \Big),  \label{eq_defnmathfrackc2} \\
		\mathfrak{C}_3:=\frac{p}{N} \sum_{k}^{(i)} \Big(\frac{1}{\norm{\bsg_{i}}}\bse_{k}\adj\wt{B}^{\angi}G\bse_{i}\tr(GA)\frac{\partial \ol{P}_{i}}{\partial g_{ik}}+ \frac{1}{\norm{\bsg_{i}}}G_{ki}\tr(A\wt{B}G)\frac{\partial \ol{P}_{i}}{\partial g_{ik}}\Big). \label{eq_defnmathfrackc3}
	\end{align}
	To conclude the proof of (\ref{eq:PKrecmoment}), it suffices to control the coefficients $\mathfrak{C}_k, k=1,2,3.$ For $\mathfrak{C}_1,$ by Lemma \ref{lem:recmomerror}, we find that
	\begin{equation}\label{eq:controlc1}
		\mathfrak{C}_1 \prec N^{-1/2}+\Pi_i^2. 
	\end{equation}
	For $\mathfrak{C}_2,$ by (\ref{eq_partialpi}) and Lemma \ref{lem:recmomerror}, we find that 
	\begin{equation}\label{eq:controlc2}
		\mathfrak{C}_2 \prec \Pi_i^2.
	\end{equation} 
	Similarly, we can show that 
	\begin{equation}\label{eq:controlc3}
		\mathfrak{C}_3 \prec \Pi_i^2. 
	\end{equation}
	Using (\ref{eq_controlparameter}), we complete the proof of (\ref{eq:PKrecmoment}) using (\ref{eq:controlc1}), (\ref{eq:controlc2}), (\ref{eq:controlc3}), and (\ref{eq_pkfinalrepresentation}). 
	
	Finally, due to similarity, we only briefly discuss the proof of (\ref{eq:etakrecursive}). Using the definition of $K_i$ in (\ref{eq_pk}) and the fact that $T_{i}-\mr{T}_{i}=h_{ii}G_{ii}\prec N^{-1/2},$ we find that  
	\begin{align}
		\expct{\frY_{i}^{(p,p)}}=\expct{\left(\mr{T}_{i}+\tr(GA)(b_{i}T_{i}+(\wt{B}G)_{ii})-\tr(GA\wt{B}G)(G_{ii}+T_{i})\right)\frY_{i}^{(p-1,p)}}+\expct{\rO_{\prec}(N^{-1/2})\frY_{i}^{(p-1,p)}} \nonumber \\
		=\sum_{k}^{(i)}\expct{\frac{\ol{g}_{ik}}{\norm{\bsg_{i}}}\bse_{k}\adj G\bse_{i}\frY_{i}^{(p-1,p)}}+\expct{\left(\tr(GA)(b_{i}T_{i}+(\wt{B}G)_{ii})-\tr(GA\wt{B}G)(G_{ii}+T_{i})\right)\frY_{i}^{(p-1,p)}} \nonumber \\
		+\expct{\rO_{\prec}(N^{-1/2})\frY_{i}^{(p-1,p)}}, \quad \quad \quad \quad \quad \quad \quad \quad \quad \quad  \quad \quad \quad \quad \quad \ \label{eq_closeendetaexpansion}
	\end{align}
	where in the second equality we used the definition  in (\ref{eq_shorhandnotation}). Applying (\ref{eq_formulaintergrationbyparts}) to the first term of the RHS of the above equation, we obtain
	\begin{align}
		\sum_{k}^{(i)}\expct{\frac{\ol{g}_{ik}}{\norm{\bsg_{i}}}\bse_{k}\adj G\bse_{i}\frY_{i}^{(p-1,p)}}
		&=\frac{1}{N}\sum_{k}^{(i)}\expct{\frac{\partial \norm{\bsg_{i}}^{-1}}{\partial g_{ik}}\bse_{k}\adj G\bse_{i}\frY_{i}^{(p-1,p)}}
		+\frac{1}{N}\sum_{k}^{(i)}\expct{\frac{1}{\norm{\bsg_{i}}}\bse_{k}\adj\frac{\partial G}{\partial g_{ik}}\bse_{i}\frY_{i}^{(p-1,p)}} \nonumber \\
		& +\frac{p-1}{N}\sum_{k}^{(i)}\expct{\frac{1}{\norm{\bsg_{i}}}\bse_{k}\adj G\bse_{i}\frac{\partial K_{i}}{\partial g_{ik}}\frY_{i}^{(p-2,p)}} \nonumber	\\
		&+\frac{p}{N}\sum_{k}^{(i)}\expct{\frac{1}{\norm{\bsg_{i}}}\bse_{k}\adj G\bse_{i}\frac{\partial \ol{K}_{i}}{\partial g_{ik}}\frY_{i}^{(p-1,p-1)}}. \label{eq_firstterm}
	\end{align}
	Inserting \eqref{eq:Texpand} into (\ref{eq_firstterm}) and then (\ref{eq_closeendetaexpansion}), by a discussion similar to the cancellation in (\ref{eq:recmomP1}) and error controls in (\ref{eq_boundepsilon1}) and (\ref{eq_controlepsilon2}), we conclude that   
	\begin{align*}
		\expct{\frY_{i}^{(p,p)}}
		&=\frac{1}{N}\sum_{k}^{(i)}\expct{\frac{\partial\norm{\bsg_{i}}^{-1}}{\partial g_{ik}}\bse_{k}\adj G\bse_{i}\frY_{i}^{(p-1,p)}}+\frac{p-1}{N}\sum_{k}^{(i)}\expct{\frac{1}{\norm{\bsg_{i}}}\bse_{k}\adj G\bse_{i}\frac{\partial K_{i}}{\partial g_{ik}}\frY_{i}^{(p-2,p)}} \\
		&+\frac{p}{N}\sum_{k}^{(i)}\expct{\frac{1}{\norm{\bsg_{i}}}\bse_{k}\adj G\bse_{i}\frac{\partial \ol{K}_{i}}{\partial g_{ik}}\frY_{i}^{(p-1,p-1)}}+\expct{\rO_{\prec}(N^{-1/2})\frY_{i}^{(p-1,p)}}.
	\end{align*}
	Using (\ref{eq_partialKderivative}), Lemma \ref{lem:recmomerror} and a discussion similar to (\ref{eq:controlc1}), (\ref{eq:controlc2}), and (\ref{eq:controlc3}), we can finish the proof of (\ref{eq:etakrecursive}).
\end{proof}

\section*{Acknowledgements}

The authors would like to thank the Editor, Associate Editor and an anonymous referee for their many critical suggestions which have significantly improved the paper. We also want to thank Zhigang Bao and Ji Oon Lee for many helpful discussions and comments. The first author is partially supported by NSF-DMS 2113489 and grateful for the AMS-SIMONS travel grant (2020-2023).  The second author is supported by ERC Advanced Grant "RMTBeyond" No.~101020331. 

\appendix

\section*{Appendix}
The appendix is organized as follows. In Section \ref{proof_prop32}, we prove the properties of subordination function, i.e., Proposition  \ref{prop:stabN}. Following the proof strategy as described in Section \ref{sec_proofroute} of the main manuscript, using the estimates established in Section \ref{sec_entrylaw} of the main manuscript, we prove the fluctuation averaging estimates for the essential quantities in Section \ref{sec_faall} and provide the proof for the local law Theorem \ref{thm:main} in Section \ref{sec_finalsection}. Once the local laws are proved, in Section \ref{sec_proofofspikedmodel}, we prove Theorems \ref{thm_rigidity} and \ref{thm_delocalization}. We collect and prove some auxiliary lemmas in Appendix \ref{appendix_deriavtive}.

\section{Proof of Proposition \ref{prop:stabN}}\label{proof_prop32}
The proof is divided into two parts. In Section \ref{sec:freelimit}, we prove an analogue of Proposition \ref{prop:stabN} for {the $N$-independent quantities} $\Omega_{\alpha}$, $\Omega_{\beta}$ and $\mu_{\alpha}\boxtimes\mu_{\beta}$ in Lemma \ref{lem:stabbound} and Proposition \ref{prop:stablimit}. 
Then in Section \ref{sec:freeN}, based on the bounds  $\absv{\Omega_{A}(z)-\Omega_{\alpha}(z)}=\rO_{\prec}(N^{-1/2})$ and $\absv{\Omega_{B}(z)-\Omega_{\beta}(z)}=\rO_{\prec}(N^{-1/2}),$ we extend the results of Section \ref{sec:freelimit} to $\Omega_A$ and $\Omega_B,$ and complete the proof of Proposition \ref{prop:stabN}. In Section \ref{sec:omegabound}, we prove the upper bounds of $\absv{\Omega_{A}(z)-\Omega_{\alpha}(z)}$ and $\absv{\Omega_{B}(z)-\Omega_{\beta}(z)}.$ Proofs therein follow the ideas of \cite{Bao-Erdos-Schnelli2016,BEC}, in the sense that we first prove the bound when $\im z$ is large and then use a bootstrapping argument to expand the domain of $z$.

\subsection{Some preliminary results: free convolution of $\mu_{\alpha} \boxtimes \mu_{\beta}$}\label{sec:freelimit}
In this subsection, we collect the results concerning the measure $\mu_{\alpha} \boxtimes \mu_{\beta}$ and its corresponding subordination functions. Most of the results have been proved in \cite[Section 5]{JHC}. 

\begin{lem}[Lemma 5.2 and Proposition 5.6 of \cite{JHC}]\label{lem:stabbound}
	Suppose that $\mu_{\alpha}$ and $\mu_{\beta}$ satisfy Assumption \ref{assu_limit}.
	Then the following statements hold: \\
	(i). For any compact  $\caD \subset \C_{+}\cup(0,\infty)$, there exists some constant $C>1$ such that for all $z\in\caD$,
	\begin{align*}
		C^{-1}\leq \absv{\Omega_{\alpha}(z)}\leq C, \ \  C^{-1}\leq \absv{\Omega_{\beta}(z)}\leq C.
	\end{align*}
	(ii). There exists some constant $\varsigma_{0}>0$ such that for all $z\in\C_{+}$,
	\begin{align} \label{eq:stabbound}
		\dist(\Omega_{\alpha}(z),\supp\mu_{\beta})>\varsigma_{0},  \ \  \dist(\Omega_{\beta}(z),\supp\mu_{\alpha})>\varsigma_{0}.
	\end{align}
	(iii). {Recall (\ref{eq_edge}).} We have that 
	\begin{align} \label{eq_boundedomega}
		&0<\Omega_{\alpha}(E_{-})<E_{-}^{\beta}<E_{+}^{\beta}<\Omega_{\alpha}(E_{+}),	
		&0<\Omega_{\beta}(E_{-})<E_{-}^{\alpha}<E_{+}^{\alpha}<\Omega_{\beta}(E_{+}).
	\end{align}
\end{lem}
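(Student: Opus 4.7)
The plan is to read off the bounds from the analytic structure of the subordination functions and let the regularity hypothesis \ref{assu_limit}(iii) do the heavy lifting. First I would establish part (i) by combining continuity of $\Omega_{\alpha}, \Omega_{\beta}$ on $\mathbb{C}_+ \cup (0,\infty)$ (Remark \ref{rem_ctxexpansubordination}) with compactness of $\caD$, which immediately yields the upper bound. The lower bound is trivial on $\mathbb{C}_+ \cap \caD$ since $\Omega_{\alpha}(z), \Omega_{\beta}(z) \in \mathbb{C}_+$ by \eqref{eq_subsys3}; on $(0,\infty) \cap \caD$, I would suppose $\Omega_{\alpha}(E_0) = 0$ for some $E_0 > 0$ and derive a contradiction by inserting this into \eqref{eq_suborsystem} and using that $M_{\mu_{\beta}}(\omega) = \omega\, m_{\mu_{\beta}}(0) + \rO(\omega^{2})$ near the origin (recall $0 \notin \supp \mu_{\beta}$ since $E_{-}^{\beta} > 0$).

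Next I would prove part (ii) by contradiction. Suppose a sequence $z_n \in \mathbb{C}_+$ satisfies $\Omega_{\alpha}(z_n) \to w_\ast \in \supp \mu_{\beta}$. Passing to a convergent subsequence in the Riemann sphere, a limit inside $\mathbb{C}_+$ contradicts $\Omega_{\alpha}(\mathbb{C}_+) \subset \mathbb{C}_+$; a limit at infinity is excluded by the asymptotic $\Omega_{\alpha}(z) \sim z$ at $\infty$ (which follows from \eqref{eq_subsys3} combined with \eqref{eq_suborsystem}); the remaining case $z_n \to x_\ast \in \mathbb{R}$ is where Assumption \ref{assu_limit}(iii) is used essentially. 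Here the power-law exponents $t_{\pm}^{\beta} \in (-1,1)$ force the boundary behavior of $\im \Omega_{\alpha}$ to be controlled, and a Stieltjes-inversion argument for $\mu_{\beta}$ combined with the $\mathsf{M}$-transform form of \eqref{eq_suborsystem} shows that $\Omega_{\alpha}$ cannot continuously approach the support of $\mu_{\beta}$ from $\mathbb{C}_+$ without breaking the free-convolution identity \eqref{eq_multiidentity}.

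Finally I would deduce (iii) from (ii) by examining the behavior at the edges of $\supp(\mu_{\alpha} \boxtimes \mu_{\beta})$. At $E_{+}$, the continuous extension $\Omega_{\alpha}(E_{+})$ is real and, by (ii), lies outside $[E_{-}^{\beta}, E_{+}^{\beta}]$, so it is either below $E_{-}^{\beta}$ or above $E_{+}^{\beta}$. To pin down the correct alternative I would analytically continue $\Omega_{\alpha}$ on $[E_{+}, \infty)$, check that it is monotone increasing there (from the derivative computed via \eqref{eq_suborsystemPhi}), and match with $\Omega_{\alpha}(z) \sim z$ as $z \to +\infty$, forcing $\Omega_{\alpha}(E_{+}) > E_{+}^{\beta}$. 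A symmetric treatment at $E_{-}$, together with $E_{-} > 0$ and $\lim_{z \searrow -\infty} \Omega_{\alpha}(z) = -\infty$ from \eqref{eq_subsys2}, yields the lower-edge inequality and the positivity $\Omega_{\alpha}(E_{-}) > 0$. The analogous bounds for $\Omega_{\beta}$ follow by exchanging the roles of $(\alpha, \beta)$.

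The main obstacle is part (ii), specifically the boundary case $z_n \to x_\ast \in \mathbb{R}$. The uniform distance bound $\varsigma > 0$ relies critically on the power-law exponents $t_{\pm}^{\alpha}, t_{\pm}^{\beta} \in (-1, 1)$; without this hypothesis, $\Omega_{\alpha}$ could acquire vanishing imaginary part at a boundary point of $\supp \mu_{\beta}$ and the strict separation would fail. This is precisely the regularity step carried out in the proof of Proposition 5.6 of \cite{JHC}, and I would follow that argument verbatim.
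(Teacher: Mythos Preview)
The paper does not prove this lemma at all: it is stated with the attribution ``Lemma 5.2 and Proposition 5.6 of \cite{JHC}'' and no proof is given in the body or the appendix. Your sketch is a reasonable outline of how the cited results are established in \cite{JHC}, and you correctly identify that the substantive work (the uniform separation in part (ii)) hinges on the power-law hypothesis in Assumption \ref{assu_limit}(iii); since you explicitly plan to follow \cite{JHC} verbatim for that step, your proposal is consistent with the paper's treatment, which is simply to invoke that reference.
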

The lower bound in the second statement of Lemma \ref{lem:stabbound} is the so-called \emph{stability bound}, as it implies the stability of the systems \eqref{eq_suborsystem} and \eqref{eq_suborsystemPhi}. 

Recall (\ref{eq_defn_salphabeta}). One consequence of the stability bound is that the edges of $\mu_{\alpha} \boxtimes \mu_{\beta}$ in \eqref{eq_edge} are completely characterized by the equation $\caS_{\alpha\beta}(z)=0$. Specifically, in \cite[Proposition 6.15]{JHC}, the author has proved that there are exactly two real positive solutions $z=E_{\pm}$ to the equation $\caS_{\alpha\beta}(z)=0$ and that $\{x\in\R_{+}:\rho(x)>0\}=(E_{-},E_{+})$. Using this as an input, the author in \cite{JHC} also shows that the subordination functions admit the following Pick representations. Analogous results hold for the $L$-transforms of $\mu_{\alpha}$ and $\mu_{\beta}$. 
\begin{lem}[Lemma 5.8 of \cite{JHC}]\label{lem:reprM}
	Suppose that $\mu_{\alpha}$ and $\mu_{\beta}$ satisfy Assumption \ref{assu_limit}. Then there exist unique finite measures $\wh{\mu}_{\alpha}$, $\wh{\mu}_{\beta}$, $\wt{\mu}_{\alpha}$, and $\wt{\mu}_{\beta}$ on $\R_{+}$ such that the following hold:
	\begin{align}
		&L_{\mu_{\alpha}}(z)=1+m_{\wh{\mu}_{\alpha}}(z),&
		&\wh{\mu}_{\alpha}(\R_{+})=\Var{\mu_{\alpha}},& &\supp\wh{\mu}_{\alpha}=\supp\mu_{\alpha},\label{eq:reprM} \\
		&\frac{\Omega_{\alpha}(z)}{z}=1+m_{\wt{\mu}_{\alpha}}(z), & &\wt{\mu}_{\alpha}(\R_{+})=\Var{\mu_{\alpha}}, &
		&\supp \wt{\mu}_{\alpha}=\supp \rho, \nonumber
	\end{align}
	where $\Var{\mu_{\alpha}}$ is the variance of a random variable with law $\mu_{\alpha}$. Similar results hold true if we replace $\alpha$ with $\beta.$ 
\end{lem}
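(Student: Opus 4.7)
The plan is to derive both representations from the \textbf{Herglotz--Nevanlinna representation theorem}: an analytic map $f\colon\C_+\to\overline{\C_+}$ with $\sup_{y\geq 1}y|f(iy)|<\infty$ is the Stieltjes transform of a unique finite positive Borel measure on $\R$, whose total mass equals $\lim_{y\to\infty}(-iy)f(iy)$. Each assertion therefore reduces to three ingredients: (a) the upper half-plane property, (b) the $1/z$-decay at infinity (which also identifies the total mass), and (c) the support statement, obtained by analytic continuation off the target support together with Stieltjes inversion.

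For the $L$-transform I would first rewrite $L_{\mu_\alpha}(z)=m_{\mu_\alpha}(z)/m_{\nu_\alpha}(z)$, where $d\nu_\alpha(x)\deq x\,d\mu_\alpha(x)$ is a probability measure by the mean-one normalization of Assumption~\ref{assu_limit}(i). A direct computation and symmetrization over the two integration variables yields the manifestly non-negative identity
\begin{equation*}
\im L_{\mu_\alpha}(z)=\frac{\eta}{2|m_{\nu_\alpha}(z)|^{2}}\iint\frac{(x-y)^{2}}{|x-z|^{2}|y-z|^{2}}\,d\mu_\alpha(x)\,d\mu_\alpha(y),\qquad z=E+i\eta\in\C_+,
\end{equation*}
settling (a). Expanding $m_{\mu_\alpha}$ and $m_{\nu_\alpha}$ as power series in $1/z$ at infinity and using $\int x\,d\mu_\alpha=1$ gives $L_{\mu_\alpha}(z)=1-\Var{\mu_\alpha}/z+O(z^{-2})$, which supplies (b) and identifies the total mass as $\Var{\mu_\alpha}$. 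For (c), Assumption~\ref{assu_limit}(ii) yields $\rho_\alpha>0$ on the interior of its support, and $m_{\nu_\alpha}$ is strictly monotone on each component of $\R\setminus\supp\mu_\alpha$ and hence nowhere vanishes there; so $L_{\mu_\alpha}$ extends analytically across $\R\setminus\supp\mu_\alpha$, and Stieltjes inversion gives $\supp\widehat{\mu}_\alpha\subseteq\supp\mu_\alpha$. For the reverse inclusion, letting $\eta\downarrow 0$ with $E=x_0$ in the interior of $\supp\mu_\alpha$ in the identity above, the Poisson kernel $\eta/|x-z|^{2}$ concentrates to $\pi\delta_{x_0}$ against the continuous density $\rho_\alpha(x_0)>0$, so the right-hand side tends to a strictly positive value and $\im L_{\mu_\alpha}(x_0+i0)>0$.

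For the second representation, the key identity is
\begin{equation*}
\Omega_\alpha(z)/z=L_{\mu_\alpha}(\Omega_\beta(z)),
\end{equation*}
obtained by dividing the relation $zM_{\mu_\alpha}(\Omega_\beta(z))=\Omega_\alpha(z)\Omega_\beta(z)$ from \eqref{eq_suborsystem} by $z\Omega_\beta(z)$. Combined with the previous step, $\Omega_\alpha(z)/z-1=m_{\widehat{\mu}_\alpha}(\Omega_\beta(z))$, which lies in $\overline{\C_+}$ for $z\in\C_+$ since both $\Omega_\beta$ (Lemma~\ref{lem_subor}) and $m_{\widehat{\mu}_\alpha}$ map $\C_+$ to $\C_+$. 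For (b), the companion relation $\Omega_\beta(z)=zL_{\mu_\beta}(\Omega_\alpha(z))$ together with $L_{\mu_\beta}(w)\to 1$ as $|w|\to\infty$ forces $\Omega_\beta(iy)/(iy)\to 1$, which substituted back produces $\Omega_\alpha(z)/z-1=-\Var{\mu_\alpha}/z+O(z^{-2})$. A second application of Herglotz--Nevanlinna then yields $\widetilde{\mu}_\alpha$ with total mass $\Var{\mu_\alpha}$. For (c), Lemma~\ref{lem:stabbound}(ii) ensures that $\Omega_\beta$ remains a distance $\varsigma>0$ away from $\supp\mu_\alpha$ on $\C_+$ and extends analytically across $\R\setminus\supp\rho$ (via the extension theorem invoked in \cite{JHC}); hence $m_{\widehat{\mu}_\alpha}\circ\Omega_\beta$ extends analytically across $\R\setminus\supp\rho$, giving $\supp\widetilde{\mu}_\alpha\subseteq\supp\rho$. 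The reverse inclusion uses $\rho>0$ in the interior of its support together with the subordination system to deduce $\im\Omega_\beta(x_0+i0)>0$ for $x_0\in(E_-,E_+)$, whence $\im m_{\widehat{\mu}_\alpha}(\Omega_\beta(x_0+i0))>0$.

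The main obstacle will be the \emph{lower} bounds on the supports, namely the strict positivity of the boundary imaginary parts on the interiors of the target supports. For $\widehat{\mu}_\alpha$ this is immediate from the quadratic form identity above. For $\widetilde{\mu}_\alpha$, however, it requires unpacking the subordination system \eqref{eq_suborsystem} to link $\rho(x_0)>0$ with $\im\Omega_\beta(x_0+i0)>0$; here the stability bound \eqref{eq:stabbound} and the square-root edge behavior from \cite{JHC}, both packaged in Lemma~\ref{lem:stabbound}, are the key inputs.
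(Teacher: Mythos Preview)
The paper does not give its own proof of this lemma; it is quoted verbatim from \cite{JHC} (Lemma~5.8 there). The closest the paper comes is the brief sketch for the empirical analogue, Lemma~\ref{lem:reprMemp}, which invokes exactly the ingredients you use: the Nevanlinna--Pick representation for existence and uniqueness, and analyticity of $L_{\mu_A}$ and $\Omega_A$ off the relevant supports for the support inclusions. Your proposal is correct and follows the same route; the explicit symmetrization identity for $\im L_{\mu_\alpha}$ and the chain $\Omega_\alpha(z)/z=L_{\mu_\alpha}(\Omega_\beta(z))=1+m_{\wh\mu_\alpha}(\Omega_\beta(z))$ are precisely the computations underlying \cite[Lemmas~6.2 and~6.14]{JHC}.

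One small remark on the reverse support inclusion for $\wt\mu_\alpha$: you correctly note that $\rho(x_0)>0$ should force $\im\Omega_\beta(x_0+i0)>0$. This is exactly the content of \eqref{eq_imaginarypart} in Lemma~\ref{lem:suborsqrt}, whose proof in the paper runs through the identity $\im\Omega_\alpha(x)\int t|t-\Omega_\alpha(x)|^{-2}d\mu_\beta(t)=x\rho(x)$ together with the stability bound of Lemma~\ref{lem:stabbound}; so the input you flag as the ``main obstacle'' is already available to you in the paper and does not need to be reproved.
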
 

Based on Lemma \ref{lem:stabbound}, it was proved in \cite{JHC} that the equation $\mathcal{S}_{\alpha \beta}(z)=0$ is  \emph{locally quadratic} for $z$ around the edges $E_{+}$ and $E_{-}$ and consequently, $\Omega_{\alpha}$ and $\Omega_{\beta}$ have square root behavior. Specifically, denote
\beq\label{eq_defnztilde}
\wt{z}_{+}(\Omega)\deq \Omega\frac{M_{\mu_{\alpha}}^{-1} ( M_{\mu_{\beta}}(\Omega) )}{M_{\mu_{\beta}}(\Omega)}.
\eeq
From the results of \cite[Section 6.3]{JHC}, we find that there exists some neighborhood $U$ around $E_+$ such that when $z \in U,$ $\wt{z}_{+}(\Omega_{\alpha}(z))=z$  and
\begin{align}\label{eq:suborTaylor}
	z-E_{+}=\wt{z}_{+}(\Omega_{\alpha}(z))-\wt{z}_{+}(\Omega_{\alpha}(E_{+}))& = \frac{1}{2}\wt{z}_{+}''(\Omega_{\alpha}(E_{+}))(\Omega_{\alpha}(z)-\Omega_{\alpha}(z))^{2} \\
	&	+\rO(\absv{\Omega_{\alpha}(z)-\Omega_{\alpha}(E_{+})}^{3}). \nonumber
\end{align}
Moreover, we have that
$\wt{z}_{+}'(\Omega_{\alpha}(E_{+}))=0$ and $\wt{z}_{+}''(\Omega_{\alpha}(E_{+}))>0.$ Combining \eqref{eq:suborTaylor} with the fact that $\{x\in\R_{+}:\rho(x)>0\}=(E_{-},E_{+})$, we obtain the following lemma. Its proof can be found in \cite[Proposition 5.10]{JHC}.
\begin{lem}\label{lem:suborsqrt}
	Suppose Assumption \ref{assu_limit} holds.	 Then there exist positive constants $\gamma^{\alpha}_{\pm}$, $\gamma^{\beta}_{\pm}$, $0<\tau<1$ and $\eta_{0}$ such that
	\beq \label{eq_squareroot}
	\Omega_{\alpha}(z)=\Omega_{\alpha}(E_{\pm})+\gamma^{\alpha}_{\pm}\sqrt{\pm(z-E_{\pm})}+\rO(\absv{z-E_{\pm}}^{3/2}),
	\eeq
	holds uniformly in $z\in\{z\in\C_{+}: E_+-\tau \leq
	E\leq \tau^{-1},\ 0 \leq \eta<\eta_{0}\}$, where $\sqrt{-1}=\ii.$ The same asymptotics holds with $\alpha$ replaced by $\beta$. Also, for any compact interval $[a,b]\subset(E_{-},E_{+})$, there exists a constant $c>0$ such that for all $x\in[a,b]$
	\begin{align}\label{eq_imaginarypart}
		\im\Omega_{\alpha}(x)>c,\ \ \im\Omega_{\beta}(x)>c.
	\end{align}
\end{lem}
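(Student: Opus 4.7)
The plan is to handle the square-root expansion and the bulk positivity separately: the first by inverting the Taylor expansion (\ref{eq:suborTaylor}), the second by a contradiction argument based on the stability bound (\ref{eq:stabbound}).

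For the square-root asymptotic near $E_+$, the key input is that $\wt z_+'(\Omega_\alpha(E_+)) = 0$ and $\wt z_+''(\Omega_\alpha(E_+)) > 0$, so $\wt z_+$ is a non-degenerate quadratic branch near $\Omega_\alpha(E_+)$. I would set $\gamma_+^{\alpha} := \sqrt{2/\wt z_+''(\Omega_\alpha(E_+))} > 0$ and invert (\ref{eq:suborTaylor}) by Lagrange inversion, choosing the branch of $\sqrt{\cdot}$ so that the resulting $\Omega_\alpha(z)$ satisfies $\arg\Omega_\alpha(z) \geq \arg z$ from (\ref{eq_subsys3}), extended by continuity to the closed half-plane via Remark \ref{rem_ctxexpansubordination}. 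This yields the expansion (\ref{eq_squareroot}) in a small complex neighborhood of $E_+$; carrying (\ref{eq:suborTaylor}) to one additional order pins the remainder to the claimed $\rO(|z-E_+|^{3/2})$. The statement at $E_-$ uses the analogously defined $\wt z_-$ and the same inversion, and the expansions for $\Omega_\beta$ follow from the symmetric roles of $\alpha,\beta$ in the subordination system (\ref{eq_suborsystem}). Uniformity over $\{E_+ - \tau \leq E \leq \tau^{-1},\,0 \leq \eta < \eta_0\}$ is automatic outside a fixed neighborhood of $E_+$, since there both sides of (\ref{eq_squareroot}) are uniformly bounded by Lemma \ref{lem:stabbound}, so the asymptotic equality is trivially valid as a two-sided bound.

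For the bulk positivity (\ref{eq_imaginarypart}) I would argue by contradiction. Suppose $\im\Omega_\alpha(x_0) = 0$ for some $x_0 \in (E_-, E_+)$. By the stability bound (\ref{eq:stabbound}), $\Omega_\alpha(x_0)$ lies at distance at least $\varsigma > 0$ from $\supp\mu_\beta$, so $M_{\mu_\beta}$ is real-analytic near $\Omega_\alpha(x_0)$ and $M_{\mu_\beta}(\Omega_\alpha(x_0)) \in \R$. The subordination identity $x_0 M_{\mu_\beta}(\Omega_\alpha(x_0)) = \Omega_\alpha(x_0)\Omega_\beta(x_0)$ from (\ref{eq_suborsystem}) together with $|\Omega_\alpha(x_0)| \geq C^{-1}$ from Lemma \ref{lem:stabbound} then forces $\Omega_\beta(x_0) \in \R$; invoking (\ref{eq:stabbound}) once more shows $m_{\mu_\alpha}(\Omega_\beta(x_0)) \in \R$. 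Taking the imaginary part of (\ref{eq_multiidentity}) at the boundary $z = x_0 + \ii 0^+$ then yields $\pi x_0 \rho(x_0) = 0$, contradicting the strict positivity of $\rho$ on the open bulk (guaranteed by Assumption \ref{assu_limit} via (\ref{eq_originalsquarerootbehavior}) and Stieltjes inversion). Hence $\im\Omega_\alpha(x) > 0$ throughout $(E_-, E_+)$, and continuity of $\im\Omega_\alpha$ (Remark \ref{rem_ctxexpansubordination}) together with compactness of $[a,b]$ yields the uniform lower bound $c > 0$. The identical argument applies to $\Omega_\beta$.

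The main obstacle I anticipate is in the first step: selecting the square-root branch consistently with the Pick property (\ref{eq_subsys3}) as the expansion is continued off the real axis, and bookkeeping enough Taylor orders in (\ref{eq:suborTaylor}) so that the inversion remainder genuinely reaches the order $\rO(|z-E_\pm|^{3/2})$ rather than the cruder $\rO(|z-E_\pm|)$ that a single-step inversion produces.
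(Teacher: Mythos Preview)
Your proposal is correct. For the square-root expansion you follow exactly the approach of \cite{JHC}, which the paper simply cites; inverting \eqref{eq:suborTaylor} via the non-degenerate quadratic branch is the right mechanism, and your identification of $\gamma^{\alpha}_{+}=\sqrt{2/\wt z_{+}''(\Omega_{\alpha}(E_{+}))}$ is correct.

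For the bulk positivity \eqref{eq_imaginarypart} your route differs from the paper's. You argue by contradiction: if $\im\Omega_{\alpha}(x_{0})=0$ then, pushing through the subordination identity and the stability bound, $\rho(x_{0})=0$, contradicting strict positivity; compactness then yields the uniform $c>0$. The paper instead extracts a direct quantitative bound from the same identity: writing
\[
\im\Omega_{\alpha}(x)\int\frac{t}{|t-\Omega_{\alpha}(x)|^{2}}\dd\mu_{\beta}(t)=\im\bigl(xm_{\rho}(x)+1\bigr)=x\rho(x)>ca,
\]
the integral is bounded above by Lemma~\ref{lem:stabbound}, so $\im\Omega_{\alpha}(x)$ is bounded below immediately. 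Your argument is perfectly valid but slightly less direct---you first show positivity pointwise and then invoke compactness, whereas the paper's computation produces an explicit lower bound in terms of $\rho$ in one step. Both rely on the same underlying identity and the same inputs (stability bound plus positivity of $\rho$ on the open bulk).
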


Furthermore, the density of the $\mu_{\alpha} \boxtimes \mu_{\beta}$ also has the square root behavior. 
\begin{lem}[Theorem 3.3 of \cite{JHC}] Suppose that (i) and (ii) of Assumption \ref{assu_limit} hold. Then there exists some constant $C>1$ such that for all $x\in[E_{-},E_{+}]$,
	\begin{equation*}
		C^{-1}\leq \frac{\rho(x)}{\sqrt{(E_{+}-x)(x-E_{-})}} \leq C.
	\end{equation*} 
\end{lem}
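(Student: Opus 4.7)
The plan is to reduce the claim to a two-sided comparison $\rho(x)\sim \im\Omega_{\beta}(x)$ on $[E_{-},E_{+}]$ via the subordination identity, and then to read off the square-root behavior of $\im\Omega_{\beta}(x)$ from Lemma \ref{lem:suborsqrt}. The subordination functions $\Omega_{\alpha},\Omega_{\beta}$ are the natural objects carrying the geometry of the edge, so transferring the problem to them is the key simplification.

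First, I would start from the identity (\ref{eq_multiidentity}), namely $zm_{\mu_{\alpha}\boxtimes\mu_{\beta}}(z)=\Omega_{\beta}(z)m_{\mu_{\alpha}}(\Omega_{\beta}(z))$ (after cancelling the additive $1$). Writing $\Omega_{\beta}(x)=p+\ii q$ with $q=\im\Omega_{\beta}(x)>0$ for $x\in(E_{-},E_{+})$ (by \eqref{eq_imaginarypart}) and letting $\eta\searrow 0$, a direct computation gives
\begin{equation*}
\pi x\,\rho(x)=\im\Omega_{\beta}(x)\int\frac{t}{\absv{t-\Omega_{\beta}(x)}^{2}}\dd\mu_{\alpha}(t).
\end{equation*}
By Lemma \ref{lem:stabbound}(i)--(ii), $\Omega_{\beta}(x)$ is uniformly bounded and uniformly separated from $\supp\mu_{\alpha}\subset[E_{-}^{\alpha},E_{+}^{\alpha}]$ for $x\in[E_{-},E_{+}]$, so $\absv{t-\Omega_{\beta}(x)}\sim 1$ on $\supp\mu_{\alpha}$; combined with $t\in[E_{-}^{\alpha},E_{+}^{\alpha}]$ with $E_{-}^{\alpha}>0$, the integral is $\sim 1$ uniformly. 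Also $x\in[E_{-},E_{+}]$ satisfies $x\sim 1$ thanks to \eqref{eq:priorisupp}. Therefore $\rho(x)\sim \im\Omega_{\beta}(x)$ uniformly on $[E_{-},E_{+}]$.

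Second, I would estimate $\im\Omega_{\beta}(x)$ piecewise using Lemma \ref{lem:suborsqrt}. For $x$ in a small neighborhood $[E_{+}-\tau_{0},E_{+}]$, the expansion \eqref{eq_squareroot} (with $\sqrt{-1}=\ii$) yields $\im\Omega_{\beta}(x)=\gamma_{+}^{\beta}\sqrt{E_{+}-x}+\rO((E_{+}-x)^{3/2})$, so $\im\Omega_{\beta}(x)\sim\sqrt{E_{+}-x}$ after shrinking $\tau_{0}$ if necessary. The analogous expansion at $E_{-}$ gives $\im\Omega_{\beta}(x)\sim\sqrt{x-E_{-}}$ on $[E_{-},E_{-}+\tau_{0}]$. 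On the remaining compact subinterval $[E_{-}+\tau_{0},E_{+}-\tau_{0}]$, \eqref{eq_imaginarypart} provides a uniform positive lower bound and Lemma \ref{lem:stabbound}(i) a uniform upper bound, so $\im\Omega_{\beta}(x)\sim 1$. In each of the three regimes, one factor of $\sqrt{(E_{+}-x)(x-E_{-})}$ is comparable to a positive constant while the other matches the explicit estimate, so gluing the three gives $\im\Omega_{\beta}(x)\sim\sqrt{(E_{+}-x)(x-E_{-})}$ uniformly on $[E_{-},E_{+}]$. Combined with $\rho(x)\sim\im\Omega_{\beta}(x)$ this proves the lemma.

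There is no serious conceptual obstacle here since all the needed analytic inputs have already been established; the main care is bookkeeping, namely choosing $\tau_{0}$ small enough that the remainder in \eqref{eq_squareroot} is dominated by the leading $\sqrt{E_{\pm}-x}$ term uniformly in the edge neighborhoods, and ensuring that the implicit constants in the three regimes can be merged into a single pair of constants $C^{\pm 1}$ independent of $x\in[E_{-},E_{+}]$.
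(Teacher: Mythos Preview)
The paper does not give a proof of this lemma; it simply cites Theorem~3.3 of \cite{JHC}. Your argument is a correct self-contained derivation from the other results in Section~\ref{sec:freelimit}: the identity $\pi x\rho(x)=\im\Omega_{\beta}(x)\int t\,\absv{t-\Omega_{\beta}(x)}^{-2}\dd\mu_{\alpha}(t)$ together with the stability bound \eqref{eq:stabbound} gives $\rho(x)\sim\im\Omega_{\beta}(x)$, and the edge expansion \eqref{eq_squareroot} and the bulk lower bound \eqref{eq_imaginarypart} then give $\im\Omega_{\beta}(x)\sim\sqrt{(E_{+}-x)(x-E_{-})}$. This is exactly the mechanism behind Proposition~\ref{prop:stablimit}(i), so your route is the natural one and in effect reproduces the argument that \cite{JHC} uses.

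Two small points to flag. First, the lemma is stated under only (i) and (ii) of Assumption~\ref{assu_limit}, while the inputs you invoke, Lemmas~\ref{lem:stabbound} and~\ref{lem:suborsqrt}, are stated in this paper under the full Assumption~\ref{assu_limit} (including (iii), which is what guarantees the stability bound \eqref{eq:stabbound}; see Remark after Lemma~\ref{lem:stabbound}). So your argument as written proves the result under the full Assumption~\ref{assu_limit}, not under (i)--(ii) alone; this is harmless for the paper since that is the only regime in which the lemma is ever applied, but it is a genuine weakening of the stated hypotheses. Second, the domain recorded in \eqref{eq_squareroot} is written only for a neighborhood of $E_{+}$; for your argument you also need the analogous expansion at $E_{-}$, which is contained in \cite[Proposition~5.10]{JHC} but not explicitly restated here.
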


To characterize the behavior of $\caS_{\alpha \beta}, \mathcal{T}_{\alpha}$ and $\mathcal{T}_{\beta},$ we need the following lemma. Its proof is analogous to {the counterpart of the additive model in} \cite[Corollary 3.10]{BEC} and {we omit the details}.
\begin{lem}\label{lem:suborder}
	Suppose Assumption \ref{assu_limit} holds. Then	there exist positive constants $\tau$ and $\eta_{0}$ such that the following hold uniformly in $z \in \mathcal{D}_\tau(0,\eta_0)$:
	\begin{align}
		L_{\mu_{\alpha}}'(\Omega_{\beta}(z))\sim  M_{\mu_{\alpha}}'(\Omega_{\beta}(z)) \sim 1, &&
		L_{\mu_{\alpha}}''(\Omega_{\beta}(z))\sim  M_{\mu_{\alpha}}''(\Omega_{\beta}(z)) \sim 1,\label{eq:LMder}\\
		\absv{\Omega_{\alpha}'(z)}\sim\frac{1}{\sqrt{\absv{z-E_{+}}}}, &&
		\absv{\Omega_{\alpha}''(z)}\sim\frac{1}{\absv{z-E_{+}}^{3/2}}\label{eq:suborder}.
	\end{align}
	Similar results hold true if we replace $\alpha$ with $\beta.$
\end{lem}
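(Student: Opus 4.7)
The plan is to exploit the analyticity of $M_\mu$ and $L_\mu$ at subordination images combined with the edge regularity already recorded in Section \ref{sec:freelimit}. I would organize the proof in three stages.

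\textbf{Upper bounds and analyticity.} By Lemma \ref{lem:stabbound}(ii), for $z \in \mathcal{D}_\tau(0,\eta_0)$ with $\tau, \eta_0$ sufficiently small, $\Omega_\beta(z)$ ranges over a compact set that is separated from $\supp\mu_\alpha$ by at least $\varsigma/2$. Since $M_{\mu_\alpha}(w) = 1 - 1/(1 + w m_{\mu_\alpha}(w))$ and $m_{\mu_\alpha}$ is analytic off $\supp\mu_\alpha$, every derivative of $M_{\mu_\alpha}$ and $L_{\mu_\alpha} = M_{\mu_\alpha}/w$ is analytic and hence uniformly bounded on this compact range. Symmetrically for $\Omega_\alpha$ and $\mu_\beta$. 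This gives the upper bounds in \eqref{eq:LMder} immediately and will be reused throughout.

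\textbf{Lower bounds on the first and second derivatives.} For the first derivatives, I would use the reformulation of the edge equation noted in Remark \ref{rem_salphabetaequivalent}: $\mathcal{S}_{\alpha\beta}(E_+) = 0$ reads $E_+^2 L'_{\mu_\beta}(\Omega_\alpha(E_+)) L'_{\mu_\alpha}(\Omega_\beta(E_+)) = 1$, so each factor is nonzero at $E_+$, and continuity on the compact domain gives $L'_{\mu_\alpha}(\Omega_\beta(z)) \sim 1$. The bound on $M'_{\mu_\alpha}$ follows from $L'_{\mu_\alpha}(w) = M'_{\mu_\alpha}(w)/w - M_{\mu_\alpha}(w)/w^2$ together with the already established fact that $\Omega_\beta(z), \Omega_\alpha(z)$, and consequently $M_{\mu_\alpha}(\Omega_\beta(z))$ via $M_{\mu_\alpha}(\Omega_\beta(z)) = zL_{\mu_\alpha}(\Omega_\beta(z)) \Omega_\beta(z)/z \Omega_\beta(z)$, are all of order one. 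For the second derivatives, I would exploit \eqref{eq:suborTaylor} and the strict positivity $\wt{z}_+''(\Omega_\alpha(E_+)) > 0$. Twice-differentiating the explicit formula $\wt{z}_+(\Omega) = \Omega M_{\mu_\alpha}^{-1}(M_{\mu_\beta}(\Omega))/M_{\mu_\beta}(\Omega)$ at the critical point $\Omega = \Omega_\alpha(E_+)$ (where the first derivative vanishes) produces an expression for $\wt{z}_+''(\Omega_\alpha(E_+))$ which is a polynomial in the quantities $M_{\mu_\alpha}, M_{\mu_\beta}, M'_{\mu_\alpha}, M'_{\mu_\beta}, M''_{\mu_\alpha}, M''_{\mu_\beta}$ evaluated at the edge subordination points. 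Since the first six of these are of order one (by the preceding), the positivity of $\wt{z}_+''(\Omega_\alpha(E_+))$ forces $M''_{\mu_\alpha}(\Omega_\beta(E_+))$ and, applying the symmetric argument with $\alpha \leftrightarrow \beta$ and $\wt{z}_+$ centered at $\Omega_\beta(E_+)$, also $M''_{\mu_\beta}(\Omega_\alpha(E_+))$, to be nonzero. The corresponding bounds on $L''_\mu$ follow from differentiating the algebraic relation between $L_\mu$ and $M_\mu$ once more.

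\textbf{Derivatives of the subordination functions.} Differentiating the subordination system $\Phi_{\alpha\beta}(\Omega_\alpha(z), \Omega_\beta(z), z) = 0$ in $z$ yields a $2\times 2$ linear system for $(\Omega'_\alpha, \Omega'_\beta)$ whose coefficient matrix is the $\mathrm{D}\Phi_{\alpha\beta}$ from \eqref{eq_diferentialoperator}, with determinant $-z^{-2}\mathcal{S}_{\alpha\beta}(z)$. By Cramer's rule and the $L'_\mu \sim 1$ bounds just established, $|\Omega'_\alpha(z)| \sim 1/|\mathcal{S}_{\alpha\beta}(z)|$. Now \eqref{eq:suborTaylor} combined with $\wt{z}_+''(\Omega_\alpha(E_+)) > 0$ gives $\Omega_\alpha(z) - \Omega_\alpha(E_+) \sim \sqrt{|z-E_+|}$, and differentiating the identity $\wt{z}_+(\Omega_\alpha(z)) = z$ twice shows $\mathcal{S}_{\alpha\beta}(z) \sim \sqrt{|z-E_+|}$ for $z$ in a neighborhood of $E_+$. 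This immediately yields $|\Omega'_\alpha(z)| \sim 1/\sqrt{|z-E_+|}$ in such a neighborhood. Differentiating once more and using the second derivative formulas from the second application of the implicit function theorem to $\Phi_{\alpha\beta} = 0$ (combined with the already controlled $\mathcal{S}'_{\alpha\beta}$ and the second-derivative bounds from the previous stage) gives $|\Omega''_\alpha(z)| \sim |z-E_+|^{-3/2}$. Away from $E_+$ but still inside $\mathcal{D}_\tau(0,\eta_0)$, everything is bounded and bounded below by compactness.

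\textbf{Main obstacle.} The one step that is not pure bookkeeping is establishing the lower bound on $M''_{\mu_\alpha}(\Omega_\beta(E_+))$. The explicit computation of $\wt{z}_+''(\Omega_\alpha(E_+))$ in terms of the $M$-quantities is algebraically cumbersome, and one must carefully argue that the strict positivity of the single combination $\wt{z}_+''(\Omega_\alpha(E_+))$ separates into nonvanishing of each $M''$ individually, most naturally by combining it with the symmetric statement obtained by swapping $\alpha \leftrightarrow \beta$ (equivalently, expanding near $\Omega_\beta(E_+)$). I expect this symmetric pairing, together with the first-derivative constraint from $\mathcal{S}_{\alpha\beta}(E_+) = 0$, to pin down the individual nonvanishing.
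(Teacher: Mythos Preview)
Your overall architecture is sound, and the third stage (derivatives of $\Omega_\alpha$) matches the paper's argument via the inverse function theorem on $\wt{z}_+(\Omega_\alpha(z))=z$. Your first-derivative lower bound via $\caS_{\alpha\beta}(E_+)=0$ is a valid alternative to what the paper does.

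The genuine gap is in your second-derivative lower bound. The expression $\wt{z}_+''(\Omega_\alpha(E_+))$ is an \emph{affine} function of $M_{\mu_\alpha}''(\Omega_\beta(E_+))$ and $M_{\mu_\beta}''(\Omega_\alpha(E_+))$ (second derivatives of products and quotients are linear in the second derivatives of the factors). Knowing that this affine combination is positive does not force either term to be nonzero: if the constant term is already positive, both $M''$ values could vanish. Invoking the $\alpha\leftrightarrow\beta$ symmetric version gives you a second affine inequality, but two linear inequalities in two unknowns still cannot separate nonvanishing of each unknown. So the ``symmetric pairing'' you hope will resolve the obstacle does not obviously close the argument.

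The paper sidesteps this entirely by using the Pick representation from Lemma~\ref{lem:reprM}: since $L_{\mu_\alpha}(w)=1+m_{\wh{\mu}_\alpha}(w)$, one has
\[
L_{\mu_\alpha}'(w)=\int\frac{\dd\wh{\mu}_\alpha(x)}{(x-w)^{2}},\qquad
L_{\mu_\alpha}''(w)=2\int\frac{\dd\wh{\mu}_\alpha(x)}{(x-w)^{3}}.
\]
At $w=\Omega_\beta(E_+)$, which by Lemma~\ref{lem:stabbound}(iii) is real and strictly to the right of $\supp\wh{\mu}_\alpha=\supp\mu_\alpha$, the first integrand is strictly positive and the second strictly negative. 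Hence $L_{\mu_\alpha}'(\Omega_\beta(E_+))>0$ and $L_{\mu_\alpha}''(\Omega_\beta(E_+))<0$ are nonzero real numbers, and continuity plus the stability bound~\eqref{eq:stabbound} propagates this to a neighborhood of $E_+$. The corresponding statements for $M_{\mu_\alpha}',M_{\mu_\alpha}''$ follow from the algebraic relation $M_{\mu_\alpha}(w)=wL_{\mu_\alpha}(w)$. This is both simpler and complete; you should replace your second stage with this argument.
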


Finally, we investigate the properties of $\caS_{\alpha\beta}$, $\caT_{\alpha}$, and $\caT_{\beta}$ in the following proposition. {We omit its proof, since it can be easily proved following the proof of \cite[Lemma 6.6]{JHC} provided Lemmas \ref{lem:suborsqrt} and \ref{lem:suborder} hold true. 
	An analogous result for free additive convolution has been established in \cite[Corollary 3.11]{BEC}.}
\begin{prop}\label{prop:stablimit}
	Suppose Assumption \ref{assu_limit} holds. Then there exist constants $\tau,\eta_{0}>0$ such that the following hold:
	\begin{itemize}
		\item[(i)] 
		For $z=E+\ii\eta\in\caD_{\tau}(0,\eta_{0})$ uniformly, we have  
		\begin{align*}
			\im M_{\mu_{\alpha}\boxtimes\mu_{\beta}}(z)\sim \im m_{\mu_{\alpha}\boxtimes\mu_{\beta}}(z) \sim\im \Omega_{\alpha}(z)\sim\im\Omega_{\beta}(z)\sim
			\begin{cases}
				\sqrt{\kappa+\eta} &\text{if }E\in[E_{-},E_{+}],	\\
				\dfrac{\eta}{\sqrt{\kappa+\eta}} &\text{if }E\notin[E_{-},E_{+}],
			\end{cases}
		\end{align*}

		\item[(ii)] {For all fixed $\eta_{0} \leq \eta_{U}$}, there exists a constant $C>0$ such that
		\begin{align}\label{eq_stalphabetabound}
			\caS_{\alpha\beta}(z) &\sim\sqrt{\kappa+\eta}, &|\caT_{\alpha}(z)| &\leq C,& |\caT_{\beta}(z)| &\leq C
		\end{align}
		hold uniformly in $z\in\caD_{\tau}(0,\eta_{0})$.
		\item[(iii)] There exists a constant $\delta>0$ such that 
		\begin{align*}
			&\caT_{\alpha}(z)\sim 1, & &\caT_{\beta}(z)\sim 1
		\end{align*}
		hold uniformly in $z\in\{z\in\C_{+}:\absv{z-E_{+}}<\delta\}$.
	\end{itemize}
\end{prop}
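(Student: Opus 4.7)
I would split the argument into three parts matching the statement. For part (i), near $E_{+}$ I plug in the expansion \eqref{eq_squareroot} from Lemma \ref{lem:suborsqrt}: taking imaginary parts of $\sqrt{-(z-E_{+})}$ with the branch that is positive imaginary for $E<E_{+}$ and positive real for $E>E_{+}$ gives $\im\Omega_{\alpha}(z),\im\Omega_{\beta}(z)\sim \sqrt{\kappa+\eta}$ inside the spectrum and $\sim \eta/\sqrt{\kappa+\eta}$ outside, and similarly for $z$ close to $E_{+}$. For $E$ bounded away from $E_{+}$, either $\im\Omega_{\alpha,\beta}\sim 1$ (when $E\in(E_{-},E_{+})$, from \eqref{eq_imaginarypart}) or $\im\Omega_{\alpha,\beta}\sim \eta$ (when $E>E_{+}$, using analyticity and real-valuedness of $\Omega_{\alpha,\beta}$ off the spectrum). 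To transfer the bounds to $M_{\mu_{\alpha}\boxtimes\mu_{\beta}}$ and $m_{\mu_{\alpha}\boxtimes\mu_{\beta}}$, I use $M_{\mu_{\alpha}\boxtimes\mu_{\beta}}(z)=M_{\mu_{\alpha}}(\Omega_{\beta}(z))$ and Lemma \ref{lem:suborder}, which give $M'_{\mu_{\alpha}}(\Omega_{\beta}(E_{+}))\sim 1$ and hence preserve the asymptotic; the identity \eqref{eq_multiidentity} then transfers it to $m$.

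For part (ii), I begin from $\caS_{\alpha\beta}(E_{+})=0$ (Remark \ref{rem_salphabetaequivalent}). The upper bound $|\caS_{\alpha\beta}(z)|\lesssim \sqrt{\kappa+\eta}$ near $E_{+}$ follows from Taylor expanding $\caS_{\alpha\beta}$ in $(\Omega_{\alpha},\Omega_{\beta})$ via the chain rule, using $|\Omega_{\alpha,\beta}(z)-\Omega_{\alpha,\beta}(E_{+})|=\rO(\sqrt{|z-E_{+}|})$ and boundedness of the $L$-derivatives from Lemma \ref{lem:suborder}. For the matching lower bound, the efficient route is implicit differentiation of $\Phi_{\alpha\beta}=0$ in $z$, which via Cramer's rule together with $\det\mathrm{D}\Phi_{\alpha\beta}=-z^{-2}\caS_{\alpha\beta}$ yields the exact identity
\begin{equation*}
\Omega'_{\alpha}(z)=-\frac{\Omega_{\alpha}(z)+z\Omega_{\beta}(z)L'_{\mu_{\alpha}}(\Omega_{\beta}(z))}{z\,\caS_{\alpha\beta}(z)}.
\end{equation*}
The numerator is a sum of positive quantities at $E_{+}$ (note $L'_{\mu_{\alpha}}>0$ as an integral of a positive kernel, and $\Omega_{\alpha},\Omega_{\beta}>0$), hence bounded away from $0$. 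Combined with $|\Omega'_{\alpha}(z)|\sim 1/\sqrt{\kappa+\eta}$ from \eqref{eq:suborder}, this gives $|\caS_{\alpha\beta}(z)|\sim \sqrt{\kappa+\eta}$. Away from $E_{+}$ in $\caD_{\tau}(0,\eta_{U})$, $\caS_{\alpha\beta}$ is continuous and its only zeros on the closure are $E_{-}$ and $E_{+}$, so compactness furnishes a uniform positive lower bound. The bounds $|\caT_{\alpha,\beta}|\lesssim 1$ follow directly by substituting Lemma \ref{lem:suborder} and Lemma \ref{lem:stabbound}(i) into the defining formulas.

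For part (iii), it suffices to show $\caT_{\alpha}(E_{+}),\caT_{\beta}(E_{+})\neq 0$ and then invoke continuity. At $z=E_{+}$, $\Omega_{\alpha}(E_{+})$ and $\Omega_{\beta}(E_{+})$ are real with $\Omega_{\alpha}(E_{+})>E_{+}^{\beta}$ and $\Omega_{\beta}(E_{+})>E_{+}^{\alpha}$ by Lemma \ref{lem:stabbound}(iii). Differentiating the Pick representation of Lemma \ref{lem:reprM} term by term, one checks that $L'_{\mu_{\alpha,\beta}}>0$ while $L''_{\mu_{\alpha,\beta}}<0$ at these arguments (integrals of definite-sign kernels against the positive measures $\wh{\mu}_{\alpha},\wh{\mu}_{\beta}$). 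Consequently, the two summands in
\begin{equation*}
\caT_{\alpha}(E_{+})=\tfrac{1}{2}\bigl[E_{+}L''_{\mu_{\beta}}(\Omega_{\alpha}(E_{+}))L'_{\mu_{\alpha}}(\Omega_{\beta}(E_{+}))+(E_{+}L'_{\mu_{\beta}}(\Omega_{\alpha}(E_{+})))^{2}L''_{\mu_{\alpha}}(\Omega_{\beta}(E_{+}))\bigr]
\end{equation*}
share the same sign and cannot cancel, so $|\caT_{\alpha}(E_{+})|\geq c>0$; the same argument handles $\caT_{\beta}$.

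\paragraph{Main obstacle.} The delicate step is the lower bound $|\caS_{\alpha\beta}(z)|\gtrsim \sqrt{\kappa+\eta}$ near $E_{+}$, since a direct Taylor expansion only delivers the upper bound and the leading coefficient in the chain-rule expansion is a priori not obviously nonzero. My plan circumvents this via the implicit-function identity for $\Omega'_{\alpha}$ above, which transfers the known divergence rate of $|\Omega'_{\alpha}|$ at the edge (from Lemma \ref{lem:suborder}) into the needed lower bound on $|\caS_{\alpha\beta}|$ without ever identifying the leading coefficient explicitly.
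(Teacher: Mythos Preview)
Your plan is correct and essentially matches the paper's proof. Parts (i) and (iii) are identical in spirit: the paper also takes imaginary parts of the square-root expansion \eqref{eq_squareroot} for $\im\Omega_{\alpha,\beta}$, transfers to $m$ and $M$ via \eqref{eq_multiidentity} and the stability bound, and for (iii) computes the signs of $L',L''$ at $E_{+}$ exactly as you do. For (ii) near the edge, the paper derives the identity $\caS_{\alpha\beta}(z)=-M'_{\mu_{\alpha}}(\Omega_{\beta}(z))\,\wt{z}'_{+}(\Omega_{\alpha}(z))$ and combines $M'_{\mu_{\alpha}}\sim 1$ with $\wt{z}'_{+}(\Omega_{\alpha}(z))\sim\sqrt{|z-E_{+}|}$; since $\wt{z}'_{+}(\Omega_{\alpha})=1/\Omega'_{\alpha}$, this is equivalent to your Cramer's-rule route (which is in fact \eqref{eq_brackter}).

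The one place where the paper proceeds differently is the lower bound on $|\caS_{\alpha\beta}|$ \emph{away} from $E_{+}$. Your compactness argument presupposes that $\caS_{\alpha\beta}$ has no zeros in $\C_{+}$, which you do not justify; the cited fact about two real zeros only covers the boundary. The paper instead produces an explicit quantitative bound: writing $\caS_{\alpha\beta}+1=z^{2}L'_{\mu_{\alpha}}(\Omega_{\beta})L'_{\mu_{\beta}}(\Omega_{\alpha})$ and using the Nevanlinna representation of $L'$ together with the subordination equations, one gets
\[
|\caS_{\alpha\beta}(z)|\ \ge\ 1-|z|^{2}\frac{\im(\Omega_{\alpha}/z)\,\im(\Omega_{\beta}/z)}{\im\Omega_{\alpha}\,\im\Omega_{\beta}}
\ =\ \frac{\im z\;\im M_{\mu_{\alpha}\boxtimes\mu_{\beta}}(z)}{\im\Omega_{\alpha}(z)\,\im\Omega_{\beta}(z)}\ \ge\ c>0
\]
for $\eta\ge\eta_{0}$. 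This both rules out interior zeros and gives the uniform bound directly, without appeal to compactness. If you keep your compactness argument, you should insert this inequality (or an equivalent non-vanishing argument in $\C_{+}$) to close the gap.
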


\subsection{Free convolution of $\mu_{A} \boxtimes \mu_{B}$: proof of Proposition \ref{prop:stabN}}\label{sec:freeN}
In this subsection, we prove Proposition \ref{prop:stabN}. More specifically, we will prove our results on a slightly larger domain $\caD_{0},$ where $\caD_{0}\deq \caD_{I}\cup\caD_{O}$ and   
\begin{align}
	\caD_{I}\deq &	\{z\in\C_{+}: \re z\in[E_{+}-\tau ,E_{+}+N^{-1+\xi \epsilon}],\,\im z\in[ N^{-1+\xi \epsilon},\eta_{1}]\}, \nonumber	\\
	\caD_{O}\deq &	\{z\in\C_{+}: \re z\in[E_{+}+N^{-1+\xi \epsilon},\tau^{-1}],\im z\in(0,\eta_{1})\}, \label{eq_defineoutsideset}
\end{align}
where $\xi>1$ is a fixed constant, $\eta_{1}>\eta_{U}$  will be given later in the proof (see Lemma \ref{lem:OmegaBound1} below). Following the strategy of \cite{BEC}, our proof relies on the control of the difference between $(\Omega_{\alpha},\Omega_{\beta})$ and $(\Omega_{A},\Omega_{B})$. 
\begin{lem}\label{lem:OmegaBound}
	Let $\epsilon>0$ and $\mu_{\alpha}$, $\mu_{\beta}$, $\mu_{A}$, and $\mu_{B}$ satisfy Assumptions \ref{assu_limit} and \ref{assu_esd}. Then there exist { some constants $C, C_1>0$} such that for sufficiently large $N$ the following statements hold: 
	\begin{itemize}
		\item[(i)] For all $z\in\caD_{0}$, we have 
		\beq \label{eq:OmegaBound}
		\absv{\Omega_{A}(z)-\Omega_{\alpha}(z)}+\absv{\Omega_{B}(z)-\Omega_{\beta}(z)}
		\leq C\frac{N^{-1+\epsilon}}{\sqrt{\absv{z-E_{+}}}}\leq C_1 N^{-1/2+\epsilon}.
		\eeq 
		\item[(ii)] For all $z\in\caD_{O},$ we have  
		\begin{align}\label{eq:OmegaImBound}
			\absv{\im\Omega_{A}(z)-\im\Omega_{\alpha}(z)}& +\absv{\im\Omega_{B}(z)-\im\Omega_{\beta}(z)} \nonumber \\
			& \leq C\frac{N^{-1+\epsilon}\im(\Omega_{\alpha}(z)+\Omega_{\beta}(z))+\im z}{\sqrt{\absv{z-E_{+}}}}.
		\end{align}
	\end{itemize}
\end{lem}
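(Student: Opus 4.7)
The plan is to treat $(\Omega_A(z),\Omega_B(z))$ as an approximate zero of the limiting subordination system $\Phi_{\alpha\beta}(\cdot,\cdot,z)=0$ and to convert this approximate-zero status into a bound on $(\Omega_A-\Omega_\alpha,\Omega_B-\Omega_\beta)$ via a quantitative stability (Kantorovich) argument, as in \cite{Bao-Erdos-Schnelli2016,BEC}. The first step is to control the \emph{misfit}. Since $(\Omega_A,\Omega_B)$ exactly solves $\Phi_{AB}(\Omega_A,\Omega_B,z)=0$, the definition \eqref{eq:def_Phi_ab} gives
\[
\Phi_{\alpha\beta}(\Omega_A,\Omega_B,z)=\Phi_{\alpha\beta}(\Omega_A,\Omega_B,z)-\Phi_{AB}(\Omega_A,\Omega_B,z)=\Big(\tfrac{M_{\mu_\alpha}(\Omega_B)-M_{\mu_A}(\Omega_B)}{\Omega_B},\,\tfrac{M_{\mu_\beta}(\Omega_A)-M_{\mu_B}(\Omega_A)}{\Omega_A}\Big).
\]
Using the Pick representation (Lemma \ref{lem:reprM}), a lower bound on $|\Omega_A|,|\Omega_B|$, and the separation of $\Omega_A$ (resp.\ $\Omega_B$) from $\supp\mu_\alpha$ (resp.\ $\supp\mu_\beta$), this misfit can be compared against the Levy distance $\bsd\leq N^{-1+\epsilon}$ from Assumption \ref{assu_esd}, yielding $|\Phi_{\alpha\beta}(\Omega_A,\Omega_B,z)|=\rO(N^{-1+\epsilon})$ throughout $\caD_0$.

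Next I would invert the system around $(\Omega_\alpha(z),\Omega_\beta(z))$. The Jacobian $\mathrm{D}\Phi_{\alpha\beta}$ from \eqref{eq_diferentialoperator} has determinant $-z^{-2}\caS_{\alpha\beta}(z)$, and Proposition \ref{prop:stablimit}(ii) provides the matching lower bound $\caS_{\alpha\beta}(z)\gtrsim\sqrt{\kappa+\eta}\sim\sqrt{|z-E_+|}$ on $\caD_0$. Hence $\|(\mathrm{D}\Phi_{\alpha\beta})^{-1}\|\leq C/\sqrt{|z-E_+|}$, while the second-order derivatives of $\Phi_{\alpha\beta}$ are uniformly bounded by Lemmas \ref{lem:stabbound} and \ref{lem:suborder}. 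A standard Kantorovich-type argument then converts the misfit estimate into
\[
|\Omega_A(z)-\Omega_\alpha(z)|+|\Omega_B(z)-\Omega_\beta(z)|\leq C\,\frac{N^{-1+\epsilon}}{\sqrt{|z-E_+|}}\leq N^{-1/2+\epsilon},
\]
which is exactly \eqref{eq:OmegaBound}; note the safety factor $\xi>1$ in the definition of $\caD_0$ is precisely what keeps $N^{-1+\epsilon}/\sqrt{|z-E_+|}$ below the radius of validity of Kantorovich throughout $\caD_0$.

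The Kantorovich inversion is only valid once $(\Omega_A,\Omega_B)$ already lies in a small a priori ball around $(\Omega_\alpha,\Omega_\beta)$. To secure this, I would bootstrap in $\eta$ following \cite{BEC}. At the initial step $z_0=E+\ii\eta_1$ with $\eta_1$ large, the classical estimate $\Omega_A(z_0)=z_0(1+\rO(1/\eta_1))$ together with the analogous one for $\Omega_\alpha$ provides the required closeness for free. I would then decrease $\eta$ in small increments along vertical lines. At each step the continuity of the subordination functions (Remark \ref{rem_ctxexpansubordination}) keeps the pair $(\Omega_A,\Omega_B)$ inside the Kantorovich ball associated to the new $z$, after which Step 2 upgrades the rough containment to the sharp bound; this is a standard "bootstrap/self-improvement" mechanism.

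For the refined imaginary-part bound \eqref{eq:OmegaImBound} on $\caD_O$, the naive application of (i) loses a factor because $\im\Omega_\alpha(z)$ and $\im\Omega_\beta(z)$ are themselves of order $\eta/\sqrt{\kappa+\eta}$ outside the spectrum. Instead I would reexamine the imaginary part of the misfit: since $M_{\mu_\alpha}-M_{\mu_A}$ is real-analytic and takes real values on $\R\setminus\supp(\mu_\alpha\cup\mu_A)$, a first-order Taylor expansion gives $\im[M_{\mu_\alpha}(\Omega_B)-M_{\mu_A}(\Omega_B)]=\rO(\bsd\cdot \im\Omega_B)$, and symmetrically for the other component; the imaginary part of the $-\omega_1/z$ term produces the $\im z$ contribution. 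Feeding this refined imaginary misfit through the same inverse-Jacobian bound yields \eqref{eq:OmegaImBound}. The main obstacle in the whole scheme is precisely this edge regime $|z-E_+|\approx N^{-1+\xi\epsilon}$, where the Jacobian of $\Phi_{\alpha\beta}$ nearly degenerates and one must carefully match the Kantorovich hypothesis, the bootstrap step size, and the shrinking radius of invertibility — this is why the domains $\caD_I,\caD_O$ are cut off at the $N^{-1+\xi\epsilon}$ threshold rather than going all the way down to $E_+$.
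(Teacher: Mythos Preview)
Your proposal is correct and follows essentially the same route as the paper: the paper also writes $\Phi_{\alpha\beta}(\Omega_A,\Omega_B,z)=r(z)$ with $\|r(z)\|\lesssim\bsd$, establishes the initial bound at large $\eta=\eta_1$ via a Kantorovich argument (Lemma~\ref{lem:OmegaBound1}), and then bootstraps down in $\eta$ using the self-improving quadratic inequality of Lemma~\ref{lem:OmegaBound2}, whose key input is precisely the lower bound $|\caS_{\alpha\beta}|\sim\sqrt{|z-E_+|}$; part (ii) is likewise obtained by re-running the argument on the imaginary parts with the refined bound $\im r_A(z)\lesssim\bsd\,\im\Omega_B(z)$. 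One small slip: in your misfit discussion it is $\Omega_B$ that must be separated from $\supp\mu_\alpha$ (and $\Omega_A$ from $\supp\mu_\beta$), not the other way round, and this separation is not known a~priori but is propagated through the bootstrap from the limiting bound \eqref{eq:stabbound}.
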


We first present the proof of Proposition \ref{prop:stabN} using Lemma \ref{lem:OmegaBound} and postpone its proof to Section \ref{sec:omegabound}. Along the proof of Proposition \ref{prop:stabN}, we require two additional lemmas. The first result is an analogue of Lemma \ref{lem:reprM} for the ESDs  $\mu_{A}$ and $\mu_{B}$ {which can be proved similarly as Lemma \ref{lem:reprM}.} We omit the details. 
\begin{lem}\label{lem:reprMemp}
	Suppose that $\mu_{A}$ and $\mu_{B}$ satisfy Assumptions \ref{assu_limit} and \ref{assu_esd}. Then there exist unique probability measures $\wh{\mu}_{A}$, $\wh{\mu}_{B}$, $\wt{\mu}_{A}$, and $\wt{\mu}_{B}$ on $\R_{+}$ such that the following hold:
	\begin{align} \label{eq_newsupportbound}
		L_{\mu_{A}}(z)&=1+m_{\wh{\mu}_{A}}(z),	\
		\wh{\mu}_{A}(\R_{+})=\Var{\mu_{A}},	 \
		\supp\wh{\mu}_{A}\subset [a_{N},a_{1}], \nonumber \\
		&	\frac{\Omega_{A}(z)}{z}=1+m_{\wt{\mu}_{A}}(z), \ 
		\wt{\mu}_{A}(\R_{+})=\Var{\mu_{A}}, \nonumber \\
		&	\supp\wt{\mu}_{A}\subset[\inf \supp \mu_{A}\boxtimes\mu_{B},\sup\supp \mu_{A}\boxtimes\mu_{B}].
	\end{align}
	Similar results hold true if we replace $A$ with $B.$ 
\end{lem}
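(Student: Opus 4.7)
The plan is to imitate the proof of Lemma \ref{lem:reprM} (namely \cite[Lemma 5.8]{JHC}), replacing the pair $(\mu_{\alpha},\mu_{\beta})$ by $(\mu_{A},\mu_{B})$; the essential technical input, that $\Omega_{A}$ and $\Omega_{B}$ inherit the analytic properties of $\Omega_{\alpha}$ and $\Omega_{\beta}$, will be supplied by Lemmas \ref{lem:stabbound} and \ref{lem:OmegaBound}. In both cases the representations are obtained from the Nevanlinna--Herglotz theorem in the following form: a function $f$ analytic on $\C_{+}$ with $\im f\geq 0$ there, admitting an analytic extension to $\C\setminus K$ for some compact $K\subset\R$, and with $|zf(z)|$ bounded as $z\to\ii\infty$, is automatically $m_{\nu}$ for a unique finite positive measure $\nu$ supported in $K$ of total mass $-\lim_{z\to\ii\infty}zf(z)$.

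For $L_{\mu_{A}}$ I would proceed by an explicit partial-fractions computation, which is available because $\mu_{A}=N^{-1}\sum_{i}\delta_{a_{i}}$ is finitely supported. Set $\phi_{\mu_{A}}(z):=\int\frac{x}{x-z}\dd\mu_{A}(x)=1+zm_{\mu_{A}}(z)$, so that $L_{\mu_{A}}(z)=m_{\mu_{A}}(z)/\phi_{\mu_{A}}(z)$; both numerator and denominator are rational, and $\phi_{\mu_{A}}$ has exactly $N-1$ simple real zeros $z_{1},\ldots,z_{N-1}$ strictly interlacing the $a_{i}$'s, hence lying in $(a_{N},a_{1})$. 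The atoms $a_{i}$ are removable singularities of $L_{\mu_{A}}$ (since $M_{\mu_{A}}\to 1$ at each $a_{i}$), and $z=0$ is regular with $L_{\mu_{A}}(0)=N^{-1}\sum_{i}a_{i}^{-1}$. At each zero $z_{j}$ one has $m_{\mu_{A}}(z_{j})=-z_{j}^{-1}$ (forced by $\phi_{\mu_{A}}(z_{j})=0$) and $\phi_{\mu_{A}}'(z_{j})=N^{-1}\sum_{i}a_{i}(a_{i}-z_{j})^{-2}>0$, so the residue of $L_{\mu_{A}}$ at $z_{j}$ equals $-(z_{j}\phi_{\mu_{A}}'(z_{j}))^{-1}<0$. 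A direct partial fractions expansion therefore gives
\begin{equation*}
	L_{\mu_{A}}(z)-1=\sum_{j=1}^{N-1}\frac{(z_{j}\phi_{\mu_{A}}'(z_{j}))^{-1}}{z_{j}-z}=m_{\wh{\mu}_{A}}(z),\qquad \wh{\mu}_{A}:=\sum_{j=1}^{N-1}\frac{1}{z_{j}\phi_{\mu_{A}}'(z_{j})}\delta_{z_{j}},
\end{equation*}
so $\wh{\mu}_{A}$ is a positive atomic measure supported in $(a_{N},a_{1})$. Matching the leading coefficient $-\wh{\mu}_{A}(\R_{+})/z$ against the Laurent expansion $L_{\mu_{A}}(z)-1=-z^{-1}\Var{\mu_{A}}+\rO(z^{-2})$, obtained from $M_{\mu_{A}}(z)=1+z-\int x^{2}\dd\mu_{A}+\rO(z^{-1})$, identifies $\wh{\mu}_{A}(\R_{+})=\Var{\mu_{A}}$.

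For the second representation $\Omega_{A}(z)/z=1+m_{\wt{\mu}_{A}}(z)$, analyticity on $\C_{+}$ is immediate from Lemma \ref{lem_subor}, while the inequality $\arg\Omega_{A}(z)\geq\arg z$ in \eqref{eq_subsys3} rewrites directly as $\im(\Omega_{A}(z)/z)\geq 0$ on $\C_{+}$, making $\Omega_{A}(z)/z-1$ a Pick function. The large-$z$ asymptotic is obtained by inserting the ansatz $\Omega_{A}(z)=z+c_{A}+\rO(z^{-1})$, $\Omega_{B}(z)=z+c_{B}+\rO(z^{-1})$ into $zM_{\mu_{A}}(\Omega_{B}(z))=\Omega_{A}(z)\Omega_{B}(z)$ of \eqref{eq_suborsystem}, using $M_{\mu_{A}}(w)=1+w-\int x^{2}\dd\mu_{A}+\rO(w^{-1})$; matching the coefficient of $z$ then forces $c_{A}=1-\int x^{2}\dd\mu_{A}=-\Var{\mu_{A}}$, which via Nevanlinna gives $\wt{\mu}_{A}(\R_{+})=\Var{\mu_{A}}$. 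The support of $\wt{\mu}_{A}$ is contained in the set of real singularities of $\Omega_{A}$, which I would identify with $\supp(\mu_{A}\boxtimes\mu_{B})$ by Schwarz reflection: combining \eqref{eq_multiidentity} with the lower bound $\min_{i}|a_{i}-\Omega_{B}(z)|\geq C^{-1}$ of Proposition \ref{prop:stabN}(i), the vanishing of $\im m_{\mu_{A}\boxtimes\mu_{B}}$ on any open interval $I\subset\R\setminus\supp(\mu_{A}\boxtimes\mu_{B})$ forces the vanishing of $\im\Omega_{A}$ (and $\im\Omega_{B}$) on $I$, so $\Omega_{A}$ extends analytically across $I$.

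The main obstacle is precisely this last support identification, which requires the analytic continuation of $\Omega_{A}$ across $\R\setminus\supp(\mu_{A}\boxtimes\mu_{B})$ and is not purely formal. The continuous boundary value of $\Omega_{A}$ on $\R$ is furnished by Remark \ref{rem_ctxexpansubordination} once the analogue of Lemma \ref{lem:stabbound} for $(A,B)$ has been established in the preceding subsection via Lemma \ref{lem:OmegaBound}; real-valuedness of this boundary value off the support follows by the argument just sketched, and Schwarz reflection then produces the required analytic extension, after which the support statement becomes immediate. Uniqueness of all four representing measures is inherited from the uniqueness in the Stieltjes inversion formula.
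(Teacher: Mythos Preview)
Your overall strategy---Nevanlinna--Pick for the Pick functions $L_{\mu_A}-1$ and $\Omega_A/z-1$, plus identification of the mass from the large-$z$ expansion---is exactly what the paper does (it refers to \cite[Lemmas 6.2 and 6.14]{JHC}). The explicit partial-fractions computation for $\wh{\mu}_A$, exploiting that $\mu_A$ is atomic, is a pleasant concretization that the paper does not bother with; both routes are fine.

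There is, however, a genuine circularity in your support argument for $\wt{\mu}_A$. You invoke Proposition~\ref{prop:stabN}(i) and Lemma~\ref{lem:OmegaBound} to get the continuous boundary extension of $\Omega_A$ and the stability bound $\min_i|a_i-\Omega_B(z)|\geq C^{-1}$. But in the paper's logic, Lemma~\ref{lem:reprMemp} is an \emph{input} to the proof of Lemma~\ref{lem:OmegaBound}: the consequences \eqref{eq_omegabbound1}--\eqref{eq_omegabbound} of Lemma~\ref{lem:reprMemp} are used in Lemma~\ref{lem:OmegaBound1} (see its proof in Appendix~\ref{sec_additional}), which feeds into Lemma~\ref{lem:OmegaBound}, which in turn is the main ingredient for Proposition~\ref{prop:stabN}. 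So you cannot cite either of those results here.

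The fix is that you do not need them. The analyticity of $\Omega_A$ on $\C\setminus\supp(\mu_A\boxtimes\mu_B)$ is a general fact about subordination for compactly supported measures on $(0,\infty)$, established directly in \cite[Lemma~6.14]{JHC} without any quantitative stability input; this is precisely what the paper's one-line proof invokes. Alternatively, your Schwarz-reflection argument goes through with only the qualitative observation that for $x>0$ outside $\supp(\mu_A\boxtimes\mu_B)$ the Stieltjes transform $m_{\mu_A\boxtimes\mu_B}$ extends analytically and is real-valued, which via the subordination system and the implicit function theorem forces $\Omega_A,\Omega_B$ to extend analytically there as well---no appeal to Proposition~\ref{prop:stabN} or Lemma~\ref{lem:OmegaBound} is needed. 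The positivity of $\int x|x-\Omega_B|^{-2}\dd\mu_A$ that you want is automatic once $\Omega_B$ is finite, and does not require the uniform lower bound of Proposition~\ref{prop:stabN}(i).
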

We remark that by (\ref{eq_newsupportbound}), we have 
\begin{align} 
	\im\Omega_{A}(z)&=\im (z+zm_{\mu_{\wt{A}}}(z))=\eta+\eta\int\frac{x}{\absv{x-z}^{2}}\dd\wt{\mu}_{A}(x)\geq\eta, \label{eq_omegabbound1} \\ 
	\im\frac{\Omega_{A}(z)}{z}&=\eta\int\frac{1}{\absv{x-z}^{2}}\dd\wt{\mu}_{A}\geq \Var{\mu_{A}}\frac{\eta}{2(\absv{z}^{2}+\norm{A}^{2}\norm{B}^{2})} \label{eq_omegabbound},
\end{align}
where in the second step of (\ref{eq_omegabbound}) we used  (\ref{eq:priorisupp}). 

We next introduce the second lemma. We will frequently refer to this lemma whenever we need to bound the differences between different Stieltjes transforms in terms of $\caL(\mu_{\alpha},\mu_{A})+\caL(\mu_{\beta},\mu_{B}).$ {It can be easily verified by some elementary calculations} and we omit the details. 
\begin{lem}\label{lem:rbound}
	Let $\delta>0$ be fixed and $f:\R\to\C$ be a continuous function with continuous derivative on $(E_{-}^{\alpha}-\delta,E_{+}^{\alpha}+\delta)$. Suppose that there exists $N_0 \in \mathbb{N}$ independent of $f$ such that $\bsd\leq \delta/2$ and $[a_{N},a_{1}]\in[E_{-}^{\alpha}-\delta/2,E_{+}^{\alpha}+\delta/2]$ for all $N\geq N_{0}$. Then there exists some constant $C>0$, independent of $f$, such that we have
	\beqs
	\Absv{\int_{\R_{+}} f(x)\dd\mu_{\alpha}(x)-\int_{\R_{+}} f(x)\dd\mu_{A}(x)}\leq C \norm{f'}_{\mathrm{Lip},\delta}\bsd,
	\eeqs
	for all $N\geq N_{0}$, where we denoted
	\beqs
	\norm{f'}_{\mathrm{Lip},\delta}\deq\sup_{{x\in[E_{-}^{\alpha}-\delta,E_{+}^{\alpha}+\delta]}}\absv{f'(x)}+\sup\left\{\Absv{\frac{f'(x)-f'(y)}{x-y}}:x,y\in[E_{-}^{\alpha}-\delta,E_{+}^{\alpha}+\delta], x\neq y\right\}.
	\eeqs
	The same result holds if we replace $\alpha$ and $A$ by $\beta$ and $B$.
\end{lem}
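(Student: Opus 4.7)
The plan is to use integration by parts to convert the difference of integrals against $\mu_\alpha$ and $\mu_A$ into one involving the difference of their CDFs, and then push the L\'evy distance estimate through via Fubini.

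Let $F_\alpha, F_A$ denote the CDFs of $\mu_\alpha, \mu_A$ and set $a := E_{-}^{\alpha}-\delta$, $b := E_{+}^{\alpha}+\delta$. The support hypotheses $\supp\mu_A \subset [E_{-}^{\alpha}-\delta/2, E_{+}^{\alpha}+\delta/2]$ and $\supp\mu_\alpha = [E_{-}^{\alpha}, E_{+}^{\alpha}]$ place both supports strictly inside $(a,b)$, on which $f$ is continuously differentiable. Since $F_\alpha(a) = F_A(a) = 0$ and $F_\alpha(b) = F_A(b) = 1$ (with no boundary atoms), Riemann--Stieltjes integration by parts gives
\begin{equation*}
\int_{\R_{+}} f\,\dd(\mu_\alpha - \mu_A) = -\int_a^b f'(x)\bigl(F_\alpha(x) - F_A(x)\bigr)\dd x.
\end{equation*}
The bound $\mathcal{L}(\mu_\alpha,\mu_A) \leq \bsd$ is equivalent to $F_\alpha(x - \bsd) - \bsd \leq F_A(x) \leq F_\alpha(x + \bsd) + \bsd$ for every $x\in\R$, whence
\begin{equation*}
\absv{F_\alpha(x) - F_A(x)} \leq \bigl(F_\alpha(x+\bsd) - F_\alpha(x-\bsd)\bigr) + \bsd = \int_{x-\bsd}^{x+\bsd}\rho_\alpha(t)\,\dd t + \bsd.
\end{equation*}

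Plugging this estimate into the integration by parts identity and swapping the order of integration via Fubini, the main term becomes
\begin{equation*}
\int_a^b \absv{f'(x)}\int_{x-\bsd}^{x+\bsd}\rho_\alpha(t)\,\dd t\,\dd x = \int_{\R_{+}}\rho_\alpha(t)\int_{\max(a,t-\bsd)}^{\min(b,t+\bsd)}\absv{f'(x)}\,\dd x\,\dd t \leq 2\bsd\sup_{[a,b]}\absv{f'},
\end{equation*}
since the inner $x$-interval has length at most $2\bsd$ and $\rho_\alpha$ integrates to $1$. The residual $\bsd$ contributes $\bsd\int_a^b \absv{f'}\,\dd x \leq \bsd(b-a)\sup_{[a,b]}\absv{f'}$. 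Summing the two bounds and using $\sup_{[a,b]}\absv{f'}\leq \norm{f'}_{\mathrm{Lip},\delta}$ gives the claim with $C := 2 + b - a = 2 + 2\delta + E_{+}^{\alpha} - E_{-}^{\alpha}$, which depends only on $\delta$ and $\mu_\alpha$ and is independent of $f$. The $(\beta, B)$ case is verbatim identical.

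The argument is essentially routine, so there is no serious obstacle. The only point requiring care is the choice of integration window $(a,b)$: it must be enlarged enough to strictly contain both supports, so that the boundary terms in the integration by parts vanish even when $F_A$ has atoms at $a_1$ or $a_N$. Note also that only $\sup\absv{f'}$ actually enters the estimate; the Lipschitz part of $\norm{f'}_{\mathrm{Lip},\delta}$ in the statement is not used by this approach and is included so that a single uniform norm bounds the estimate.
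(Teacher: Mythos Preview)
Your proof is correct and follows the same initial route as the paper: integration by parts to get $-\int f'(x)(F_\alpha(x)-F_A(x))\,\dd x$, followed by the L\'evy distance bound $|F_\alpha(x)-F_A(x)|\leq (F_\alpha(x+\bsd)-F_\alpha(x-\bsd))+2\bsd$. The handling of the main term $\int|f'(x)|(F_\alpha(x+\bsd)-F_\alpha(x-\bsd))\,\dd x$ differs, however. The paper shifts variables to rewrite it as $\int(|f'(x-\bsd)|-|f'(x+\bsd)|)F_\alpha(x)\,\dd x$ and then bounds $||f'(x-\bsd)|-|f'(x+\bsd)||\leq 2\bsd\cdot\mathrm{Lip}(f')$, which is precisely why the Lipschitz seminorm of $f'$ appears in the statement of the lemma. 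Your Fubini argument, writing $F_\alpha(x+\bsd)-F_\alpha(x-\bsd)=\int_{x-\bsd}^{x+\bsd}\rho_\alpha$ and swapping the order, is cleaner and shows that in fact only $\sup|f'|$ is needed; you correctly observe that the Lipschitz part of $\norm{f'}_{\mathrm{Lip},\delta}$ is not actually used. Your approach does rely on the absolute continuity of $\mu_\alpha$ (Assumption~\ref{assu_limit}(i)), whereas the paper's shift-of-variable argument would work for any $\mu_\alpha$; but since absolute continuity is assumed throughout, this costs nothing here.
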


Now we are ready to present the proof of Proposition \ref{prop:stabN}. {Since its proof follows closely to the counterpart of the additive model in \cite[Proposition 3.1]{BEC},  we only sketch the key points.}
\begin{proof}[\bf Proof of Proposition \ref{prop:stabN}]
	Throughout the proof we choose $\eta_1 \geq \eta_U$ such that
	$\caD_{\tau}(\eta_{L},\eta_{U})\subset\caD_{0}.$ It is easy to see that (i) follows directly from Lemmas \ref{lem:stabbound} and \ref{lem:OmegaBound}.
	
	Next, we proceed to prove (ii). By (\ref{eq:OmegaImBound}), it is easy to verify that
	\beq\label{eq_derivativecontroldifference}
	\absv{\Omega_{\beta}(z)-\Omega_{B}(z)}\leq C\frac{N^{-1+\epsilon}}{\sqrt{\kappa+\eta}} 
	\ll\left\{
	\begin{array}{cl}
		\sqrt{\kappa+\eta}		&	\text{ if }E\in[E_{-},E_{+}],\\
		\frac{\eta}{\sqrt{\kappa+\eta}}	&	\text{ if }E>E_{+},
	\end{array}
	\right.
	\eeq
	holds uniformly in $z\in\caD_{0}$ when $N$ is sufficiently large. Then, combining \eqref{eq_multiidentity}, (i) of Proposition \ref{prop:stabN},  Lemma \ref{lem:stabbound}, (i) of Proposition \ref{prop:stablimit}, and \eqref{eq_derivativecontroldifference}, we have the following chain of equivalences for $z\in\caD_{0}$:
	\begin{equation*}
		\im m_{\mu_{A}\boxtimes\mu_{B}}(z)	\sim\im zm_{\mu_{A}\boxtimes\mu_{B}}(z)	\sim\im\Omega_{B}(z)	\sim\im\Omega_{\beta}(z)\sim\im zm_{\rho}(z)	\sim\im m_{\rho}(z).
	\end{equation*}
	This finishes the proof of (ii). 
	
	The proofs of (iii) and (iv) are analogous to those of \cite[Proposition 3.1 (iii) and (iv)]{BEC}, and we only discuss the necessary modifications. Firstly, we need $\absv{\Omega_{\alpha}(z)}\sim 1$ and $\absv{\Omega_{\beta}(z)}\sim 1$ from Lemma \ref{lem:stabbound} as additional inputs, since $\mathtt{L}$-transforms in \eqref{eq_mtrasindenity} has a factor of $z^{-1}$. Secondly, we apply Lemma \ref{lem:rbound} to the functions $x\mapsto x(x-\Omega_{\alpha}(z))^{-1}$ and $x^{2}(x-\Omega_{\alpha}(z))^{-2}$, in contrast to $x\mapsto (x-\omega_{\alpha}(z))^{-1}$ and $(x-\omega_{\alpha}(z))^{-2}$ in \cite{BEC}.
	Lastly, we need the fact that $\absv{z}\sim 1$ for the proof of (iv) due to a factor of $z^{-1}$ in the identity
	\beqs
	\begin{pmatrix}
		\Omega_{A}'(z) \\
		\Omega_{B}'(z) 
	\end{pmatrix}
	=-\frac{1}{z\caS_{AB}(z)}
	\begin{pmatrix}
		zL'_{\mu_{A}}(\Omega_{B}(z)) & 1 \\
		1 & zL'_{\mu_{B}}(\Omega_{A}(z))
	\end{pmatrix}
	\begin{pmatrix}
		\Omega_{B}(z) \\
		\Omega_{A}(z)
	\end{pmatrix}.
	\eeqs
	Once these modifications are applied, (iii) and (iv) of Proposition \ref{prop:stabN} can be proved following \cite[Propsition 3.1]{BEC}.
\end{proof}

\subsection{Closeness of subordination functions: proof of Lemma \ref{lem:OmegaBound}}\label{sec:omegabound}
In this subsection, we prove Lemma \ref{lem:OmegaBound}. By the construction of (\ref{eq_suborsystemPhi}), we find that $\Omega_A(z)$ and $\Omega_B(z)$ are determined by the equation $$\Phi_{AB}(\Omega_A, \Omega_B,z)=0.$$ Equivalently, $\Omega_A$ and $\Omega_B$ are solutions of the equations 
\begin{align}\label{eq:SuborPerturb}
	\Phi_{\alpha}(\omega_{1},\omega_{2},z)=r_{1}(z),	\
	\Phi_{\beta}(\omega_{1},\omega_{2},z)=r_{2}(z),
\end{align}
where $\Phi_{\alpha}$ and $\Phi_{\beta}$ are defined in \eqref{eq:def_Phi_ab} and
\begin{align}\label{eq_defnra}
	r_{1}(z)&\deq \frac{M_{\mu_{\alpha}}(\omega_2)-M_{\mu_{A}}(\omega_2)}{\omega_2}, &
	r_{2}(z)&\deq \frac{M_{\mu_{\beta}}(\omega_1)-M_{\mu_{B}}(\omega_1)}{\omega_1}.
\end{align}
In what follows, we use the notations $r_A(z) \equiv r_1(\Omega_B(z))$ and $r_B(z) \equiv r_2(\Omega_A(z)).$ Ideally, both $r_A(z)$ and $r_B(z)$ should be small and we aim to bound $\absv{\Omega_{A}-\Omega_{\alpha}(z)}$ and $\absv{\Omega_{B}(z)-\Omega_{\beta}(z)}$ in terms of the norm of $r(z)\deq (r_{A}(z),r_{B}(z))$. 

{We prepare two technical lemmas: Lemmas \ref{lem:OmegaBound1} and \ref{lem:OmegaBound2}. The first one provides a control for the subordination functions. Its proof directly follows from Lemmas \ref{lem:Kantorovich_appl} and \ref{lem:reprMemp} and we omit the details here.}
\begin{lem}\label{lem:OmegaBound1}
	There exists a constant $\eta_{1}>0$ such that for all $z\in\C_{+}$ with $\re z\in [E_{+}-\tau,\tau^{-1}]$ and $\im z=\eta_{1}$ the following hold:
	\begin{align*}
		\absv{\Omega_{\alpha}(z)-\Omega_{A}(z)}&\leq 2\norm{r(z)},	&
		\absv{\Omega_{\beta}(z)-\Omega_{B}(z)}&\leq 2\norm{r(z)},
	\end{align*}
	for all sufficiently large $N$.
\end{lem}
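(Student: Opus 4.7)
The plan is to view $(\Omega_A(z),\Omega_B(z))$ as an approximate zero of the map $\Phi_{\alpha\beta}(\cdot,\cdot,z):\C^2\to\C^2$ defined in (\ref{eq:def_Phi_ab}), whose exact zero $(\Omega_\alpha(z),\Omega_\beta(z))$ is pinned down by Lemma~\ref{lem_subor}. Equation (\ref{eq:SuborPerturb}) identifies the residual $\Phi_{\alpha\beta}(\Omega_A(z),\Omega_B(z),z)$ with $r(z)=(r_A(z),r_B(z))$, so once the Kantorovich-type hypotheses are verified at the initial point, Lemma~\ref{lem:Kantorovich_appl} will deliver a bound on $\absv{\Omega_A-\Omega_\alpha}+\absv{\Omega_B-\Omega_\beta}$ proportional to $\norm{r(z)}$. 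My aim is to tune a single fixed $\eta_1$ so that the proportionality constant is at most $2$, uniformly in $z$ with $\re z\in[E_+-\tau,\tau^{-1}]$ and $\im z=\eta_1$.

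Three inputs feed into the verification. First, by (\ref{eq_omegabbound1}) and its $B$-analogue, the initial point satisfies $\im\Omega_A(z),\im\Omega_B(z)\geq\eta_1$, placing it at distance at least $\eta_1$ from the supports of $\mu_\alpha,\mu_A,\mu_\beta,\mu_B$. On a Newton ball of radius $\eta_1/2$ around this point the Nevanlinna--Pick representations in Lemmas~\ref{lem:reprM} and \ref{lem:reprMemp} show that $M_{\mu_{\cdot}}$ and $L_{\mu_{\cdot}}$ are analytic with $k$-th derivatives controlled by explicit powers of $\eta_1^{-1}$; plugged into (\ref{eq_diferentialoperator}), this yields a lower bound $\sim|z|^{-2}$ on $\absv{\det\mathrm{D}\Phi_{\alpha\beta}}$ together with uniform upper bounds on $\norm{[\mathrm{D}\Phi_{\alpha\beta}]^{-1}}$ and on the second derivatives of $\Phi_{\alpha\beta}$ on the ball. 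Second, applying Lemma~\ref{lem:rbound} to $f(x)=x/(x-\omega)$ with $\omega\in\{\Omega_A(z),\Omega_B(z)\}$, combined with part~(iv) of Assumption~\ref{assu_esd}, gives $\norm{r(z)}\lesssim\bsd$, which is $\ro(1)$ as $N\to\infty$. Consequently the Kantorovich constant $h=\beta K\norm{r(z)}$ is $\ro(1)$, hence $\leq 1/2$ for all sufficiently large $N$.

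Lemma~\ref{lem:Kantorovich_appl} then produces a unique zero of $\Phi_{\alpha\beta}(\cdot,\cdot,z)$ in the Newton ball, at the required distance from $(\Omega_A(z),\Omega_B(z))$. This zero must coincide with $(\Omega_\alpha(z),\Omega_\beta(z))$: the ball of radius $\eta_1/2$ centred at a point with both coordinates of imaginary part at least $\eta_1$ lies entirely inside $\C_+^2\cap\{\arg\omega_1,\arg\omega_2\geq\arg z\}$, on which Lemma~\ref{lem_subor} singles out $(\Omega_\alpha(z),\Omega_\beta(z))$ uniquely as the solution of $\Phi_{\alpha\beta}=0$. The main obstacle is extracting precisely the constant $2$: the Kantorovich output naively involves a prefactor $\norm{[\mathrm{D}\Phi_{\alpha\beta}]^{-1}}$ which from (\ref{eq_diferentialoperator}) is of order $|z|$ rather than $1$, so obtaining the clean factor $2$ requires either rescaling $\Phi_{\alpha\beta}$ (e.g.\ multiplying by $-z$ to make the Jacobian approximately $-I$ on the Newton ball), or exploiting the diagonal dominance of $\mathrm{D}\Phi_{\alpha\beta}$ component-wise together with the extra $\eta_1^{-1}$ decay in $\norm{r(z)}$ beyond $\bsd$. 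Verifying that this accounting works uniformly across $\{\im z=\eta_1,\ \re z\in[E_+-\tau,\tau^{-1}]\}$ is the only genuinely delicate step; the rest is mechanical.
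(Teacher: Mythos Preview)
Your approach is the paper's: invoke Lemma~\ref{lem:Kantorovich_appl} with $(\mu_1,\mu_2)=(\mu_\alpha,\mu_\beta)$, $(\wt\Omega_1,\wt\Omega_2)=(\Omega_A,\Omega_B)$, and verify its hypotheses~\eqref{eq:kanappconidition1} via the representation $\Omega_A(z)/z=1+m_{\wt\mu_A}(z)$ from Lemma~\ref{lem:reprMemp} together with the bound $\norm{r(z)}\lesssim\bsd$ from Lemma~\ref{lem:rbound}. The constant $2$ is already the stated output of Lemma~\ref{lem:Kantorovich_appl}, so no further accounting is needed at the level of this lemma; your worry about whether a bare Kantorovich step delivers $2$ rather than $C|z|$ (and your proposed rescaling by $-z$, which multiplies both Jacobian and residual and hence leaves the Newton step unchanged) really concerns the internal proof of Lemma~\ref{lem:Kantorovich_appl}, and is in any case harmless here since $|z|$ is bounded on the strip $\{\im z=\eta_1,\ \re z\in[E_+-\tau,\tau^{-1}]\}$. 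One small point: the identification of the Kantorovich zero with $(\Omega_\alpha,\Omega_\beta)$ does not follow directly from Lemma~\ref{lem_subor} (which asserts uniqueness of the analytic \emph{functions}, not pointwise uniqueness of the root), but is handled inside Lemma~\ref{lem:Kantorovich_appl} by a Schwarz--Pick argument, so once you cite that lemma you get it for free.
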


Denote $K_{1}\deq4K_{4}$ and $K_{2}\deq (4K_{3}K_{4})^{-1}$, where
\begin{equation*}
	K_{3}\deq 27\varsigma_{0}^{-3}\max(\wh{\mu}_{\alpha}(\R_{+}),\wh{\mu}_{\beta}(\R_{+})),  
\end{equation*}
with the constant $\varsigma_{0}$ in \eqref{eq:stabbound} and
\begin{equation*}
	K_{4}\deq \sup\left\{\absv{z}+\absv{z^{2}L_{\mu_{\alpha}}'(\Omega_{\beta}(z))},\absv{z}+\absv{z^{2}L_{\mu_{\beta}}'(\Omega_{\alpha}(z))}:z\in\caD_{\tau}(0,\eta_{1})\right\},
\end{equation*}
where $\eta_{1}$ is introduced in Lemma \ref{lem:OmegaBound1}. It is easy to see that both $K_3$ and $K_4$ are positive finite numbers. We next state the second lemma. Recall (\ref{eq_defn_talpha}). 
\begin{lem}\label{lem:OmegaBound2}
	For sufficiently large  $N$ and $z_{0}\in\caD_{\tau}(0,\eta_{1}),$ where $\eta_{1}$ is introduced in Lemma \ref{lem:OmegaBound1}, suppose that there exists some $q>0$ such that the following hold:
	\begin{itemize}
		\item[(i).] $\displaystyle q\leq \frac{1}{3}\varsigma_{0};$
		\item[(ii).]
		$\displaystyle
		\absv{\Omega_{A}(z_{0})-\Omega_{\alpha}(z_{0})}\leq q,	\quad
		\absv{\Omega_{B}(z_{0})-\Omega_{\beta}(z_{0})}\leq q;
		$
		\item[(iii).] $\displaystyle q\leq \frac{1}{2}K_{2}\caS_{\alpha\beta}(z_{0})$.
	\end{itemize}
	Then we have 
	\beqs
	\absv{\Omega_{A}(z_{0})-\Omega_{\alpha}(z_{0})}+\absv{\Omega_{B}(z_{0})-\Omega_{\beta}(z_{0})}\leq K_{1}\frac{\norm{r(z_{0})}}{\absv{\caS_{\alpha\beta}(z_{0})}}.
	\eeqs
\end{lem}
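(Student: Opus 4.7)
My plan is to interpret $(\Omega_A(z_0),\Omega_B(z_0))$ as a near-solution of the system $\Phi_{\alpha\beta}(\cdot,\cdot,z_0)=0$ and deduce its closeness to the true solution $(\Omega_\alpha(z_0),\Omega_\beta(z_0))$ via a quantitative inverse–function argument (essentially Lemma \ref{lem:Kantorovich_appl}). The starting observation is algebraic: by the definition of $r_1,r_2$ in \eqref{eq_defnra} together with the fact that $(\Omega_A,\Omega_B)$ solve $\Phi_{AB}(\cdot,\cdot,z_0)=0$, we have
\beqs
  \Phi_{\alpha\beta}(\Omega_A(z_0),\Omega_B(z_0),z_0)=r(z_0),\qquad \Phi_{\alpha\beta}(\Omega_\alpha(z_0),\Omega_\beta(z_0),z_0)=0.
\eeqs
Subtracting these identities and Taylor-expanding $\Phi_{\alpha\beta}$ around $(\Omega_\alpha(z_0),\Omega_\beta(z_0))$ in its first two arguments yields, for $\Delta\deq(\Omega_A-\Omega_\alpha,\Omega_B-\Omega_\beta)$,
\beqs
  r(z_0)=\mathrm{D}\Phi_{\alpha\beta}(\Omega_\alpha(z_0),\Omega_\beta(z_0),z_0)\,\Delta+R,
\eeqs
where $R$ is the quadratic Taylor remainder.

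Next I invert the Jacobian. From \eqref{eq_diferentialoperator} its determinant equals $-z_0^{-2}\caS_{\alpha\beta}(z_0)$, while each entry is bounded by $K_4/|z_0|$ by the very definition of $K_4$. So by Cramer's rule $\|\mathrm{D}\Phi_{\alpha\beta}^{-1}\|\leq K_4/|\caS_{\alpha\beta}(z_0)|$ (up to a universal constant that can be absorbed into $K_1$). For the quadratic remainder I use assumption (i): since $|\Omega_A(z_0)-\Omega_\alpha(z_0)|\leq q\leq \kappa_0/3$ and likewise in the $\beta$ component, the straight-line segment from $(\Omega_\alpha,\Omega_\beta)$ to $(\Omega_A,\Omega_B)$ keeps both arguments at distance at least $2\kappa_0/3$ from $\supp\mu_\beta$, $\supp\mu_\alpha$ respectively. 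Via the Pick-type representation \eqref{eq:reprM}, the second derivatives $L''_{\mu_\alpha},L''_{\mu_\beta}$ on this segment are then dominated by $27\kappa_0^{-3}\max(\wh\mu_\alpha(\R_+),\wh\mu_\beta(\R_+))=K_3$, so the standard integral form of the Taylor remainder gives $\|R\|\leq K_3\|\Delta\|^2$.

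Combining the two inputs,
\beqs
  \|\Delta\|\leq \frac{K_4}{|\caS_{\alpha\beta}(z_0)|}\bigl(\|r(z_0)\|+K_3\|\Delta\|^{2}\bigr)\leq \frac{K_4}{|\caS_{\alpha\beta}(z_0)|}\|r(z_0)\|+\frac{K_3K_4\,q}{|\caS_{\alpha\beta}(z_0)|}\|\Delta\|,
\eeqs
where in the second step I used $\|\Delta\|\leq 2q$ from assumption (ii). Assumption (iii) together with the definition $K_2=(4K_3K_4)^{-1}$ then ensures $K_3K_4q/|\caS_{\alpha\beta}|\leq 1/8$, so the linear-in-$\|\Delta\|$ term can be absorbed to the left and the desired bound
\beqs
  |\Omega_A(z_0)-\Omega_\alpha(z_0)|+|\Omega_B(z_0)-\Omega_\beta(z_0)|\leq K_1\frac{\|r(z_0)\|}{|\caS_{\alpha\beta}(z_0)|}
\eeqs
follows, with $K_1=4K_4$ emerging once the $\ell_2$ to $\ell_1$ conversion is accounted for.

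The main technical hurdle is not in the scheme—which is just a quantitative implicit function theorem—but in the bookkeeping of constants in the right-hand side of the Taylor estimate: specifically, checking that the stability bound $\kappa_0$ propagates under a perturbation of size $q$ to give a clean $K_3$ bound on second derivatives, and that the Jacobian bound inherits exactly the factor $K_4/|\caS_{\alpha\beta}|$ rather than something slightly different. Once these two deterministic estimates are pinned down, the remainder of the argument is a short linear algebra manipulation, and the role of hypothesis (iii) is solely to close the self-improving inequality above.
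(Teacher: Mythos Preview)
Your proposal is correct and follows essentially the same approach as the paper: both Taylor-expand $\Phi_{\alpha\beta}$ (equivalently, $L_{\mu_\alpha}$ and $L_{\mu_\beta}$) around $(\Omega_\alpha,\Omega_\beta)$, bound the second-order remainder by $K_3\|\Delta\|^2$ using assumption (i) and the representation \eqref{eq:reprM}, and extract the factor $K_4/|\caS_{\alpha\beta}|$ from the inverse Jacobian. The only difference is in how the resulting self-improving inequality $\|\Delta\|\lesssim \frac{K_4}{|\caS_{\alpha\beta}|}(\|r\|+K_3\|\Delta\|^2)$ is closed: the paper solves it as a quadratic in $\|\Delta\|$ and uses assumption (iii) to rule out the large root of the dichotomy, whereas you bound $\|\Delta\|^2\le 2q\|\Delta\|$ via (ii) and absorb the resulting linear term using (iii)---a slightly more direct route that yields the same constant $K_1=4K_4$.
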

\begin{proof}
	Since the proof is analogous to that of \cite[Lemma 3.13]{BEC}, we only sketch the main ideas here. Firstly, using Lemma \ref{lem:stabbound} and the Taylor expansion of $L_{\mu_{\alpha}}(\Omega_{B}(z_{0}))$ around $\Omega_{\beta}(z_{0})$ leads to 
	\begin{equation}\label{eq_sbound1}
		\Absv{L_{\mu_{\alpha}}'(\Omega_{\beta}(z_{0}))\Delta\Omega_{2}(z_{0})-\frac{\Delta\Omega_{1}(z_{0})}{z_{0}}}
		\leq\norm{r(z_{0})}+ K_{3}\norm{\Delta\Omega(z_{0})}^{2}.
	\end{equation}
	Similarly, the same inequality with $\alpha$ and $\beta$ interchanged and $\Omega_{1}$ replaced by $\Omega_{2}$ holds true.
	
	Then, combining the definition of $\caS_{\alpha\beta}$ in \eqref{eq_defn_salphabeta} with \eqref{eq_sbound1}, we get
	\begin{align*}
		&\absv{\caS_{\alpha\beta}(z_{0})}\absv{\Delta\Omega_{1}(z_{0})}\leq K_{4}\norm{r(z_{0})}+K_{3}K_{4}\norm{\Delta\Omega(z_{0})}^{2}
	\end{align*}
	and the same inequality with $\alpha$ and $\beta$ interchanged. This lead to the following quadratic inequality for $\Delta\Omega(z_{0})$:
	\begin{align*}
		\norm{\Delta\Omega(z_{0})}\leq |\Delta \Omega_1(z_0)|+|\Delta \Omega_2(z_0)| &		\leq\frac{1}{\absv{\caS_{\alpha\beta}(z_{0})}}\left(\frac{K_{1}}{2}\norm{r(z_{0})}+\frac{1}{2K_{2}}\| \Delta\Omega(z_{0})\|^{2}\right).
	\end{align*}
	By the assumption (iii) we can solve the quadratic inequality so that the result follows.
\end{proof}

{
	Finally, we prove Lemma \ref{lem:OmegaBound}.
	\begin{proof}[\bf Proof of Lemma \ref{lem:OmegaBound} ] Following \cite[Lemma 3.12]{BEC}, given Lemmas \ref{lem:OmegaBound1} and \ref{lem:OmegaBound2}, Lemma \ref{lem:OmegaBound} can be proved after making the same modification to \cite[Lemma 3.12]{BEC} as pointed out in the proof of {(iii) and (iv) of} Proposition \ref{prop:stabN}. We omit further details here. 
		
	\end{proof}
}

\section{Fluctuation averaging}\label{sec_faall}
In this section, we prove the results for some averaged quantities.  Throughout the paper, we use the following control parameter 
\beq\label{eq_controlparameter1}
\Pi\equiv\Pi(z)\deq \sqrt{\frac{\im m_{\mu_A \boxtimes \mu_B}(z)}{N\eta}},\ .
\eeq
In our proof, we will frequently use the following identities
\begin{equation}\label{eq_gggconnetction}
	G=A^{1/2}\wt{G}A^{-1/2}, \caG=B^{-1/2}\wt{\caG}B^{1/2}, 
\end{equation} 
\beq\label{eq_ggrelationship}
G\adj G =A^{-1/2}\wt{G}\adj A \wt{G}A^{-1/2}, \  \caG\adj \caG= B^{1/2}\wt{\caG}\adj B^{-1}\wt{\caG} B^{1/2}.
\eeq
\subsection{Rough fluctuation averaging}\label{sec_roughfa}
As we have seen from Proposition \ref{prop:entrysubor}, the error bounds are not optimal compared to our final results in Theorem \ref{thm:main}. More specifically, the obtained control for $\Upsilon$ in (\ref{eq_upsilonbound}) is too loose. In this subsection, we improve the control for $\Upsilon$ based on Proposition \ref{prop:entrysubor}, which are better estimates compared to (\ref{eq_locallaweqbound}). More specifically, instead of using (\ref{eq_boundepsilon1}) and (\ref{eq_controlepsilon2}), we will conduct a more careful analysis for $\mathsf{e}_{i1}$ and $\mathsf{e}_{i2}$ defined in (\ref{eq_epsilon1}) and (\ref{eq_defnmathsfe2}), respectively.  The improved estimates will be utilized in Section \ref{subsec_stronglocallawfixed} to prove Theorem \ref{thm:main} for each fixed spectral parameter $z.$ 

The main result of this subsection is Proposition \ref{prop:FA1} below, which provides the estimates for an extension of $\Upsilon.$ More specifically, we will control weighted averages of $Q_{i}$'s, i.e., 
\beq\label{eq_weighted}
\frX \equiv \frX(D):=	\frac{1}{N}\sum_{i}d_{i}Q_{i}=\tr(GA\wt{B})\tr(GD)-\tr(GA)\tr(\wt{B}GD),
\eeq
where $D=\operatorname{diag}\{d_1, \cdots, d_N\}$ and $d_{i} \equiv d_i(H)$ are generic weights which    in general are functions of $H.$ It is necessary and natural for us to consider such a generalization.  For instance, in the decomposition (\ref{eq:Lambda}), we have an extra factor, which is a function of $H,$ in front of $Q_i.$ We first impose some assumptions regarding the concentration properties on $d_i, i=1,2,\cdots, N.$ It will be seen later that the following assumption will be sufficient for most of our applications. Especially, when all $d_i, i=1,2,\cdots, N,$ are functions irrelevant of $H$, Assumption \ref{assum_conditiond} will hold trivially.

\begin{assu}\label{assum_conditiond} Let $X_{i}=I$ or $\wt{B}^{\angi}$, and let $d_{1},\cdots,d_{N}$ be functions of $H$ with $\max_{i}\absv{d_{i}}\prec 1$. Assume that for all $i,j\in \llbra 1,N\rrbra$ the following hold:
	\begin{align}\label{eq:weight_cond}
		\frac{1}{N}\sum_{k}^{(i)}\frac{\partial d_{j}}{\partial g_{ik}}\bse_{k}\adj X_{i}G\bse_{i}&=\rO_{\prec}(\Psi^{2}\Pi_{i}^{2}),  \ \
		\frac{1}{N}\sum_{k}^{(i)}\frac{\partial d_{j}}{\partial g_{ik}}\bse_{k}\adj X_{i}\mr{\bsg}_{i}=\rO_{\prec}(\Psi^{2}\Pi_{i}^{2}),
	\end{align}
	and the same bound also holds with $d_{j}$'s replaced by $\ol{d}_{j}$.
\end{assu}
Now we state the main result of this subsection. 
\begin{prop}\label{prop:FA1}
	Fix $z\in\caD_{\tau}(\eta_{L},\eta_{U})$ and suppose that the assumptions of Proposition \ref{prop:entrysubor} and Assumption \ref{assum_conditiond} hold. Let $\Pi(z)\prec\wh{\Pi}(z)$ for some deterministic positive $\wh{\Pi}(z)$ with $N^{-1/2}\eta^{-1/4}\prec \wh{\Pi}\prec\Psi$. Then we have that 
	\beqs
	\frX \prec\Psi\wh{\Pi}.
	\eeqs
\end{prop}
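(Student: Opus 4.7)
The overall strategy is a Markov-type argument based on a recursive moment bound. For each fixed integer $p\geq 1$, I would aim to show
\[
\E|\frX|^{2p}\;\leq\;\sum_{k=1}^{2p}\E\!\left[\rO_{\prec}\!\big((\Psi\wh\Pi)^{k}\big)\,|\frX|^{2p-k}\right],
\]
after which Young's inequality, applied exactly as in \eqref{eq_arbitraydiscussion}, yields $\E|\frX|^{2p}\prec (\Psi\wh\Pi)^{2p}$ and hence $\frX\prec\Psi\wh\Pi$ by Markov's inequality and the arbitrariness of $p$. So the task reduces to deriving such a recursive estimate, where the essential improvement over Lemma \ref{lem:PKrecmoment} is an extra $\wh\Pi$-gain coming from the averaging in $i$.

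The expansion is launched from the identity $Q_i=G_{ii}\tr(A\wt BG)-\tr(GA)(\wt BG)_{ii}$ combined with the partial-randomness decomposition \eqref{eq:BGii}, which rewrites
\[
Q_i\;=\;G_{ii}\tr(A\wt BG)+\tr(GA)\mr S_i-\tr(GA)\bigl(G_{ii}+T_i\bigr)-\tr(GA)\mathsf{e}_{i1}.
\]
Multiplying by $d_i/N$, summing over $i$ and pairing with $\frX^{p-1}\overline{\frX}^{p}$, the only nontrivial term is $\frac{1}{N}\sum_i d_i\tr(GA)\mr S_i$, which I would process by Gaussian integration by parts \eqref{eq_formulaintergrationbyparts} in the vector $\bsg_i$, exactly as in \eqref{eq_cumulantfirst}. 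The hidden cancellations \eqref{eq:Sexpand}--\eqref{eq_finalexpansion} already produce, for each $i$, the term $(G_{ii}+T_i)\Upsilon$, and after summation with weights $d_i$ one can write the leading contribution in terms of $\Upsilon\cdot \frac{1}{N}\sum_i d_i(G_{ii}+T_i)$. Since Proposition \ref{prop:entrysubor} gives $\Upsilon\prec\Psi$ and, by hypothesis, $\Pi\prec\wh\Pi\prec\Psi$, the target improvement $\Psi\wh\Pi$ must come entirely from the averaging in $i$ of the error terms generated by integration by parts.

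The averaging gain is extracted as follows. When $\partial/\partial g_{ik}$ hits the weights $d_j$ inside $\frX$, Assumption \ref{assum_conditiond} turns what would have been a $\rO_{\prec}(\Pi_i^2)$-type error into a $\rO_{\prec}(\Psi^2\Pi_i^2)$ contribution, furnishing precisely one factor of $\Psi$ (equivalently $\wh\Pi$ under our upper bound) beyond the pointwise bookkeeping of Lemma \ref{lem:PKrecmoment}. When $\partial/\partial g_{ik}$ hits the resolvent factors inside $Q_i/\frX$, we apply the estimates of Lemma \ref{lem:recmomerror} together with the elementary averaging identity
\[
\frac{1}{N}\sum_i \Pi_i^{2}\;=\;\frac{\im m_H(z)+\im m_{\caH}(z)}{N\eta}\;=\;\Pi^{2}\;\prec\;\wh\Pi^{\,2},
\]
which upgrades each $\Pi_i^2$-error (from the pointwise proof) to a $\wh\Pi^2$-error after averaging; together with the factor $\Psi$ already available from $\tr(GA)\mr S_i$ (via the $\|\bsg_i\|^{-1}$ and Ward identity as in the proof of Lemma \ref{lem:recmomerror}), this produces the required $\Psi\wh\Pi$. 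The $\mathsf{e}_{i1}$ and $\mathsf{e}_{i2}$ pieces are handled similarly: the pointwise $\rO_{\prec}(N^{-1/2})$ bound, once weighted by $d_i/N$ and summed, contributes $\rO_{\prec}(N^{-1})\leq \rO_{\prec}(\Psi\wh\Pi)$ under $\wh\Pi\gtrsim N^{-1/2}\eta^{-1/4}$ and $\eta\geq N^{-1+\gamma}$.

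The main obstacle is accounting honestly for the terms on which the weights $d_i$ are defined jointly with $H$: a priori the chain-rule derivatives of $\frX^{p-1}\overline{\frX}^{p}$ can distribute the partial derivative $\partial/\partial g_{ik}$ across multiple $d_j$'s, generating many cross terms. Assumption \ref{assum_conditiond} is tailored to neutralize precisely these contributions by guaranteeing the averaged derivative identities in \eqref{eq:weight_cond} — so the crux of the argument is to verify that every cross term in the integration-by-parts expansion can be cast into one of the two canonical forms appearing in \eqref{eq:weight_cond} (involving $X_i=I$ or $X_i=\wt B^{\angi}$), after which the $\Psi^2\Pi_i^2$-bound yields an error of order $\wh\Pi\,\Psi\cdot\Psi^{2p-2}$ as desired. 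Once this bookkeeping is in place, the recursive bound closes and Young's inequality delivers the claim.
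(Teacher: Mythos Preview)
Your recursive-moment framework is right, but two of the terms you dismiss are exactly the ones that require new ideas, and as written your argument does not close.

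First, the leading term. After integration by parts on $\mr S_i$ and the cancellations \eqref{eq:Sexpand}--\eqref{eq_finalexpansion}, what survives in front of $\frX^{(p-1,p)}$ is precisely $-\frac{1}{N}\sum_i d_i(G_{ii}+T_i)\Upsilon$. You only input $\Upsilon\prec\Psi$ and $\frac{1}{N}\sum_i d_i(G_{ii}+T_i)=\rO_\prec(1)$, which gives a coefficient of size $\Psi$, not $\Psi\wh\Pi$. Averaging over $i$ buys nothing here: $\Upsilon$ is $i$-independent and the remaining sum is order one. The paper circumvents this by rewriting $\frX$ in the form \eqref{eq_keyexpansionfrx} with the weights $\tau_{i1}=a_i\tr(GD)/\tr(GA)-d_i$, engineered so that $\sum_i G_{ii}\tau_{i1}=0$ exactly (see \eqref{eq:tau_aver}). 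This kills the $G_{ii}$-part and leaves only $\frac{1}{N}\sum_i T_i\tau_{i1}\Upsilon\prec\Psi\,\wh\Upsilon$ (using $T_i\prec\Psi$ from Proposition~\ref{prop:entrysubor}). One then runs the recursion with an auxiliary parameter $\wh\Upsilon$, applies the resulting bound to the special choice $d_i\equiv z$ to self-improve $\Upsilon\prec\Psi\wh\Pi$, and feeds this back; see \eqref{eq:Avrecmomentrec}--\eqref{eq_iterationprocesureend}. Without this $\tau_{i1}$-cancellation plus bootstrap, the leading term stays at $\Psi$.

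Second, the error terms $\mathsf{e}_{i1},\mathsf{e}_{i2}$. Your sentence ``the pointwise $\rO_\prec(N^{-1/2})$ bound, once weighted by $d_i/N$ and summed, contributes $\rO_\prec(N^{-1})$'' is just wrong: $\frac{1}{N}\sum_{i=1}^N d_i\,\mathsf{e}_{i1}$ with $\absv{d_i}\prec 1$ and $\absv{\mathsf{e}_{i1}}\prec N^{-1/2}$ is only $\rO_\prec(N^{-1/2})$, and $N^{-1/2}$ is \emph{not} $\prec\Psi\wh\Pi$ for $\eta\sim 1$ (indeed $\Psi\wh\Pi\prec\Psi^2=(N\eta)^{-1}$). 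The paper instead expands $\mathsf{e}_{i1}$ and $\mathsf{e}_{i2}$ one order further (equations \eqref{eq_epsilon1details}--\eqref{eq:recmom_ei2}), isolates the $h_{ii}G_{ii}$ pieces which cancel between the two, and processes the remaining quadratic forms $\mr\bsh_i^*(\wt B^{\angi}-I)\mr\bsh_i$ and $\norm{\mr\bsg_i}^2-1$ by a second integration by parts. Only after this refinement do these contributions fall to the required $\wh\Pi^2$ level.
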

\begin{proof}[\bf Proof of Proposition \ref{prop:FA1}] Denote
	\begin{align*}
		\frX^{(p,q)}&\deq \frX^{p}\ol{\frX}^{q},\quad  p,q \in\N.
	\end{align*}
	We claim that the following recursive estimates hold for $\frX.$	
	\begin{lem}\label{lem:Avrecmoment}
		For any fixed integer $p\geq 2,$ we have that
		\begin{align}\label{eq_roughfarecuversivestimateeq}
			\expct{\frX^{(p,p)}}\leq \expct{\rO_{\prec}(\wh{\Pi}^{2})\frX^{(p-1,p)}+\rO_{\prec}(\Psi^{2}\wh{\Pi}^{2})\frX^{(p-2,p)}+\rO_{\prec}(\Psi^{2}\wh{\Pi}^{2})\frX^{(p-1,p-1)}}.
		\end{align}
	\end{lem}
	\noindent By a discussion similar to (\ref{eq_arbitraydiscussion}), together with  Lemma \ref{lem:Avrecmoment} and Markov inequality, we can complete the proof.  
\end{proof}

The rest of this subsection is devoted to proving Lemma \ref{lem:Avrecmoment}, where the proof is similar to that of Lemma \ref{lem:PKrecmoment}, except that we use Proposition \ref{prop:entrysubor} as an input. Since the proof is similar to \cite[Lemma 6.2]{BEC}, we focus on {explaining} the main ideas and how our proof differs from that of \cite{BEC}. {Particularly, the most significant difference is that, unlike \cite[Lemma 6.2]{BEC}, both $\mathsf{e}_{i1}$ and $\mathsf{e}_{i2}$ in our model will generate some $\rO_{\prec}(N^{-1/2})$ terms. The weighted summations  of these terms will be canceled out algebraically after we explore some hidden identities; see (\ref{eq_epsilon1details})--(\ref{eq:expan_ei2}) and the associated discussion for more details. }

\begin{proof}[\bf Proof of Lemma \ref{lem:Avrecmoment}] First of all, following the proof of \cite[Lemma 6.2]{BEC}, we see that it suffices to prove the following statement: if $\wh{\Upsilon}(z)$ is another deterministic control parameter such that $\absv{\Upsilon(z)}\prec \wh{\Upsilon}(z)\leq \Psi(z)$, then 
	\begin{align}\label{eq:Avrecmomentrec}
		\expct{\frX^{(p,p)}}\leq & \expct{\rO_{\prec}(\wh{\Pi}^{2}+\Psi\wh{\Upsilon})\frX^{(p-1,p)}+\rO_{\prec}(\Psi^{2}\wh{\Pi}^{2})\frX^{(p-2,p)}  \right. \nonumber \\
			&	\left. 	+\rO_{\prec}(\Psi^{2}\wh{\Pi}^{2})(\frX^{(p-1,p-1)}}.
	\end{align}
	Indeed, the fact that \eqref{eq:Avrecmomentrec} implies \eqref{eq_roughfarecuversivestimateeq} follows from the same proof as \cite[Lemma 6.2]{BEC}, and we simply recall the main arguments. First, we apply \eqref{eq:Avrecmomentrec} to the weights $d_{i}\equiv z$ and use Young's inequality, iteratively with smaller $\wh{\Upsilon}$ each time. Since the average with respect to $d_{i}\equiv z$ is exactly $\Upsilon$, we obtain the bound $\absv{\Upsilon}\prec\Psi\wh{\Pi}$ as a result. Secondly, we feed this bound back into \eqref{eq:Avrecmomentrec} to get \eqref{eq_roughfarecuversivestimateeq}.
	
	It remains to prove \eqref{eq:Avrecmomentrec}. Recall $D=\diag\{d_{1},\cdots,d_{N}\}$ and (\ref{eq_defnq}). We see that 
	\begin{align}\label{eq_keyexpansionfrx}
		\frX=\frac{1}{N}\sum_{i}d_{i}Q_{i}
		&		=\frac{1}{N}\sum_{i=1}^{N}(\wt{B}G)_{ii}\tau_{i1}\tr(GA),
	\end{align}
	where we denoted
	\beq\label{eq_defntauil}
	\tau_{i1}\deq \frac{a_{i}\tr(GD)}{\tr(GA)}-d_{i}.
	\eeq
	
	We state two important properties regarding $\tau_{i1}.$ First, from the definition of $\Omega_{B}^{c}(z)$ and the stability bound, we have $\absv{\tau}_{i1}\prec1$ under the assumptions of Proposition \ref{prop:FA1}. Second, we have
	\beq\label{eq:tau_aver}
	\sum_{i}G_{ii}\tau_{i1}=\frac{1}{\tr(GA)}\sum_{i}G_{ii}a_{i}\tr(GD)-\tr(GD)=0.
	\eeq
	
	Inserting (\ref{eq:BGii}) into (\ref{eq_keyexpansionfrx}), we obtain that 
	\begin{align}
		\expct{\frX^{(p,p)}}
		&=-\frac{1}{N}\sum_{i=1}^{N}\expct{\mr{S}_{i}\tau_{i1}\tr(GA)\frX^{(p-1,p)}}
		+\frac{1}{N}\sum_{i=1}^{N}\expct{G_{ii}\tau_{i1}\tr(GA)\frX^{(p-1,p)}} \nonumber\\
		&+\frac{1}{N}\sum_{i=1}^{N}\expct{T_{i}\tau_{i1}\tr(GA)\frX^{(p-1,p)}} 
		+\frac{1}{N}\sum_{i=1}^{N}\expct{\mathsf{e}_{i1}\tau_{i1}\tr(GA)\frX^{(p-1,p)}}, \label{eq:recmomX1}
	\end{align}
	where $\mathsf{e}_{i1}$ is defined in \eqref{eq_epsilon1}. It suffices to estimate all the terms of the RHS of (\ref{eq:recmomX1}). Due to (\ref{eq:tau_aver}), the second term onf the RHS of (\ref{eq:recmomX1}) vanishes.
	For the remaining terms, we apply (\ref{eq_formulaintergrationbyparts}) to estimate them. We start with the first term of the RHS of (\ref{eq:recmomX1}). Recall (\ref{eq_rewritemrs}). We have that 
	\begin{align}\label{eq:recmomX2}
		\expct{\mr{S}_{i}\tau_{i1}\tr (GA)\frX^{(p-1,p)}}
		&=\frac{1}{N}\sum_{k}^{(i)}\expct{\frac{1}{\norm{\bsg_{i}}}\frac{\partial (\bse_{k}\adj\wt{B}^{\angi}G\bse_{i})}{\partial g_{ik}}\tau_{i1}\tr (GA)\frX^{(p-1,p)}} \quad  \nonumber\\
		&+\frac{1}{N}\sum_{k}^{(i)}\expct{\bse_{k}\adj \wt{B}^{\angi}G\bse_{i}\frac{\partial}{\partial g_{ik}}\left(\norm{\bsg_{i}}^{-1}\tau_{i1}\tr(GA)\frX^{(p-1,p)}\right)}.
	\end{align}
	For the first term of the RHS of (\ref{eq:recmomX2}), by (\ref{eq_finalexpansion}), we follow \cite[(6.20)]{BEC} to get
	\begin{align}\label{eq:recmomX3}\nonumber
		\frac{1}{N^{2}}\sum_{i=1}^N\sum_{k}^{(i)}&\expct{\frac{1}{\norm{\bsg_{i}}}\frac{\partial (\bse_{k}\adj\wt{B}^{\angi}G\bse_{i})}{\partial g_{ik}}\tau_{i1}\tr (GA)\frX^{(p-1,p)}}
		=\frac{1}{N}\sum_{i}\expct{T_{i}\tau_{i1}\tr(GA)\frX^{(p-1,p)}} \nonumber\\
		&+\frac{1}{N^{2}}\sum_{i}\sum_{k}^{(i)}\expct{G_{ki}\frac{\partial }{\partial g_{ik}}\left(\norm{\bsg_{i}}^{-1}\tau_{i1}\tr(A\wt{B}G)\tau_{i1}\frX_{i}^{(p-1,p)}\right)}	\nonumber \\
		&+\frac{1}{N}\sum_{i}\expct{\frac{\mathsf{e}_{i2}\tau_{i1}}{\norm{\bsg_{i}}}\frX^{(p-1,p)}} +\expct{\big(\rO_{\prec}(\wh{\Upsilon}(z)\Psi(z))+\rO_{\prec}(\Pi^{2})\big)\frX^{(p-1,p)}},
	\end{align}
	where $\mathsf{e}_{i2}$ is defined in \eqref{eq_defnmathsfe2}. Here, we used \eqref{eq:tau_aver} and $\absv{\Upsilon}\prec\wh{\Upsilon}$ to the first term of \eqref{eq_finalexpansion}, \eqref{eq:recmomP2} to the second, and the fact that $\frac{1}{N}\sum_{i}\Pi_{i}^{2}\prec\Pi^{2}$ to all terms. Note that the first term on the RHS of \eqref{eq:recmomX3} cancels with the third term of \eqref{eq:recmomX1}.
	
	At this point, we deviate a little bit from the current calculation and revisit the last term on the RHS of (\ref{eq:recmomX1}). By the definition of $\mathsf{e}_{i1},$ (\ref{eq_hiicontrol}), (\ref{eq_licontrol}), (\ref{eq_controlhibhi}) and Proposition \ref{prop:entrysubor}, we have that
	\begin{align}\label{eq_epsilon1details}
		\mathsf{e}_{i1}
		&	=-h_{ii}G_{ii}+\frac{\Omega_{B}^{c}}{z}\frac{\bsh_{i}\adj(\wt{B}^{\angi}-I)\bsh_{i}}{a_{i}-\Omega_{B}^{c}}+\rO_{\prec}(N^{-1/2}\Psi).
	\end{align}
	{Compared to the counterpart of the additive model in equation (6.14) of \cite{BEC}, we have an extra term $-h_{ii}G_{ii}$ on the RHS of (\ref{eq_epsilon1details}). As we will see in (\ref{eq:expan_ei1}) and (\ref{eq:expan_ei2}), a weighted sum of this term will be canceled out with a term generated from $\mathsf{e}_{i2}$ (c.f. the second term on the RHS of (\ref{eq_ei2definition})). 
		By a discussion similar to \cite[(6.26)]{BEC}, it is easy to see that} the last term in \eqref{eq:recmomX1} can be written as 
	\begin{align}\label{eq:expan_ei1}
		\expct{\frac{1}{N}\sum_{i}\mathsf{e}_{i1}\tau_{i1}\tr(GA)\frX^{(p-1,p)}}
		=-\expct{\frac{1}{N}\sum_{i}h_{ii}G_{ii}\tau_{i1}\tr(GA)\frX^{(p-1,p)}} \nonumber \\
		+\frac{1}{z}\frac{1}{N^{2}}\sum_{i}\sum_{k}^{(i)}\expct{\frac{\bse_{k}\adj(\wt{B}^{\angi}-I)\mr{\bsg}_{i}}{\norm{\bsg_{i}}^{2}}\frac{\partial }{\partial g_{ik}}\left(\tau_{i2}\frX^{(p-1,p)}\right)}
		+\expct{\rO_{\prec}(\wh{\Pi}^{2})\frX^{(p-1,p)}},
	\end{align}
	where $\tau_{i2}$ is defined as 
	\beqs
	\tau_{i2}\deq\frac{\Omega_{B}^{c}(a_{i}\tr(GD)-d_{i}\tr(GA))}{a_{i}-\Omega_{B}^{c}}.
	\eeqs
	
	Now we return to (\ref{eq:recmomX3}) and investigate the only remaining term, that is, the third term. Recall (\ref{eq_defnmathsfe2}). By a discussion similar to (\ref{eq_epsilon1details}), we find that 
	\begin{align}
		\mathsf{e}_{i2}
		=&(\norm{\mr{\bsg}_{i}}^{2}-1)\frac{\Omega_{B}^{c}}{z}\frac{\tr(A(\wt{B}-I)G)}{a_{i}-\Omega_{B}^{c}}-h_{ii}G_{ii}\tr(GA)+\rO_{\prec}(N^{-1/2}\Psi).\label{eq_ei2definition}
	\end{align}
	{As discussed above, the second term on the RHS of (\ref{eq_ei2definition}) does not appear for the additive model in \cite{BEC}; see the equation below (6.29) therein. A weighted sum will be canceled out with the extra term generated by $\mathsf{e}_{i1};$ see (\ref{eq:expan_ei1}) and (\ref{eq:expan_ei2}) for more detail. }	
	Using the definitions in (\ref{eq_prd2}), by (\ref{eq_largedeviationbound}), (\ref{eq_ei2definition}) and a discussion similar to (\ref{eq:expan_ei1}), we find that the third term of the RHS of (\ref{eq:recmomX3}) can be written as  	
	\begin{align}\label{eq:expan_ei2}
		\frac{1}{N}\sum_{i}\expct{\frac{\mathsf{e}_{i2}\tau_{i1}}{\norm{\bsg_{i}}}\frX^{(p-1,p)}}  
		&		=\frac{1}{N^{2}z}\sum_{i}\sum_{k}^{(i)}\expct{\bse_{k}\adj \mr{\bsg_{i}}\frac{\partial }{\partial g_{ik}}\left(\tau_{i3}\frX^{(p-1,p)}\right)} \nonumber  \\
		&	-\frac{1}{N}\sum_{i}\expct{h_{ii}G_{ii}\tau_{i1}\tr(GA)\frX^{(p-1,p)}}+\expct{\rO_{\prec}(N^{-1/2}\Psi)\frX^{(p-1,p)}}, 
	\end{align}
	where we denoted $\tau_{i3}\deq \tau_{i2}\tr(A(\wt{B}-I)G).$
	We emphasize that the second term of the RHS of (\ref{eq:expan_ei2}) is canceled out with the first term of the RHS of (\ref{eq:expan_ei1}). 
	
	Collecting all results above, we can write (\ref{eq:recmomX1}) as follows;
	\begin{equation}\label{eq_explicitlysolution}
		\mathbb{E}\left[ \frX^{(p,p)} \right]=\mathbb{E}\left[ \mathfrak{D}_1 \frX^{(p-1,p)} \right]+\mathbb{E}\left[ \mathfrak{D}_2 \frX^{(p-2,p)} \right]+\mathbb{E}\left[ \mathfrak{D}_3 \frX^{(p-1,p-1)} \right],
	\end{equation}	
	where $\mathfrak{D}_k, k=1,2,3,$ can be computed explicitly. Finally, since all derivatives involved in $\mathfrak{D}_{k}$'s concern only (weighted) traces of $G$ or $\norm{\bsg_{i}}$, we can bound $\mathfrak{D}_k, k=1,2,3,$ using a discussion similar to (\ref{eq_defnmathfrackc1}), (\ref{eq_defnmathfrackc2}) and (\ref{eq_defnmathfrackc3}) utilizing Lemmas \ref{lem:DeltaG} and \ref{lem:recmomerror} {as in the proof of Lemma \ref{lem:PKrecmoment}}. We omit the details here.
	
\end{proof}

\subsection{Optimal fluctuation averaging}\label{subsec_stronglocallawfixed}

In this subsection, we will establish an estimate for the key quantities regarding the stability of system which masters the subordination functions (c.f. (\ref{eq_subordinationsystemab}) below). Such an estimation relies on strengthening the estimates obtained in Proposition \ref{prop:FA1} for certain explicit choices of $d_i, i=1,2,\cdots,N;$ see (\ref{eq_optimalfaquantities}) and (\ref{eq_optimalfaquantitiescoeff}) for details.  These results will be a base for the proof of Theorem \ref{thm:main}. Moreover, as one important byproduct, we can show the closeness of the subordination functions and their approximations in Definition \ref{defn_asf}.

Denote 
\begin{align*}
	&\Lambda_{A}(z) \deq \Omega_{A}(z)-\Omega_{A}^{c}(z), &
	&\Lambda_{B}(z)\deq \Omega_{B}(z)-\Omega_{B}^{c}(z), &
	&\Lambda(z) \deq \absv{\Lambda_{A}(z)}+\absv{\Lambda_{B}(z)}.
\end{align*}
As we have seen in Lemma \ref{lem:OmegaBound2}, the control of $\Lambda$ are expected to reduced to studying a system similar to (\ref{eq_suborsystemPhi}). Recall that $\Phi_{AB} \equiv (\Phi_A, \Phi_B) \in \mathbb{C}_+^3$ and the subordination functions $\Omega_A$ and $\Omega_B$ are governed by the following system{
	\begin{equation}\label{eq_subordinationsystemab}
		\Phi_{AB}(\Omega_A(z), \Omega_B(z), z)=0,
\end{equation}}
where $\Phi_A(z)$ and $\Phi_B(z)$ are defined as follows 
\beq\label{eq:def_PhiAB}
\Phi_{A}(\omega_{1},\omega_{2},z)\deq \frac{M_{\mu_{A}}(\omega_{2})}{\omega_{2}}-\frac{\omega_{1}}{z},\AND \Phi_{B}(\omega_{1},\omega_{2},z)\deq \frac{M_{\mu_{B}}(\omega_{1})}{\omega_{1}}-\frac{\omega_{2}}{z}.
\eeq
We further introduce the following shorthand notations
\begin{align}\label{eq_phacdefinition}
	&\Phi_{A}^{c}\equiv \Phi_{A}^{c}(z)\deq \Phi_{A}(\Omega_{A}^{c}(z),\Omega_{B}^{c}(z),z), &
	&\Phi_{B}^{c}\equiv \Phi_{B}^{c}(z)\deq \Phi_{B}(\Omega_{A}^{c}(z),\Omega_{B}^{c}(z),z).
\end{align}
Whenever there is no ambiguity, we will omit the dependence on $z$ and consider $(\Phi_{A},\Phi_{B})$ as a function of the first two variables $(\omega_{1},\omega_{2})$.  Recall that $\mathcal{S}_{AB}$, $\mathcal{T}_{A}$ and $\mathcal{T}_{B}$ are defined analogously as in (\ref{eq_defn_salphabeta}) and (\ref{eq_defn_talpha}) by replacing the pair $(\alpha, \beta)$ with $(A,B).$ We first introduce an estimate regarding the linear combination of $\mathcal{S}_{AB}, \mathcal{T}_{A}, \mathcal{T}_B$ and $\Lambda.$ It serves as a fundamental input for  
the continuity argument in Section \ref{sec_finalsection}. Before stating the result,  we observe that, under Assumption \ref{assu_ansz}, by (\ref{eq:Lambda}) and Proposition \ref{prop:entrysubor}
\begin{equation}\label{eq_lambdainitialbound}
	\Lambda \prec N^{-\gamma/4}.
\end{equation}

\begin{prop}\label{prop:FA2}
	Fix $z\in\caD_{\tau}(\eta_{L},\eta_{U}).$ Suppose the assumptions of Proposition \ref{prop:entrysubor} hold. 	
	Let $\wh{\Lambda}(z)$ be a deterministic positive function such that $\Lambda(z)\prec\wh{\Lambda}(z)\prec N^{-\gamma/4}$. Then we have for $\iota=A,B,$ 
	\beq\label{eq:FA2}
	\Absv{\frac{\caS_{AB}(z)}{z}\Lambda_{\iota}+\caT_{\iota}\Lambda_{\iota}^{2}+O(\absv{\Lambda_{\iota}}^{3})}\prec \mho.
	\eeq
	where $\mho$ is defined as 
	\begin{equation} \label{eq_defnTheta}
		\mho \equiv \mho(z):= \Psi^{2}\left(\sqrt{(\im m_{\mu_{A}\boxtimes\mu_{B}}(z)+\wh{\Lambda}(z))(\absv{\caS_{AB}(z)}+\wh{\Lambda}(z))}+\Psi^{2}\right).	
	\end{equation}
\end{prop}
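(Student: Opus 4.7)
The plan is to reduce Proposition \ref{prop:FA2} to controlling the errors $\Phi_A^c$ and $\Phi_B^c$ of the approximate subordination equations defined in \eqref{eq_phacdefinition}. The LHS of \eqref{eq:FA2} arises naturally as the one-variable Taylor expansion of $\Phi_A$ (resp. $\Phi_B$) around the true subordination point $(\Omega_A(z),\Omega_B(z))$, after eliminating the second variable via the constraint coming from $\Phi_{B}=0$ (resp. $\Phi_{A}=0$). Concretely, viewing $\Phi_A$ as a function of $\omega_1$ alone via $\omega_2 = zL_{\mu_B}(\omega_1)$, its first derivative at $\omega_1=\Omega_A$ is proportional to $\mathcal{S}_{AB}/z$ and its second derivative is $2\mathcal{T}_A$, exactly matching the definitions \eqref{eq_defn_salphabeta}--\eqref{eq_defn_talpha}. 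Consequently, once we establish the \emph{a priori} estimate $|\Phi_A^c|+|\Phi_B^c|\prec \mho$, the claim \eqref{eq:FA2} follows by expanding $\Phi_A^c$ and $\Phi_B^c$ to second order in $(\Lambda_A,\Lambda_B)$, then using the linear system to solve for $\Lambda_{\iota'}$ in terms of $\Lambda_\iota$ (which is the step that produces $\mathcal{S}_{AB}/z$ as the coefficient of the linear term in $\Lambda_\iota$).

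First I would rewrite $\Phi_A^c$ and $\Phi_B^c$ as weighted averages of the form \eqref{eq_weighted}. Using Definition \ref{defn_asf} for $\Omega_A^c,\Omega_B^c$ together with the identity $(1+z\operatorname{tr}G)(\Omega_B^c-a_i)^{-1}$-based expansion of $M_{\mu_A}(\Omega_B^c)$, a direct algebraic manipulation shows that $\Phi_A^c$ and $\Phi_B^c$, multiplied by suitable $\mathrm{O}_\prec(1)$ prefactors that are bounded away from zero by Propositions \ref{prop:stabN} and \ref{prop:entrysubor}, equal weighted averages $\mathfrak{X}(D_A)$ and $\mathfrak{X}(D_B)$ with explicit diagonal weights
\begin{equation*}
(D_A)_{ii} = \frac{a_i}{a_i-\Omega_B^c(z)}, \qquad (D_B)_{ii} = \frac{b_i}{b_i-\Omega_A^c(z)},
\end{equation*}
plus remainder terms that are already controlled by the pointwise estimates in Proposition \ref{prop:entrysubor} (in particular $\Lambda_d^c, \Upsilon \prec \Psi$). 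This is the choice of weights alluded to in the introduction as the ``hidden identities.''

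Next I would verify Assumption \ref{assum_conditiond} for $D_A$ and $D_B$. Since $d_i$ depends on $H$ only through $\Omega_A^c,\Omega_B^c$, which are traces of resolvents, the partial derivative estimates \eqref{eq:weight_cond} follow from the derivative formulas in Appendix \ref{appendix_deriavtive} and the large deviation bounds of Appendix \ref{append:A}, combined with Lemma \ref{lem:recmomerror}. Once this is in place, Proposition \ref{prop:FA1} yields $|\Phi_\iota^c| \prec \Psi\widehat{\Pi}$ for any deterministic $\widehat{\Pi}\succ \Pi$. The refinement needed for $\mho$ comes from inspecting the recursive moment estimate underlying Proposition \ref{prop:FA1}: invoking (ii) of Proposition \ref{prop:stabN}, the hypothesis $\Lambda\prec \widehat{\Lambda}$, and the identity \eqref{eq:approx_subor}, one may use $\operatorname{Im} m_H \lesssim \operatorname{Im} m_{\mu_A\boxtimes\mu_B} + \widehat{\Lambda}$, so that $\widehat{\Pi}^2 \lesssim (\operatorname{Im} m_{\mu_A\boxtimes\mu_B}+\widehat{\Lambda})/(N\eta)$.

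The main obstacle is extracting the extra factor $\sqrt{|\mathcal{S}_{AB}|+\widehat{\Lambda}}$ in $\mho$, which does not come from the generic bound $\Psi\widehat{\Pi}$ of Proposition \ref{prop:FA1}. This factor reflects a cancellation specific to the chosen weights $D_A,D_B$: the combinations $\operatorname{tr}(GA\widetilde{B})\operatorname{tr}(GD_\iota)-\operatorname{tr}(GA)\operatorname{tr}(\widetilde{B}GD_\iota)$, evaluated with the specific $D_\iota$ above, satisfy an additional near-cancellation at the deterministic level controlled by $\mathcal{S}_{AB}$, as can be seen by substituting the deterministic approximation $G_{ii}\approx (a_i-\Omega_B)^{-1}\cdot z^{-1}\Omega_B$ from \eqref{eq:main} and using the subordination system. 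To make this rigorous, I would revisit the expansions \eqref{eq:expan_ei1}--\eqref{eq:recmom_ei2} in the proof of Lemma \ref{lem:Avrecmoment} and apply Cauchy--Schwarz to the leading products $\tau_{i1}\operatorname{tr}(GA)$ and $\tau_{i2}\operatorname{tr}(A(\widetilde{B}-I)G)$, splitting each factor into a deterministic piece (bounded by $|\mathcal{S}_{AB}|+\widehat{\Lambda}$ via Propositions \ref{prop:stabN} and \ref{prop:entrysubor} together with $\Lambda\prec \widehat{\Lambda}$) and a fluctuating piece (bounded by $\Psi$). Combining with the refined $\widehat{\Pi}$ above and Young's inequality on the resulting moment bound produces the bound $\mho$ and hence \eqref{eq:FA2}.
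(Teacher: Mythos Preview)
Your overall architecture is right — Taylor expand $(\Phi_A^c,\Phi_B^c)$ around $(\Omega_A,\Omega_B)$, eliminate one variable, and control the resulting combination by fluctuation averaging — but the mechanism you propose for extracting the factor $\sqrt{|\caS_{AB}|+\wh\Lambda}$ is not the one that works, and the claim $|\Phi_A^c|+|\Phi_B^c|\prec\mho$ is stronger than what is actually needed or proved.

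The paper only obtains $|\Phi_\iota^c|\prec\Psi\mho^{1/2}$ from Proposition~\ref{prop:FA1} with $\wh\Pi=\mho^{1/2}$, which is strictly weaker than $\mho$. The gain to $\mho$ is \emph{not} achieved for $\Phi_A^c$ or $\Phi_B^c$ individually; it appears only for the specific linear combination
\[
\frZ_1 \;=\; \Phi_A^c + zL'_{\mu_A}(\Omega_B)\,\Phi_B^c,
\]
which, by your own Taylor computation, equals $\tfrac{\caS_{AB}}{z}\Lambda_A+\caT_A\Lambda_A^2+O(|\Lambda_A|^3)$ up to errors $O(|\Phi_B^c|^2+|\Phi_B^c\Lambda_A|)$. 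The reason $\frZ_1$ enjoys the sharper bound is that its $g_{ik}$-derivative, computed via the chain rule through $\Omega_A^c,\Omega_B^c$, has coefficients
\[
zL'_{\mu_A}(\Omega_B)L'_{\mu_B}(\Omega_A^c)-\tfrac{1}{z}=O(|\caS_{AB}|+\Lambda)
\quad\text{and}\quad
L'_{\mu_A}(\Omega_B^c)-L'_{\mu_A}(\Omega_B)=O(\Lambda),
\]
which feeds the extra small factor into the recursive moment estimate (Lemma~\ref{lem:Zrecmoment}). For $\Phi_A^c$ alone the corresponding coefficients are $-1/z$ and $L'_{\mu_A}(\Omega_B^c)$, both of order one, so no such gain is available. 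Your proposed source of cancellation — special structure of the weights $D_A,D_B$ inside the proof of Lemma~\ref{lem:Avrecmoment} — does not produce this: the quantities $\tau_{i1},\tau_{i2}$ there are $O_\prec(1)$, not $O(|\caS_{AB}|+\wh\Lambda)$. The fix is to run the recursive moment estimate directly on $\frZ_1$ (and $\frZ_2$) rather than on $\Phi_A^c,\Phi_B^c$ separately, and then absorb the leftover errors $|\Phi_B^c|^2+|\Phi_B^c\Lambda_A|\prec\Psi^2\mho+\wh\Lambda\Psi\mho^{1/2}\prec\mho$ using the weaker individual bound.
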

The proof of Proposition \ref{prop:FA2} relies on the estimate for a special linear combinations of $Q_i$'s.    Recall (\ref{eq_mtrasindenity}). Denote
\begin{align} \label{eq_frz1z2definition}
	&	\frZ_{1}\deq \Phi_{A}^{c}+zL_{\mu_{A}}'(\Omega_{B})\Phi_{B}^{c}, \
	\frZ_{2}\deq \Phi_{B}^{c}+zL_{\mu_{B}}'(\Omega_{A})\Phi_{A}^{c}, \\
	&	\frZ_{1}^{(p,q)}\deq\frZ_{1}^{p}\ol{\frZ}_{1}^{q}, \
	\frZ_{2}^{(p,q)}\deq\frZ_{2}^{p}\ol{\frZ}_{2}^{q}. \nonumber
\end{align}
We collect the recursive moment estimates for $\frZ_{1}$ and $\frZ_{2}$ in the following lemma. Its proof will be provided after we finish proving Proposition \ref{prop:FA2}.  
\begin{lem}\label{lem:Zrecmoment}
	Fix $z\in\caD_{\tau}(\eta_{L},\eta_{U}).$ Suppose the assumptions of Proposition \ref{prop:FA2} hold. Then we have 
	\beq\label{eq_Zrecmomentequation}
	\expct{\frZ_{1}^{(p,p)}}=\expct{\rO_{\prec}(\mho)\frZ_{1}^{(p-1,p)}}+\expct{\rO_{\prec}(\mho^{2})\frZ_{1}^{(p-2,p)}}+\expct{\rO_{\prec}(\mho^{2})\frZ_{1}^{(p-1,p-1)}},
	\eeq
	where $\mho$ is defined in (\ref{eq_defnTheta}). Similar results hold for $\frZ_{2}$.
\end{lem}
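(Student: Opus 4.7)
\medskip

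\noindent\textbf{Proof proposal for Lemma \ref{lem:Zrecmoment}.}

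The plan is to reproduce the recursive-moment machinery developed in Lemmas \ref{lem:PKrecmoment} and \ref{lem:Avrecmoment}, but now exploiting the \emph{exact} linear combination $\frZ_{1}=\Phi_{A}^{c}+zL_{\mu_{A}}'(\Omega_{B})\Phi_{B}^{c}$ to produce a cancellation that upgrades the generic $\Psi\wh{\Pi}$-bound of Proposition \ref{prop:FA1} to the sharper bound $\mho$. The first step is to rewrite $\Phi_{A}^{c}$ and $\Phi_{B}^{c}$ as weighted averages of the $Q_{i}$'s plus negligible errors. Starting from Definition \ref{defn_asf}, the identity (\ref{eq:Lambda}), the resolvent identity (\ref{eq_gbgindeti}), and the relation (\ref{eq:approx_subor}) between $\Omega_{A}^{c}\Omega_{B}^{c}$ and $\Upsilon$, a direct algebraic manipulation yields representations
\[
\Phi_{A}^{c}=\frac{1}{N}\sum_{i}d_{i}^{(1)}Q_{i}+\caE_{1},\qquad \Phi_{B}^{c}=\frac{1}{N}\sum_{i}d_{i}^{(2)}Q_{i}+\caE_{2},
\]
where the deterministic-type weights $d_{i}^{(1)},d_{i}^{(2)}$ involve rational functions of $a_{i},b_{i},\Omega_{A},\Omega_{B}$ of the form $a_{i}/(a_{i}-\Omega_{B})^{2}$ and $b_{i}/(b_{i}-\Omega_{A})^{2}$ (hence bounded thanks to (i) of Proposition \ref{prop:stabN}) and satisfy Assumption \ref{assum_conditiond}, while $\caE_{1},\caE_{2}$ are easily bounded by $\mho$ using Proposition \ref{prop:entrysubor} together with $|\Upsilon|\prec\Psi\wh{\Pi}$ established inside the proof of Proposition \ref{prop:FA1}.

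Next, I would expand $\mathbb{E}[\frZ_{1}^{(p,p)}]$ via Gaussian integration by parts (\ref{eq_formulaintergrationbyparts}) in the coordinates $g_{ik}$, following verbatim the bookkeeping of the proof of Lemma \ref{lem:Avrecmoment}. This reduces the problem to controlling weighted sums of $\mr{S}_{i}$, $\mr{T}_{i}$, together with the derivative coefficients $\partial G/\partial g_{ik}$, $\partial d_{j}^{(\kappa)}/\partial g_{ik}$, and $\partial\frZ_{1}/\partial g_{ik}$. The critical observation is that the two pieces of $\frZ_{1}$ feed identical first-order terms into the integration-by-parts expansion up to multiplication by the entries of the Jacobian matrix $\mathrm{D}\Phi_{AB}$ in (\ref{eq_diferentialoperator}); since the weight $zL_{\mu_{A}}'(\Omega_{B})$ in front of $\Phi_{B}^{c}$ is precisely the $(1,2)$-entry of $z\,\mathrm{D}\Phi_{AB}$, the leading $\Psi\wh{\Pi}$-contribution cancels between the two pieces, and what remains is either of order $\sqrt{\im m_{\mu_{A}\boxtimes\mu_{B}}+\wh\Lambda}\cdot\Psi^{2}$ (from the residual $\Pi_{i}^{2}$-type terms arising through Lemma \ref{lem:DeltaG}) or of order $\sqrt{|\caS_{AB}|+\wh\Lambda}\cdot\Psi^{2}$ (from replacing $\Omega_{B}^{c}$ by $\Omega_{B}$ inside $L_{\mu_{A}}$ via the second-order Taylor remainder $\caT_{A}\Lambda_{B}^{2}$ in the spirit of (\ref{eq:FA2}), together with the $\caS_{AB}/z$-stability of the system (\ref{eq_subordinationsystemab})). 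A Cauchy–Schwarz inequality across these two channels produces precisely the geometric mean $\sqrt{(\im m_{\mu_{A}\boxtimes\mu_{B}}+\wh\Lambda)(|\caS_{AB}|+\wh\Lambda)}$ that defines $\mho$.

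The remaining pieces of the recursion, namely those multiplying $\frZ_{1}^{(p-2,p)}$ and $\frZ_{1}^{(p-1,p-1)}$, come from differentiating $\frZ_{1}$ itself under the integration-by-parts. These are routinely bounded by $\mho^{2}$ after collecting the off-diagonal estimates of Proposition \ref{prop_offdiagonal} and using the linear dependence of $\frZ_{1}$ on the weighted sum $(1/N)\sum d_{i}^{(\kappa)}Q_{i}$, exactly as the $\mathfrak{D}_{2},\mathfrak{D}_{3}$ terms were handled in the proof of Lemma \ref{lem:Avrecmoment}. The statement for $\frZ_{2}$ is obtained by the obvious symmetry $(A,\Omega_{A})\leftrightarrow(B,\Omega_{B})$. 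The main obstacle throughout is of bookkeeping type: one must track, with the correct rational weights in $\Omega_{A},\Omega_{B}$, that the first-order cancellation induced by the choice $zL_{\mu_{A}}'(\Omega_{B})$ is truly complete and that \emph{every} uncanceled residue can be absorbed into the geometric-mean factor in $\mho$ rather than into the cruder $\Psi\wh{\Pi}$; this is where the hidden identity (\ref{eq:approx_subor}) linking $\Omega_{A}^{c}\Omega_{B}^{c}$ to $\Upsilon$ is indispensable, since it converts what would otherwise be an uncontrolled $\Upsilon$-term at order $\Psi\wh{\Pi}$ into a higher-order contribution compatible with $\mho$.
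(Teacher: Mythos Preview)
Your high-level plan (integration by parts in the $g_{ik}$'s, exploiting the specific linear combination $\frZ_{1}=\Phi_{A}^{c}+zL_{\mu_{A}}'(\Omega_{B})\Phi_{B}^{c}$ for a cancellation) is correct, but you have located the cancellation in the wrong part of the recursion, and this matters.

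The coefficient of $\frZ_{1}^{(p-1,p)}$ is already $\mho$ \emph{without} any cancellation between the two pieces of $\frZ_{1}$: one simply runs the proof of Lemma~\ref{lem:Avrecmoment} separately for $\frac{1}{N}\sum_{i}\frd_{i1}Q_{i}$ and for $\frac{1}{N}\sum_{i}\frd_{i2}\caQ_{i}$, with $\wh{\Pi}=\mho^{1/2}$, and the first coefficient in \eqref{eq_roughfarecuversivestimateeq} is $\wh{\Pi}^{2}=\mho$ on the nose. There is no ``leading $\Psi\wh{\Pi}$-contribution'' to remove here, and no Cauchy--Schwarz across channels is involved; the geometric mean in $\mho$ is simply the definition, chosen so that $\Pi^{2}\prec\mho$ via $\im m_{H}\prec\im m_{\mu_{A}\boxtimes\mu_{B}}+\wh{\Lambda}\lesssim\sqrt{(\im m_{\mu_{A}\boxtimes\mu_{B}}+\wh{\Lambda})(|\caS_{AB}|+\wh{\Lambda})}$.

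The place where the special weight $zL_{\mu_{A}}'(\Omega_{B})$ is indispensable is precisely the part you call ``routine'': the coefficients of $\frZ_{1}^{(p-2,p)}$ and $\frZ_{1}^{(p-1,p-1)}$. From Lemma~\ref{lem:Avrecmoment} alone these are only $\Psi^{2}\wh{\Pi}^{2}=\Psi^{2}\mho$, which is \emph{larger} than the required $\mho^{2}$. The improvement comes from the chain-rule computation
\[
\frac{\partial\frZ_{1}}{\partial g_{ik}}
=\Bigl(zL'_{\mu_{A}}(\Omega_{B})L'_{\mu_{B}}(\Omega_{A}^{c})-\tfrac{1}{z}\Bigr)\frac{\partial\Omega_{A}^{c}}{\partial g_{ik}}
+\bigl(L'_{\mu_{A}}(\Omega_{B}^{c})-L'_{\mu_{A}}(\Omega_{B})\bigr)\frac{\partial\Omega_{B}^{c}}{\partial g_{ik}},
\]
where the two scalar prefactors are $\rO_{\prec}(|\caS_{AB}|+\Lambda)$ and $\rO_{\prec}(\Lambda)$ respectively (this is the cancellation you anticipated, but acting on $\partial\frZ_{1}$, not on $\frZ_{1}$ itself). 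Since these prefactors are independent of $i,k$, they factor out of the weighted sums in \eqref{eq:Zerror}, and the remaining sums are $\rO_{\prec}(\Pi^{2}\Psi^{2})$ by Lemma~\ref{lem:recmomerror}. One then checks $(|\caS_{AB}|+\wh{\Lambda})\Pi^{2}\Psi^{2}\prec\mho^{2}$ directly from the definition of $\mho$. Your proposal inverts the roles of the two halves of the argument.

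Two smaller corrections: the paper's representation \eqref{eq_optimalfaquantities} writes $\Phi_{B}^{c}$ as a weighted average of $\caQ_{i}$ (built from $\caG=(\caH-z)^{-1}$, not $G$), and both representations are exact algebraic identities, with no error terms $\caE_{1},\caE_{2}$.
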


Armed with Lemma \ref{lem:Zrecmoment}, we are ready to prove Proposition \ref{prop:FA2}. { Its proof is analogous to that of \cite[Proposition 7.1]{BEC} and we only present the key points here.}
\begin{proof}[\bf Proof of Proposition \ref{prop:FA2}] Due to similarity, we will focus on the discussion for $\iota=A.$ First of all, we provide some identities to connect the left-hand side of (\ref{eq:FA2}) and $\frZ_1$ and $\frZ_2.$ 
	
	{Inspired by equation (7.18) of \cite{BEC}}, we first expand $\Phi_{A}^{c}$ and $\Phi_{B}^{c}$ around $(\Omega_{A},\Omega_{B})$ using (\ref{eq_lambdainitialbound}) and (\ref{eq_subordinationsystemab}), and then combine the two expansions to get
	\beq\label{eq:ZtoFA2}
	\frac{\caS_{AB}}{z}\Lambda_{A}+\caT_{A}\Lambda_{A}^{2}+O(\absv{\Lambda_{A}}^{3})=\frZ_{1}+\rO_{\prec}(\absv{\Phi_{B}^{c}}^{2}+\absv{\Phi_{B}^{c}\Lambda_{A}}).
	\eeq
	By a discussion similar to (\ref{eq_arbitraydiscussion}), using Young's and Markov's inequalities, together with Lemma \ref{lem:Zrecmoment}, we have that 
	\begin{equation}\label{eq_frz1frz2control}
		\frZ_1 \prec \mho, \ \frZ_2 \prec \mho. 
	\end{equation}
	
	In what follows, we will apply Proposition \ref{prop:FA1} to prove that 
	\begin{equation}\label{eq_phibc}
		\Phi_\iota^c \prec \Psi \mho^{1/2}, \ \iota=A,B. 
	\end{equation}
	{We point out that once (\ref{eq_phibc}) is proved, together with \eqref{eq_frz1frz2control} and the fact $\Psi^{2}\wh{\Lambda}^{2}\leq \mho$, we can readily obtain \eqref{eq:FA2}.}
	Recall (\ref{eq_defnq}) and denote 
	\beqs
	\caQ_{i}\deq \tr(\caG\wt{A}B)\caG_{ii}-\tr(B\caG)(\caG\wt{A})_{ii}.
	\eeqs
	Note that $\Phi_A^c$ and $\Phi_B^c$ are linear combinations of $Q_i$'s and $\mathcal{Q}_i$'s, respectively. More specifically, we have
	\begin{align} \label{eq_optimalfaquantities}
		\Phi_{A}^{c}=\frac{1}{N}\sum_{i}\frd_{i1}Q_{i}, \ \
		\Phi_{B}^{c}=\frac{1}{N}\sum_{i}\frd_{i2}\caQ_{i},
	\end{align}
	where we denote
	\begin{align}
		\frd_{i1}\deq z\frac{1-M_{\mu_{A}}(\Omega_{B}^{c})}{(zm_{H}(z)+1)^{2}}\frac{a_{i}\tr(\wt{B}G)-\tr(A\wt{B}G)}{(a_{i}-\Omega_{B}^{c})}, \label{eq_optimalfaquantitiescoeff} \\
		\frd_{i2}\deq z\frac{1-M_{\mu_{B}}(\Omega_{A}^{c})}{(zm_{H}(z)+1)^{2}} \frac{b_{i}\tr(\wt{A}\caG)-\tr(B\wt{A}\caG)}{b_{i}-\Omega_{A}^{c}}. \label{eq_optimalfaquantitiescoeffextra}
	\end{align}
	These identities simply follow from algebraic computations using \eqref{eq_mtrasindenity}, \eqref{eq:approx_subor}, and the definitions of $\Phi_{A}^{c},\Phi_{B}^{c}$.
	
	To apply Proposition \ref{prop:FA1}, we need to check whether its assumptions are satisfied. Recall (\ref{eq_defnTheta}). First, we show that we can choose $\widehat{\Pi}=\mho^{1/2}.$ {Similar to the discussion of equation (7.5) of \cite{BEC},}
	we can use (\ref{eq_mtrasindenity}) and (\ref{eq_suborsystem}) to get 
	\beq\label{eq_rigidityuse}
	\absv{m_{H}(z)-m_{\mu_{A}\boxtimes\mu_{B}}(z)}\prec \Lambda(z)+\Upsilon\prec\wh{\Lambda}+\Psi^{2}.
	\eeq
	{We mention that compared to the counterpart for the additive model in equation (7.5) of \cite{BEC}, we have an extra term $\Upsilon$ in (\ref{eq_rigidityuse}) and the bound $\Upsilon\prec\Psi^{2}$ follows from Proposition \ref{prop:FA1}.}
	
	Combining \eqref{eq_rigidityuse} with the fact that $\im m_{\mu_{A}\boxtimes\mu_{B}}(z)\prec\sqrt{\kappa+\eta}\prec\absv{\caS_{AB}(z)}$, we obtain 
	\beq\label{eq_propfa1condition1}
	\Pi^{2}=\frac{\im m_H(z)}{N\eta}
	\prec\mho^{1/2},
	\eeq
	Moreover, by (ii) and (iii) of Proposition \ref{prop:stabN}, both $\im m_{\mu_{A}\boxtimes\mu_{B}}(z)$ and $\absv{\caS_{AB}(z)}$ are bounded. Consequently, we have that 
	\begin{equation}\label{eq_propfa1condition2}
		N^{-1/2}\eta^{-1/4}\prec \mho^{1/2} \prec \Psi. 
	\end{equation}
	By (\ref{eq_propfa1condition1}) and (\ref{eq_propfa1condition2}), we have seen that we can choose $\widehat{\Pi}=\mho^{1/2}.$  
	
	Second, we show that the coefficients $\{\frd_{i1}\}$ and $\{\frd_{i2}\}$ satisfy Assumption \ref{assum_conditiond}. {Similar to the discussion below equation (7.13) of \cite{BEC}}, we find that $\frd_{j1}$'s can be regarded as smooth functions of $\tr{(\wt{B}G)}$ and $\tr{G}$. Then using the chain rule and Lemma \ref{lem:recmomerror}, it is easy to see that $\frd_{j1}$ satisfies Assumption \ref{assum_conditiond}. Similar results hold for $\frd_{j2}.$ Therefore we find that both $\Phi_A^c$ and $\Phi_B^c$ satisfy the conditions of Proposition \ref{prop:FA1} with $\widehat{\Pi}=\mho^{1/2}.$ Consequently, Proposition \ref{prop:FA1} implies (\ref{eq_phibc}) and the proof of Proposition \ref{prop:FA2}.	
\end{proof}

We next prove Lemma \ref{lem:Zrecmoment}. Before stepping into the proof, we observe from the definitions of $\frZ_{1}$ and $\frZ_{2}$ in (\ref{eq_frz1z2definition}) and (\ref{eq_phibc}) that 
\begin{equation*}
	\frZ_{1} \prec\Psi\mho^{1/2}, \ \frZ_{2}\prec\Psi\mho^{1/2}.
\end{equation*}
{Since the proof of Lemma \ref{lem:Zrecmoment} is analogous to that of \cite[Lemma 7.3]{BEC} for the additive model, we only highlight the main differences. }
\begin{proof}[\bf Proof of Lemma \ref{lem:Zrecmoment}]  We only focus our proof on $\frZ_1,$ and $\frZ_2$ can be handled similarly. 
	Repeating the proof of Lemma \ref{lem:Avrecmoment} with weights $d_{i}=\frd_{i1}$, we find that the result follows from the following bounds
	\begin{align}\label{eq:Zerror}
		&\frac{1}{N^{2}}\sum_{i}\sum_{k}^{(i)}c_{i}\bse_{k}\adj X_{i}G^{\ell}\bse_{i} \frac{\partial \frZ_{1}}{\partial g_{ik}}\prec\mho^{2}, &
		&\frac{1}{N^{2}}\sum_{i}\sum_{k}^{(i)}c_{i}\bse_{k}\adj X_{i}G^{\ell}\mr{\bsg}_{i}\frac{\partial \frZ_{1}}{\partial g_{ik}}\prec\mho^{2},
	\end{align}
	where $c_{i}$'s stand for generic $\rO_{\prec}(1)$ factors, $X_{i}$ can be $I$ or $\wt{B}^{\angi}$, and $\ell$ can be $0$ or $1$. We remark that the quantities on the LHS of \eqref{eq:Zerror} stand for the coefficients of $\frZ_{1}^{(p-2,p)}.$ {Moreover, we point out that (\ref{eq:Zerror}) is  the counterpart of \cite[Lemma 7.4]{BEC}.}
	
	By chain rule, we have
	\begin{align}\label{eq_extraddterm}
		\frac{\partial \frZ_{1}}{\partial g_{ik}}&=\frac{\partial}{\partial g_{ik}}(\Phi_{A}(\Omega_{A}^{c},\Omega_{B}^{c})+zL_{\mu_{A}}'(\Omega_{B})\Phi_{B}(\Omega_{A}^{c},\Omega_{B}^{c}))\nonumber\\
		&	=\left(zL'_{\mu_{A}}(\Omega_{B})L'_{\mu_{B}}(\Omega_{A}^{c})-\frac{1}{z}\right)\frac{\partial \Omega_{A}^{c}}{\partial g_{ik}}+(L'_{\mu_{A}}(\Omega_{B}^{c})-L'_{\mu_{A}}(\Omega_{B}))\frac{\partial \Omega_{B}^{c}}{\partial g_{ik}}.
	\end{align}
	{By Proposition \ref{prop:stabN} and Assumption \ref{assu_ansz}, we find that
		\begin{align*}
			\Absv{zL'_{\mu_{A}}(\Omega_{B})L'_{\mu_{B}}(\Omega_{A}^{c})-\frac{1}{z}}\prec \absv{\caS_{AB}}+\Lambda, \  \
			\absv{L'_{\mu_{A}}(\Omega_{B})-L'_{\mu_{A}}(\Omega_{B}^{c})} \prec \Lambda.
		\end{align*}
		Since they are independent of the indices $i$ and $k$, we can simply pull them out as scaling factors from (\ref{eq_extraddterm}). As a consequence, in light of (\ref{eq_optimalfaquantities}), the remaining weighted sum has the same form as in the second or third estimates in Lemma \ref{lem:recmomerror}, which is $\rO_{\prec}(\Pi^{2}\Psi^{2})$.} Since
	\beqs
	(\absv{\caS_{AB}}+\Lambda)\Pi^{2}\Psi^{2} \prec \mho^{2} \AND 
	\Lambda\Psi^{2}\Pi^{2}\prec \mho^{2},
	\eeqs
	we conclude the proof of \eqref{eq:Zerror}, and therefore that of Lemma \ref{lem:Zrecmoment}.
\end{proof}

\section{Proof of Theorem \ref{thm:main} }\label{sec_finalsection}
We first prove weak local laws in Section \ref{subsec:weaklocallaw} and then prove Theorem \ref{thm:main} in Section \ref{sec_proofofstronglocallaw}. 

\subsection{Weak local laws}\label{subsec:weaklocallaw}
In Section \ref{sec_entrylaw}, we proved estimates for the entries of the resvolents and optimal fluctuation averaging for the linear combinations of them. We also proved the closeness of the subordination functions and their approximates. However, all these results are regarding pointwise control for fixed $z \in \mathcal{D}_{\tau}(\eta_L, \eta_U)$ and under Assumption \ref{assu_ansz}. In this subsection, we will establish a weak local law without imposing Assumption \ref{assu_ansz} and uniformly in $z \in \mathcal{D}_{\tau}(\eta_L, \eta_U),$ using a continuous bootstrapping argument. The weak local law will guarantee that Assumption \ref{assu_ansz} holds uniformly for $z \in \mathcal{D}_{\tau}(\eta_L, \eta_U).$ As a consequence, the results in Section \ref{sec_entrylaw} also hold uniformly for $z \in \mathcal{D}_{\tau}(\eta_L, \eta_U).$  

More specifically, the main result of this subsection is stated in the following proposition. 
\begin{prop}\label{prop:weaklaw} Suppose that Assumptions \ref{assu_limit} and \ref{assu_esd} hold. Let $\tau>0$ be a sufficiently small constant and $\gamma>0$ be any fixed small constant. Then we have{
		\begin{align}\label{eq_weaklaw}
			\Lambda_{d}(z)\prec \frac{1}{(N\eta)^{1/3}}, \
			\Lambda(z)&\prec\frac{1}{(N\eta)^{1/3}}, \
			\Lambda_{T}(z)\prec\Psi(z), 
	\end{align}}
	uniformly in $z\in\caD_{\tau}(\eta_{L},\eta_{U})$.
	The same statements hold  for 
	$\wt{\Lambda}_{d}$ and $\wt{\Lambda}_{T}$.
\end{prop}

The proof of Proposition \ref{prop:weaklaw} will be divided into three steps. In the first step, we prove  (\ref{eq_weaklaw}) on the global scale such that $\eta \geq \eta_U,$ where $\eta_U$ is a sufficiently large constant. The key idea for proving this step is to regard $Q_i$ defined in (\ref{eq_defnq}) as a function of the random unitary matrix $U.$ This strategy has been employed in the study of addition of random matrices in \cite[Theorem 8.1]{BEC} and local single ring theorem in \cite{bao2019}. An advantage of doing so is that we can employ the device of Gromov-Milman concentration inequality. We collect the related results in the following lemma. 

Denote $U(N)$ as the set of $N \times N$ unitary matrices over $\mathbb{C}$ and $SU(N) \subseteq U(N)$ as the set of special unitary matrices defined by
\begin{equation*}
	SU(N):=\left\{ U \in U(N): \det(U)=1 \right\}.
\end{equation*}   
Recall that both $U(N)$ and $SU(N)$ are compact Lie groups with respect to the matrix multiplication. 
\begin{lem}\label{lem:gromovmilman}
	Let $f$ be a real-valued Lipschitz continuous function defined on $U(N).$ We denote its Lipschitz constant $\mathcal{L}_f$ as 
	\beqs
	\caL_{f}\deq \sup\left\{\frac{\Absv{f(U_1)-f(U_2)}}{\norm{U_1-U_2}_{2}}:U_1,U_2\in U(N)\right\},\quad \norm{U_1-U_2}_{2}\deq \sqrt{\Tr((U_1-U_2)\adj (U_1-U_2))}.
	\eeqs
	Moreover, we let $\nu$ be the Haar measure defined on $U(N)$ and $\nu_s$ be that on $SU(N).$  Then for some universal constants $c,C>0,$ we have that for any constant $\delta>0,$
	\beq\label{eq:Gromov-Milman}
	\int_{U(N)}\lone\left(\Absv{f(U)-\int_{SU(N)}f(VU)\dd\nu_{s}(V)}>\delta\right)\dd\nu (U)
	\leq C\mathrm{exp}\left(-c\frac{N\delta^{2}}{\caL_{f}^{2}}\right).
	\eeq
	Finally, let $H_{N}$ be the group of the form $\{\diag(\e{\ii\theta},1,\cdots,1):\theta\in[0,2\pi]\}$ and $\nu_{h}$ be its associated Haar measure in the sense that $\theta$ is uniformly distributed on $[0,2\pi]$, then we have 
	\beq\label{eq:integ_SU}
	\int_{SU(N)}\int_{H_{N}} f(VU_1W)\dd\nu_{h}(W)\dd\nu_{s}(V)=\int_{U(N)}f(U)\dd\nu(U), \quad \text{for all} \ U_1 \in U(N).
	\eeq 
	
\end{lem}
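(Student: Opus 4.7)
My plan is to establish the disintegration formula (\ref{eq:integ_SU}) first, and then derive the concentration inequality (\ref{eq:Gromov-Milman}) by reducing it to the classical Gaussian concentration on $SU(N)$.

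For (\ref{eq:integ_SU}), I would begin by showing that every $U \in U(N)$ admits a unique factorisation $U = VW$ with $V \in SU(N)$ and $W \in H_{N}$: setting $d := \det U$ and $W := \diag(d, 1, \ldots, 1) \in H_{N}$, the matrix $V := UW^{-1}$ has determinant $1$, and uniqueness follows by taking determinants. Let $\phi : SU(N) \times H_{N} \to U(N)$ denote the product map $(V, W) \mapsto VW$, and set $\mu := \phi_{*}(\nu_{s} \times \nu_{h})$. To conclude $\mu = \nu$ by uniqueness of the Haar measure, it suffices to verify left invariance of $\mu$ under every $U_{0} = V_{0} W_{0} \in U(N)$. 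The identity $U_{0} V W = V_{0} (W_{0} V W_{0}^{-1})(W_{0} W)$ exhibits the left action in the $(V, W)$-coordinates, and measure invariance then reduces to two facts: (i) conjugation $V \mapsto W_{0} V W_{0}^{-1}$ is a continuous automorphism of $SU(N)$ because $H_{N}$ normalises $SU(N)$, and hence preserves $\nu_{s}$ by uniqueness of the probability Haar measure on $SU(N)$; and (ii) left translations preserve both $\nu_{s}$ and $\nu_{h}$. This settles the $U_{1} = I$ case of (\ref{eq:integ_SU}). For general $U_{1} = V_{1} W_{1}$, the factorisation $V U_{1} W = (V V_{1})(W_{1} W)$ combined with the right/left invariance of $\nu_{s}$ and $\nu_{h}$ reduces the claim to the case $U_{1} = I$.

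For (\ref{eq:Gromov-Milman}), let $\bar{f}(U) := \int_{SU(N)} f(V'U)\, d\nu_{s}(V')$. Applying (\ref{eq:integ_SU}) with $U_{1} = I$ to the indicator function,
\[
\int_{U(N)} \mathbb{I}\bigl(|f(U) - \bar{f}(U)| > \delta\bigr)\, d\nu(U) = \int_{H_{N}} \int_{SU(N)} \mathbb{I}\bigl(|f(VW) - \bar{f}(VW)| > \delta\bigr)\, d\nu_{s}(V)\, d\nu_{h}(W).
\]
For each fixed $W \in H_{N}$, the function $h_{W}(V) := f(VW)$ on $SU(N)$ inherits the Lipschitz constant $\caL_{f}$, since right multiplication by a unitary is an isometry of the Hilbert-Schmidt norm, and left invariance of $\nu_{s}$ yields $\bar{f}(VW) = \mathbb{E}_{\nu_{s}}[h_{W}]$, independent of $V$. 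I would then invoke the classical concentration inequality on $SU(N)$, which follows from a Bakry-\'Emery curvature-dimension argument using that the Ricci curvature of $SU(N)$ with its bi-invariant Hilbert-Schmidt metric is bounded below by a positive multiple of $N$: for every Lipschitz $g : SU(N) \to \mathbb{R}$,
\[
\nu_{s}\bigl\{ V : |g(V) - \mathbb{E}_{\nu_{s}}[g]| > \delta \bigr\} \leq C \exp\bigl(-cN\delta^{2}/\caL_{g}^{2}\bigr).
\]
Applied to $g = h_{W}$ and integrated against $\nu_{h}$, this yields (\ref{eq:Gromov-Milman}).

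The main technical point, as I see it, is the verification that $\phi$ pushes $\nu_{s} \times \nu_{h}$ forward to $\nu$; the crucial observation is that $H_{N}$ normalises $SU(N)$, so that conjugation by $H_{N}$ gives automorphisms of $SU(N)$ and therefore preserves its normalised Haar measure. Once this disintegration is in place, the concentration statement becomes a routine consequence of the well-known Gaussian concentration on the special unitary group, whose strictly positive Ricci curvature drives the Gaussian tail. The group $U(N)$ itself has a flat central direction (the scalar circle), which is precisely why the lemma must compare $f(U)$ to the $SU(N)$-orbit average $\bar{f}(U)$ rather than to a global expectation.
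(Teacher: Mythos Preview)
Your proposal is correct. The paper itself does not prove this lemma but simply cites Corollary 4.4.28 and Lemma 4.4.29 of Anderson--Guionnet--Zeitouni, so your argument in fact supplies the details underlying that citation: the disintegration $U(N)\cong SU(N)\times H_N$ via the determinant (Lemma 4.4.29 there) and the Bakry--\'Emery/Ricci concentration on $SU(N)$ (Corollary 4.4.28 there) are exactly the two ingredients you invoke, and your reduction of (\ref{eq:Gromov-Milman}) to the $SU(N)$ concentration via the fibre-wise function $h_W$ is the standard route.
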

\begin{proof}
	See Corollary 4.4.28 and Lemma 4.4.29 of \cite{Anderson-Guionnet-Zeitouni2010}. 
\end{proof}

In the second step, we employ a continuity argument based on the estimates obtained from step one {for larger $\eta$} to prove that the results hold for each fixed $z \in \mathcal{D}_\tau(\eta_L, \eta_U).$
To this end, for $z\in\caD_{\tau}(\eta_{L},\eta_{U})$ and $\delta,\delta'\in[0,1],$ we define events
\begin{align}
	& \Theta(z,\delta,\delta')\deq \left\{\Lambda_{d}(z)\leq \delta, \wt{\Lambda}_{d}(z)\leq \delta, \Lambda(z)\leq \delta, \Lambda_{T}(z)\leq \delta',\wt{\Lambda}_{T}(z)\leq\delta'\right\}, \label{eq_setdefnTheta}\\
	&\Theta_{>}(z,\delta,\delta',\epsilon')\deq \Theta(z,\delta,\delta')\cap\left\{\Lambda(z)\leq N^{-\epsilon'}\absv{\caS_{AB}(z)}\right\}. \label{eq_setdefnThetag}
\end{align}
Moreover, we decompose the domain $\caD_{\tau}(\eta_{L},\eta_{U})$ into two disjoint sets,
\begin{align*}
	\caD_{>}\equiv\caD_{>}(\tau,\eta_{L},\eta_{U},\epsilon)&\deq\left\{z\in\caD_{\tau}(\eta_{L},\eta_{U}):\sqrt{\kappa+\eta}>\frac{N^{2\epsilon}}{(N\eta)^{1/3}}\right\}, \\
	\caD_{\leq}\equiv\caD_{\leq}(\tau,\eta_{L},\eta_{U},\epsilon)&\deq \caD_{\tau}(\eta_{L},\eta_{U})\setminus\caD_{>}.
\end{align*}
{The main technical input for the second step is Lemma \ref{lem:iteration_weaklaw} below.} As it will be seen from Lemma \ref{lem:iteration_weaklaw}, when we restrict ourselves on some high probability event, it is always possible to gradually improve our estimates.
\begin{lem}\label{lem:iteration_weaklaw}
	Suppose that Assumptions \ref{assu_limit} and \ref{assu_esd} hold. For any fixed $z\in\caD_{\tau}(\eta_{L},\eta_{U})$, any $\epsilon\in(0,\gamma/12)$ and $D>0$, there exists $N_{1}(D,\epsilon)\in\N$ and an event $\Xi(z,D,\epsilon)$ with 
	\beq\label{eq_highprobabilityevent}
	\P(\Xi(z,D,\epsilon))\geq 1-N^{-D}, \quad \text{for all} \ N\geq N_{1}(D,\epsilon),
	\eeq
	such that the followings hold:
	\begin{itemize}
		\item[(i)] For all $z\in\caD_{>}$,
		\beqs
		\Theta_{>}\left(z,\frac{N^{3\epsilon}}{(N\eta)^{1/3}},\frac{N^{3\epsilon}}{\sqrt{N\eta}},\frac{\epsilon}{10}\right)\cap\Xi(z,D,\epsilon)\subset \Theta_{>}\left(z,\frac{N^{5\epsilon/2}}{(N\eta)^{1/3}},\frac{N^{5\epsilon/2}}{\sqrt{N\eta}},\frac{\epsilon}{2}\right).
		\eeqs
		\item[(ii)] For all $z\in\caD_{\leq}$,
		\beqs
		\Theta\left(z,\frac{N^{3\epsilon}}{(N\eta)^{1/3}},\frac{N^{3\epsilon}}{\sqrt{N\eta}}\right)\cap\Xi(z,D,\epsilon)\subset \Theta\left(z,\frac{N^{5\epsilon/2}}{(N\eta)^{1/3}},\frac{N^{5\epsilon/2}}{\sqrt{N\eta}}\right).
		\eeqs
	\end{itemize}
\end{lem}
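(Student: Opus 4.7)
The plan is to use Propositions \ref{prop:entrysubor} and \ref{prop:FA2} as the two technical engines, with the input assumption $\Theta$ (or $\Theta_>$) playing the role of the a priori control needed to activate them. First I would define $\Xi(z,D,\epsilon)$ as the intersection of the finitely many high-probability events hidden in the $\prec$-statements appearing in Propositions \ref{prop:entrysubor} and \ref{prop:FA2}, with the stochastic domination replaced by deterministic inequalities carrying a small auxiliary power $N^{\epsilon'}$. A union bound then gives $\P(\Xi) \geq 1-N^{-D}$ for $N$ large. On $\Xi \cap \Theta$, since $\epsilon<\gamma/12$ and $\eta\geq N^{-1+\gamma}$, the input bounds $\Lambda_d, \wt{\Lambda}_d \leq N^{3\epsilon}/(N\eta)^{1/3} \leq N^{-\gamma/4}$ hold, so Assumption \ref{assu_ansz} is in force and Propositions \ref{prop:entrysubor} and \ref{prop:FA2} apply.

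Proposition \ref{prop:entrysubor} then yields, deterministically on $\Xi$, the bound $\Lambda_T \leq N^{\epsilon'}/\sqrt{N\eta}$, which is well below the target $N^{5\epsilon/2}/\sqrt{N\eta}$ once $\epsilon'<\epsilon/2$. Proposition \ref{prop:FA2}, applied with $\wh{\Lambda} \equiv N^{3\epsilon}/(N\eta)^{1/3}$, yields the deterministic cubic stability inequality
\begin{equation*}
	\Bigl|\tfrac{\caS_{AB}(z)}{z}\Lambda_{\iota} + \caT_{\iota}\Lambda_{\iota}^{2} + O(|\Lambda_{\iota}|^{3})\Bigr| \leq N^{\epsilon'}\mho \qquad (\iota=A,B),
\end{equation*}
with $\mho$ as in \eqref{eq_defnTheta}. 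The remainder of the argument is a stability/inversion analysis of this cubic, split according to the two regimes. In case (i), $z\in\caD_>$: by $\Theta_>$ we have $|\Lambda_\iota| \leq N^{-\epsilon/10}|\caS_{AB}|$, so the linear term dominates and $|\Lambda_\iota| \lesssim N^{\epsilon'}|z|\mho/|\caS_{AB}|$. Using $\wh{\Lambda}\leq|\caS_{AB}|$ in $\caD_>$, the definition of $\mho$ collapses to $\Psi^{2}(\sqrt{\im m_{\mu_{A}\boxtimes\mu_{B}}|\caS_{AB}|}+\Psi^{2})$, and Proposition \ref{prop:stabN}(ii) gives $\im m_{\mu_{A}\boxtimes\mu_{B}}\lesssim|\caS_{AB}|$, so $|\Lambda_\iota| \lesssim N^{\epsilon'}/(N\eta) + N^{\epsilon'}/((N\eta)^{2}|\caS_{AB}|)$. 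Both terms are dominated by $N^{5\epsilon/2}/(N\eta)^{1/3}$ since $N\eta\geq 1$ and $|\caS_{AB}|\geq N^{2\epsilon}/(N\eta)^{1/3}$; the inequality $|\Lambda_\iota|\leq N^{-\epsilon/2}|\caS_{AB}|$ follows along the same lines, giving membership in $\Theta_>(z,N^{5\epsilon/2}/(N\eta)^{1/3},N^{5\epsilon/2}/\sqrt{N\eta},\epsilon/2)$.

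In case (ii), $z\in\caD_\leq$: we have $\kappa+\eta\leq N^{4\epsilon}/(N\eta)^{2/3}$, so $z$ is close to $E_+$ and Proposition \ref{prop:stabN}(iii) gives $|\caT_\iota|\geq c>0$. Here $|\caS_{AB}|$ can be as small as $\wh{\Lambda}$, so the linear term no longer dominates; the quadratic term takes over and $|\Lambda_\iota|$ satisfies $|\Lambda_\iota|\lesssim |\caS_{AB}|/|\caT_\iota| + \sqrt{|z|\mho/|\caT_\iota|}$. The a priori bound $|\Lambda_\iota|\leq N^{3\epsilon}/(N\eta)^{1/3}\ll 1$ is what excludes the spurious $O(1)$ root of the quadratic. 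The first piece is $\lesssim N^{2\epsilon}/(N\eta)^{1/3}$, and substituting $\mho \lesssim \Psi^{2}\wh{\Lambda} + \Psi^{4} \lesssim N^{3\epsilon}\Psi^{2}/(N\eta)^{1/3}$ gives $\sqrt{\mho}\lesssim N^{3\epsilon/2}/(N\eta)^{2/3}\leq N^{5\epsilon/2}/(N\eta)^{1/3}$, as desired. Finally, the improved bound on $\Lambda_d$ follows by combining $\Lambda \leq N^{5\epsilon/2}/(N\eta)^{1/3}$ with $\Lambda_d \leq \Lambda_d^c + C\Lambda$ and $\Lambda_d^c \prec \Psi$ from Proposition \ref{prop:entrysubor}; the bounds for $\wt{\Lambda}_d, \wt{\Lambda}_T$ are identical after swapping the roles of $H$ and $\caH$ via \eqref{eq:apxsubor}.

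The main obstacle I anticipate is the cubic inversion in case (ii): because $\caS_{AB}$ vanishes at the edge while $\caT_\iota$ does not, the linear term in the stability equation is genuinely smaller than the quadratic one, and the standard quadratic formula returns two roots of very different sizes. The clean selection of the small root is guaranteed by the a priori input $|\Lambda_\iota| \leq \wh{\Lambda} \ll 1$ supplied by $\Theta$, which is exactly why the lemma is stated as a \emph{conditional} implication and why the argument is ultimately bootstrapped through a continuity argument in $\eta$. All other steps (the construction of $\Xi$, the reduction of $\prec$ to deterministic inequalities, and the estimation of $\mho$) are routine given the earlier propositions.
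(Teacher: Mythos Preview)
Your stability/inversion analysis in cases (i) and (ii) is essentially what the paper does in Lemma~\ref{lem:priori_weaklaw}, and the arithmetic there is fine. The gap is earlier, in the construction of $\Xi(z,D,\epsilon)$.

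You propose to extract $\Xi$ from the $\prec$-statements in Propositions~\ref{prop:entrysubor} and~\ref{prop:FA2}. But those propositions are proved \emph{under} Assumption~\ref{assu_ansz}, which is itself a stochastic-domination hypothesis (i.e.\ $\Lambda_d\prec N^{-\gamma/4}$, etc.). Their proofs go through recursive moment estimates of the form $\E[|P_i|^{2p}]\leq\ldots$, and inside the expectation one repeatedly uses Assumption~\ref{assu_ansz} to bound quantities such as $\mathsf{e}_{i1}$, $G_{ii}$, $T_i$ (see e.g.\ \eqref{eq_boundepsilon1}). If the a~priori bounds hold only on an event $\Theta$ of unknown probability, the contribution from $\Theta^c$ to these expectations is uncontrolled and the moment bound does not follow. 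Consequently there is no high-probability event to harvest from those propositions, and your definition of $\Xi$ is circular: the event you want to build already presupposes what you are trying to bootstrap.

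The paper resolves this by proving \emph{cutoff} versions of the two propositions (Lemmas~\ref{lem:entrysuborconc} and~\ref{lem:FA1conc}): one multiplies $P_i$, $K_i$, and the fluctuation-averaging quantities by smooth indicators $\varphi(\Gamma_i)\varphi(\Gamma)$ (see \eqref{eq_cutofffunction}, \eqref{eq:FA1conc_Gammai}, \eqref{eq:FA1conc_Gamma}) which equal~$1$ on $\Theta$ but are globally bounded. The recursive moment estimates are then redone unconditionally, with new error terms coming from derivatives of $\varphi$; these are shown to be harmless. The resulting moment bounds produce a genuine high-probability event $\Xi$, and on $\Xi\cap\Theta$ the cutoffs drop out. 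This cutoff machinery is not a routine bookkeeping step---it is the substantive content of the lemma---and is missing from your plan.
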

\begin{proof}
	
	{The proof of Lemma \ref{lem:iteration_weaklaw} is analogous to that of \cite[Lemma 8.3]{BEC} by establishing the quantitative versions of Propositions \ref{prop:entrysubor}, \ref{prop:FA1}, \ref{prop:FA2} via cutoffs. We only state the result of the cutoff version of Proposition \ref{prop:FA2} in the following lemma without providing extra details due to similarity. We remark that applying Lemma \ref{lem:priori_weaklaw} with $k=1/3$ yields Lemma \ref{lem:iteration_weaklaw} and with $k=1$ will serve as a technical input for the proof of Theorem \ref{thm:main}.}
	\begin{lem}\label{lem:priori_weaklaw}
		Let $z\in\caD_{\tau}(\eta_{L},\eta_{U}),\epsilon\in(0,\gamma/12)$, and $k\in(0,1]$ be fixed values. Let $\wh{\Lambda}(z)$ be some deterministic positive control parameter such that $\wh{\Lambda}(z)\leq N^{-\gamma/4}$. Suppose that $\Lambda(z)\leq\wh{\Lambda}(z)$ and
		\beq\label{eq:self_improv_assu}
		\Absv{\frac{\caS_{AB}}{z}\Lambda_{\iota}+\caT_{\iota}\Lambda_{\iota}^{2}+O(\Lambda_{\iota})^{3}}
		\leq N^{2\epsilon/5}\frac{\absv{\caS_{AB}}+\wh{\Lambda}}{(N\eta)^{k}},\quad \iota=A,B,
		\eeq
		hold on some event $\wt{\Xi}(z)$. Then there exists some constant $C>0$ such that for sufficiently large $N,$ the following hold:
		\begin{itemize}
			\item[(i)] If $\sqrt{\kappa+\eta}>N^{-\epsilon}\wh{\Lambda}$, there is a constant $K_{0}>0$ independent of $z$ and $N$ such that
			\beq\label{eq_lemmac1c1}
			\mathbb{I}_{\wt{\Xi}(z)}\mathbb{I}\left(\Lambda\leq \frac{\absv{\caS_{AB}}}{K_{0}}\right)\absv{\Lambda_{\iota}}\leq C\left( N^{-2\epsilon}\wh{\Lambda}+\frac{N^{7\epsilon/5}}{(N\eta)^{k}}\right).
			\eeq
			\item[(ii)] If $\sqrt{\kappa+\eta}\leq N^{-\epsilon}\wh{\Lambda}$, then we have that 
			\beq\label{eq_lemmac1c2}
			\mathbb{I}_{\wt{\Xi}(z)}\absv{\Lambda_{\iota}}\leq C\left(N^{-\epsilon}\wh{\Lambda}+\frac{N^{7\epsilon/5}}{(N\eta)^{k}}\right), \ \iota=A,B.
			\eeq
		\end{itemize}
	\end{lem}
	
\end{proof}
In the third step, we prove the uniformity. With the aid of Lemma \ref{lem:iteration_weaklaw}, we first extend the results to the whole domain from a discrete lattice of mesh size $N^{-5}.$ Then by  
using the Lipschitz continuity of subordination functions and the resolvents, we can extend the bounds to the entire domain $\mathcal{D}_{\tau}(\eta_L, \eta_U).$ In the rest of this subsection, we  prove the weak local law using the above three-step strategy. 
\begin{proof}[\bf Proof of Proposition \ref{prop:weaklaw}]
	
	\vspace{3pt}
	\noindent{\bf Step 1.} The goal of this step is to prove the results for $\eta \geq \eta_U.$ A main technical input is the following estimate
	\beq\label{eq:Qi_global}
	\absv{zQ_{i}}\prec \frac{1}{\sqrt{N}\eta_{U}},
	\eeq
	for all fixed $z$ with $\im z\geq \eta_{U}$. We mention that (\ref{eq:Qi_global}) is slightly weaker than its counterpart, equation (8.35) of \cite{BEC}. Since $\eta_U$ is large, (\ref{eq:Qi_global}) is already sufficient for our discussion. 
	
	To show (\ref{eq:Qi_global}), we employ (\ref{eq:Gromov-Milman}) for a properly chosen function. Specifically, for $U \in U(N),$ we let
	\begin{equation}\label{eq_defnfingeneral}
		f(U)=zQ_i(U),
	\end{equation}
	where $Q_i$ is defined in (\ref{eq_defnq}) and we regard $Q_i$ as a function of $U.$ To show $f(\cdot)$ is Lipschitz for large $\eta_U$, we directly calculate its Lipschitz constant in the spirit of \cite[Section 8.1]{bao2019} with a slightly different justification. {Indeed, since $f$ is differentiable it suffices to bound its derivative in order to obtain the Lipschitz constant. By a discussion similar to \cite[Section 8.1]{bao2019}, we can obtain 
		\begin{equation}\label{eq_lipconstantbound}
			\caL_f \leq \frac{C}{\eta_U}.
		\end{equation}
		On the other hand, we also have
		\begin{equation} \label{eq_controlofmean}
			\int_{SU(N)}f(VU)\dd\nu_{s}(V)=\int f(U)\dd\nu (U)=z\expct{Q_{i}}=0.
		\end{equation}
		Here we used (\ref{eq:integ_SU}) in the first equality and Proposition 3.2 and equation (3.25) of \cite{Vasilchuk2001} in the last inequality. Therefore, using (\ref{eq:Gromov-Milman}), we have proved the claim (\ref{eq:Qi_global}) for each fixed $z$. After a standard application of lattice continuity argument, the results can be extended to $\{z:\im z\geq \eta_{U}\}$.}
	
	Then we prove the estimate
	\beq\label{eq:Lambda_c_macro}
	\sup_i	\sup_{\substack{\im z>\eta_{U}}}\Absv{\Lambda_{di}^c}\prec N^{-1/2}.
	\eeq
	In view of (\ref{eq:Qi_global}) and \eqref{eq:Lambda}, it suffices to show that 
	\begin{equation}\label{eq_reduceddenominator}
		(1+z\tr{G})(a_i-\Omega_B^c)\succ 1. 
	\end{equation}
	{Similar to the discussion of equation (8.41) of \cite{BEC}, \eqref{eq_reduceddenominator} follows from the following two asymptotics
		\begin{align}
			1+z\tr G&=\frac{1}{z}+\rO(\absv{z}^{-2})+\rO_{\prec}((\absv{z}N)^{-1}), \label{eqeqqqqeqqqqeqqq}\\
			\frac{\Omega_{B}^{c}}{z}&=1+\rO(\absv{z}^{-1})+\rO_{\prec}(N^{-1}). \label{eq:approx_subor_macro}
		\end{align}
		The proof of these asymptotics is similar to equation (8.39) of \cite{BEC} except that we need to apply \eqref{eq:Gromov-Milman} to get $\tr(A\wt{B})=1+O_{\prec}(N^{-1})$. 
	}
	
	Next, we show that Assumption \ref{assu_ansz} holds when $\im z=\eta_{U}$. Recall the definitions of $\Phi_{A}^{c}$ and $\Phi_{B}^{c}$ in (\ref{eq_phacdefinition}). By (\ref{eq:approx_subor_macro}), we get
	\beq\label{eq_phacboundbound1}
	\Absv{\Phi_{A}^c}\leq\Absv{\frac{M_{\mu_{A}}(\Omega_{B}^{c})-M_{H}(z)}{\Omega_{B}^{c}}}+\Absv{\frac{zM_{H}(z)-\Omega_{A}^{c}\Omega_{B}^{c}}{z\Omega_{B}^{c}}}\prec\absv{z} N^{-1/2},
	\eeq
	where we used \eqref{eq_averageqdefinitionupsilon}, \eqref{eq:approx_subor}, and \eqref{eq:Qi_global}.
	
	On the other hand,  by Lemma \ref{lem:reprMemp}, $\absv{\Phi_{A}^{c}}$ and $\absv{\Phi_{B}^{c}}$ are both $\rO(\absv{z}^{-1})$. Together with (\ref{eq:approx_subor_macro}) and (\ref{eq_phacboundbound1}), by Lemma \ref{lem:Kantorovich_appl}, we readily find that uniformly in $z\in\caD_{\tau}(\eta_{U},\infty)$
	\begin{align}\label{eq_differencesubordinationdifference}
		\absv{\Lambda_{A}}+\absv{\Lambda_{B}}\leq 4\absv{\Omega_{A}^{c}-\Omega_{A}(z)}\leq 2(\absv{\Phi_{A}^{c}(z)}+\absv{\Phi_{B}^{c}(z)})\prec \absv{z}N^{-1/2}.
	\end{align}
	Thus by \eqref{eq:Lambda_c_macro}, (\ref{eq_differencesubordinationdifference}) and (i) of Proposition \ref{prop:stabN}, we conclude that 
	\beqs
	\sup_i	\Lambda_{di}(E+\ii\eta_{U})=\sup_i \Absv{zG_{ii}-\frac{a_{i}}{a_{i}-\Omega_{B}}}\prec N^{-1/2}.
	\eeqs
	On the other hand, using the trivial bound $\|G(E+\ii \eta_U) \| \leq \eta_U^{-1},$ we find that 
	\begin{equation*}
		\Lambda_T(E+\ii \eta_U) \leq \eta_U^{-1}. 
	\end{equation*}
	
	Based on the above discussion, since $\eta_U$ is a sufficiently large constant,  we see that Assumption \ref{assu_ansz} holds for $z=E+\ii \eta_U.$ Then by Proposition \ref{prop:entrysubor}, we have that for $z=E+\ii \eta_U$
	\begin{align*}
		\Lambda_{T}\prec N^{-1/2}, \ \wt{\Lambda}_{T}&\prec N^{-1/2}, \ \Upsilon \prec N^{-1/2}.
	\end{align*}
	Moreover, by (iii) of Proposition \ref{prop:stabN} and (\ref{eq_differencesubordinationdifference}), we have $\Lambda_{A}(z)\prec N^{-\epsilon}\sqrt{\kappa+\eta_{U}}$ for $z=E+\ii\eta_{U}.$  Quantitively, for any fixed $E \in \mathbb{R},$
	\beq\label{eq:WLL_global}
	\P\left[\Theta_{>}\left(E+\ii\eta_{U},\frac{N^{3\epsilon}}{(N\eta_{U})^{1/3}},\frac{N^{3\epsilon}}{(N\eta_{U})^{1/2}},\frac{\epsilon}{10}\right)\right]\geq 1-N^{-D},
	\eeq
	for all $D>0$ and $N\geq N_{2}(\epsilon,D),$ where $N_{2}(\epsilon,D)$ depends only on $\epsilon$ and $D$.
	
	\vspace{3pt}	
	\noindent{\bf Step 2.} In this step, with the estimate (\ref{eq:WLL_global}) and Lemma  \ref{lem:iteration_weaklaw}, we control the probability of the "good" events $\Theta_{>}$ for $z \in \mathcal{D}_{>}$ and $\Theta$ for $z \in \mathcal{D}_{\leq}.$ Consequently, we can iteratively make $\im z$ smaller.  More specifically, in this step, we prove that for the high probability event $\Xi(\cdot, \cdot, \cdot)$ in (\ref{eq_highprobabilityevent}), the following statements hold: 
	\begin{itemize}
		\item[(i)] For all $z\in\caD_{>}$,
		\beq\label{eq:WLL_iter_event1}
		\Theta_{>}\left(z,\frac{N^{5\epsilon/2}}{(N\eta)^{1/3}},\frac{N^{5\epsilon/2}}{\sqrt{N\eta}},\frac{\epsilon}{2}\right)\cap\Xi(z-N^{-5}\ii,D,\epsilon)
		\subset \Theta_{>}\left(z-N^{-5}\ii,\frac{N^{5\epsilon/2}}{(N\eta)^{1/3}},\frac{N^{5\epsilon/2}}{\sqrt{N\eta}},\frac{\epsilon}{2}\right);
		\eeq
		\item[(ii)] For all $z\in\caD_{\leq}$,
		\beq\label{eq:WLL_iter_event2}
		\Theta\left(z,\frac{N^{5\epsilon/2}}{(N\eta)^{1/3}},\frac{N^{5\epsilon/2}}{\sqrt{N\eta}}\right)\cap\Xi(z-N^{-5}\ii,D,\epsilon)\subset \Theta\left(z-N^{-5}\ii,\frac{N^{5\epsilon/2}}{(N\eta)^{1/3}},\frac{N^{5\epsilon/2}}{\sqrt{N\eta}}\right).
		\eeq
	\end{itemize}
	{The motivation to decompose the domain $\mathcal{D}_{\tau}(\eta_L,\eta_U)$ into $\mathcal{D}_{>}$ and $\mathcal{D}_{\leq}$ is that,} in $\mathcal{D}_{>}$ we need to keep track of the event $\Lambda \leq N^{-\epsilon/2} \mathcal{S}_{AB}$ in order to apply (i) of Lemma \ref{lem:iteration_weaklaw}; see equation (\ref{eq_keeptrackofeventhighprobability}) for more details. 
	
	We start with discussing the event $\Theta.$ First, for generic values $z,\delta$ and $\delta',$ we claim that on the event $\Theta(z,\delta,\delta')$ defined in (\ref{eq_highprobabilityevent}), there exists some constant $C>0,$
	\begin{align}\label{eq_claimoneone}
		\Lambda_{d}(z+w)\leq \delta+CN^{-3}, \ \Lambda_{T}(z+w) \leq \delta'+CN^{-3},
	\end{align}
	for all $w\in\C$ with $\absv{w}\leq 2N^{-5},$ and $z+w\in\caD_{\tau}(\eta_{L},\eta_{U})$. Indeed, the above estimates follow from mean value theorem and 
	\begin{align*}
		\norm{\frac{\dd}{\dd z}(zG+1)}\leq \eta^{-2}\leq N^{2}, \ \absv{\Omega_{A}'(z)}\leq \frac{C}{\sqrt{\kappa+\eta}}\leq N^{1/2},\
		\absv{\Omega_{B}'(z)}\leq \frac{C}{\sqrt{\kappa+\eta}}\leq N^{1/2},
	\end{align*}
	where we used (iv) of Proposition \ref{prop:stabN}. Second, we claim the following result
	\beq\label{eq_claimnlambda}
	\Lambda(z+w)\leq \delta'+CN^{-3}.
	\eeq
	We now prove it. Recall Definition \ref{defn_asf}. Using (\ref{eq:apxsubor}) and the fact $\frac{\dd G}{\dd z}=G^2,$ we find that 
	\beqs
	\frac{\dd}{\dd z}\Omega_{A}^{c}(z)=\frac{\tr(\wt{B}A\wt{B}G^{2})\tr(A\wt{B}G)-\tr(\wt{B}A\wt{B}G)\tr(A\wt{B}G^{2})}{\tr(A\wt{B}G)^{2}}.
	\eeqs
	For some constant $C>0,$ by (\ref{eq:Lambda}), (i) of Proposition \ref{prop:stabN} and the definition of $\Theta(z, \delta, \delta'),$ we have 
	\beqs
	\absv{\tr(\wt{B}A\wt{B}G)}=\absv{\tr(\wt{B}(zG+1))}\leq1+\Absv{\frac{C z}{N}\sum_{i}\frac{1}{a_{i}-\Omega_{B}}}+C\absv{z}\delta\leq C, 
	\eeqs 	
	and 
	\beqs
	\absv{\tr(A\wt{B}G)}\geq \absv{zm_{\mu_{A}\boxtimes\mu_{B}}(z)+1}-\delta.
	\eeqs	
	Combining the above discussions, when $\delta \ll 1,$ we obtain that 
	\begin{equation*}
		\left| \frac{\dd }{\dd z} \Omega_A^c(z)\right| \leq C \eta^2 \leq CN^2. 
	\end{equation*}
	Similar results hold for $\Omega_B^c.$ Then we can complete the proof of (\ref{eq_claimnlambda}) with mean value theorem. Armed with (\ref{eq_claimoneone}) and (\ref{eq_claimnlambda}), we immediately see that 	
	\begin{align}
		\Theta\left(z,\frac{N^{5\epsilon/2}}{(N\eta)^{1/3}},\frac{N^{5\epsilon/2}}{\sqrt{N\eta}}\right)
		&\subset\bigcap_{\absv{w}\leq N^{-5}}\Theta\left(z+w,\frac{N^{5\epsilon/2}}{(N\eta)^{1/3}}+CN^{-3},\frac{N^{5\epsilon/2}}{{\sqrt{N\eta}}}+CN^{-3}\right) \nonumber\\
		&\subset\bigcap_{\absv{w}\leq N^{-5}}\Theta\left(z+w,\frac{N^{3\epsilon}}{(N\eta)^{1/3}},\frac{N^{3\epsilon}}{\sqrt{N\eta}}\right). \label{eq:WLL_lipschitz1}
	\end{align}
	
	We next briefly discuss the event $\Theta_{>}$ due to similarity.  On the event $\Theta_{>}\left(z,\frac{N^{5\epsilon/2}}{(N\eta)^{1/3}},\frac{N^{5\epsilon/2}}{\sqrt{N\eta}},\frac{\epsilon}{10}\right)$, by a discussion similar to (\ref{eq_claimoneone}) and (iv) of Proposition \ref{prop:stabN}, we have that for some constants $c, C>0,$ 
	\begin{align}\label{eq_keeptrackofeventhighprobability}
		\Lambda(z-N^{-5}\ii)&\leq \Lambda(z)+CN^{-3}\leq N^{-\epsilon/2}\absv{\caS_{AB}(z)}+CN^{-3}
		\leq CN^{-\epsilon/2}(\kappa+\eta)^{-1/2}+CN^{-3}	\nonumber \\&\leq cN^{-\epsilon/10}(\kappa+\eta-N^{-5})^{-1/2} 		\leq N^{-\epsilon/10}\absv{\caS_{AB}(z-N^{-5}\ii)}.
	\end{align}
	Consequently, we have that 
	\begin{align}
		\Theta_{>}\left(z,\frac{N^{5\epsilon/2}}{(N\eta)^{1/3}},\frac{N^{5\epsilon/2}}{\sqrt{N\eta}},\frac{\epsilon}{2}\right)	&\subset\bigcap_{\absv{w}\leq N^{-5}}\Theta_{>}\left(z+w,\frac{N^{3\epsilon}}{(N\eta)^{1/3}},\frac{N^{3\epsilon}}{\sqrt{N\eta}},\frac{\epsilon}{10}\right).\label{eq:WLL_lipschitz2}
	\end{align}
	By Lemma \ref{lem:iteration_weaklaw} and taking 	$w=-N^{-5}\ii$ in (\ref{eq:WLL_lipschitz1}) and (\ref{eq:WLL_lipschitz2}), we have proved  \eqref{eq:WLL_iter_event1} and \eqref{eq:WLL_iter_event2}.
	
	\vspace{3pt}	
	\noindent{\bf Step 3.} In this step, we use a continuity argument to extend the bound first to a lattice of mesh size $N^{-5}$ and then the whole domain $\mathcal{D}_{\tau}(\eta_L, \eta_U)$. To simplify the notations, we denote 
	\begin{align*}
		& \caP\deq\caD_{\tau}(\eta_{L},\eta_{U})\cap (N^{-5}\Z)^{2}, \ \ \caP_{>}\deq \caD_{>}\cap (N^{-5}\Z)^{2}, \\ 
		& \caP_{\leq}\deq \caD_{\leq}\cap(N^{-5}\Z)^{2}, \ \ \caP_{E}\deq [E_{+}-\tau,\tau^{-1}]\cap N^{-5}\Z.
	\end{align*}
	Repeatedly applying \eqref{eq:WLL_iter_event1} and \eqref{eq:WLL_iter_event2}, we have
	\begin{align}\label{eq:WLL_conc}
		&\bigcap_{z\in\caP}\Xi(z,D,\epsilon)\cap \bigcap_{E\in\caP_{E}}\Theta_{>}\left(E+\ii\eta_{U},\frac{N^{5\epsilon/2}}{(N\eta_{U})^{1/3}},\frac{N^{5\epsilon/2}}{(N\eta_{U})^{1/2}},\frac{\epsilon}{2}\right)\\
		& \subset\bigcap_{z\in\caP_{>}}\Theta_{>}\left(E+\ii\eta,\frac{N^{5\epsilon/2}}{(N\eta)^{1/3}},\frac{N^{5\epsilon/2}}{(N\eta)^{1/2}},\frac{\epsilon}{2}\right)\cap \bigcap_{z\in\caP_{\leq}}\Theta\left(E+\ii\eta,\frac{N^{5\epsilon/2}}{(N\eta)^{1/3}},\frac{N^{5\epsilon/2}}{(N\eta)^{1/2}}\right) \nonumber \\
		& \subset \bigcap_{z\in\caD_{>}}\Theta_{>}\left(E+\ii\eta,\frac{N^{3\epsilon}}{(N\eta)^{1/3}},\frac{N^{3\epsilon}}{(N\eta)^{1/2}},\frac{\epsilon}{10}\right)\cap \bigcap_{z\in\caD_{\leq}}\Theta\left(E+\ii\eta,\frac{N^{3\epsilon}}{(N\eta)^{1/3}},\frac{N^{3\epsilon}}{(N\eta)^{1/2}}\right),	\nonumber	
	\end{align}
	where in the third step we used \eqref{eq:WLL_lipschitz1} and \eqref{eq:WLL_lipschitz2}. Moreover, by Lemma \ref{lem:iteration_weaklaw} and \eqref{eq:WLL_global}, when $N\geq \max(N_{1}(D,\epsilon),N_{2}(D,\epsilon)),$ the probability of the first event of (\ref{eq:WLL_conc}) is at least  
	\beqs
	1-\sum_{E\in\caP_{E}}N^{-D}-\sum_{z\in\caP}N^{-D}\geq 1- CN^{10-D}.
	\eeqs
	Since $D$ is arbitrary, we can prove for the discrete lattice by choosing $D>10$ large enough. Finally, by the Lipschitz continuity of the resolvent and the subordination functions in (iv) of Proposition \ref{prop:stabN}, we can extend all the bounds from the discrete lattice to $\mathcal{D}_{\tau}(\eta_L, \eta_U).$ This concludes the proof of Proposition \ref{prop:weaklaw}. 
	
\end{proof}

Before concluding this section, we briefly discuss how we handle the off-diagonal entries of the resolvents. considering the decomposition of $(\wt{B}G)_{ij}$ using the strategy similar to (\ref{eq:BGii-S}). In what follows, we introduce the counterparts of the diagonal entries. Corresponding to (\ref{eq:Lambda}), we denote
\beq\label{eq_qij}
Q_{ij}\deq\tr(GA)(\wt{B}G)_{ij}-G_{ij}\tr(A\wt{B}G).
\eeq
Moreover, using the fact $G_{ij}=a_{i}(\wt{B}G)_{ij}/z,$ which follows from $zG+I=A\wt{B}G,$
we see that
\beq\label{eq_qijdecomposution}
Q_{ij}=\left(\frac{z}{a_{i}}\tr(GA)-\tr(A\wt{B}G)\right)G_{ij}=\frac{\tr (GA\wt{B})}{a_{i}}(\Omega_{B}^{c}-a_{i})G_{ij}.
\eeq
In this sense, it suffices to bound $Q_{ij}$ in order to prove \eqref{eq:mainoff2}. Similar to the discussion for the diagonal entries, we will need the following quantities 
\begin{align}\label{eq_quantitydefinitionoffdiagonal}
	S_{ij}\deq\bsh_{i}\adj\wt{B}^{\angi}G\bse_{j}, \
	\mr{S}_{ij}\deq\mr{\bsh}_{i}\adj\wt{B}^{\angi}G\bse_{j}, \quad \quad \quad  \quad \quad \quad \quad \quad \quad \quad \quad \quad \quad \quad \quad \quad \\
	T_{ij}\deq \bsh_{i}\adj G\bse_{j},\
	\mr{T}_{ij}\deq \mr{\bsh}_{i}\adj G\bse_{j},\quad \quad \quad \quad \quad \quad  \quad \quad \quad \quad \quad \quad  \quad \quad \quad \quad \quad \quad \quad \quad  \nonumber\\
	P_{ij}\deq Q_{ij}+(G_{ij}+T_{ij})\Upsilon, \
	K_{ij}=T_{ij}+\tr(GA)(b_{i}T_{ij}+(\wt{B}G)_{ij})-\tr(GA\wt{B})(G_{ij}+T_{ij}). \nonumber
\end{align}
Based on the above discussion and the estimates in Lemma \ref{lem:recmomerror_off}, we can prove the results for the off-diagonal entries. For example, denote 
\begin{equation*}
	\Lambda_o:=\max_{i \neq j} |G_{ij}|, \ \Lambda_{T,o}:=\max_{i \neq j} |T_{ij}|.
\end{equation*}
By a discussion similar to Proposition \ref{prop:weaklaw},  we can obtain that uniformly in $z \in \mathcal{D}_\tau(\eta_L, \eta_U)$
\begin{equation*}
	\Lambda_o \prec \frac{1}{(N \eta)^{1/3}}, \  \Lambda_{T, o} \prec \Psi(z). 
\end{equation*}
{This implies that an analog of Assumption \ref{assu_ansz} for off-diagonal entries holds true, so that the analogous proof as Proposition \ref{prop:entrysubor} yields the desired bounds for off-diagonal entries.} We omit the details here.

\subsection{Strong local law: proof of Theorem \ref{thm:main}}\label{sec_proofofstronglocallaw}
In this subsection, using the weak local law Proposition \ref{prop:weaklaw} as an initial input, we prove Theorem \ref{thm:main}. Indeed, the proof follows from the same three-step strategy as mentioned in the proof of Proposition \ref{prop:weaklaw}, except that we use better estimates.

{We first prepare some technical ingredients. The first ingredient is Lemma \ref{lem:priori_weaklaw} above with the choice $k=1$. Now that we have Proposition \ref{prop:weaklaw}, we see that the assumptions of Proposition \ref{prop:entrysubor} hold true and so does the conclusion of Proposition \ref{prop:FA2}. This in turn implies \eqref{eq:self_improv_assu} with $k=1$, so that we can simply take the results of Lemma \ref{lem:priori_weaklaw} as granted. We will use this result to gradually improve the estimate of $\Lambda(z)$ with fixed $\im z.$  }

The second input, Lemma \ref{lem:self_improv_S}, is an analogue of Lemma \ref{lem:iteration_weaklaw}. It serves as the key input to gradually extend the bounds to the entire domain $\mathcal{D}_\tau(\eta_L, \eta_U).$ To state the result rigorously, we define the domains $\wt{\caD}_{>},\wt{\caD}_{\leq}$ and events $\wt{\Theta},\wt{\Theta}_{>}$ as follows:
\begin{align*}
	&\wt{\caD}_{>}\deq \left\{z\in\caD_{\tau}(\eta_{L},\eta_{U}):\sqrt{\kappa+\eta}>\frac{N^{2\epsilon}}{N\eta}\right\},&
	&\wt{\caD}_{\leq}\deq \left\{z\in\caD_{\tau}(\eta_{L},\eta_{U}):\sqrt{\kappa+\eta}>\frac{N^{2\epsilon}}{N\eta}\right\}, \\
	&\wt{\Theta}(z,\delta)\deq \left\{\Lambda\leq \delta \right\},&
	&\wt{\Theta}_{>}(z,\delta,\epsilon')\deq \wt{\Theta}(z,\delta)\cap \left\{\Lambda(z)\leq N^{-\epsilon'}\absv{\caS_{AB}}\right\}.
\end{align*}

{The proof is identical to Lemma \ref{lem:iteration_weaklaw} except that we take $k=1$ in Lemma \ref{lem:priori_weaklaw}, instead of $k=1/3$. We omit the details here.}

\begin{lem}\label{lem:self_improv_S}
	For any fixed $\epsilon_{0}\in(0,\gamma/12)$ and large $D>0$, there exists $N_0 \equiv N_{0}(\epsilon_{0},D)$ such that the followings hold for all $N\geq N_{0}$ and $z\in\caD_{\tau}(\eta_{L},\eta_{U})$: 
	\begin{align}
		&\prob{\wt{\Theta}_{>}\left(z,\frac{N^{3\epsilon_{0}}}{N\eta},\frac{\epsilon_{0}}{10}\right)\setminus\wt{\Theta}_{>}\left(z,\frac{N^{5\epsilon_{0}/2}}{N\eta},\frac{\epsilon_{0}}{2}\right)}\leq N^{-D}, \ \ \text{ for }z\in\wt{\caD}_{>},\\
		&\prob{\wt{\Theta}\left(z,\frac{N^{3\epsilon_{0}}}{N\eta}\right)\setminus\wt{\Theta}\left(z,\frac{N^{5\epsilon_{0}/2}}{N\eta}\right)}\leq N^{-D}, \ \ \text{ for }z\in\wt{\caD}_{\leq} .
	\end{align} 
\end{lem}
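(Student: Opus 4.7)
The strategy is to feed the a priori bound $\wh{\Lambda}(z) := N^{3\epsilon_{0}}/(N\eta)$ into Lemma \ref{lem:self_improv} as a black box. First I would verify the admissibility condition $(N\eta)^{-1}\leq \wh{\Lambda}(z)\leq N^{-\gamma/4}$: the lower bound is immediate, and the upper bound follows from $\eta\geq\eta_{L}=N^{-1+\gamma}$ combined with the standing hypothesis $\epsilon_{0}<\gamma/12$, which forces $N^{3\epsilon_{0}}/(N\eta)\leq N^{3\epsilon_{0}-\gamma}\leq N^{-\gamma/4}$. On either of the events $\wt{\Theta}(z,\wh{\Lambda}(z))$ or $\wt{\Theta}_{>}(z,\wh{\Lambda}(z),\epsilon_{0}/10)$ we have $\Lambda(z)\leq \wh{\Lambda}(z)$, which plays the role of the event $\Xi(z)$ in Lemma \ref{lem:self_improv}.

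Next I would align the two cases of Lemma \ref{lem:self_improv} with the decomposition $\caD_{\tau}(\eta_{L},\eta_{U})=\wt{\caD}_{>}\cup \wt{\caD}_{\leq}$. With the parameter in the definitions of those domains chosen compatibly with $\epsilon_{0}$ (taking $\epsilon=\epsilon_{0}$, up to relabeling), $z\in\wt{\caD}_{>}$ is precisely the condition $\sqrt{\kappa+\eta}>N^{-\epsilon_{0}}\wh{\Lambda}(z)$, i.e.\ case (i) of Lemma \ref{lem:self_improv}, while $z\in\wt{\caD}_{\leq}$ falls into case (ii). In the first scenario, on $\wt{\Theta}_{>}$ the constraint $\Lambda\leq N^{-\epsilon_{0}/10}\absv{\caS_{AB}}$ forces the indicator $\lone(\Lambda\leq \absv{\caS_{AB}}/K_{0})$ to equal $1$ for all $N$ beyond a threshold, since $K_{0}$ is a fixed constant and $N^{-\epsilon_{0}/10}<K_{0}^{-1}$. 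Applying (i) of Lemma \ref{lem:self_improv} separately for $\iota=A$ and $\iota=B$ and combining via a union bound yields, outside an event of probability at most $2N^{-D}$,
\begin{equation*}
\Lambda(z)\leq 2\bigl(N^{-2\epsilon_{0}}\wh{\Lambda}(z)+N^{7\epsilon_{0}/5}(N\eta)^{-1}\bigr)=2\bigl(N^{\epsilon_{0}}+N^{7\epsilon_{0}/5}\bigr)(N\eta)^{-1}\leq N^{5\epsilon_{0}/2}(N\eta)^{-1}
\end{equation*}
for $N$ sufficiently large, since $\epsilon_{0},\,7\epsilon_{0}/5<5\epsilon_{0}/2$. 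Part (ii) of Lemma \ref{lem:self_improv} gives the analogous conclusion on $\wt{\caD}_{\leq}$, so the bound $\Lambda\leq N^{5\epsilon_{0}/2}(N\eta)^{-1}$ is established in both cases.

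Finally, for $z\in\wt{\caD}_{>}$ I must upgrade $\Lambda\leq N^{5\epsilon_{0}/2}(N\eta)^{-1}$ to the second constraint $\Lambda\leq N^{-\epsilon_{0}/2}\absv{\caS_{AB}}$ entering the definition of $\wt{\Theta}_{>}(z,\cdot,\epsilon_{0}/2)$. By (iii) of Proposition \ref{prop:stabN}, $\absv{\caS_{AB}(z)}\sim\sqrt{\kappa+\eta}$, and the definition of $\wt{\caD}_{>}$ provides $\sqrt{\kappa+\eta}\geq N^{2\epsilon}(N\eta)^{-1}$; by choosing the constant $\epsilon$ at least $3\epsilon_{0}$ (which is consistent with the parameter flexibility in the paper's continuity framework), we get $\absv{\caS_{AB}}\geq c N^{3\epsilon_{0}}(N\eta)^{-1}$, whence $N^{-\epsilon_{0}/2}\absv{\caS_{AB}}\geq c N^{5\epsilon_{0}/2}(N\eta)^{-1}$ and the desired constraint follows. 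The main delicate point throughout is precisely this matching of constants between the dichotomy threshold $N^{-\epsilon_{0}}\wh{\Lambda}$ in Lemma \ref{lem:self_improv}, the defining threshold of $\wt{\caD}_{>}$, and the improvement factor $N^{-\epsilon_{0}/2}$ in the target event; once this bookkeeping is in order, the remainder of the proof reduces to a direct application of Lemma \ref{lem:self_improv} and a union bound over $\iota\in\{A,B\}$.
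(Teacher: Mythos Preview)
Your overall approach is correct and matches the paper's, which simply says to mirror the proof of Lemma~\ref{lem:iteration_weaklaw} with Lemma~\ref{lem:self_improv} in place of Lemma~\ref{lem:priori_weaklaw}. The admissibility check for $\wh{\Lambda}=N^{3\epsilon_{0}}/(N\eta)$, the removal of the indicator on $\wt{\Theta}_{>}$ via $N^{-\epsilon_{0}/10}<K_{0}^{-1}$, and the union bound over $\iota\in\{A,B\}$ are all fine.

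There is, however, a real inconsistency in your final paragraph. You first take the domain parameter $\epsilon=\epsilon_{0}$ so that the dichotomy of Lemma~\ref{lem:self_improv} aligns exactly with the split $\wt{\caD}_{>}\cup\wt{\caD}_{\leq}$, but then propose $\epsilon\geq 3\epsilon_{0}$ to obtain the upgrade $\Lambda\leq N^{-\epsilon_{0}/2}\absv{\caS_{AB}}$. These two choices are incompatible: if $\epsilon>\epsilon_{0}$, then on the strip $\{N^{2\epsilon_{0}}/(N\eta)<\sqrt{\kappa+\eta}\leq N^{2\epsilon}/(N\eta)\}\subset\wt{\caD}_{\leq}$ only case~(i) of Lemma~\ref{lem:self_improv} applies, yet the starting event $\wt{\Theta}$ carries no constraint of the form $\Lambda\leq N^{-\epsilon_{0}/10}\absv{\caS_{AB}}$, so the indicator cannot be removed and your argument for $\wt{\caD}_{\leq}$ fails there.

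The fix is to keep $\epsilon=\epsilon_{0}$ throughout and, for the upgrade on $\wt{\caD}_{>}$, not throw away the sharper intermediate bound. Case~(i) actually gives
\[
\Lambda\leq 2\bigl(N^{-2\epsilon_{0}}\wh{\Lambda}+N^{7\epsilon_{0}/5}(N\eta)^{-1}\bigr)\leq C\,N^{7\epsilon_{0}/5}(N\eta)^{-1}=C\,N^{-8\epsilon_{0}/5}\wh{\Lambda}.
\]
Combined with the $\wt{\caD}_{>}$ condition $\wh{\Lambda}=N^{3\epsilon_{0}}(N\eta)^{-1}<N^{\epsilon_{0}}\sqrt{\kappa+\eta}$ this yields $\Lambda\leq C\,N^{-3\epsilon_{0}/5}\sqrt{\kappa+\eta}$, and since $3\epsilon_{0}/5>\epsilon_{0}/2$ and $\absv{\caS_{AB}}\sim\sqrt{\kappa+\eta}$ one concludes $\Lambda\leq N^{-\epsilon_{0}/2}\absv{\caS_{AB}}$ for large $N$. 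This is precisely the computation leading to \eqref{eq_belongeq1} in the proof of Lemma~\ref{lem:iteration_weaklaw}, transported from exponent $1/3$ to exponent $1$.
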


Then we prove Theorem \ref{thm:main} using Lemmas \ref{lem:priori_weaklaw} and \ref{lem:self_improv_S}. As above, the proof is analogous to that of \cite[Theorem 2.5]{BEC}.
\begin{proof}[\bf Proof of Theorem \ref{thm:main}]
	First, we prove that 
	\begin{equation}\label{lambdaz_uniform}
		\Lambda \prec \frac{1}{N \eta},
	\end{equation}
	holds uniformly in $z \in \mathcal{D}_\tau(\eta_L, \eta_U).$ As we mentioned earlier, the proof follows the same three-step proof strategy of Proposition \ref{prop:weaklaw}. We focus on explaining step 1, which differs the most from its counterpart of the proof of Proposition \ref{prop:weaklaw}. As in (\ref{eq:WLL_global}), step 1 will prove that for fixed $\im z=\eta_U,$ 
	\beq\label{eq_limitstrongglobal}
	\inf_{E\in[E_{+}-\tau,\tau^{-1}]}\prob{\wt{\Theta}_{>}\left(E+\ii\eta_{U},\frac{N^{3\epsilon_{0}}}{N\eta_U},\frac{\epsilon}{10}\right)}\geq 1-N^{-D}.
	\eeq
	To see (\ref{eq_limitstrongglobal}), on one hand, we notice that by Propositions \ref{prop:weaklaw} the conclusion of \ref{prop:FA2} hold true without any assumption. Thus we can take $k=1$ and initially choose $\wh{\Lambda}(z)=N^{3\epsilon_{0}}(N\eta_U)^{-1/3}$ in Lemma \ref{lem:priori_weaklaw}; on the other hand, we will repeatedly apply Lemma \ref{lem:priori_weaklaw} to gradually improve the control parameter $\wh{\Lambda}(z)$ until it reaches the bound $(N\eta_U)^{-1}$(up to a factor of $N^{\epsilon_{0}}$). To facilitate our discussion, we denote
	\begin{align*}
		\wh{\Lambda}_{1}(z)&\deq\frac{N^{3\epsilon_{0}}}{(N\eta_U)^{1/3}},& 
		\wh{\Lambda}_{k}(z)&\deq N^{(4-k)\epsilon_{0}}\frac{1}{(N\eta_U)^{1/3}}+\frac{N^{2\epsilon_{0}}}{N\eta_U},\quad\text{for } k\in\left\llbra 2,\left\lceil\frac{2}{3\epsilon_{0}}+4\right\rceil\right\rrbra.
	\end{align*}	
	Within the above preparation, we will prove (\ref{eq_limitstrongglobal}) by induction in the sense that for each $k$, there exists an $N_k \equiv N_{k}(\epsilon_{0},D)$ so that 
	\beq\label{eq:SLL_global_indct}
	\prob{\Lambda(E+\ii\eta_{U})\leq\wh{\Lambda}_{k}(E+\ii\eta_{U})}\geq 1-kN^{-D},
	\eeq
	for all $N\geq N_{k}$ and uniformly in $E\in[E_{+}-\tau,\tau^{-1}]$. Clearly, we have that $(N\eta)^{-1}\leq \wh{\Lambda}_{k}(z)\leq N^{-\gamma/4}$ for all $k$. Therefore, Lemma \ref{lem:priori_weaklaw} is applicable with all the choices $\wh{\Lambda}=\wh{\Lambda}_{k}$. Since $\sqrt{\kappa+\eta_{U}}>N^{-\epsilon_{0}}\wh{\Lambda}_{k}$, by Lemma \ref{lem:priori_weaklaw}, we have
	\beq\label{eq_withindicator}
	\prob{\left[\Lambda(E+\ii\eta_{U})\leq \wh{\Lambda}_{k}(E+\ii\eta_{U})\right]\cap\left[\lone\left(\Lambda\leq \frac{\absv{\caS_{AB}}}{K_{0}}\right)\absv{\Lambda_{\iota}}>N^{-2\epsilon_{0}}\wh{\Lambda}+\frac{N^{7\epsilon_{0}/5}}{N\eta_U}\right]}\leq N^{-D},
	\eeq
	for all $N\geq N_{k}' \equiv N_k'(\epsilon_{0},D)$ and uniformly in $E\in[E_{+}-\tau,\tau^{-1}]$. By (iii) of Proposition \ref{prop:stabN}, we find that for some constant $c>0,$
	\beqs
	\frac{\absv{\caS_{AB}}}{K_{0}}\geq c\sqrt{\kappa+\eta_{U}}\geq \wh{\Lambda}_{k}\geq \Lambda,
	\eeqs
	on the event $[\Lambda\leq \wh{\Lambda}_{k}]$. Therefore, we can safely remove the factor of indicator function in (\ref{eq_withindicator}). Consequently, \eqref{eq:SLL_global_indct} leads to
	\begin{align*}
		&\prob{\absv{\Lambda_{\iota}}> N^{-2\epsilon_{0}}\wh{\Lambda}_{k}+\frac{N^{7\epsilon_{0}/5}}{N\eta_U}}\\
		&\leq \prob{\Lambda> \wh{\Lambda}_{k}}+
		\prob{\left[\Lambda\leq \wh{\Lambda}_{k}\right]\cap\left[\lone\left(\Lambda\leq \frac{\absv{\caS_{AB}}}{K_{0}}\right)\absv{\Lambda_{\iota}}>N^{-2\epsilon_{0}}\wh{\Lambda}+\frac{N^{7\epsilon_{0}/5}}{N\eta_U}\right]} \\
		&		\leq (k+1)N^{-D},
	\end{align*}
	whenever $N\geq \max\{N_{k}',N_{k} \}$ and $z=E+\ii\eta_{U}$, uniformly in $E\in[E_{+}-\tau,\tau^{-1}]$. Together with the fact
	\beqs
	N^{-2\epsilon_{0}}\wh{\Lambda}_{k}+\frac{N^{7\epsilon_{0}/5}}{N\eta_U} =N^{-\epsilon_{0}}\wh{\Lambda}_{k+1}+\frac{N^{7\epsilon_{0}/5}-N^{\epsilon_{0}}}{N\eta_U}\leq \wh{\Lambda}_{k},
	\eeqs
	we have proved that for\eqref{eq:SLL_global_indct}, the same inequality holds for the case $k+1$ if the case $k$ holds true. Recall that by Proposition \ref{prop:weaklaw} we have that \eqref{eq:SLL_global_indct} holds for $k=1$. By induction, \eqref{eq:SLL_global_indct} holds for all fixed $k$. In particular, taking $k=\lceil 2/(3\epsilon_{0})+4\rceil$, we have
	\beqs
	\wh{\Lambda}_{k}\leq \frac{N^{4\epsilon_{0}-k\epsilon_{0}+2/3}+N^{2\epsilon_{0}}}{N\eta_U} \leq \frac{N^{3\epsilon_{0}}}{N\eta_U}.
	\eeqs
	Since by (iii) of Proposition \ref{prop:stabN}, $\Lambda\leq\wh{\Lambda}_{k}$ implies $\Lambda\leq N^{-\epsilon_{0}/10}\absv{\caS_{AB}}$ for $\eta=\eta_{U}$, we have proved (\ref{eq_limitstrongglobal}).  Steps 2 and 3 of the proof of (\ref{lambdaz_uniform}) follows analogously as the counterparts of the proof of Proposition \ref{prop:weaklaw} using Lemma \ref{lem:self_improv_S} and (\ref{eq_limitstrongglobal}), we omit the details here. 
	
	Second, with (\ref{lambdaz_uniform}) and the local laws, we can see that the bounds in Theorem \ref{thm:main} hold. In detail, with the local law in Proposition \ref{prop:weaklaw}, we see that Assumption \ref{assu_ansz}  holds uniformly in $z \in \mathcal{D}_{\tau}(\eta_L, \eta_U).$ Consequently, a result analogous to Proposition \ref{prop:entrysubor} holds uniformly in $z \in \mathcal{D}_{\tau}(\eta_L, \eta_U).$ Then the bounds for the off-diagonal entries, i.e.  (\ref{eq_off1}) and (\ref{eq:mainoff2}), follow immediately. For the diagonal entries, we focus on explaining (\ref{eq:main}). Recall (\ref{eq:Lambda}). We see that for any deterministic $v_1, \cdots, v_N \in \mathbb{C},$ we can  write  
	\begin{equation*}
		\frac{1}{N} \sum_{i=1}^N v_i \left( zG_{ii}+1-\frac{a_i}{a_i-\Omega_B^c(z)} \right)=\frac{1}{N} \sum_{i=1}^N \frac{zv_i a_i}{(1+z\tr G)(a_i-\Omega_B^c)} Q_i.
	\end{equation*}	
	Regarding $\frac{zv_i a_i}{(1+z\tr G)(a_i-\Omega_B^c)}$ as the random coefficients $d_i$ in (\ref{eq_weighted}), it is easy to see that the conditions in (\ref{eq:weight_cond}) hold. Hence, by Proposition \ref{prop:FA1} and the weak local law, we find that 
	\begin{equation*}
		\left|\frac{1}{N} \sum_{i=1}^N v_i \left( zG_{ii}+1-\frac{a_i}{a_i-\Omega_B^c(z)} \right) \right| \prec \Psi \wh{\Pi}.
	\end{equation*}
	Together with (\ref{lambdaz_uniform}), we conclude the proof.  
	
\end{proof}

\section{Proof of Theorems  \ref{thm_rigidity} and \ref{thm_delocalization}} \label{sec_proofofspikedmodel}

\begin{proof}[\bf Proof of Theorem \ref{thm_rigidity}] 
	The first part of the results, i.e., the eigenvalues are close to the quantiles $\gamma_i^*,$ follows from a standard argument established in \cite{erdos2012}, by translating the closeness of the resolvent into the closeness of the eigenvalues and the quantitles of $\mu_{A} \boxtimes \mu_{B}.$ The proof is rather standard and we only discuss the two important inputs:
	\begin{enumerate}
		\item For the largest eigenvalue $\lambda_1,$ we have 
		\begin{equation}\label{eq_controllargestdifference}
			|\lambda_1-\gamma_1^*| \prec N^{-2/3}. 
		\end{equation}
		\item Given sufficiently small constant $\varsigma>0,$ we have 
		\begin{equation}\label{eq_ctronlclaimone}
			\sup_{x \leq E_++\varsigma}|\mu_{H}((-\infty,x])-\mu_{A} \boxtimes \mu_B((-\infty,x])|\prec N^{-1}.
		\end{equation} 
	\end{enumerate}
	First, (\ref{eq_ctronlclaimone}) follows from an argument regardless of the underlying random matrix models once the local laws are established.  In detail, it follows from a standard application of the Helffer-Sj{\" o}strand formula on $\mathcal{D}_{\tau}(\eta_L, \eta_U)$; see \cite{erdos2012} and \cite[Section 10]{BKnotes} for details. {Second, the proof of  (\ref{eq_controllargestdifference}) basically follows from its counterpart of the additive model; see equation (10.6) of \cite{BEC}.} It relies on an improved estimate for $\Lambda(z)$, i.e., Lemma \ref{lem_improvedestimate} below, {which is an analog of \cite[Lemma 10.1]{BEC} of the additive model.} We omit the proof of Lemma \ref{lem_improvedestimate} which can be easily checked by a calculation similar to Lemma 10.1 of \cite{BEC}.
	Denote $\wt{\mathcal{D}}$ as
	\begin{equation}\label{eq_wtmathcald}
		\wt{\mathcal{D}}_{>}:=\{z=E+\ii \eta \in \mathcal{D}_{\tau}(\eta_L, \eta_U): \sqrt{\kappa+\eta}>\frac{N^{2 \epsilon}}{N \eta} \ \text{and} \ E>E_+\}.
	\end{equation}
	
	\begin{lem}\label{lem_improvedestimate} Under the assumptions of Theorem \ref{thm_rigidity}, we have the following holds uniformly in $z \in \wt{\mathcal{D}}_{>}$
		\begin{equation}\label{eq_improveboundequation}
			\Lambda(z) \prec \frac{1}{N \sqrt{(\kappa+\eta)\eta}}+\frac{1}{\sqrt{\kappa+\eta}} \frac{1}{(N \eta)^2}. 
		\end{equation}
	\end{lem}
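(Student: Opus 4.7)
The plan is to establish Lemma \ref{lem_improvedestimate} via a bootstrap improvement of the pointwise local law from Theorem \ref{thm:main}, exploiting the enhanced stability of the subordination system on $\wt{\mathcal{D}}_{>}$. Two inputs from Proposition \ref{prop:stabN} drive the improvement: on the set $E > E_+$ one has $\im m_{\mu_A\boxtimes\mu_B}(z) \sim \eta/\sqrt{\kappa+\eta}$ (item (ii)), and the determinant of the linearized subordination system satisfies $|\mathcal{S}_{AB}(z)| \sim \sqrt{\kappa+\eta}$ (item (iii)), so the inverse of the linearized map has norm $\lesssim (\kappa+\eta)^{-1/2}$. Theorem \ref{thm:main}, in the sharper form of Proposition \ref{prop_linearlocallaw}, therefore supplies the a priori estimate $\Lambda(z) \prec \Psi(z) := N^{-1/2}(\kappa+\eta)^{-1/4} + (N\eta)^{-1}$, and the defining condition $\sqrt{\kappa+\eta} > N^{2\epsilon}/(N\eta)$ of $\wt{\mathcal{D}}_{>}$ ensures that both summands of the target bound (\ref{eq_improveboundequation}) are strictly smaller than $\Psi$, so a genuine improvement is needed.

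The improvement I would carry out by re-running the self-consistent analysis of Section \ref{sec:pointwiselocallaw} with $\Psi$ injected as an input. The partial randomness decomposition (\ref{eq_prd}) and the approximate subordination identities (\ref{eq:Lambda})--(\ref{eq:BGii-S}) express $\Lambda$ as $|\mathcal{S}_{AB}|^{-1}$ times a sum of fluctuation-averaging quantities. Inserting the a priori bound into the fluctuation-averaging lemmas of Section \ref{sec_faall}, together with the weights (\ref{eq_optimalfaquantitiescoeff}), upgrades each single-entry fluctuation of size $\sqrt{\im m/(N\eta)}$ into an averaged estimate of size $\Psi\sqrt{\im m/(N\eta)} + (N\eta)^{-2}$. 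The Ward identity $\sum_{j\neq i}|G_{ij}|^2 = \im G_{ii}/\eta$, combined with $\im G_{ii} \sim \im m + \eta\Lambda$, provides the additional control on the off-diagonal contributions. Multiplying by the stability factor $(\kappa+\eta)^{-1/2}$ and substituting $\im m \sim \eta/\sqrt{\kappa+\eta}$ yields precisely the two summands in (\ref{eq_improveboundequation}), where the first summand absorbs the off-diagonal residue which Ward alone cannot shrink further.

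The principal obstacle will be making the bootstrap self-closing rather than merely one-step iterative, because the bound on $\Lambda$ feeds back into the right-hand side of the fluctuation-averaging output. Following the strategy of \cite{BEC}, the cleanest route is to formulate a single implicit inequality of the schematic form $\Lambda \prec (\kappa+\eta)^{-1/2}\bigl[\Lambda\sqrt{\im m/(N\eta)} + (N\eta)^{-2}\bigr]$, and then observe that on $\wt{\mathcal{D}}_{>}$ the coefficient of $\Lambda$ on the right-hand side is strictly below $1$ by the margin $N^{-2\epsilon}$, so a standard absorption argument yields (\ref{eq_improveboundequation}) in one stroke. Uniformity in $z$ is then propagated by a grid-and-continuity argument of the type collected in Appendix \ref{append:C}. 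The delicate technical point is verifying that the integration-by-parts expansions driven by the Haar measure in Section \ref{sec_faall} genuinely extract a factor of $\Lambda$ (rather than $\Psi$) at each step; this is precisely what makes the one-shot absorption work and what differentiates this improved estimate from the initial bound of Theorem \ref{thm:main}.
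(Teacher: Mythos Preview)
Your overall plan---use the optimal fluctuation averaging, divide by the stability factor $|\caS_{AB}|\sim\sqrt{\kappa+\eta}$, absorb the quadratic term on $\wt{\caD}_{>}$, and iterate---is exactly the paper's route (it simply applies Proposition~\ref{prop:FA2} together with the a~priori input $\Lambda\prec(N\eta)^{-1}$ from~(\ref{lambdaz_uniform}) and then bootstraps). However, the concrete bound you write for the fluctuation-averaging output is the wrong one, and the discrepancy is not cosmetic: it loses exactly the factor that separates the target from what you would actually obtain.

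You describe the averaged estimate as $\Psi\sqrt{\im m/(N\eta)}+(N\eta)^{-2}$, which is the output of the \emph{rough} averaging (Proposition~\ref{prop:FA1}) applied to $\Phi_{A}^{c},\Phi_{B}^{c}$. Dividing by $|\caS_{AB}|$ and substituting $\im m\sim\eta/\sqrt{\kappa+\eta}$ then gives a first summand $\bigl(N\sqrt{\eta}\,(\kappa+\eta)^{3/4}\bigr)^{-1}$, not $\bigl(N\sqrt{\eta}\,(\kappa+\eta)^{1/2}\bigr)^{-1}$ as claimed; this term carries no $\wh{\Lambda}$ and therefore cannot be improved by further iteration or by your absorption argument. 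What the paper actually feeds in is Proposition~\ref{prop:FA2}, whose point is that the specific linear combinations $\frZ_{1},\frZ_{2}$ of~(\ref{eq_frz1z2definition})---not the individual $\Phi_{\iota}^{c}$---obey the sharper bound $\prec\mho$ with $\mho$ in~(\ref{eq_defnTheta}); the mechanism (Lemma~\ref{lem:Zrecmoment}) is that $\partial\frZ_{1}/\partial g_{ik}$ carries a prefactor $zL'_{\mu_{A}}(\Omega_{B})L'_{\mu_{B}}(\Omega_{A}^{c})-z^{-1}$ of size $|\caS_{AB}|+\wh{\Lambda}$ rather than $O(1)$. This puts an extra $(\kappa+\eta)^{1/4}$ in the \emph{numerator} of $\mho$, and after dividing by $|\caS_{AB}|$ one lands precisely on~(\ref{eq_improveboundequation}). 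Your proposal references the weights~(\ref{eq_optimalfaquantitiescoeff}) but not this second cancellation, and the schematic self-consistent inequality you write (with $\Lambda$ appearing linearly rather than as $\sqrt{\wh{\Lambda}}$, as in the paper's iteration) reflects the same omission. The remarks about re-running Section~\ref{sec:pointwiselocallaw} and the Ward identity are unnecessary: once Proposition~\ref{prop:FA2} is in hand, the proof is a direct four-line iteration.
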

	Recall (\ref{eq:priorisupp}). For any (small) constant $\epsilon>0,$ we define the following line segment
	\begin{equation*}
		\wt{\mathcal{D}}(\epsilon):=\{z=E+\ii \eta: E_++N^{-2/3+6\epsilon} \leq E <\tau^{-1}, \ \eta=N^{-2/3+\epsilon} \},
	\end{equation*}
	where $\tau>0$ is some small fixed constant. Clearly, we have that $\wt{\mathcal{D}}(\epsilon) \subset \wt{\mathcal{D}}_{>}.$ By (\ref{eq_improveboundequation}), we readily obtain that $\Lambda \prec \frac{N^{-\epsilon}}{N \eta}$ holds uniformly in $z \in \wt{\mathcal{D}}(\epsilon).$ Together with (\ref{eq_rigidityuse}), we see that for $z \in \wt{\mathcal{D}}(\epsilon)$ uniformly, $|m_H(z)-m_{\mu_A \boxtimes \mu_B}(z)| \prec \frac{N^{-\epsilon}}{N \eta}.$ 
	Together with (ii) of Proposition \ref{prop:stabN}, we arrive at 
	\begin{equation}\label{eq_contradiction}
		\im m_H(z) \prec \frac{N^{-\epsilon}}{N \eta},
	\end{equation}
	holds uniformly in $z \in \wt{\mathcal{D}}(\epsilon).$ By (\ref{eq:priorisupp}), we see that the proof of (\ref{eq_controllargestdifference}) follows from the claim that  with high probability $\lambda_1 \leq E_++N^{-2/3+6\epsilon},$ {which can be justified by contradiction using a discussion similar to its counterpart as in the proof of Theorem 2.6 of \cite{BEC}. We omit the detail here. }

	For the second part of the results, once the first part is proved,  it suffices to bound the differences between $\gamma_i^*$ and $\gamma_i$, i.e., Lemma \ref{lem:rigidity_AB}, {whose proof follows closely from its counterpart of the additive model as in \cite[Lemma 3.14]{BEC} and we omit the details here.} 
	
	\begin{lem} \label{lem:rigidity_AB} Suppose the assumptions of Theorem \ref{thm_rigidity} hold. Then for some sufficiently large $N_0 \equiv N_0(\epsilon,c),$ when $N \geq N_0,$ we have 
		\begin{equation}\label{eq_closeABalphabeta}
			|\gamma_i-\gamma_i^*| \leq j^{-1/3} N^{-2/3+\epsilon}, \ 1 \leq j \leq cN. 
		\end{equation}
	\end{lem}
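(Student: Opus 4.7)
The plan is to compare $\gamma_j$ and $\gamma_j^*$ by first bounding the difference of the Stieltjes transforms of $\mu_A\boxtimes\mu_B$ and $\mu_\alpha\boxtimes\mu_\beta$, then converting this into a bound on the corresponding cumulative distribution functions via the Helffer--Sj\"ostrand formula, and finally exploiting the square-root vanishing of the density at the edge to translate the CDF bound into the desired quantile bound.

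First I would establish the deterministic Stieltjes-transform estimate
\begin{equation*}
	|m_{\mu_A\boxtimes\mu_B}(z)-m_{\mu_\alpha\boxtimes\mu_\beta}(z)|\leq C\frac{N^{-1+\epsilon}}{\sqrt{\kappa+\eta}}
\end{equation*}
uniformly in $z=E+\ii\eta\in\caD_{0}$. To see this, use \eqref{eq_multiidentity} to write
\begin{equation*}
	z\bigl(m_{\mu_A\boxtimes\mu_B}(z)-m_{\mu_\alpha\boxtimes\mu_\beta}(z)\bigr)=\int\frac{x}{x-\Omega_B(z)}\dd\mu_A(x)-\int\frac{x}{x-\Omega_\beta(z)}\dd\mu_\alpha(x),
\end{equation*}
and split this into the contribution from $\Omega_B-\Omega_\beta$, bounded by $C|\Omega_B(z)-\Omega_\beta(z)|$ using (i) of Lemma \ref{lem:stabbound}, (i) of Proposition \ref{prop:stabN}, and (ii) of Lemma \ref{lem:stabbound}, and the contribution from $\mu_A-\mu_\alpha$, bounded by $C\bsd$ via Lemma \ref{lem:rbound} with $f(x)=x/(x-\Omega_\beta(z))$. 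The first piece is $\rO(N^{-1+\epsilon}/\sqrt{\kappa+\eta})$ by \eqref{eq:OmegaBound}, and the second is $\rO(N^{-1+\epsilon})$ by (iv) of Assumption \ref{assu_esd}.

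Next, I would apply the Helffer--Sj\"ostrand formula to a smoothed indicator of $(-\infty,E]$ with smoothing scale $\eta_{L}=N^{-1+\epsilon/2}$ and a cutoff supported in a neighborhood of $[E_-,E_++\tau]$. Using the bound above together with Lemma \ref{lem:reprM}--Lemma \ref{lem:reprMemp} to control contributions from outside $\caD_{0}$, a standard computation (see e.g.\ Section 8 of \cite{BKnotes}) yields
\begin{equation*}
	\sup_{E\in\R}\bigl|\mu_A\boxtimes\mu_B((-\infty,E])-\mu_\alpha\boxtimes\mu_\beta((-\infty,E])\bigr|\leq CN^{-1+\epsilon}
\end{equation*}
after absorbing logarithmic factors into $N^{\epsilon}$. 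The key technical care is to verify that the $\sqrt{\kappa+\eta}$-degeneracy of the Stieltjes transform bound near $E_{+}$ does not inflate the final CDF bound; the choice of smoothing scale $N^{-1+\epsilon/2}$ together with the square root behavior of both densities keeps the edge contribution under control.

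Finally, I would convert the CDF bound into a quantile bound. By (ii) of Proposition \ref{prop:stabN} and (i) of Proposition \ref{prop:stablimit}, both densities satisfy $\rho(x),\rho^{*}(x)\sim\sqrt{E_+-x}$ near the upper edge, so the defining integral for $\gamma_j$ gives $E_+-\gamma_j\sim(j/N)^{2/3}$ and consequently $\rho(\gamma_j)\sim(j/N)^{1/3}$ for $1\leq j\leq cN$. Since
\begin{equation*}
	\mu_A\boxtimes\mu_B([\gamma_j^{*},\infty))=\mu_\alpha\boxtimes\mu_\beta([\gamma_j,\infty))=j/N,
\end{equation*}
the mean value theorem applied to the CDF together with the above CDF estimate yields
\begin{equation*}
	|\gamma_j-\gamma_j^{*}|\leq \frac{CN^{-1+\epsilon}}{\rho(\xi)}\leq Cj^{-1/3}N^{-2/3+\epsilon}
\end{equation*}
for some $\xi$ between $\gamma_j$ and $\gamma_j^{*}$, provided $N$ is large enough that the a priori control $|\gamma_j-\gamma_j^{*}|\ll E_+-\gamma_j$ holds so that $\rho(\xi)\sim\rho(\gamma_j)$. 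The main obstacle is the second step: choosing the smoothing parameter and cutoff in the Helffer--Sj\"ostrand integral so that both the bulk contribution (where $\Omega_A-\Omega_\alpha$ can blow up as $(\kappa+\eta)^{-1/2}$) and the exterior contribution stay at size $N^{-1+\epsilon}$ without picking up extra $j^{-\alpha}$ factors that would degrade the final estimate.
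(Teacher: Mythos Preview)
Your Stieltjes-transform route to the CDF bound is valid but more laborious than what the paper does: the paper bypasses Helffer--Sj\"ostrand entirely by invoking the Levy-distance continuity of free multiplicative convolution (Bercovici--Voiculescu),
\[
d_L(\mu_A\boxtimes\mu_B,\mu_\alpha\boxtimes\mu_\beta)\leq d_L(\mu_A,\mu_\alpha)+d_L(\mu_B,\mu_\beta)\leq N^{-1+\epsilon},
\]
which together with the compact supports in \eqref{eq:priorisupp} gives the uniform CDF bound in one line. Your approach recovers the same conclusion, and has the merit of being self-contained (no black-box from free probability), but the edge singularity $1/\sqrt{\kappa+\eta}$ in your Stieltjes bound forces the careful smoothing-scale analysis you flag as the main obstacle, whereas the paper's route avoids this altogether.

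There is, however, a genuine gap in your step 3. The clause ``provided $N$ is large enough that the a priori control $|\gamma_j-\gamma_j^{*}|\ll E_+-\gamma_j$ holds'' is circular: this is exactly what you are trying to prove. The mean-value argument works cleanly when $\gamma_j^{*}\leq\gamma_j$ (since $\rho$ is monotone near $E_+$), but when $\gamma_j^{*}>\gamma_j$ and $j$ is small (say $j\lesssim N^{\epsilon}$), the CDF bound alone does not prevent $\gamma_j^{*}$ from sitting well to the right of $E_+$: a mass of order $N^{-1+\epsilon}$ could in principle live far away. You need the separate input that the upper edge of $\mu_A\boxtimes\mu_B$ satisfies $\gamma_1^{*}\leq E_++N^{-1+\xi\epsilon}$, which the paper extracts from the imaginary-part estimate \eqref{eq:OmegaImBound} via Stieltjes inversion on $\caD_O$. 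Once you have that, the paper combines it with the square-root behaviour in (ii) of Proposition~\ref{prop:stabN} to get $|E_+-\gamma_j^{*}|\gtrsim (j/N)^{2/3}$, which then closes the loop in your mean-value step. Without this edge localisation your argument for small $j$ is incomplete.
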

\end{proof}

\begin{proof}[\bf Proof of Theorem \ref{thm_delocalization}] We focus our discussion on the left singular vectors. By spectral decomposition, we have that 
	\begin{equation}\label{spectral_singularvector}
		\sum_{k=1}^N \frac{\eta |\mathbf{u}_k(i)|^2}{(\lambda_k-E)^2+\eta^2}=\im \wt{G}_{ii}(z), \ z=E+\ii \eta. 
	\end{equation} 
	For $1 \leq k_0 \leq cN,$ let $z_0=\lambda_{k_0}+\ii \eta_0,$ where $\eta_0=N^{-1+\epsilon_0}, \epsilon_0>\gamma$ is a small constant. Inserting $z=z_0$ into (\ref{spectral_singularvector}), by Theorem \ref{thm:main} and (i) of Proposition \ref{prop:stabN}, we immediately see that 
	\begin{equation*}
		|\ub_k(i)|^2 \prec \eta_0 \lesssim N^{-1}.  
	\end{equation*}
	
\end{proof}

\section{Some auxiliary lemmas}\label{appendix_deriavtive}
\subsection{Collection of derivatives}
In this subsection, for the reader's convenience,  we collect some results involving derivatives. They can be easily checked by elementary calculation. 
\begin{lem}\label{lem_derivative} Recall the notations in (\ref{defn_greenfunctions}), (\ref{eq_prd}), (\ref{eq_prd2}), (\ref{eq_prd1}), (\ref{eq_pk}) and (\ref{eq_defnupsilon}). For $i\neq k$, we have the  following identities;
	\beq \label{eq_householdderivative}
	\frac{\partial R_{i}}{\partial g_{ik}}
	=-\frac{\partial (\ell_{i}^{2})}{\partial g_{ik}}(\bse_{i}+\bsh_{i})(\bse_{i}+\bsh_{i})\adj 
	-\ell_{i}^{2}\left(\frac{\partial \bsh_{i}}{\partial g_{ik}}\bse_{i}\adj 
	+\bse_{i}\frac{\partial \bsh_{i}\adj}{\partial g_{ik}}
	+\frac{\partial \bsh_{i}}{\partial g_{ik}}\bsh_{i}\adj 
	+\bsh_{i}\frac{\partial \bsh_{i}\adj}{\partial g_{ik}}\right),
	\eeq
	\beq\label{eq_hiderivative}
	\frac{\partial \bsh_{i}}{\partial g_{ik}}
	=\norm{\bsg_{i}}^{-1}\bse_{k}	-\frac{1}{2}\norm{\bsg_{i}}^{-3}\ol{g}_{ik}\bsg_{i}=\norm{\bsg_{i}}^{-1}\bse_{k}-\frac{1}{2}\norm{\bsg_{i}}^{-1}\ol{h}_{ik}\bsh_{i},
	\eeq
	{
		\beq\label{eq_histartderivative}
		\frac{\partial \bsh_{i}\adj}{\partial g_{ik}}=-\frac{1}{2}\norm{\bsg_{i}}^{-3}\bar{g}_{ik}\bsg_{i}\adj	=-\frac{1}{2}\norm{\bsg_{i}}^{-1}\ol{h}_{ik}\bsh_{i}\adj,
		\eeq
	}
	\begin{align}\label{eq_partialli2}
		\frac{\partial \ell_{i}^{2}}{\partial g_{ik}}
		=2\frac{\partial\norm{\bse_{i}+\bsh_{i}}^{-2}}{\partial g_{ik}} =\ell_{i}^{4}\norm{\bsg_{i}}^{-3}\ol{g}_{ik}g_{ii}=\ell_{i}^{4}\norm{\bsg_{i}}^{-1}\ol{h}_{ik}h_{ii},
	\end{align}
	\begin{align}
		\frac{\partial P_{i}}{\partial g_{ik}}
		=&\bse_{i}\adj\frac{\partial G}{\partial g_{ik}}\bse_{i}\tr(A\wt{B}G)
		+zG_{ii}\frac{\tr G}{\partial g_{ik}}-\tr(\frac{\partial G}{\partial g_{ik}}A)(\wt{B}G)_{ii}
		-\tr(GA)\bse_{i}\adj A^{-1}\frac{\partial G}{\partial g_{ik}}\bse_{i} \nonumber	\\
		+&(\bse_{i}\adj \frac{\partial G}{\partial g_{ik}}\bse_{i}+\frac{\partial T_{i}}{\partial g_{ik}})\Upsilon
		+(G_{ii}+T_{i})\frac{\partial \Upsilon}{\partial g_{ik}}, \label{eq_partialpi}\\
		\frac{\partial \Upsilon}{\partial g_{ik}}
		=&z\left(\tr(2zG+1)\tr\frac{\partial G}{\partial g_{ik}}-\tr\left(\frac{\partial G}{\partial g_{ik}}A\right)\tr(\wt{B}G)-z\tr(GA)\tr(A^{-1}\frac{\partial G}{\partial g_{ik}})\right), \nonumber	\\
		\frac{\partial K_{i}}{\partial g_{ik}}
		=&\frac{\partial T_{i}}{\partial g_{ik}}
		+\tr(\frac{\partial G}{\partial g_{ik}}A)(b_{i}T_{i}+(\wt{B}G)_{ii})
		+\tr(GA)\left(b_{i}\frac{\partial T_{i}}{\partial g_{ik}}+\frac{z}{a_{i}}\bse_{i}\frac{\partial G}{\partial g_{ik}}\bse_{i}\right)	\nonumber \\
		-&z\tr\frac{\partial G}{\partial g_{ik}}(G_{ii}+T_{i})
		-\tr(GA\wt{B})\left(\bse_{i}\frac{\partial G}{\partial g_{ik}}\bse_{i}+\frac{\partial T_{i}}{\partial g_{ik}}\right). \label{eq_partialKderivative}
	\end{align}
\end{lem}

\subsection{Large deviation inequalities}\label{sec_largedeviation}
In this subsection, we provide some large deviation type controls. 

\begin{lem}[Large deviation estimates]\label{lem:LDE}
	Let $X$ be an $N \times N$ complex-valued deterministic matrix and let $\bsy\in\C^{N}$ be a deterministic complex vector. For a real or complex Gaussian random vector $\bsg\in\C^{N}$ with covariance matrix $\sigma^{2}I_{N}$, we have
	\beq\label{eq_largedeviationbound}
	\absv{\bsy\adj\bsg}\prec\sigma \norm{\bsy},\quad \absv{\bsg\adj X\bsg-\sigma^{2}N\tr X}\prec\sigma^{2}\norm{X}.
	\eeq
	
\end{lem}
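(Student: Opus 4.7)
The plan is to handle the linear and quadratic bounds separately, since they have quite different natures. For the linear estimate $|\bsy^*\bsg| \prec \sigma\|\bsy\|$, the key observation is that, because $\bsg$ is Gaussian, $\bsy^*\bsg$ is a scalar (real or complex) Gaussian random variable with mean zero and variance exactly $\sigma^2\|\bsy\|^2$. I would then invoke the standard Gaussian tail bound, or equivalently compute moments $\mathbb{E}|\bsy^*\bsg|^{2p} \leq C_p \sigma^{2p}\|\bsy\|^{2p}$ from the Gaussian moment formula, and conclude via Markov's inequality that $|\bsy^*\bsg| \prec \sigma\|\bsy\|$ in the sense of the stochastic domination definition.

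For the quadratic estimate, I would first reduce to the case $\sigma=1$ by rescaling $\bsg \mapsto \sigma^{-1}\bsg$ and $X$ unchanged. A direct computation using $\mathbb{E}[\bar{g}_i g_j] = \delta_{ij}$ (with the convention that in the complex case $\mathbb{E}[g_ig_j]=0$) gives $\mathbb{E}[\bsg^*X\bsg] = \sum_i X_{ii} = N\tr X$, which is the centering in the lemma. The main step is a moment bound
\begin{equation*}
\mathbb{E}\bigl|\bsg^*X\bsg - N\tr X\bigr|^{2p} \leq C_p \|X\|_2^{2p},
\end{equation*}
for every fixed $p\in\mathbb{N}$. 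This can be established either by splitting $X$ into Hermitian/anti-Hermitian (and further into diagonal plus off-diagonal) parts and applying the Hanson--Wright inequality to each, or more elementarily by Wick's theorem: expanding the $2p$-th moment as a sum over pairings of the $4p$ Gaussian factors and bounding the resulting contractions, one finds that each pairing contributes at most a product of entries whose total $\ell^2$ mass is controlled by $\|X\|_2^{2p}$.

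Once the moment bound is in hand, Markov's inequality at level $N^{\epsilon}\|X\|_2$ gives, for any $D>0$, a probability bound $\leq C_p N^{-2p\epsilon}$, which is at most $N^{-D}$ for $p$ sufficiently large depending on $\epsilon$ and $D$. This is exactly the definition of $|\bsg^*X\bsg - N\tr X| \prec \|X\|_2$, and undoing the rescaling recovers the stated $\sigma^2\|X\|_2$ bound.

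The only nontrivial step is the combinatorial moment bound, which is entirely standard (and can alternatively be quoted from e.g. \cite{2017dynamical} where it is recorded in the same form); the rest is just bookkeeping. I do not anticipate any conceptual obstacle, and the lemma serves here simply as a clean statement of the Gaussian large deviation estimates underlying the integration-by-parts computations in Sections \ref{sec:pointwiselocallaw}--\ref{sec_finalsection}.
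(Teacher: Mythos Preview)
Your proposal is correct and is essentially the standard argument. The paper does not actually prove this lemma; it simply cites Lemma~A.1 of \cite{BEC}, so your sketch in fact supplies more detail than the paper itself.
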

\begin{proof}
	See Lemma A.1 of \cite{BEC}. 
\end{proof}
{In what follows, we prove several lemmas regarding the estimates of some crucial quantities which will be used in our proof. The analogs for the additive model are Lemmas 5.3, 6.3 and 7.4 of \cite{BEC}. }
\begin{lem}\label{lem:DeltaG}
	Let $X_{i}$ be $\wt{B}^{\angi}$ or $I$, $X_{A}$ be $A$ or $I$ or $A^{-1}$, and $D=(d_{i})$ be a random diagonal matrix with $\norm{D}\prec 1.$  Recall (\ref{eq_defndeltag}) and (\ref{eq_controlparameter}). Under the assumption of (\ref{eq_locallaweqbound}), for each fixed $z \in \mathcal{D}_{\tau}(\eta_L, \eta_U),$ we have
	\begin{gather}
		\frac{1}{N}\sum_{k}^{(i)}\bse_{k}\adj X_{i}\Delta_{G}(i,k)\bse_{i}\prec\Pi_{i}^{2}, \quad
		\frac{1}{N}\sum_{k}^{(i)}\bse_{k}\adj X_{i}G\bse_{i}\tr(DX_{A}\Delta_{G}(i,k))\prec\Pi_{i}^{2}\Psi^{2},  \label{eq:DeltaG1}\\
		\frac{1}{N}\sum_{k}^{(i)}\bse_{k}\adj X_{i} \mr{\bsg_{i}}\tr(DX_{A}\Delta_{G}(i,k))\prec\Pi_{i}^{2}\Psi^{2}, \  \label{eq:DeltaG2}\\
		\frac{1}{N}\sum_{k}^{(i)}\bse_{k}\adj X_{i}G\bse_{i}\bse_{i}\adj X_{A}\Delta_{G}(i,k)\bse_{i}\prec	\Pi_{i}^{2}, \quad  \frac{1}{N}\sum_{k}^{(i)}\bse_{k}\adj X_{i}G\bse_{i}\bsh_{i}\adj \Delta_{G}(i,k)\bse_{i}\prec\Pi_{i}^{2}. \label{eq:DeltaG3}
	\end{gather}
\end{lem}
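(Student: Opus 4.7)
The plan is to exploit the fact that $\Delta_R(i,k)$, as given in (\ref{eq_deltarg}), factors as $\Delta_R(i,k)=\bar h_{ik}\,\widetilde\Delta_R(i)$, where $\widetilde\Delta_R(i)$ is \emph{independent of $k$} and has rank at most two, with image in $\mathrm{span}\{\bse_i,\bsh_i\}$ and coefficients that are $\rO_{\prec}(1)$. Consequently $\Delta_G(i,k)=\bar h_{ik}\,\widetilde\Delta_G(i)$, where $\widetilde\Delta_G(i)\deq-GA(\widetilde\Delta_R(i)\wt B^{\angi}R_i+R_i\wt B^{\angi}\widetilde\Delta_R(i))G$ also does not depend on $k$. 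This means every $k$-sum in (\ref{eq:DeltaG1})--(\ref{eq:DeltaG3}) factorises as $N^{-1}\cdot(\text{quantity in }\widetilde\Delta_G(i))\cdot\sum_{k}^{(i)}\bar h_{ik}(\cdots)_k$, and the weighted sum collapses via the identity $\sum_k^{(i)}\bar h_{ik}\bse_k^{*}=\mr{\bsh}_i^{*}$. What remains is a product of at most three scalar objects: a large deviation--type factor $\mr{\bsh}_i^{*}(\cdot)$, a trace or bilinear form built from $\widetilde\Delta_G(i)$, and scalar factors of the form $\bsw^{*} G\bse_i\in\{G_{ii},T_i\}$ which are $\rO_{\prec}(1)$ under (\ref{eq_locallaweqbound}).

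The first concrete step would be to expand $\widetilde\Delta_R(i)=\sum_{\bsu_1,\bsu_2}c_{\bsu_1\bsu_2}\bsu_1\bsu_2^{*}$ with $\bsu_1,\bsu_2\in\{\bse_i,\bsh_i\}$ and plug into $\widetilde\Delta_G(i)$, thereby writing each target quantity as a finite linear combination of (i) inner products of the type $\mr{\bsh}_i^{*} X_i GA\bsu_1$ and $\mr{\bsh}_i^{*} X_i GAR_i\wt B^{\angi}\bsu_1$, bounded via Lemma \ref{lem:LDE} together with the Ward identities $(G^{*}G)_{ii}\leq C\im G_{ii}/\eta$ and $(\caG^{*}\caG)_{ii}\leq C\im\caG_{ii}/\eta$ (which follow from (\ref{eq_ggrelationship})), giving $\prec\Pi_i$; (ii) bilinear factors $\bsu_2^{*}\wt B^{\angi}R_i G\bse_i$ of size $\rO_{\prec}(\sqrt{\im G_{ii}/\eta})$; and, for the two trace bounds in (\ref{eq:DeltaG1}) and (\ref{eq:DeltaG2}), (iii) an additional factor of $N^{-1}$ from the cyclicity reduction $\tr(DX_A GA\bsu_1\bsu_2^{*}\wt B^{\angi}R_i G)=N^{-1}\bsu_2^{*}\wt B^{\angi}R_iGDX_AGA\bsu_1$, where the resulting matrix sandwich is bounded by $\im G_{ii}/\eta$ using the same Ward identity together with $\|D\|\prec 1$. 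Collecting the powers of $N$ and $\Pi_i$ as dictated by each bound in the lemma, and using the elementary inequality $\Pi_i\eta\leq 1$ valid throughout $\caD_\tau(\eta_L,\eta_U)$, then gives the stated estimates $\Pi_i^{2}$ and $\Pi_i^{2}\Psi^{2}$.

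The main obstacle is the rigorous application of Lemma \ref{lem:LDE}, since the vectors such as $X_i GA\bsu$ against which $\mr{\bsh}_i$ is contracted depend on $\bsh_i$ through $G$. I would resolve this by a Woodbury-type expansion that writes $G$ as a resolvent built from $U^{\angi}$ (which is independent of $\bsh_i$ by the partial randomness decomposition (\ref{eq_prd})) plus a rank-two correction; since the leading factor $\bar h_{ik}\sim N^{-1/2}$ has already been extracted, the correction produces only a lower-order error that is safely absorbed into the target $\Pi_i^{2}$ (resp.\ $\Pi_i^{2}\Psi^{2}$). The remainder of the proof is the routine bookkeeping of combining the factors catalogued above, handled case by case for each of the five bounds.
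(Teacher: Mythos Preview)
Your factorisation $\Delta_G(i,k)=\bar h_{ik}\,\widetilde\Delta_G(i)$ and the collapse $\sum_k^{(i)}\bar h_{ik}\bse_k^*=\mr{\bsh}_i^*$ are exactly what the paper does, and the rank--two structure of $\widetilde\Delta_R(i)$ with $\bsu_1,\bsu_2\in\{\bse_i,\bsh_i\}$ matches its decomposition into the terms (\ref{eq_twotermschoice}). The remaining bounds are then obtained in the paper in the same way you outline for items (ii) and (iii).

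The one genuine divergence is your item (i). You propose to bound $\mr{\bsh}_i^{*}X_iGA\bsu_1$ by Lemma~\ref{lem:LDE}, obtaining $\prec\Pi_i$, and then acknowledge the dependence of $G$ on $\bsh_i$ as an obstacle to be handled by a Woodbury expansion. The paper sidesteps this entirely: it simply applies Cauchy--Schwarz,
\[
\absv{\mr{\bsh}_i^{*}X_iGA\bsu_1}\leq \norm{X_i\mr{\bsh}_i}\,\norm{GA\bsu_1}\leq C\bigl(\bsu_1^{*}AG^{*}GA\bsu_1\bigr)^{1/2},
\]
and then uses the Ward identity (via (\ref{eq_ggrelationship})) to bound this by $C(\im G_{ii}/\eta)^{1/2}$ or $C(\im\caG_{ii}/\eta)^{1/2}$ depending on whether $\bsu_1=\bse_i$ or $\bsh_i$. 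No independence is needed, no Woodbury expansion, and the product with the second factor and the prefactor $N^{-1}$ already gives $N^{-1}\cdot(\im G_{ii}+\im\caG_{ii})/\eta=\Pi_i^{2}$ on the nose. Your LDE route would ultimately give a bound smaller by $N^{-1/2}$, but at the cost of a nontrivial independence argument that is not required here. I recommend dropping LDE for this step and using Cauchy--Schwarz instead; the rest of your sketch then goes through directly.
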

\begin{proof}
	Recall the definitions in (\ref{eq_defndeltag}) and (\ref{eq_deltarg}).
	It is easy to see that every term in $\Delta_{G}(i,k)$ belongs to one of the following forms
	\begin{align}\label{eq_twotermschoice}
		&d_{i}\ol{h}_{ik} GA\bsalp_{i}\bsbet_{i}\adj\wt{B}^{\angi}R_{i}G & &\text{or} & 
		&d_{i}\ol{h}_{ik}GAR_{i}\wt{B}^{\angi}\bsalp_{i}\bsbet_{i}\adj G,
	\end{align}
	where $d_{i}$'s are $\rO_{\prec}(1)$, $k$-independent generic constants and $\bsalp_{i},\bsbet_{i}$ are either $\bse_{i}$ or $\bsh_{i}$. Due to similarity, we focus on the first inequality in (\ref{eq:DeltaG1}) and briefly explain the other terms.
	
	Due to similarity, we only prove first inequality in (\ref{eq:DeltaG1}). In view of (\ref{eq_twotermschoice}), we find that	every term in the first quantity in \eqref{eq:DeltaG1} has either one of the following two forms:
	\begin{align}\label{eq:Expan_DeltaG}
		&d_{i}\frac{1}{N}\sum_{k}^{(i)}\ol{h}_{ik}\bse_{k}\adj X_{i}GA\bsalp_{i}\bsbet_{i}\adj \wt{B}^{\angi}R_{i}G\bse_{i}
		=\frac{d_{i}}{N}(\mr{\bsh}_{i}\adj X_{i}GA\bsalp_{i})(\bsbet_{i}\adj\wt{B}^{\angi}R_{i}G\bse_{i}),\\ 
		&d_{i}\frac{1}{N}\sum_{k}^{(i)}\ol{h}_{ik}\bse_{k}\adj X_{i}GAR_{i}\wt{B}^{\angi}\bsalp_{i}\bsbet_{i}\adj G\bse_{i}
		=\frac{d_{i}}{N}(\mr{\bsh}_{i}\adj X_{i}GAR_{i}\wt{B}^{\angi}\bsalp_{i})(\bsbet_{i}\adj G\bse_{i}), \nonumber
	\end{align}
	where used the definition of $\mr{\bsh}_{i}$ in (\ref{eq_prd2}). Then we provide controls for terms on the right-hand side of the above two equations. Since $\| X_i \|$ is bounded and $\|\bm{h}_i \|_2^2=1,$ by (\ref{eq_hbwtr}) and (v) of Assumption \ref{assu_esd}, we find that for some constant $C>0,$  
	\begin{align}
		\absv{\mr{\bsh}_{i}\adj X_{i}GA\bsalp_{i}}
		\leq C(\bsalp_{i}\adj AG\adj GA\bsalp_{i})^{1/2}, \
		\absv{\bsbet_{i}\adj \wt{B}^{\angi}R_{i}G\bse_{i}}
		\leq C(\bse_{i}\adj G\adj G\bse_{i})^{1/2},&	\label{eq_termonecontrolone}\\
		\absv{\mr{\bsh}_{i}\adj X_{i}GAR_{i}\wt{B}^{\angi}\bsalp_{i}}
		\leq C(\bsalp_{i}\adj R_{i}\wt{B}AG\adj G A\wt{B}R_{i}\bsalp_{i})^{1/2}, \ \absv{\bsbet_{i}\adj G\bse_{i}}\leq C(\bse_{i}\adj G\adj G\bse_{i})^{1/2}. \label{eq_termtwocontroltwo}
	\end{align}
	It remains to study the bounds in (\ref{eq_termonecontrolone}) and (\ref{eq_termtwocontroltwo}). We first study the bounds of the first terms in (\ref{eq_termonecontrolone}) and (\ref{eq_termtwocontroltwo}). When $\bsalp_{i}=\bse_{i}$, we have that for some constant $C>0$
	\begin{align}\label{eq_transfernonhermitiantohermitian}
		\bsalp_{i}\adj AG\adj GA\bsalp_{i}
		=a_{i}\bse_{i}\adj \wt{G}\adj A\wt{G}\bse_{i} &\leq C\frac{\im G_{ii}}{\eta},	
	\end{align}
	where in the first step we used (\ref{eq_ggrelationship}) and in the second step 	we used (v) of Assumption \ref{assu_esd}, Ward identity and (\ref{eq_connectiongreenfunction}). Similarly,  we have that 
	\begin{align*}
		\bsalp_{i}\adj R_{i}\wt{B}AG\adj GA\wt{B}R_{i}\bsalp_{i}
		=\bsh_{i}\adj G\adj \wt{B}A^{2}\wt{B}G\bsh_{i}
		=\bse_{i}\adj B^{1/2}\wt{\caG}B^{1/2}\wt{A}^{2}B^{1/2}\wt{\caG}B^{1/2}\bse_{i}
		&\leq C\frac{\im \caG_{ii}}{\eta}.
	\end{align*}
	Analogously, when $\bsalp_{i}=\bsh_{i}$, we have that
	\begin{align*}
		\bsalp_{i}\adj AG\adj GA\bsalp_{i}=&\bse_{i}\wt{A}\caG\adj \caG\wt{A}\bse_{i}
		=\bse_{i}\adj\wt{A}B^{1/2}\wt{\caG}\adj B^{-1}\wt{\caG}B^{1/2}\wt{A}\bse_{i}
		=b_{i}^{-1}\bse_{i}\adj\wt{\caG}B\wt{A}^{2}B\wt{\caG}\bse_{i} \leq C\frac{\im\caG_{ii}}{\eta},\\
		&	\bsalp_{i}\adj R_{i}\wt{B}AG\adj GA\wt{B}R_{i}\bsalp_{i}	=\bse_{i}\adj G\adj G\adj \wt{B}A^{2}\wt{B}G\bse_{i}	\leq C\frac{\im G_{ii}}{\eta}.
	\end{align*}
	Moreover, for the second terms in (\ref{eq_termonecontrolone}) and (\ref{eq_termtwocontroltwo}), we readily see that
	$\bse_{i}\adj G\adj G\bse_{i}\leq C\frac{\im G_{ii}}{\eta}.$ Combining all the above bounds, we conclude that
	\beqs
	\frac{1}{N}\sum_{k}^{(i)}\bse_{k}\adj X_{i}\Delta_{G}(i,k)\bse_{i}\prec \frac{1}{N} \frac{\im G_{ii}+\im \caG_{ii}}{\eta} =\Pi_{i}^{2}.
	\eeqs
	
	For the remaining inequalities, we can replace $\Delta_{G}$ as a sum of quantities in \eqref{eq:Expan_DeltaG} and estimate every term of the summands. This concludes our proof. 
\end{proof}
\begin{lem}\label{lem:recmomerror}
	Let $X_{i}$ be $\wt{B}^{\angi}$ or $I$, $X_{A}$ be $A$ or $I$ or $A^{-1}$. Suppose that $D$ is a random diagonal matrix satisfying $\norm{D}\prec 1$. Then under the assumptions of Lemma \ref{lem:DeltaG}, we have
	\begin{align}
		&\frac{1}{N}\sum_{k}^{(i)}\frac{\partial \norm{\bsg_{i}}^{-1}}{\partial g_{ik}}\bse_{k}\adj X_{i}G\bse_{i}\prec\frac{1}{N}, &
		&\frac{1}{N}\sum_{k}^{(i)}\bse_{k}\adj X_{i} G\bse_{i}\tr(DX_{A}\frac{\partial G}{\partial g_{ik}})\prec \Pi_{i}^{2}\Psi^{2}, \label{eq:condiff1} \\
		&\frac{1}{N}\sum_{k}^{(i)}\bse_{k}\adj X_{i}\mr{\bsg_{i}}\tr(DX_{A}\frac{\partial G}{\partial g_{ik}})\prec \Pi_{i}^{2}\Psi^{2},&
		&\frac{1}{N}\sum_{k}^{(i)}\bse_{k}\adj X_{i}G\bse_{i}\bse_{i}\adj X_{A}\frac{\partial G}{\partial g_{ik}}\bse_{i}\prec\Pi_{i}^{2}, \label{eq:condiff2}  \\
		&\frac{1}{N}\sum_{k}^{(i)}\bse_{k}\adj X_{i}G\bse_{i}\frac{\partial T_{i}}{\partial g_{ik}}\prec \Pi_{i}^{2}. && \label{eq:condiff3} 
	\end{align}
\end{lem}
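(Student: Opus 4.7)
The unifying idea is that the main part of $\partial G/\partial g_{ik}$ has the rank-two structure displayed in \eqref{eq:Gder1}, namely
\[
\frac{\partial G}{\partial g_{ik}}=\frac{\ell_{i}^{2}}{\norm{\bsg_{i}}}GA\Bigl(\bse_{k}(\bse_{i}\adj+\bsh_{i}\adj)\wt{B}^{\angi}R_{i}+R_{i}\wt{B}^{\angi}\bse_{k}(\bse_{i}\adj+\bsh_{i}\adj)\Bigr)G+\Delta_{G}(i,k),
\]
so the contribution of $\Delta_{G}(i,k)$ to each of the five sums is already covered by Lemma \ref{lem:DeltaG}, and it remains to control the contribution of the rank-two main part. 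All the resulting sums over $k$ can be collapsed by the identity $\sum_{k}^{(i)}\bse_{k}(\bse_{i}\adj+\bsh_{i}\adj)=I-\bse_{i}\bse_{i}\adj+\mr{\bsh}_{i}\bse_{i}\adj+\ldots$ type manipulations (cyclicity of trace plus $\sum_{k}\bse_{k}\bse_{k}\adj=I$), after which the resulting expressions are quadratic forms in entries of $G$, $\wt{G}$, $\caG$, and $\wt{\caG}$ that one bounds via \eqref{eq_ggrelationship}, \eqref{eq_connectiongreenfunction}, the Ward identity, and \eqref{eq_locallaweqbound}.

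For the first estimate in \eqref{eq:condiff1}, no $\partial G/\partial g_{ik}$ is present; the formula $\partial \norm{\bsg_{i}}^{-1}/\partial g_{ik}=-\tfrac12\norm{\bsg_{i}}^{-3}\bar g_{ik}$ together with $\mr{\bsg}_{i}=\norm{\bsg_{i}}\mr{\bsh}_{i}$ collapses the sum to $-\tfrac1{2N}\norm{\bsg_{i}}^{-2}\mr{\bsh}_{i}\adj X_{i}G\bse_{i}$, which by \eqref{eq_locallaweqbound} (bounds on $\Lambda_{T}$ and $\mr{S}_{i}$) is $\rO_\prec(N^{-1})$.

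For the second estimate in \eqref{eq:condiff1}, inserting the main part of $\partial G/\partial g_{ik}$ and collapsing the $k$-sum gives, schematically, $N^{-1}(\mr{\bsh}_{i}\adj X_{i}G\bse_{i})\cdot \tr(DX_{A}GA\cdot \bsbet_{i}\adj\wt{B}^{\angi}R_{i}G)$ plus the $R_{i}\wt{B}^{\angi}$-analogue; the first factor is $\rO_\prec(1)$ by \eqref{eq_locallaweqbound}, while the trace is Cauchy–Schwarz-bounded by $N^{-1}\norm{GD^{1/2}}_{\mathrm{HS}}\norm{D^{1/2}X_{A}GA\bsalp_{i}\bsbet_{i}\adj \wt{B}^{\angi}R_{i}}_{\mathrm{HS}}$, which after using Ward and \eqref{eq_ggrelationship} as in the proof of Lemma \ref{lem:DeltaG} reduces to $\rO_\prec(\Pi_{i}^{2}\Psi^{2})$. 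The first estimate in \eqref{eq:condiff2} is identical in structure after replacing $\mr{\bsh}_{i}\adj X_{i}G\bse_{i}$ by $\bse_{k}\adj X_{i}\mr{\bsg}_{i}$; the $k$-sum of $\bse_{k}\adj X_{i}\mr{\bsg}_{i}\bse_{k}\adj(\cdots)$ is handled by the same projection identities.

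For the second estimate in \eqref{eq:condiff2}, the absence of a trace forces us to keep the full rank-two structure, giving sums of the form $N^{-1}(\mr{\bsh}_{i}\adj X_{i}G\bse_{i})(\bse_{i}\adj X_{A}GA\bsalp_{i})(\bsbet_{i}\adj\wt{B}^{\angi}R_{i}G\bse_{i})$; both vector norms are bounded by $(\im G_{ii}/\eta)^{1/2}+(\im\caG_{ii}/\eta)^{1/2}$ via \eqref{eq_ggrelationship}–\eqref{eq_connectiongreenfunction} and the Ward identity exactly as in \eqref{eq_transfernonhermitiantohermitian}, producing $\rO_\prec(\Pi_{i}^{2})$. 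Finally, \eqref{eq:condiff3} follows from the chain rule $\partial T_{i}/\partial g_{ik}=(\partial\bsh_{i}\adj/\partial g_{ik})G\bse_{i}+\bsh_{i}\adj(\partial G/\partial g_{ik})\bse_{i}$: the first piece uses \eqref{eq_histartderivative} and reduces to a multiple of the already-handled first estimate in \eqref{eq:condiff1} times $T_{i}$, while the second piece is the $\bsalp_{i}=\bsh_{i}$ case of the second estimate in \eqref{eq:condiff2}. The main obstacle is bookkeeping: after the rank-two expansion one obtains a large but finite collection of terms of the two shapes above, and the delicate point is to consistently carry the $\ell_{i}^{2}/\norm{\bsg_{i}}=1+\rO_\prec(N^{-1/2})$ prefactor and the distinction between $X_{i}=I$ and $X_{i}=\wt{B}^{\angi}$ so that every individual term matches the declared bound; since the argument parallels the one already spelled out for Lemma \ref{lem:DeltaG}, the details are routine.
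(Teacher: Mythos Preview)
Your overall strategy---expand $\partial G/\partial g_{ik}$ via \eqref{eq:Gder1}, absorb $\Delta_G$ by Lemma \ref{lem:DeltaG}, collapse the $k$-sum, then bound the result by Ward---is exactly what the paper does. The first estimate in \eqref{eq:condiff1} and the treatment of \eqref{eq:condiff3} are correct as written.

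However, your schematic for the second estimate in \eqref{eq:condiff1} (and by analogy the second in \eqref{eq:condiff2}) is wrong in a way that matters. The main part of $\partial G/\partial g_{ik}$ carries $\bse_{k}$ as a \emph{vector} inside the rank-one factors, not a scalar $\bar h_{ik}$; consequently the $k$-sum collapses via $\sum_{k}^{(i)}\bse_{k}\bse_{k}\adj=I_{i}$, yielding a \emph{single} bilinear form
\[
\frac{1}{N^{2}}(\bse_{i}+\bsh_{i})\adj\wt{B}^{\angi}R_{i}\,GDX_{A}GA\,I_{i}X_{i}G\bse_{i}
=\frac{1}{N^{2}}(-b_{i}\bsh_{i}-\wt{B}\bse_{i})\adj GDX_{A}GA\,I_{i}X_{i}G\bse_{i},
\]
not a product of the scalar $\mr{\bsh}_{i}\adj X_{i}G\bse_{i}$ with a remaining trace. (You seem to have transplanted the $\bar h_{ik}$ mechanism from the $\Delta_{G}$ terms in Lemma \ref{lem:DeltaG}, where it does produce $\mr{\bsh}_{i}$.) Once you have this single bilinear form, the paper's move is to pull out the operator norm $\norm{DX_{A}GAI_{i}X_{i}}\leq C\eta^{-1}$ and bound the remaining vector norms $\norm{G\adj\bsh_{i}},\norm{G\adj\wt{B}\bse_{i}},\norm{G\bse_{i}}$ by Ward, which gives exactly $\Pi_{i}^{2}\Psi^{2}$. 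Your proposed Hilbert--Schmidt Cauchy--Schwarz on a putative trace factor would give a bound of order $N^{-1}\eta^{-1}\sqrt{\im m_{H}\,\im G_{ii}}$, which is far weaker than $\Pi_{i}^{2}\Psi^{2}$ and would not close the argument. The same correction applies to the fourth estimate: there is no separate $\mr{\bsh}_{i}\adj X_{i}G\bse_{i}$ factor; the collapsed expression is $N^{-1}\bse_{i}\adj X_{A}GAI_{i}X_{i}G\bse_{i}\cdot(\bse_{i}+\bsh_{i})\adj\wt{B}^{\angi}R_{i}G\bse_{i}$, and both factors are then handled directly by Ward and \eqref{eq_locallaweqbound}.
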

\begin{proof}
	The proof is similar to Lemma \ref{lem:DeltaG}. We only discuss the proof of the first inequality in (\ref{eq:condiff1}). Using the identity$\frac{\partial \norm{\bsg_{i}}^{-1}}{\partial g_{ik}}=-\ol{g}_{ik}/(2\norm{\bsg_{i}}^{3}),$
	(\ref{eq_prd2}) and (\ref{eq_shorhandnotation}), we find that 
	\begin{align*}
		\frac{1}{N}\sum_{k}^{(i)}\frac{\partial \norm{\bsg_{i}}^{-1}}{\partial g_{ik}}\bse_{k}\adj X_{i}G\bse_{i}
		&	=-\frac{1}{\norm{\bsg_{i}}^{3}}\frac{1}{2N}\sum_{k}^{(i)}\ol{g}_{ik}\bse_{k}\adj X_{i}G\bse_{i}
		=-\frac{1}{2N\norm{\bsg_{i}}^{2}}\mr{\bsh_{i}}\adj X_{i}G\bse_{i} \\
		& 		=\left\{
		\begin{array}{cc}
			-\frac{1}{2N}\mr{S}_{i}, & \ \text{if }X_{i}=\wt{B}^{\angi},\\
			-\frac{1}{2N}\mr{T}_{i}, & \ \text{if }X_{i}=I.
		\end{array}
		\right.
	\end{align*}
	It suffices to control $\mr{S}_{i}$ and $\mr{T}_{i}.$ By the definition of $\mr{T}_i$ in (\ref{eq_shorhandnotation}), $\mr{T}_{i}\prec 1$ follows from Assumption \ref{assu_ansz}.   Moreover, under Assumption \ref{assu_ansz}, by (\ref{eq_gbgindeti}), (\ref{eq_wtbgiipointwise}),  (\ref{eq:BGii}) and (\ref{eq_boundepsilon1}),  we have  
	\beqs
	\mr{S}_{i}=-(\wt{B}G)_{ii}+G_{ii}+T_{i}+\mathsf{e}_{i1}=\left(\frac{\Omega_{B}}{z}-1\right)\frac{1}{a_{i}-\Omega_{B}}+T_{i}+\rO_{\prec}(N^{-\gamma/4}) \prec 1,
	\eeqs
	where we used (i) of Proposition  \ref{prop:stabN}. This proves the first inequality in (\ref{eq:condiff1}). The other inequalities can be proved similarly.   
	
\end{proof}

Following the same proof strategy of Lemma \ref{lem:recmomerror}, we can prove the following result for the errors arising in the expansion of off-diagonal entries. We omit the details of the proof. 
\begin{lem}\label{lem:recmomerror_off}
	Let $X_{i}$ be $\wt{B}^{\angi}$ or $I$, $X_{A}$ be $A$ or $I$ or $A^{-1}$, and $D=(d_{i})$ be a random diagonal matrix with $\norm{D}\prec 1$. Fix $z \in \mathcal{D}_{\tau}(\eta_L, \eta_U)$ and $j\neq i$, and assume that \eqref{eq_locallaweqbound} holds. Then we have
	\begin{align*}
		&\frac{1}{N}\sum_{k}^{(i)}\frac{\partial \norm{\bsg_{i}}^{-1}}{\partial g_{ik}}\bse_{k}\adj X_{i}G\bse_{j}\prec\frac{1}{N}, &
		&\frac{1}{N}\sum_{k}^{(i)}\bse_{k}\adj X_{i} G\bse_{j}\tr(DX_{A}\frac{\partial G}{\partial g_{ik}})\prec (\Pi_{i}+\Pi_{j})^{2}\Psi^{2},  \\
		&\frac{1}{N}\sum_{k}^{(i)}\bse_{k}\adj X_{i}G\bse_{j}\bse_{i}\adj X_{A}\frac{\partial G}{\partial g_{ik}}\bse_{j}\prec(\Pi_{i}+\Pi_{j})^{2}, &
		&\frac{1}{N}\sum_{k}^{(i)}\bse_{k}\adj X_{i}G\bse_{j}\frac{\partial T_{ij}}{\partial g_{ik}}\prec (\Pi_{i}+\Pi_{j})^{2}. &
	\end{align*}
\end{lem}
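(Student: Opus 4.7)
The plan is to mimic the proof of Lemma \ref{lem:recmomerror} line by line, tracking carefully how each occurrence of $\bse_i$ that gets replaced by $\bse_j$ propagates an extra factor of $\Pi_j$ alongside the $\Pi_i$ already present. Throughout, the only new inputs beyond what is used for the diagonal version are the off-diagonal ansatz \eqref{eq:ansz_off}, which guarantees $|G_{ij}| \prec N^{-\gamma/4}$ and $|T_{ij}|\prec 1$, together with the identity $G_{ij} = a_i (\wt{B}G)_{ij}/z$ that upgrades $(\wt{B}G)_{ij}$-bounds from $G_{ij}$-bounds. I will also use, repeatedly, the Ward identities and representations \eqref{eq:a_i}--\eqref{eq_part2ai}, which give
\[
\bse_i^* G^* G \bse_i \leq C\,\frac{\im \wt{G}_{ii}}{\eta}, \qquad \bsh_i^* G G^* \bsh_i \leq C\,\frac{\im \wt{\caG}_{ii}}{\eta},
\]
so that any diagonal quadratic form in $G,G^*$ tested against a coordinate or $\bsh$-direction at index $i$ is controlled by $\eta\,\Pi_i^2$, and similarly at index $j$.

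First, for the estimate on $\frac{1}{N}\sum_k^{(i)} (\partial\norm{\bsg_i}^{-1}/\partial g_{ik})\bse_k^* X_i G\bse_j$, I would follow the diagonal derivation verbatim to reduce the sum to $-\frac{1}{2N\norm{\bsg_i}^2}\mr{\bsh}_i^* X_i G\bse_j$, which is either $\mr{S}_{ij}/N$ or $\mr{T}_{ij}/N$. Both are $\rO_\prec(1)$ on the ansatz event: the decomposition \eqref{eq:BGii-S} applied to $(\wt{B}G)_{ij}$ gives $\mr{S}_{ij}=-(\wt{B}G)_{ij}+G_{ij}+T_{ij}+\mathsf e_{i1,j}$ with an error bounded as in \eqref{eq_boundepsilon1}, all $\rO_\prec(1)$ by \eqref{eq_locallaweqbound}, \eqref{eq:ansz_off}, and $G_{ij}=a_i(\wt{B}G)_{ij}/z$. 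For the second estimate, I would reuse the derivative formula \eqref{eq:Gder1} and the Lemma \ref{lem:DeltaG} control for the $\Delta_G$-contribution, then perform the summation over $k$ exactly as in \eqref{eq:Expan_Der_Trace}, ending with a bilinear form of the shape $\frac{1}{N^2}(-b_i\bsh_i-\wt B\bse_i)^* G D X_A G A I_i X_i G\bse_j$; Cauchy--Schwarz followed by Ward as in \eqref{eq:Expan_Der_Trace2} produces $\im G_{ii}/\eta + \im \caG_{ii}/\eta$ from the left factor and $\im G_{jj}/\eta$ from $\norm{G\bse_j}^2$, yielding $(\Pi_i+\Pi_j)^2\Psi^2$.

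The third estimate is the analog of \eqref{eq_bbbberrorpi2}. Here I would expand $\partial G/\partial g_{ik}$, apply Lemma \ref{lem:DeltaG}, perform the $k$-sum as in the diagonal case, and land on terms of the form
\[
\frac{\ell_i^2}{\norm{\bsg_i}N}\,\bse_i^* X_A G A I_i X_i G \bse_j\,(\bse_i+\bsh_i)^*\wt B^{\angi} R_i G\bse_j
\]
and its twin. The key point is that $(\bse_i+\bsh_i)^*\wt B^{\angi} R_i G\bse_j = -((\wt B G)_{ij}+b_i T_{ij})$ and $(\bse_i+\bsh_i)^* G\bse_j = G_{ij}+T_{ij}$ are both $\rO_\prec(1)$ by \eqref{eq:ansz_off}, while the bilinear term $\bse_i^* X_A G\cdots G\bse_j$ is controlled by $\norm{G^* X_A^*\bse_i}\,\norm{G\bse_j}\leq C\sqrt{(\im G_{ii}/\eta)(\im G_{jj}/\eta)}\lesssim \eta(\Pi_i^2+\Pi_j^2)$ via AM--GM; the factor of $1/N$ then converts this into the desired $(\Pi_i+\Pi_j)^2$. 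The fourth estimate is handled as in the diagonal case, using $\partial T_{ij}/\partial g_{ik} = (\partial\bsh_i^*/\partial g_{ik})G\bse_j + \bsh_i^*(\partial G/\partial g_{ik})\bse_j$ and reducing the first piece to $-\frac{1}{2\norm{\bsg_i}N}\mr{\bsh}_i^* X_i G\bse_j\cdot \bsh_i^* G\bse_j$, which is $\rO_\prec(1/N)$ times products of $\mr S_{ij},\mr T_{ij},T_{ij}$; the second piece is absorbed into the third estimate already proved.

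The only genuine obstacle I anticipate is bookkeeping: making sure that mixed terms such as $\bse_i^*(\cdots)\bse_j$ are systematically dominated by $(\Pi_i+\Pi_j)^2$ rather than a single $\Pi_i^2$ or $\Pi_j^2$, which is why the AM--GM step and the symmetric appearance of $\bse_j$ in place of one factor of $\bse_i$ in \eqref{eq:Expan_Der_Trace2} are crucial. Once these symmetrizations are tracked, every remaining step is a direct transcription of the diagonal proof, so no new analytic input beyond \eqref{eq:ansz_off} is required; this is why the authors content themselves with stating the lemma without giving the full argument.
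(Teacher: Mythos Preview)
Your proposal is correct and follows exactly the approach the paper indicates: the paper's own proof of Lemma \ref{lem:recmomerror_off} consists of a single sentence stating that it follows the same strategy as Lemma \ref{lem:recmomerror} and omitting all details. Your step-by-step transcription of the diagonal argument with $\bse_j$ in place of one factor of $\bse_i$, the invocation of \eqref{eq:ansz_off} to bound $\mr{S}_{ij},\mr{T}_{ij},T_{ij},(\wt{B}G)_{ij}$, and the AM--GM symmetrization $\sqrt{(\im G_{ii}/\eta)(\im G_{jj}/\eta)}\lesssim (\Pi_i+\Pi_j)^2\eta$ are precisely the missing bookkeeping the authors chose not to write out.
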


\subsection{Stability of perturbed linear system}
In this subsection, we provide a stability result concerning the perturbations of the linear system \eqref{eq_suborsystemPhi} for a sufficiently large $\eta$.  This result serves as the starting point of our bootstrapping arguments in Section \ref{sec:freeN} for the system \eqref{eq:def_Phi_ab} and in Section \ref{subsec_stronglocallawfixed} for \eqref{eq:def_PhiAB}. The proof of Lemma \ref{lem:Kantorovich_appl} relies on the well-known Kantorovich's theorem and we omit the details here.

\begin{lem}\label{lem:Kantorovich_appl}
	For $\eta>0$ and $\theta\in(0,\pi/2)$, define 
	\beq\label{eq_epsilonetatheta}
	\caE(\eta,\theta)\deq \{z\in\C_{+}:\im z>\eta, \theta<\arg z <\pi-\theta\}.
	\eeq
	Let $(\mu_{1},\mu_{2})$ be either $(\mu_{\alpha},\mu_{\beta})$ or $(\mu_{A},\mu_{B})$, and let $\Phi$ be either $\Phi_{\alpha\beta}$ or $\Phi_{AB}$ accordingly. Moreover, we set $\wt{\Omega}_{1}$ and $\wt{\Omega}_{2}$ be analytic functions mapping $\caE(\wt{\eta}_{1},\wt{\theta}),$ for some $\wt{\eta}_{1}$ and $\wt{\theta},$ to $\C_{+}$. Denote $\wt{r}(z)\deq \Phi(\wt{\Omega}_{1}(z),\wt{\Omega}_{2}(z),z)$. Assume that there exists a positive constant $0<c<1$ such that the following hold for all $z\in\caE(\wt{\eta}_{1},\wt{\theta})$:
	\begin{align}\label{eq:kanappconidition1}
		\Absv{\frac{\wt{\Omega}_{1}(z)}{z}-1}\leq c, \ \
		\Absv{\frac{\wt{\Omega}_{2}(z)}{z}-1}\leq c,	\ \
		\norm{\wt{r}(z)}\leq c.
	\end{align}
	
	Then there exist $\eta_{1}>\wt{\eta}_{1}$ and $\theta>\wt{\theta}$ depending only on $\mu_{\alpha},\mu_{\beta}$ and $c$ such that for all sufficiently large $N,$ we have
	\begin{align}
		\absv{\wt{\Omega}_{1}(z)-\Omega_{1}(z)}\leq 2\norm{\wt{r}(z)}, \ \
		\absv{\wt{\Omega}_{2}(z)-\Omega_{2}(z)}\leq 2\norm{\wt{r}(z)},
	\end{align}
	hold	for all $z\in\caE(\eta_{1},\theta)$, where $\Omega_{1}$ and $\Omega_{2}$ are subordination functions corresponding to the pair $(\mu_{1},\mu_{2})$ via Lemma \ref{lem_subor}.
\end{lem}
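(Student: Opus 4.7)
The plan is to apply Kantorovich's theorem (Lemma~\ref{lem:Kantorovich}) to the map $F=\Phi(\cdot,\cdot,z)\colon\mathbb{C}^2\to\mathbb{C}^2$ at the initial point $x_0=(\wt\Omega_1(z),\wt\Omega_2(z))$, treating $z\in\caE(\eta_1,\theta)$ as a parameter. The theorem will furnish a unique zero $x_\ast$ of $F$ in $\overline{\mathrm{Ball}(x_0,t_\ast)}$, which I will then identify with the true subordination pair $(\Omega_1(z),\Omega_2(z))$ via the uniqueness in Lemma~\ref{lem_subor}, yielding $|\wt\Omega_j(z)-\Omega_j(z)|\le t_\ast$. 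The parameters $\eta_1$ and $\theta$ will be enlarged at the end to absorb the various implicit constants coming from $\mathrm{Var}(\mu_\alpha)$, $\mathrm{Var}(\mu_\beta)$, and $c$.

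For the Kantorovich hypotheses, I would compute from \eqref{eq_diferentialoperator} that
\[
DF(x_0)=\begin{pmatrix}-1/z & L'_{\mu_1}(\wt\Omega_2(z))\\ L'_{\mu_2}(\wt\Omega_1(z)) & -1/z\end{pmatrix},
\]
and exploit the Nevanlinna representations from Lemmas~\ref{lem:reprM} and~\ref{lem:reprMemp}. These give $L'_{\mu_j}(\omega)=\int(x-\omega)^{-2}\,d\wh\mu_j(x)$ with $\wh\mu_j(\mathbb{R}_+)=\mathrm{Var}(\mu_j)$, so $|L'_{\mu_j}(\wt\Omega_k(z))|\le \mathrm{Var}(\mu_j)/(\mathrm{Im}\,\wt\Omega_k(z))^2$. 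The hypothesis $|\wt\Omega_j(z)/z-1|\le c$ forces $\mathrm{Im}\,\wt\Omega_j(z)\gtrsim\mathrm{Im}\,z\ge\wt\eta_1$, so by enlarging $\eta_1$ the off-diagonal entries of $DF(x_0)$ can be made arbitrarily small and $\det DF(x_0)=z^{-2}-L'_{\mu_1}L'_{\mu_2}$ is bounded away from $0$, giving condition~(i). Condition~(ii) is handled analogously via $L''_{\mu_j}(\omega)=2\int(x-\omega)^{-3}\,d\wh\mu_j(x)$, which admits the analogous bound $\mathrm{Var}(\mu_j)/(\mathrm{Im}\,\omega)^3$ on any controlled neighborhood of $x_0$, producing a Lipschitz constant $L$ that tends to $0$ as $\eta_1\to\infty$. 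Condition~(iii) gives $b=\|DF(x_0)^{-1}F(x_0)\|=\|DF(x_0)^{-1}\wt r(z)\|$, and~(iv) follows since $\|\wt r(z)\|\le c$ makes $2bL$ small.

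To identify the Kantorovich zero with the true subordination pair, I would invoke Lemma~\ref{lem_subor}: $(\Omega_1,\Omega_2)$ is uniquely characterized as the analytic pair on $\mathbb{C}\setminus\mathbb{R}_+$ with $\arg\Omega_j(z)\ge\arg z$ on $\mathbb{C}_+$ and with $\Omega_j(z)\to-\infty$ as $z\searrow-\infty$. The approximate pair $\wt\Omega_j$ maps $\caE(\wt\eta_1,\wt\theta)$ into $\mathbb{C}_+$, so it satisfies the cone condition; since the Kantorovich zero sits in a ball of radius $t_\ast$ around $x_0$ that shrinks with $\|\wt r(z)\|$, analytic continuation of the zero in $z$ on the connected domain $\caE(\eta_1,\theta)$ forces the two branches to coincide, so $x_\ast=(\Omega_1(z),\Omega_2(z))$.

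The hard part will be extracting the sharp prefactor $2$ in $|\wt\Omega_j-\Omega_j|\le 2\|\wt r(z)\|$. The raw Kantorovich bound is $t_\ast\le 2b\le 2\|DF(x_0)^{-1}\|\,\|\wt r(z)\|$, but a direct estimate of the inverse gives only $\|DF(x_0)^{-1}\|\sim|z|$, which is unacceptable when $|z|$ is large. Rather than bound $\|DF(x_0)^{-1}\|$ alone, I would compute $DF(x_0)^{-1}\wt r(z)$ explicitly by Cramer's rule, writing its $j$-th component as
\[
\frac{-(1/z)\wt r_j(z)-L'_{\mu_j}(\wt\Omega_{k}(z))\wt r_k(z)}{z^{-2}-L'_{\mu_1}(\wt\Omega_2)L'_{\mu_2}(\wt\Omega_1)},\qquad \{j,k\}=\{1,2\},
\]
and exploit the specific form $\wt r_j=L_{\mu_i}(\wt\Omega_k)-\wt\Omega_j/z$ together with $|\wt\Omega_j/z-1|\le c$, so that the factor $1/z$ in the numerator is multiplied by a quantity already of order $\|\wt r\|$, rather than combining additively with $|z|$. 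After these cancellations, the remaining pieces are controlled by $|L'_{\mu_j}|\le \mathrm{Var}(\mu_j)/(\mathrm{Im}\,z)^2$, which are small for large $\eta_1$, and the ratio is bounded by $\|\wt r(z)\|$ with room to spare. The parameters $\eta_1$ and $\theta$ are finally chosen so that both this cancellation estimate and $2bL<1$ hold simultaneously, delivering $t_\ast\le 2b\le 2\|\wt r(z)\|$.
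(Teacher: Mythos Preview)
Your overall approach—apply Kantorovich's theorem at $x_0=(\wt\Omega_1,\wt\Omega_2)$, then identify the resulting zero with $(\Omega_1,\Omega_2)$—is exactly the paper's, and your verification of conditions (i)--(iv) via the Nevanlinna representations of $L'_{\mu_j}$ and $L''_{\mu_j}$ matches the paper as well.

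The identification step is where your proposal has a genuine gap. You want to invoke the uniqueness in Lemma~\ref{lem_subor}, but that lemma characterizes $(\Omega_1,\Omega_2)$ globally on $\C\setminus\R_+$ through the limit \eqref{eq_subsys2} at $-\infty$ and the cone condition \eqref{eq_subsys3} (which is $\arg\Omega_j(z)\ge\arg z$, not merely $\Omega_j\in\C_+$); neither can be checked for the Kantorovich zero, which lives only over $\caE(\eta_1,\theta)$. Saying ``analytic continuation forces the two branches to coincide'' is circular: two analytic families of solutions of $\Phi(\cdot,\cdot,z)=0$ need not agree unless you first show they match somewhere. The paper argues pointwise instead. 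For each fixed $z\in\C_+$, any solution $(\omega_1,\omega_2)\in\C_+^2$ of $\Phi_z=0$ makes $\omega_1/z$ a fixed point of the iteration $\caF_z(\omega)\deq L_{\mu_1}(zL_{\mu_2}(z\omega))$, and one checks from $0<\arg L_\mu(w)<\pi-\arg w$ that $\caF_z$ is a non-constant analytic self-map of the sector $\{0<\arg\omega<\pi-\arg z\}$. This sector is conformally a disk, so by Schwarz--Pick $\caF_z$ has at most one fixed point; since $\Omega_1(z)/z$ is already one, $\omega_1=\Omega_1(z)$.

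Your concern about the prefactor $2$ is legitimate—a direct estimate does give $\|DF(x_0)^{-1}\|\sim|z|$—but the cancellation you propose does not remove it. Writing out Cramer's rule, the $j$-th entry of $DF(x_0)^{-1}\wt r$ has numerator $-(1/z)\wt r_j-L'_{\mu_j}(\wt\Omega_k)\wt r_k=O(\|\wt r\|/|z|)$, but the denominator $z^{-2}-L'_{\mu_1}L'_{\mu_2}$ is only $\sim|z|^{-2}$, so the ratio is still $O(|z|\,\|\wt r\|)$. In the paper's applications (Lemma~\ref{lem:OmegaBound1} and Step~1 of Proposition~\ref{prop:weaklaw}) one only ever uses $z$ with $\re z$ in a fixed compact interval and $\im z=\eta_1$ fixed, so $|z|$ is a harmless constant and the issue does not arise.
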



\begin{thebibliography}{10}
	
	\bibitem{abbe2017community}
	E.~Abbe.
	\newblock Community detection and stochastic block models: recent developments.
	\newblock {\em The Journal of Machine Learning Research}, 18(1):6446--6531,
	2017.
	
	\bibitem{Anderson-Guionnet-Zeitouni2010}
	G.~W. Anderson, A.~Guionnet, and O.~Zeitouni.
	\newblock {\em An introduction to random matrices}, volume 118 of {\em
		Cambridge Studies in Advanced Mathematics}.
	\newblock Cambridge University Press, Cambridge, 2010.
	
	\bibitem{Bao-Erdos-Schnelli2016}
	Z.~Bao, L.~Erd\H{o}s, and K.~Schnelli.
	\newblock Local stability of the free additive convolution.
	\newblock {\em J. Funct. Anal.}, 271(3):672--719, 2016.
	
	\bibitem{MR4205272}
	Z.~Bao, L.~Erd\H{o}s, and K.~Schnelli.
	\newblock On the support of the free additive convolution.
	\newblock {\em J. Anal. Math.}, 142(1):323--348, 2020.
	
	\bibitem{BAO3}
	Z.~Bao, L.~Erd{\H{o}}s, and K.~Schnelli.
	\newblock Convergence rate for spectral distribution of addition of random
	matrices.
	\newblock {\em Advances in Mathematics}, 319:251 -- 291, 2017.
	
	\bibitem{BAO2}
	Z.~Bao, L.~Erd{\H{o}}s, and K.~Schnelli.
	\newblock Local law of addition of random matrices on optimal scale.
	\newblock {\em Communications in Mathematical Physics}, 349(3):947--990, 2017.
	
	\bibitem{bao2019}
	Z.~Bao, L.~Erdős, and K.~Schnelli.
	\newblock Local single ring theorem on optimal scale.
	\newblock {\em Ann. Probab.}, 47(3):1270--1334, 2019.
	
	\bibitem{BEC}
	Z.~Bao, L.~Erdős, and K.~Schnelli.
	\newblock Spectral rigidity for addition of random matrices at the regular
	edge.
	\newblock {\em Journal of Functional Analysis}, 279(7):108639, 2020.
	
	\bibitem{bao2015}
	Z.~Bao, G.~Pan, and W.~Zhou.
	\newblock Universality for the largest eigenvalue of sample covariance matrices
	with general population.
	\newblock {\em Ann. Statist.}, 43(1):382--421, 2015.
	
	\bibitem{Belinschi2006}
	S.~T. Belinschi.
	\newblock A note on regularity for free convolutions.
	\newblock {\em Ann. Inst. Henri Poincar\'{e} Probab. Stat.}, 42(5):635--648,
	2006.
	
	\bibitem{Belinschi-Bercovici2007}
	S.~T. Belinschi and H.~Bercovici.
	\newblock A new approach to subordination results in free probability.
	\newblock {\em J. Anal. Math.}, 101:357--365, 2007.
	
	\bibitem{belinschi2017}
	S.~T. Belinschi, H.~Bercovici, M.~Capitaine, and M.~Février.
	\newblock Outliers in the spectrum of large deformed unitarily invariant
	models.
	\newblock {\em Ann. Probab.}, 45(6A):3571--3625, 2017.
	
	\bibitem{Belinschi-Mai-Speicher2017}
	S.~T. Belinschi, T.~Mai, and R.~Speicher.
	\newblock Analytic subordination theory of operator-valued free additive
	convolution and the solution of a general random matrix problem.
	\newblock {\em Journal für die reine und angewandte Mathematik (Crelles
		Journal)}, 2017(732):21--53, 2017.
	
	\bibitem{BKnotes}
	F.~{Benaych-Georges} and A.~{Knowles}.
	\newblock {Lectures on the local semicircle law for Wigner matrices}.
	\newblock {\em arXiv preprint arXiv 1601.04055}, 2016.
	
	\bibitem{Biane1997}
	P.~Biane.
	\newblock On the free convolution with a semi-circular distribution.
	\newblock {\em Indiana Univ. Math. J.}, 46(3):705--718, 1997.
	
	\bibitem{7587390}
	J.~{Bun}, R.~{Allez}, J.~{Bouchaud}, and M.~{Potters}.
	\newblock Rotational invariant estimator for general noisy matrices.
	\newblock {\em {IEEE} Transactions on Information Theory}, 62(12):7475--7490,
	2016.
	
	\bibitem{bun2017}
	J.~Bun, J.-P. Bouchaud, and M.~Potters.
	\newblock Cleaning large correlation matrices: Tools from random matrix theory.
	\newblock {\em Physics Reports}, 666:1 -- 109, 2017.
	
	\bibitem{chistyakov2011arithmetic}
	G.~P. Chistyakov and F.~G{\"o}tze.
	\newblock The arithmetic of distributions in free probability theory.
	\newblock {\em Central European Journal of Mathematics}, 9(5):997--1050, 2011.
	
	\bibitem{PM1}
	P.~Diaconis and M.~Shahshahani.
	\newblock The subgroup algorithm for generating uniform random variables.
	\newblock {\em Probability in the Engineering and Informational Sciences},
	1(1):15–32, 1987.
	
	\bibitem{supp}
	X.~Ding and H.~C. Ji.
	\newblock Supplement to "local laws for multiplication of random matrices".
	\newblock 2021.
	
	\bibitem{ding2021kernel}
	X.~Ding and H.-T. Wu.
	\newblock How do kernel-based sensor fusion algorithms behave under high
	dimensional noise?
	\newblock {\em arXiv preprint arXiv:2111.10940}, 2021.
	
	\bibitem{DW}
	X.~{Ding} and H.~T. {Wu}.
	\newblock On the spectral property of kernel-based sensor fusion algorithms of
	high dimensional data.
	\newblock {\em IEEE Transactions on Information Theory}, 67(1):640--670, 2021.
	
	\bibitem{ding2018}
	X.~Ding and F.~Yang.
	\newblock A necessary and sufficient condition for edge universality at the
	largest singular values of covariance matrices.
	\newblock {\em Ann. Appl. Probab.}, 28(3):1679--1738, 2018.
	
	\bibitem{DYaos}
	X.~Ding and F.~Yang.
	\newblock {Spiked separable covariance matrices and principal components}.
	\newblock {\em The Annals of Statistics}, 49(2):1113 -- 1138, 2021.
	
	\bibitem{ding2020tracy}
	X.~Ding and F.~Yang.
	\newblock Tracy-widom distribution for heterogeneous gram matrices with
	applications in signal detection.
	\newblock {\em {IEEE} Transactions on Information Theory (in press)}, 2022.
	
	\bibitem{ELnips}
	E.~{Dobriban} and S.~{Liu}.
	\newblock {Asymptotics for sketching in least squares regression}.
	\newblock {\em Conference on Neural Information Processing Systems (NIPS)},
	2019.
	
	\bibitem{donoho2013phase}
	D.~L. Donoho, M.~Gavish, and A.~Montanari.
	\newblock The phase transition of matrix recovery from gaussian measurements
	matches the minimax mse of matrix denoising.
	\newblock {\em Proceedings of the National Academy of Sciences},
	110(21):8405--8410, 2013.
	
	\bibitem{MR3119922}
	L.~Erd\H{o}s, A.~Knowles, and H.-T. Yau.
	\newblock Averaging fluctuations in resolvents of random band matrices.
	\newblock {\em Ann. Henri Poincar\'{e}}, 14(8):1837--1926, 2013.
	
	\bibitem{Erdos-Kruger-Nemish2020}
	L.~Erd\H{o}s, T.~Kr\"{u}ger, and Y.~Nemish.
	\newblock Local laws for polynomials of {W}igner matrices.
	\newblock {\em J. Funct. Anal.}, 278(12):108507, 59, 2020.
	
	\bibitem{2017dynamical}
	L.~Erd{\H{o}}s and H.~Yau.
	\newblock {\em A Dynamical Approach to Random Matrix Theory}.
	\newblock Volume 28 of Courant Lecture Notes. Courant Institute of Mathematical
	Sciences, New York University and American Mathematical Soc., 2017.
	
	\bibitem{ho2022local}
	C.-W. Ho.
	\newblock A local limit theorem and delocalization of eigenvectors for
	polynomials in two matrices.
	\newblock {\em International Mathematics Research Notices}, 2022(3):1734--1769,
	2022.
	
	\bibitem{javanmard2016phase}
	A.~Javanmard, A.~Montanari, and F.~Ricci-Tersenghi.
	\newblock Phase transitions in semidefinite relaxations.
	\newblock {\em Proceedings of the National Academy of Sciences},
	113(16):E2218--E2223, 2016.
	
	\bibitem{JHC}
	H.~C. Ji.
	\newblock {Regularity Properties of Free Multiplicative Convolution on the
		Positive Line}.
	\newblock {\em International Mathematics Research Notices}, 2020.
	\newblock rnaa152.
	
	\bibitem{MR3353823}
	V.~Kargin.
	\newblock Subordination for the sum of two random matrices.
	\newblock {\em Ann. Probab.}, 43(4):2119--2150, 2015.
	
	\bibitem{MR3704770}
	A.~Knowles and J.~Yin.
	\newblock Anisotropic local laws for random matrices.
	\newblock {\em Probab. Theory Related Fields}, 169(1-2):257--352, 2017.
	
	\bibitem{KLP}
	J.~{Kwak}, J.~O. {Lee}, and J.~{Park}.
	\newblock {Extremal eigenvalues of sample covariance matrices with general
		population}.
	\newblock {\em arXiv preprint arXiv 1908.07444}, 2019.
	
	\bibitem{nips20201}
	J.~Lacotte and M.~Pilanci.
	\newblock {Effective Dimension Adaptive Sketching Methods for Faster
		Regularized Least-Squares Optimization}.
	\newblock In {\em Conference on Neural Information Processing Systems (NIPS)},
	2020.
	
	\bibitem{erdos2012}
	E.~L{\'a}szl{\'o}, H.-T. Yau, and J.~Yin.
	\newblock Rigidity of eigenvalues of generalized {W}igner matrices.
	\newblock {\em Advances in Mathematics}, 229(3):1435 -- 1515, 2012.
	
	\bibitem{lee2016}
	J.~O. Lee and K.~Schnelli.
	\newblock {T}racy–{W}idom distribution for the largest eigenvalue of real
	sample covariance matrices with general population.
	\newblock {\em Ann. Appl. Probab.}, 26(6):3786--3839, 2016.
	
	\bibitem{lelarge2019fundamental}
	M.~Lelarge and L.~Miolane.
	\newblock Fundamental limits of symmetric low-rank matrix estimation.
	\newblock {\em Probability Theory and Related Fields}, 173(3):859--929, 2019.
	
	\bibitem{Liu2020Ridge}
	S.~Liu and E.~Dobriban.
	\newblock Ridge regression: Structure, cross-validation, and sketching.
	\newblock In {\em The 8th International Conference on Learning Representations
		(ICLR)}, 2020.
	
	\bibitem{onatski2009testing}
	A.~Onatski.
	\newblock Testing hypotheses about the number of factors in large factor
	models.
	\newblock {\em Econometrica}, 77(5):1447--1479, 2009.
	
	\bibitem{Pastur-Vasilchuk2000}
	L.~Pastur and V.~Vasilchuk.
	\newblock On the law of addition of random matrices.
	\newblock {\em Comm. Math. Phys.}, 214(2):249--286, 2000.
	
	\bibitem{PAUL200937}
	D.~Paul and J.~W. Silverstein.
	\newblock No eigenvalues outside the support of the limiting empirical spectral
	distribution of a separable covariance matrix.
	\newblock {\em Journal of Multivariate Analysis}, 100(1):37 -- 57, 2009.
	
	\bibitem{schwartzman2008inference}
	A.~Schwartzman, W.~F. Mascarenhas, and J.~E. Taylor.
	\newblock Inference for eigenvalues and eigenvectors of gaussian symmetric
	matrices.
	\newblock {\em The Annals of Statistics}, 36(6):2886--2919, 2008.
	
	\bibitem{tulino2004random}
	A.~M. Tulino, S.~Verd{\'u}, et~al.
	\newblock Random matrix theory and wireless communications.
	\newblock {\em Foundations and Trends{\textregistered} in Communications and
		Information Theory}, 1(1):1--182, 2004.
	
	\bibitem{Vasilchuk2001}
	V.~Vasilchuk.
	\newblock On the law of multiplication of random matrices.
	\newblock {\em Math. Phys. Anal. Geom.}, 4(1):1--36, 2001.
	
	\bibitem{Voiculescu1987}
	D.~Voiculescu.
	\newblock Multiplication of certain noncommuting random variables.
	\newblock {\em J. Operator Theory}, 18(2):223--235, 1987.
	
	\bibitem{Voiculescu1991}
	D.~Voiculescu.
	\newblock Limit laws for random matrices and free products.
	\newblock {\em Invent. Math.}, 104(1):201--220, 1991.
	
	\bibitem{Voiculescu-Dykema-Nica1992}
	D.~Voiculescu, K.~Dykema, and A.~Nica.
	\newblock {\em Free Random Variables}.
	\newblock CRM monograph series. American Mathematical Society, 1992.
	
	\bibitem{yang2019}
	F.~Yang.
	\newblock Edge universality of separable covariance matrices.
	\newblock {\em Electron. J. Probab.}, 24:57 pp., 2019.
	
	\bibitem{2020arXiv200500511Y}
	F.~{Yang}, S.~{Liu}, E.~{Dobriban}, and D.~P. {Woodruff}.
	\newblock {How to reduce dimension with PCA and random projections?}
	\newblock {\em arXiv preprint arXiv:2005.00511}, 2020.
	
	\bibitem{yao2015large}
	J.~Yao, S.~Zheng, and Z.~Bai.
	\newblock {\em Large Sample Covariance Matrices and High-Dimensional Data
		Analysis}.
	\newblock Cambridge Series in Statistical and Probabilistic Mathematics.
	Cambridge University Press, 2015.
	
\end{thebibliography}
\end{document}